\documentclass[11pt]{article}

\usepackage[a4paper,margin=1.05in]{geometry}
\usepackage[utf8]{inputenc}
\usepackage[T1]{fontenc}
\usepackage{lmodern}
\usepackage{amsmath,amssymb,amsthm,mathtools}
\usepackage{mathrsfs}
\usepackage{booktabs}
\usepackage{array}
\usepackage{enumitem}
\usepackage{microtype}
\usepackage[colorlinks=true,linkcolor=blue,citecolor=blue,urlcolor=blue]{hyperref}
\usepackage[nameinlink,capitalize,noabbrev]{cleveref}

\numberwithin{equation}{section}

\theoremstyle{plain}
\newtheorem{theorem}{Theorem}[section]
\newtheorem{proposition}[theorem]{Proposition}
\newtheorem{lemma}[theorem]{Lemma}
\newtheorem{corollary}[theorem]{Corollary}

\theoremstyle{definition}
\newtheorem{definition}[theorem]{Definition}
\newtheorem{assumption}[theorem]{Assumption}

\theoremstyle{remark}
\newtheorem{remark}[theorem]{Remark}

\newcommand{\NS}{\mathrm{NS}}
\newcommand{\loc}{\mathrm{loc}}
\newcommand{\cl}{\mathrm{cl}}
\newcommand{\intg}{\mathrm{int}}

\newcommand{\comp}{\mathrm{comp}}

\newcommand{\prs}{\mathrm{prs}}
\newcommand{\harm}{\mathrm{harm}}
\newcommand{\proj}{\mathrm{proj}}
\newcommand{\sync}{\mathrm{sync}}
\newcommand{\gate}{\mathrm{gate}}
\newcommand{\detc}{\mathrm{det}}
\newcommand{\chart}{\mathrm{chart}}
\newcommand{\locerr}{\mathrm{loc}}
\newcommand{\gap}{\mathrm{gap}}
\newcommand{\quot}{\mathrm{quot}}
\newcommand{\EF}{\mathrm{EF}}
\newcommand{\Err}{\mathsf{Err}}
\newcommand{\Tax}{\mathsf{Tax}}
\newcommand{\Leak}{\mathsf{Leak}}
\newcommand{\Dist}{\operatorname{dist}}
\newcommand{\dist}{\operatorname{dist}}

\newcommand{\supp}{\operatorname{supp}}

\newcommand{\A}{\mathcal A}

\newcommand{\F}{\mathcal F}
\newcommand{\K}{\mathcal K}
\newcommand{\Rprs}{\mathcal R_{\prs}}
\newcommand{\norm}[1]{\left\lVert #1\right\rVert}
\newcommand{\abs}[1]{\left\lvert #1\right\rvert}

\title{Finite-Window Recursive Audit Chains for Navier--Stokes Generated Packages}
\author{Runlong Yu\\
The University of Alabama, Tuscaloosa, AL, USA\\
\texttt{ryu5@ua.edu}}
\date{}

\begin{document}
\maketitle

\begin{abstract}
We develop a finite-window recursive audit framework for Navier--Stokes-generated packages. On a fixed window, the underlying anti-phantom certificate asserts that a baseline-visible package is either detected locally or charged to an explicit residual ledger. The first component of the framework is a broad one-step admissibility criterion for pressure-admissible finite-energy Navier--Stokes-generated audit packages under an explicit ledger of synchronization, localization, projection, harmonic-tail, chart, clean-gap, gate/slack, and detector mismatches. The second component is a finite-chain propagation theorem: once one-step admissibility supplies a finite renormalized chain of packages and a static finite-window audit certificate is available at each scale, variable-coefficient error recursion and weighted summation yield a recursive finite-window anti-phantom lower bound. We also give compact/effective pressure projection criteria, reduced quotient chart visibility, clean detector gaps, a pressure--flux--energy matrix kernel condition, a smooth reduced NS-generated verification class, and a conditional Caffarelli--Kohn--Nirenberg compatible defect-extraction criterion. 
\end{abstract}

\tableofcontents

\section{Introduction}

We work within the Leray--Hopf finite-energy framework for incompressible Navier--Stokes solutions \cite{Leray1934,Hopf1951} and the suitable-weak-solution partial regularity theory of Scheffer and Caffarelli--Kohn--Nirenberg, together with later refinements and expositions \cite{Scheffer1976,Scheffer1977,CKN1982,Lin1998,SereginLectureNotes}.  The pressure and localization modules use standard Calderon--Zygmund pressure decompositions, pressure regularity, and local energy methods \cite{SohrWahl1986,SereginSverak2002,ESS2003}.  The finite-window ledger, audit, and anti-phantom terminology follows the structural reduction framework developed in \cite{YuCriticalLedgers2026,YuSingularityAuditTransfer2026,YuComputationalAntiPhantom2026}.

The results are not a one-component regularity criterion.  Nevertheless, the CKN-compatible defect-extraction branch and the scale-critical defect-package language are adjacent to the one-component and degeneration literature \cite{KukavicaZiane2006,CaoTiti2011,CheminZhang2016,CheminZhangZhang2017,HanLeiLiZhao2019,KangNguyen2023,YuOneComponent2026,YuStrict2026,YuSchur2026,YuInvisible2026}.

The static finite-window audit theory \cite{YuSingularityAuditTransfer2026,YuComputationalAntiPhantom2026} proves a lower bound of the form
\begin{equation}\label{eq:static-audit-fw}
  M_{\Lambda}^{\loc}(D-\zeta_*)
  \ge
  c_{\Lambda,0}\,\dist_0(D,\Gamma)
  - E_{\Lambda,0}^{\quot}(D).
\end{equation}
The interpretation is that a defect visible in the baseline quotient cannot be simultaneously detector-silent and residual-cheap on the same finite window.  The natural next question is not yet an infinite-scale regularity question.  It is the finite recursive question: after one restriction-rescaling step, does an NS-generated package remain in the admissible class, and if so how do the audit coefficient and residual ledger change?

The analysis is organized around the interface
\[
\begin{array}{c}
  \boxed{\text{NS-generated one-step admissibility}}\\[0.35em]
  \Downarrow\\[0.35em]
  \boxed{\text{finite admissible audit chain}}\\[0.35em]
  \Downarrow\\[0.35em]
  \boxed{\text{recursive finite-window lower bound}}.
\end{array}
\]
The one-step half constructs and audits the map
\[
  D_k \longmapsto D_{k+1}=R_kD_k
\]
by restriction to the next geometric scale, Navier--Stokes rescaling to the unit cylinder, synchronization of the quotient representative, and recomputation of pressure, source, localization, gate/slack, detector, residual, and quotient coordinates.  The recursive half assumes a valid static certificate on each scale and sums those certificates with a variable-coefficient error recursion.

The main finite-chain output has the schematic form
\[
  \sum_{k=0}^K w_k M_k
  \ge
  c_K^{\min}\sum_{k=0}^K w_k\delta_k
  - E_K^{\mathrm{rec}},
\]
where the recursive error includes all explicitly charged one-step increments.  The theorem is useful precisely because the increment ledger is decomposed: PDE-generated channels such as pressure/source preservation, local energy/flux leakage, and localized Calderon--Zygmund pressure recomputation are separated from structural or reduced-model channels such as chart visibility, clean-gap kernel-freeness, and detector comparison.  The energy/flux bookkeeping is also close in spirit to local energy-transfer and anomalous-dissipation viewpoints \cite{ConstantinETiti1994,Eyink1994,DuchonRobert2000,BarkerPrange2021}.

\subsection{Main contributions}

The main contributions are as follows.
\begin{enumerate}[label=(\roman*)]
  \item It defines broad NS-generated finite-window audit packages and the renormalized one-step ledger.
  \item It proves fixed-window PDE modules: pressure/source preservation, energy/flux localization, and localized pressure recomputation.
  \item It records structural ledger insertions and reduced finite-dimensional detector, chart, clean-gap, and PFE matrix criteria.
  \item It proves finite-chain recursive audit propagation by variable-coefficient error recursion and weighted summation.
  \item It adds a conditional CKN-compatible defect-extraction branch showing how CKN-badness can force positive audit defect under an explicit audit-to-CKN metric comparison.
\end{enumerate}

\subsection{Scope of the results}

None of the theorems proves infinite-chain propagation, scale-uniform regularity, singularity exclusion, or the Navier--Stokes Clay problem.  The broad one-step theorem is conditional on every named mismatch being dominated by the corresponding ledger component.  The reduced chart and clean-gap mechanisms are finite-dimensional or compact-quotient inputs unless separately verified.  The CKN branch is a finite-window defect-extraction statement under an audit-to-CKN domination hypothesis; it is not a regularity proof.

\subsection{Logical status of the main assertions}

For clarity, we separate the assertions into three classes.  First, the fixed-window PDE estimates in \cref{sec:pde} are proved directly from pressure admissibility, the local energy inequality, scaling, Holder's inequality, and Calderon--Zygmund estimates.  Second, the synchronization, gate/slack, detector-comparison, coefficient-update, and recursive summation statements are finite-dimensional or algebraic consequences of explicitly stated hypotheses.  Third, compact chart visibility, clean detector gaps, PFE kernel-freeness, scale-uniform propagation, and the audit-to-CKN comparison are structural inputs unless a cited or separate theorem verifies them in a concrete class.  Thus each theorem should be read with its displayed hypotheses; no hidden claim is made that these structural hypotheses follow automatically for arbitrary suitable weak solutions.

\section{Preliminaries}
\label{sec:preliminaries}

\subsection{Normalized cylinders and scaling}

Fix \(0<\lambda<1\) and a scale sequence
\[
  r_k=\lambda^k r_0.
\]
The normalized cylinder is
\[
  Q_1=B_1\times(-1,0).
\]
For a local Navier--Stokes pair \((u,p)\) near \(z_k=(x_k,t_k)\), define
\[
  u_k(y,s)=r_k u(x_k+r_ky,t_k+r_k^2s),
  \qquad
  p_k(y,s)=r_k^2p(x_k+r_ky,t_k+r_k^2s).
\]
Unless explicitly stated, centers are fixed, \(z_k=z_0\).  Moving-center
variants require additional bookkeeping and are not used here.

\subsection{Pressure spaces}

The source and pressure observation spaces are
\[
  X_{\mathrm{src}}
  :=
  L^{3/2}((-1,0);L^{3/2}(B_1))^{3\times 3},
\]
\[
  Y_{\prs}
  :=
  L^{3/2}((-1,0);L^{3/2}(B_{1/2})),
  \qquad
  Y_{\harm}
  :=
  L^{3/2}((-1,0);L^{3/2}(B_{3/4})).
\]
The pressure map is
\[
  \Rprs F
  :=
  R_iR_j(F_{ij})\big|_{B_{1/2}},
\]
with zero extension outside \(B_1\).  The finite-window Calderon--Zygmund
bound is recorded as a standard singular-integral estimate and is used here in the local pressure-decomposition convention of Navier--Stokes theory \cite{SohrWahl1986,SereginLectureNotes}:
\begin{equation}\label{eq:cz-bound}
  \norm{\Rprs F}_{Y_{\prs}}
  \le C_{\mathrm{CZ}}\norm{F}_{X_{\mathrm{src}}}.
\end{equation}

\subsection{Cutoff convention}

Fix spatial cutoffs \(\eta,\chi\in C_c^\infty(B_1)\) with
\[
  \eta\equiv1\text{ on }B_{3/4},
  \qquad
  \chi\equiv1\text{ on }B_{3/4}.
\]
Let
\[
  A_\chi:=\operatorname{supp}\nabla\chi\cup\operatorname{supp}\Delta\chi.
\]
For the pressure cutoff, define the separated pressure annulus
\[
  A_\eta:=B_1\cap\operatorname{supp}(1-\eta).
\]
Since \(\eta\equiv1\) on \(B_{3/4}\), the sets \(B_{1/2}\) and \(A_\eta\)
have positive separation.
Fix a time cutoff \(\theta\in C_c^\infty((-1,0])\) and set
\[
  \phi(y,s)=\theta(s)\chi(y).
\]

\section{Broad NS-Generated Audit Packages}
\label{sec:packages}

\subsection{The broad package class}

\begin{definition}[Broad NS-generated scale package]\label{def:broad-package}
A scale-\(k\) package \(D_k(u,p)\) belongs to \(\A_k^{\NS}\) if it is generated
by normalized local data \((u_k,p_k)\) satisfying:
\begin{enumerate}[label=(\alph*)]
  \item pressure admissibility:
  \[
    u_k\in L^3(Q_1)^3,\qquad p_k\in L^{3/2}(Q_1),
  \]
  and
  \[
    -\Delta p_k=\partial_i\partial_j(u_{k,i}u_{k,j})
  \]
  in distributions on the spatial ball, modulo time-dependent constants;
  \item finite local energy:
  \[
    u_k\in L^\infty((-1,0);L^2(B_1))^3,\qquad
    \nabla u_k\in L^2(Q_1)^{3\times3};
  \]
  \item the local energy inequality on the normalized window, with admissible
  nonnegative test functions, as in the suitable weak solution framework \cite{CKN1982,SereginLectureNotes};
  \item finite audit coordinates
  \[
    D_k=(U_k,P_k^{\mathrm{act}},P_k^{\harm},S_k,L_k,G_k,
    \zeta_k,M_k,\delta_k,E_k);
  \]
  \item the baseline defect distance
  \[
    \delta_k:=\Dist_{0,k}(D_k,\Gamma_{\Lambda,k}^{\intg});
  \]
  \item fixed cutoff and pressure decomposition conventions as in
  \cref{sec:preliminaries};
  \item an admissible synchronized representative convention
  \(D_k-\zeta_k\);
  \item named channel budgets for localization, pressure projection, harmonic
  tail, chart, clean gap, gate/slack, detector, residual, and synchronization
  channels.
\end{enumerate}
\end{definition}

\begin{remark}[Status]
\Cref{def:broad-package} is extracted from the derivation plan.  It is a broad
finite-window class, not a smooth class.  It does not include scale-uniform
summability, detector kernel-freeness, chart visibility, or clean-gap
verification for arbitrary suitable weak solutions.
\end{remark}

\subsection{Pressure decomposition coordinates}

\begin{definition}[Active and harmonic pressure coordinates]
\label{def:pressure-coordinates}
For \(D_k\in\A_k^{\NS}\), define
\[
  F_{k,ij}^{\mathrm{act}}:=\eta\,u_{k,i}u_{k,j},
  \qquad
  \widetilde P_k^{\mathrm{act}}:=R_iR_j(F_{k,ij}^{\mathrm{act}}).
\]
The observed active pressure coordinate is
\[
  P_k^{\mathrm{act}}
  :=
  \widetilde P_k^{\mathrm{act}}\big|_{B_{1/2}}
  =
  \Rprs F_k^{\mathrm{act}},
\]
and
\[
  P_k^{\harm}
  :=
  \bigl(p_k-\widetilde P_k^{\mathrm{act}}\bigr)\big|_{B_{3/4}}
\]
is the harmonic pressure coordinate on the pressure-natural interior region.
\end{definition}

\begin{equation}\label{eq:delta-k}
  \delta_k:=\dist_{0,k}(D_k,\Gamma_{\Lambda,k}^{\intg}).
\end{equation}

\section{The One-Step Map and Ledger}
\label{sec:one-step}

\subsection{Renormalized one-step map}

\begin{definition}[One-step audit map]\label{def:one-step-map}
The one-step map
\[
  \mathcal R_k:D_k\mapsto D_{k+1}
\]
consists of:
\begin{enumerate}[label=(\roman*)]
  \item restriction from \(Q_k\) to \(Q_{k+1}\);
  \item Navier--Stokes rescaling to \(Q_1\);
  \item selection of a synchronized representative \(\zeta_{k+1}\);
  \item recomputation of active pressure, harmonic pressure, source,
  localization, gate/slack, detector, residual, and quotient-distance
  coordinates.
\end{enumerate}
\end{definition}

\subsection{One-step ledger}

\begin{definition}[One-step increment ledger]\label{def:one-step-ledger}
The one-step ledger is
\[
  \Delta_k
  =
  \Delta_k^{\sync}
  +\Delta_k^{\mathrm{loc},\EF}
  +\Delta_k^{\proj}
  +\Delta_k^{\harm}
  +\Delta_k^{\chart}
  +\Delta_k^{\gap}
  +\Delta_k^{\gate}
  +\Delta_k^{\detc}.
\]
Each component is required to dominate the mismatch of its corresponding
channel in the passage from \(D_k\) to \(D_{k+1}\).
\end{definition}

\begin{remark}[Energy/flux convention]
The localization convention used here is
\[
  \Delta_k^{\mathrm{loc}}
  :=
  \Delta_k^{\mathrm{loc},\EF}.
\]
Momentum-only and pressure-only leakage conventions are not used here.
\end{remark}

\subsection{Admissibility with error}

\begin{definition}[Next-scale admissibility with ledger]
\label{def:admissibility-with-ledger}
The notation
\[
  D_{k+1}\in\A_{k+1}^{\NS}(\Delta_k)
\]
means that the scale-\((k+1)\) package satisfies the defining entries of
\(\A_{k+1}^{\NS}\), with every one-step mismatch assigned to the corresponding
component of \(\Delta_k\).
\end{definition}

\begin{assumption}[One-step admissibility convention]\label{ass:one-step-adm}
The notation \(D_{k+1}\in\A_{k+1}^{\NS}(\Delta_k)\) means that the next-scale package satisfies the defining coordinate and structural entries of the broad NS-generated class with every named mismatch charged to the corresponding component of \(\Delta_k\).  The working one-step theorem, \cref{thm:main}, is the primary verification mechanism for this assumption in the NS-generated branch.
\end{assumption}

\section{PDE-Generated Modules}
\label{sec:pde}

\subsection{Pressure/source preservation}

\begin{proposition}[Pressure/source preservation]
\label{prop:pressure-source-preservation}
Let \(D_k\in\A_k^{\NS}\) be generated by pressure-admissible data.  Then
\[
  U_k=u_k\in L^3(Q_1)^3,
  \qquad
  F_k^{\mathrm{act}}\in X_{\mathrm{src}},
\]
\[
  P_k^{\mathrm{act}}\in Y_{\prs},
  \qquad
  P_k^{\harm}\in Y_{\harm}.
\]
Moreover \(P_k^{\harm}\) is harmonic in the interior region on which
\(\eta=1\), modulo the chosen pressure normalization.
\end{proposition}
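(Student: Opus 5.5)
The plan is to verify the four membership claims in turn, using only the pressure-admissibility entry (a) of \cref{def:broad-package}, Hölder's inequality, and the Calderon--Zygmund bound \eqref{eq:cz-bound}. Harmonicity will then follow from a distributional computation on the region where \(\eta\equiv1\).

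\textbf{Integrability.} The statement \(U_k=u_k\in L^3(Q_1)^3\) is part of entry (a). For the source, note that \(|\eta|\le C\) and \(\supp\eta\subset B_1\). Hölder's inequality in \(x\) and then in \(t\) gives
\[
\norm{\eta\,u_{k,i}u_{k,j}}_{L^{3/2}(Q_1)}\le \norm{\eta}_{L^\infty}\norm{u_k}_{L^3(Q_1)}^2,
\]
so \(F_k^{\mathrm{act}}\in X_{\mathrm{src}}\). Extending \(F_k^{\mathrm{act}}\) by zero outside \(B_1\), for almost every \(s\) the spatial slice lies in \(L^{3/2}(\mathbb R^3)\). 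The Riesz transforms are bounded on \(L^{3/2}(\mathbb R^3)\), so \(\widetilde P_k^{\mathrm{act}}(\cdot,s)\) is well defined with
\[
\norm{\widetilde P_k^{\mathrm{act}}(\cdot,s)}_{L^{3/2}(\mathbb R^3)}\le C\norm{F_k^{\mathrm{act}}(\cdot,s)}_{L^{3/2}}.
\]
Integrating the \(3/2\) power in \(s\) shows \(\widetilde P_k^{\mathrm{act}}\in L^{3/2}((-1,0);L^{3/2}(\mathbb R^3))\). Restricting to \(B_{1/2}\) gives \(P_k^{\mathrm{act}}\in Y_{\prs}\), which is exactly \eqref{eq:cz-bound}. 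For the harmonic coordinate, \(p_k\in L^{3/2}(Q_1)\) restricts to \(L^{3/2}((-1,0);L^{3/2}(B_{3/4}))\), and subtracting the restriction of \(\widetilde P_k^{\mathrm{act}}\) to \(B_{3/4}\) gives \(P_k^{\harm}\in Y_{\harm}\).

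\textbf{Harmonicity.} With the sign convention \(-\Delta R_iR_j g=\partial_i\partial_j g\) (Riesz transforms with symbol \(-\xi_i\xi_j/|\xi|^2\) composed appropriately), we have, for almost every \(s\) and in \(\mathcal D'(\mathbb R^3)\),
\[
-\Delta\widetilde P_k^{\mathrm{act}}=\partial_i\partial_j(\eta\,u_{k,i}u_{k,j}).
\]
This identity is checked on the Fourier side for Schwartz data and then extended by density and \(L^{3/2}\) continuity. On \(B_{3/4}\) we have \(\eta\equiv1\), so the right-hand side agrees there with \(\partial_i\partial_j(u_{k,i}u_{k,j})\). Entry (a) then gives \(-\Delta(p_k-\widetilde P_k^{\mathrm{act}})=0\) in \(\mathcal D'(B_{3/4})\) for almost every \(s\), modulo the time-dependent constant. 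Weyl's lemma upgrades this distributional harmonicity of an \(L^{3/2}\) function to classical harmonicity in the open region where \(\eta=1\).

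\textbf{Main obstacle.} The delicate point is passing from the space-time distributional equation in (a) to a slicewise statement for almost every \(s\). To handle it, I would test against products \(\psi(s)\varphi(y)\) and use separability of \(C_c^\infty(B_{3/4})\). This yields, for almost every \(s\), the identity \(\int (p_k-\widetilde P_k^{\mathrm{act}})(\cdot,s)\,\Delta\varphi=0\) for all \(\varphi\) in a countable dense family, and hence for all \(\varphi\). The remaining conventions (normalization of constants and the zero extension of \(F\)) only affect \(P_k^{\harm}\) by functions of time, which are harmonic in space. This is precisely what ``modulo the chosen pressure normalization'' absorbs.
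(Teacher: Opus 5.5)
Your proposal is correct and follows essentially the same route as the paper. It uses H\"older for the source, the Calderon--Zygmund bound before and after restriction for \(P_k^{\mathrm{act}}\) and \(P_k^{\harm}\), and the identity \(-\Delta R_iR_jF_{ij}=\partial_i\partial_jF_{ij}\) together with \(\eta\equiv1\) on \(B_{3/4}\) and pressure admissibility for harmonicity. Your slicewise argument via product test functions and a countable dense family, and the appeal to Weyl's lemma, make explicit two points that the paper passes over with ``for almost every time.''
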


\begin{proof}
Since \(u_k\in L^3(Q_1)^3\),
\[
  u_k\otimes u_k\in L^{3/2}(Q_1)^{3\times3},
  \qquad
  \norm{u_k\otimes u_k}_{L^{3/2}(Q_1)}
  \le
  \norm{u_k}_{L^3(Q_1)}^2.
\]
The cutoff \(\eta\) is smooth and bounded, hence
\[
  F_k^{\mathrm{act}}=\eta\,u_k\otimes u_k\in X_{\mathrm{src}}.
\]
By the fixed-window Calderon--Zygmund estimate \eqref{eq:cz-bound},
\[
  \norm{P_k^{\mathrm{act}}}_{Y_{\prs}}
  =
  \norm{\Rprs F_k^{\mathrm{act}}}_{Y_{\prs}}
  \le
  C_{\mathrm{CZ}}\norm{F_k^{\mathrm{act}}}_{X_{\mathrm{src}}},
\]
so \(P_k^{\mathrm{act}}\in Y_{\prs}\).

The global active pressure lift
\(\widetilde P_k^{\mathrm{act}}=R_iR_j(F_{k,ij}^{\mathrm{act}})\) is locally
\(L^{3/2}\) on \(B_{3/4}\) by the same Calderon--Zygmund bound applied before
restriction.  Since \(p_k\in L^{3/2}(Q_1)\), the difference
\[
  P_k^{\harm}
  =
  (p_k-\widetilde P_k^{\mathrm{act}})\big|_{B_{3/4}}
\]
belongs to \(Y_{\harm}\).

It remains only to identify the interior equation.  With the convention used
in \(\Rprs\),
\[
  -\Delta R_iR_jF_{ij}=\partial_i\partial_jF_{ij}
\]
in distributions.  Therefore
\[
  -\Delta \widetilde P_k^{\mathrm{act}}
  =
  \partial_i\partial_j(\eta u_{k,i}u_{k,j}).
\]
On every smaller interior ball \(B_\rho\Subset B_{3/4}\), the cutoff satisfies
\(\eta=1\), so
\[
  \partial_i\partial_j(\eta u_{k,i}u_{k,j})
  =
  \partial_i\partial_j(u_{k,i}u_{k,j})
\]
there.  Pressure admissibility gives
\[
  -\Delta p_k=\partial_i\partial_j(u_{k,i}u_{k,j}),
\]
and hence
\[
  -\Delta\bigl(p_k-\widetilde P_k^{\mathrm{act}}\bigr)=0
\]
in distributions on \(B_\rho\), for almost every time.  Since \(\rho<3/4\)
is arbitrary, \(P_k^{\harm}\) is spatially harmonic on the interior region
where \(\eta=1\), modulo the allowed time-dependent pressure normalization.
\end{proof}

\begin{remark}[Status]
\Cref{prop:pressure-source-preservation} is a proved finite-window module.
It does not assert compactness, scale-uniformity, or membership in any reduced
detector or chart class.
\end{remark}

\subsection{Energy/flux localization}

The localization estimates below are fixed-window consequences of the local energy inequality for suitable weak solutions \cite{Leray1934,Hopf1951,CKN1982}.

\begin{definition}[Energy/flux leakage]\label{def:ef-leakage}
Define
\[
\begin{aligned}
  \Leak_{\phi}^{\EF}(u,p)
  &:=
  \int_{Q_1}|u|^2\bigl(|\partial_t\phi|+|\Delta\phi|\bigr)\\
  &\quad+
  \int_{Q_1}\bigl(|u|^3+2|p||u|\bigr)|\nabla\phi|
  +
  2\int_{Q_1}|\nabla u|^2\phi .
\end{aligned}
\]
Let
\[
  \Omega_\chi:=\operatorname{supp}\chi.
\]
The pulled-back one-step energy/flux leakage in scale-\(k\) normalized
variables is
\[
\begin{aligned}
  \Leak_{k\to k+1}^{\EF}
  :=
  &\int_{-\lambda^2}^{0}\int_{\lambda\Omega_\chi}
  \bigl(|u_k|^2+|\nabla u_k|^2\bigr)\\
  &\quad+
  \int_{-\lambda^2}^{0}\int_{\lambda A_\chi}
  \bigl(|u_k|^2+|u_k|^3+|p_k||u_k|\bigr).
\end{aligned}
\]
\end{definition}

\begin{proposition}[Energy/flux localization]\label{prop:ef-localization}
Assume \(D_k\in\A_k^{\NS}\) is generated by finite-energy data satisfying the
local energy inequality on the normalized window.  Then the localization terms
created by restriction and rescaling to the next scale are charged by
\[
  \Leak_{\phi}^{\EF}(u_{k+1},p_{k+1})
  \le
  C_{\lambda,\chi,\theta}\Leak_{k\to k+1}^{\EF}.
\]
Consequently the localization component is admissible whenever
\[
  \Delta_k^{\mathrm{loc},\EF}
  \ge
  C_{\lambda,\chi,\theta}\Leak_{k\to k+1}^{\EF}.
\]
\end{proposition}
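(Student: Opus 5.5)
The plan is to change variables from scale-\((k+1)\) normalized coordinates back to scale-\(k\) normalized coordinates. Then each of the four integrals in \(\Leak_\phi^{\EF}(u_{k+1},p_{k+1})\) becomes an integral of \(u_k,p_k,\nabla u_k\) over \(\lambda Q_1=\lambda B_1\times(-\lambda^2,0)\), weighted by derivatives of \(\phi(y/\lambda,s/\lambda^2)\). The main tool is the fixed-center scaling relation between the two normalizations. Since \(r_{k+1}=\lambda r_k\), we have
\[
  u_{k+1}(y,s)=\lambda\,u_k(\lambda y,\lambda^2 s),\qquad
  p_{k+1}(y,s)=\lambda^2 p_k(\lambda y,\lambda^2 s),
\]
and hence \(\nabla u_{k+1}(y,s)=\lambda^2(\nabla u_k)(\lambda y,\lambda^2 s)\). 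The volume element transforms as \(dy\,ds=\lambda^{-5}\,dx\,d\tau\), where \((x,\tau)=(\lambda y,\lambda^2 s)\).

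I would then treat the terms separately and track where each weight is supported.

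\emph{Interior dissipation term.} The term \(2\int|\nabla u_{k+1}|^2\phi\) is supported in \(\operatorname{supp}\chi\times(-1,0)\). It pulls back to \(2\lambda^{-1}\int_{-\lambda^2}^0\int_{\lambda\Omega_\chi}|\nabla u_k|^2\,\theta(\tau/\lambda^2)\chi(x/\lambda)\), which is bounded by \(2\lambda^{-1}\|\theta\|_\infty\|\chi\|_\infty\) times the first integral in \(\Leak^{\EF}_{k\to k+1}\).

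\emph{Time-derivative term.} The term \(\int|u_{k+1}|^2|\partial_s\phi|\) involves \(\theta'(s)\chi(y)\), whose spatial support is again \(\Omega_\chi\). It therefore pulls back to \(\lambda^{-3}\|\theta'\|_\infty\|\chi\|_\infty\int_{-\lambda^2}^0\int_{\lambda\Omega_\chi}|u_k|^2\).

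\emph{Laplacian and flux terms.} In the remaining terms the weights are \(\Delta\phi=\theta\Delta\chi\) and \(\nabla\phi=\theta\nabla\chi\). Both are supported in \(A_\chi\), which pulls back to \(\lambda A_\chi\). This gives the bounds
\[
  \lambda^{-3}\|\theta\|_\infty\|\Delta\chi\|_\infty\int_{\lambda A_\chi}|u_k|^2
  \quad\text{and}\quad
  \lambda^{-2}\|\theta\|_\infty\|\nabla\chi\|_\infty\int_{\lambda A_\chi}\bigl(|u_k|^3+2|p_k||u_k|\bigr).
\]
Here the factor \(\lambda^{-2}\) comes from \(\lambda^3\cdot\lambda^{-5}\) for \(|u|^3\), and from \(\lambda^2\cdot\lambda\cdot\lambda^{-5}\) for \(|p||u|\). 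Collecting all four terms, I would set \(C_{\lambda,\chi,\theta}:=2\lambda^{-3}\bigl(1+\|\theta\|_{W^{1,\infty}}\bigr)\|\chi\|_{W^{2,\infty}}\). This is \(\lambda\)-dependent but finite for fixed \(0<\lambda<1\), and it gives the claimed inequality.

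The finiteness of the right-hand side comes from \cref{def:broad-package}(a),(b), because \(\lambda Q_1\subset Q_1\). The ``consequently'' clause is then immediate from \cref{def:one-step-ledger}, since the mismatch in the localization channel is exactly what \(\Leak^{\EF}_\phi(u_{k+1},p_{k+1})\) measures. I would also note where the local energy inequality, \cref{def:broad-package}(c), enters. The local energy inequality for \((u_{k+1},p_{k+1})\) tested against \(\phi\) has its right-hand side controlled by precisely these leakage terms, so charging the leakage controls the localization terms created by restriction. Moreover, the inequality is itself scale-invariant under the NS scaling, so it transfers to the new window.

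The main obstacle is keeping the support and scaling bookkeeping consistent, in particular confirming that no term produces a weight outside \(\lambda\Omega_\chi\) or \(\lambda A_\chi\). I would check this first for the time-cutoff term, since \(\theta'\) need not vanish where \(\chi\) does. This is why the \(|u_k|^2\) contribution over \(\lambda\Omega_\chi\), and not only over \(\lambda A_\chi\), is included in the definition of \(\Leak^{\EF}_{k\to k+1}\).
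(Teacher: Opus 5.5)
Your proposal is correct and is essentially the paper's proof. You pull back through the one-step Navier--Stokes rescaling with Jacobian \(\lambda^{-5}\) and bound the time-derivative, Laplacian, cubic-flux, pressure-flux and dissipation terms separately, with the same factors \(\lambda^{-3},\lambda^{-3},\lambda^{-2},\lambda^{-2},\lambda^{-1}\) and the same supports as the paper: \(\lambda\Omega_\chi\) for the \(\partial_s\phi\) and \(\phi|\nabla u|^2\) terms, \(\lambda A_\chi\) for the \(\Delta\phi\) and \(\nabla\phi\) terms. Your explicit constant \(2\lambda^{-3}(1+\|\theta\|_{W^{1,\infty}})\|\chi\|_{W^{2,\infty}}\) dominates each coefficient, including the factor \(2\) on the pressure-flux term, so it is a harmless sharpening of the paper's unspecified \(C_{\lambda,\chi,\theta}\).
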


\begin{proof}
The local energy inequality is the admissibility mechanism that makes the
energy/flux channel a legitimate localization channel.  The estimate displayed
here is the fixed-window scaling bound for the terms in that channel.

Use the one-step rescaling
\[
  u_{k+1}(Y,S)=\lambda u_k(\lambda Y,\lambda^2S),
  \qquad
  p_{k+1}(Y,S)=\lambda^2p_k(\lambda Y,\lambda^2S).
\]
With \(y=\lambda Y\), \(s=\lambda^2S\), one has
\[
  dY\,dS=\lambda^{-5}\,dy\,ds.
\]
The energy term with the time derivative satisfies
\[
\begin{aligned}
  \int_{Q_1}|u_{k+1}|^2|\partial_S\phi|\,dY\,dS
  &\le
  \lambda^{-3}\norm{\partial_S\phi}_{L^\infty}
  \int_{-\lambda^2}^{0}\int_{\lambda\Omega_\chi}|u_k|^2\,dy\,ds .
\end{aligned}
\]
The energy term with the spatial Laplacian satisfies
\[
\begin{aligned}
  \int_{Q_1}|u_{k+1}|^2|\Delta_Y\phi|\,dY\,dS
  &\le
  \lambda^{-3}\norm{\Delta\chi}_{L^\infty}\norm{\theta}_{L^\infty}
  \int_{-\lambda^2}^{0}\int_{\lambda A_\chi}|u_k|^2\,dy\,ds .
\end{aligned}
\]
The cubic flux term obeys
\[
  \int_{Q_1}|u_{k+1}|^3|\nabla_Y\phi|\,dY\,dS
  \le
  \lambda^{-2}\norm{\nabla\chi}_{L^\infty}\norm{\theta}_{L^\infty}
  \int_{-\lambda^2}^{0}\int_{\lambda A_\chi}|u_k|^3\,dy\,ds ,
\]
and the pressure flux term obeys
\[
  \int_{Q_1}|p_{k+1}|\,|u_{k+1}|\,|\nabla_Y\phi|\,dY\,dS
  \le
  \lambda^{-2}\norm{\nabla\chi}_{L^\infty}\norm{\theta}_{L^\infty}
  \int_{-\lambda^2}^{0}\int_{\lambda A_\chi}|p_k|\,|u_k|\,dy\,ds .
\]
Finally,
\[
  \nabla_Yu_{k+1}(Y,S)
  =
  \lambda^2(\nabla_yu_k)(\lambda Y,\lambda^2S),
\]
so
\[
  \int_{Q_1}|\nabla_Yu_{k+1}|^2\phi\,dY\,dS
  \le
  \lambda^{-1}\norm{\phi}_{L^\infty}
  \int_{-\lambda^2}^{0}\int_{\lambda\Omega_\chi}|\nabla_yu_k|^2\,dy\,ds .
\]
Combining the five estimates and absorbing the fixed powers of \(\lambda\)
and cutoff norms into \(C_{\lambda,\chi,\theta}\) gives the claimed bound.
\end{proof}

\begin{remark}[Why the full cutoff support appears]
The terms involving \(|\partial_S\phi|\) and \(\phi|\nabla u|^2\) are supported
on the pulled-back support of \(\chi\), not only on the spatial transition
annulus \(A_\chi\).  A shell-only leakage estimate would require an additional
absorption, monotonicity, or shell-to-core argument.  No such smallness or
scale-uniform statement is claimed here.
\end{remark}

\subsection{Pressure recomputation and commutator leakage}

The commutator estimate uses only the separated-kernel part of the Calderon--Zygmund theory for the double Riesz transform.

\begin{proposition}[Localized pressure recomputation]
\label{prop:commutator}
For pressure-admissible data in \(\A_k^{\NS}\), the localized
Calderon--Zygmund commutator estimate is
\[
  \norm{[\eta,R_iR_j](u_{k,i}u_{k,j})}_{Y_{\prs}}
  \le
  C_\eta\norm{u_k\otimes u_k}_{L^{3/2}((-1,0);L^{3/2}(A_\eta))}.
\]
Under a finite amplitude bound \(\norm{u_k}_{L^3(Q_1)}\le M_U\),
\[
  \norm{[\eta,R_iR_j](u_{k,i}u_{k,j})}_{Y_{\prs}}
  \le
  C_\eta M_U
  \norm{u_k}_{L^3((-1,0)\times A_\eta)}.
\]
\end{proposition}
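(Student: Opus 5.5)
Fix a time and write \(f=u_{k,i}u_{k,j}\), extended by zero outside \(B_1\). The commutator is
\[
[\eta,R_iR_j]f=\eta\,R_iR_j f-R_iR_j(\eta f)=\eta\,R_iR_jf-R_iR_j(\eta f),
\]
and it is observed only on \(B_{1/2}\), where \(\eta\equiv1\). There it equals
\[
R_iR_j f-R_iR_j(\eta f)=R_iR_j\bigl((1-\eta)f\bigr).
\]
Since \(f\) vanishes outside \(B_1\), the function \((1-\eta)f\) is supported in \(A_\eta=B_1\cap\operatorname{supp}(1-\eta)\). We may therefore replace \((1-\eta)f\) by \(g:=(1-\eta)f\,\mathbf 1_{A_\eta}\). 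The point of the proof is that \(B_{1/2}\) and \(A_\eta\) are at positive distance \(d>0\), fixed by the cutoff convention. Only the off-diagonal, separated-kernel part of \(R_iR_j\) is used, as the paper announces.

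For \(x\in B_{1/2}\) and \(y\in A_\eta\) we have \(|x-y|\ge d\). The double Riesz transform then acts on \(g\) by its smooth kernel \(K_{ij}(x-y)=c\,\partial_i\partial_j|x-y|^{-1}\); the delta-function part \(-\tfrac13\delta_{ij}\delta\) does not contribute off the support. The kernel satisfies \(|K_{ij}(x-y)|\le C|x-y|^{-3}\le Cd^{-3}\). Hence, pointwise in \(x\in B_{1/2}\),
\[
\bigl|R_iR_jg(x)\bigr|\le Cd^{-3}\int_{A_\eta}|g|\,dy\le Cd^{-3}\|1-\eta\|_{L^\infty}|A_\eta|^{1/3}\,\|f\|_{L^{3/2}(A_\eta)},
\]
using H\"older's inequality on the bounded set \(A_\eta\). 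Taking the \(L^{3/2}(B_{1/2})\) norm costs a factor \(|B_{1/2}|^{2/3}\). Raising to the power \(3/2\) and integrating over \(s\in(-1,0)\) gives the first estimate. All geometric factors are absorbed into \(C_\eta\), and \(|u_{k,i}u_{k,j}|\le|u_k\otimes u_k|\).

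For the second estimate, apply H\"older's inequality on \((-1,0)\times A_\eta\):
\[
\|u_k\otimes u_k\|_{L^{3/2}}\le\|u_k\|_{L^3((-1,0)\times A_\eta)}^2\le\|u_k\|_{L^3(Q_1)}\,\|u_k\|_{L^3((-1,0)\times A_\eta)}.
\]
The bound \(\|u_k\|_{L^3(Q_1)}\le M_U\) then finishes the argument.

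The main obstacle is making the kernel representation rigorous, because \(g\) is only in \(L^{3/2}\) and not in a smooth class. I would first prove the pointwise formula for \(g\in C_c^\infty(A_\eta^\circ)\). For such \(g\), \(R_iR_jg=-\partial_i\partial_j(-\Delta)^{-1}g\), where \((-\Delta)^{-1}g\) is the Newtonian potential. Off the support this potential may be differentiated under the integral sign. I would then pass to general \(g\) by density. On the left, the global \(L^{3/2}\) bound \eqref{eq:cz-bound} gives convergence; on the right, the uniform separated-kernel bound controls the limit.

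A secondary issue is that the support of \(g\) sits in the closure of \(A_\eta\), possibly touching the boundary of \(B_1\). This is harmless: the distance from the support to \(B_{1/2}\) is still at least \(d\).
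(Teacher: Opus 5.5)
Your proof is correct and follows the paper's argument. On \(B_{1/2}\) you reduce the commutator to \(R_iR_j((1-\eta)f)\) with source in \(A_\eta\), and the separated kernel gives a pointwise bound by \(\int_{A_\eta}|f|\,dy\). H\"older's inequality on the finite-measure sets and integration in time give the \(Y_{\prs}\) estimate. The inequality \(\|u_k\otimes u_k\|_{L^{3/2}((-1,0)\times A_\eta)}\le\|u_k\|_{L^3(Q_1)}\|u_k\|_{L^3((-1,0)\times A_\eta)}\) then gives the amplitude form.

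Your density justification of the kernel representation is a useful addition that the paper omits. Your sign in \(R_iR_j=-\partial_i\partial_j(-\Delta)^{-1}\) is opposite to the paper's convention \(-\Delta R_iR_jF_{ij}=\partial_i\partial_jF_{ij}\), but this is harmless because only \(|K_{ij}|\) enters the bounds.
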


\begin{proof}
Let \(T_{ij}=R_iR_j\) and \(f_{ij}=u_{k,i}u_{k,j}\).  We use the convention
\[
  [\eta,T_{ij}]f_{ij}
  :=
  \eta\,T_{ij}f_{ij}-T_{ij}(\eta f_{ij}).
\]
On the pressure observation ball \(B_{1/2}\), \(\eta=1\).  Hence
\[
  [\eta,T_{ij}]f_{ij}
  =
  T_{ij}\bigl((1-\eta)f_{ij}\bigr)
  \qquad\text{on }B_{1/2}.
\]
The function \((1-\eta)f_{ij}\) is supported in \(A_\eta\).  Since
\(\operatorname{dist}(B_{1/2},A_\eta)>0\), the kernel \(K_{ij}\) of \(T_{ij}\)
is smooth and bounded on \(B_{1/2}\times A_\eta\).  Therefore, for almost
every time \(s\),
\[
  |[\eta,T_{ij}]f_{ij}(x,s)|
  \le
  C_\eta\int_{A_\eta}|f(y,s)|\,dy,
  \qquad x\in B_{1/2}.
\]
Taking the \(L^{3/2}(B_{1/2})\)-norm and using the finite measure of
\(A_\eta\) gives
\[
  \norm{[\eta,T_{ij}]f_{ij}(\cdot,s)}_{L^{3/2}(B_{1/2})}
  \le
  C_\eta\norm{f(\cdot,s)}_{L^{3/2}(A_\eta)}.
\]
Taking the \(L^{3/2}\)-norm in time yields the claimed \(Y_{\prs}\) estimate.

For \(f=u_k\otimes u_k\),
\[
  \norm{u_k\otimes u_k}_{L^{3/2}((-1,0)\times A_\eta)}
  =
  \norm{u_k}_{L^3((-1,0)\times A_\eta)}^2.
\]
If \(\norm{u_k}_{L^3(Q_1)}\le M_U\), then
\[
  \norm{u_k}_{L^3((-1,0)\times A_\eta)}^2
  \le
  M_U\norm{u_k}_{L^3((-1,0)\times A_\eta)},
\]
which gives the finite-amplitude form.
\end{proof}

\section{Structural Ledger Assignments}
\label{sec:structural}

\subsection{Synchronization}

\begin{definition}[Broad near-minimizer synchronization loss]\label{def:broad-sync}
For each scale \(k\), assume the gauge class
\(\Gamma_{\Lambda,k}^{\intg}\) is nonempty and that
\[
  \delta_k
  :=
  \Dist_{0,k}(D_k,\Gamma_{\Lambda,k}^{\intg})<\infty .
\]
Fix \(\varepsilon_k>0\) and choose an \(\varepsilon_k\)-near minimizing
representative
\[
  \zeta_k\in\Gamma_{\Lambda,k}^{\intg},
  \qquad
  \lvert D_k-\zeta_k\rvert_{0,k}
  \le
  \delta_k+\varepsilon_k.
\]
Let
\[
  T_k^\Gamma:
  \Gamma_{\Lambda,k}^{\intg}\to\Gamma_{\Lambda,k+1}^{\intg}
\]
be the chosen finite-window gauge-transport map, and let
\(d_{\Gamma,k+1}\) be the gauge discrepancy functional at scale \(k+1\).
The broad synchronization error is
\[
  \mathrm{SyncErr}_k
  :=
  d_{\Gamma,k+1}\bigl(\zeta_{k+1},
  T_k^\Gamma\zeta_k\bigr)
  +
  \varepsilon_k+\varepsilon_{k+1}.
\]
\end{definition}

\begin{proposition}[Broad synchronization ledger insertion]
\label{prop:broad-sync-insertion}
Under the convention in \cref{def:broad-sync}, if
\[
  \Delta_k^{\sync}\ge \mathrm{SyncErr}_k,
\]
then the synchronization entry of the one-step ledger is verified.
\end{proposition}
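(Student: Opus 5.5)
The plan is to unwind \cref{def:one-step-ledger} and \cref{def:admissibility-with-ledger} and check that the synchronization mismatch is bounded by \(\mathrm{SyncErr}_k\). The proposition then follows by transitivity of \(\le\). What has to be identified is the quantity the synchronization channel must dominate. It is the discrepancy between the synchronized representative actually selected at scale \(k+1\) and the transported image of the scale-\(k\) representative. To this one adds the slack incurred by using near-minimizers instead of exact minimizers of \(\Dist_{0,k}\) and \(\Dist_{0,k+1}\).

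First I will fix the data of \cref{def:broad-sync}. Since \(\Gamma_{\Lambda,k}^{\intg}\) is nonempty and \(\delta_k<\infty\), the infimum defining \(\delta_k\) is a finite real number. Hence for every \(\varepsilon_k>0\) an \(\varepsilon_k\)-near minimizer \(\zeta_k\) exists by the definition of infimum, and likewise \(\zeta_{k+1}\) exists at scale \(k+1\). This makes step (iii) of \cref{def:one-step-map} well defined, with no compactness or exact-minimizer hypothesis. Next I will decompose the synchronization mismatch into three parts. The transport part compares \(\zeta_{k+1}\) with \(T_k^\Gamma\zeta_k\) in \(d_{\Gamma,k+1}\). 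The two selection parts are \(\lvert D_k-\zeta_k\rvert_{0,k}-\delta_k\le\varepsilon_k\) and \(\lvert D_{k+1}-\zeta_{k+1}\rvert_{0,k+1}-\delta_{k+1}\le\varepsilon_{k+1}\). Each part is bounded by the corresponding summand of \(\mathrm{SyncErr}_k\), and adding the three bounds gives mismatch \(\le\mathrm{SyncErr}_k\le\Delta_k^{\sync}\). This is exactly the domination required of the synchronization component in \cref{def:one-step-ledger}, so the entry is verified in the sense of \cref{ass:one-step-adm}.

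The main obstacle is definitional rather than analytic. The paper does not explicitly fix the synchronization mismatch as a formula, so the proof must adopt the convention that this mismatch is measured by \(d_{\Gamma,k+1}\) together with the near-minimization slacks. I would state this convention explicitly at the start. I would also note that if \(d_{\Gamma,k+1}\) satisfies a triangle inequality, any other reasonable choice of reference representative changes the mismatch only by terms already accounted for by the \(\varepsilon\)'s. No other ledger channel is touched, so nothing further needs checking.
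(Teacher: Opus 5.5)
Your proposal is correct and follows the paper's proof, which likewise treats the statement as definitional: the mismatch between \(\zeta_{k+1}\) and \(T_k^\Gamma\zeta_k\), together with the two near-minimizer tolerances, is exactly \(\mathrm{SyncErr}_k\), so \(\Delta_k^{\sync}\ge\mathrm{SyncErr}_k\) charges it to the synchronization component. Drop your closing aside about other choices of reference representative: two \(\varepsilon_{k+1}\)-near minimizers need not be close in \(d_{\Gamma,k+1}\) when minimizers are non-unique, and this kind of synchronization stability is what the paper explicitly declines to claim.
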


\begin{proof}
The synchronization entry asks that the mismatch between the next selected
representative and the transported previous representative be charged to the
synchronization component of the ledger.  By definition, this mismatch and the
two near-minimizer tolerances are exactly \(\mathrm{SyncErr}_k\).  The
assumption \(\Delta_k^{\sync}\ge\mathrm{SyncErr}_k\) therefore assigns the
entire synchronization loss to the synchronization ledger component.
\end{proof}

\begin{remark}[Status of the synchronization convention]
\Cref{def:broad-sync,prop:broad-sync-insertion} give a broad quotient
ledger convention.  They do not prove uniform synchronization stability and
do not prove an estimate of the form
\[
  \lvert \zeta_{k+1}-R_k\zeta_k\rvert
  \le
  C E_k+C\Delta_k
\]
without additional assumptions.  That stronger estimate belongs to a later
scale-uniform branch.  An exact reduced finite-dimensional splitting model is
a separate reduced-model variant and is not used in the general finite-window theorem.
\end{remark}

\begin{assumption}[Synchronization ledger assignment under the broad convention]\label{ass:sync}
The representative selected at the next scale satisfies
\[
  \mathrm{SyncErr}_k
  \le
  \Delta_k^{\sync}.
\]
Here this is interpreted through the broad synchronization loss convention
in \cref{def:broad-sync}.  A reduced splitting estimate belongs to a
separate reduced-model variant.
\end{assumption}

\subsection{Projection and harmonic tails}

\begin{definition}[Projection and harmonic tail ledger datum]
\label{def:proj-harm-datum}
At the next scale let \(\mathsf P_{k+1}^{\cl}\in Y_{\prs}\) denote the
selected clean pressure observation whose finite-rank pressure projection is
recorded in the package, and let
\(\mathsf H_{k+1}^{\harm}\in Y_{\harm}\) denote the selected harmonic
pressure-tail observation.  Choose finite-rank maps
\[
  P_{N,k+1}^{\cl}:Y_{\prs}\to Y_{\prs},
  \qquad
  H_{M,k+1}^{\harm}:Y_{\harm}\to Y_{\harm}.
\]
No orthogonality is assumed unless it is specified as part of a later reduced
model.  Define the one-step projection and harmonic tail errors by
\[
  \mathrm{ProjErr}_k
  :=
  \norm{(I-P_{N,k+1}^{\cl})\mathsf P_{k+1}^{\cl}}_{Y_{\prs}},
\]
and
\[
  \mathrm{HarmErr}_k
  :=
  \norm{(I-H_{M,k+1}^{\harm})\mathsf H_{k+1}^{\harm}}_{Y_{\harm}}.
\]
\end{definition}

\begin{proposition}[Projection and harmonic ledger insertion]
\label{prop:proj-harm-insertion}
Under the finite-window datum in \cref{def:proj-harm-datum}, if
\[
  \Delta_k^{\proj}\ge \mathrm{ProjErr}_k,
  \qquad
  \Delta_k^{\harm}\ge \mathrm{HarmErr}_k,
\]
then the pressure-projection and harmonic-tail entries of the one-step
admissibility checklist are verified.
\end{proposition}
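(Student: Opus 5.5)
The argument is definitional, in the same pattern as \cref{prop:broad-sync-insertion}, and I would make it in three steps.

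First, I would identify exactly what the pressure-projection and harmonic-tail entries of the one-step checklist require. By \cref{def:one-step-ledger} and \cref{def:admissibility-with-ledger}, the condition \(D_{k+1}\in\A_{k+1}^{\NS}(\Delta_k)\) asks that each named one-step mismatch be dominated by its own ledger component. For these two channels, the mismatch is the part of the recorded next-scale observation that the finite-rank reduction does not capture. I would write the selected clean pressure observation as
\[
  \mathsf P_{k+1}^{\cl}=P_{N,k+1}^{\cl}\mathsf P_{k+1}^{\cl}+(I-P_{N,k+1}^{\cl})\mathsf P_{k+1}^{\cl}.
\]
Here the first summand is the coordinate recorded in the package, and the second is the discarded tail. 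I would decompose \(\mathsf H_{k+1}^{\harm}\) by \(H_{M,k+1}^{\harm}\) in the same way.

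Second, I would check that both tails lie in the correct observation spaces, so that their norms are finite. The maps \(P_{N,k+1}^{\cl}\) and \(H_{M,k+1}^{\harm}\) act on \(Y_{\prs}\) and \(Y_{\harm}\) respectively, and the data \(\mathsf P_{k+1}^{\cl}\in Y_{\prs}\), \(\mathsf H_{k+1}^{\harm}\in Y_{\harm}\) are given. When these observations are built from the rescaled next-scale pair \((u_{k+1},p_{k+1})\), their membership follows from \cref{prop:pressure-source-preservation} applied at scale \(k+1\). Consequently the tail norms are exactly \(\mathrm{ProjErr}_k\) and \(\mathrm{HarmErr}_k\), both finite. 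No orthogonality is needed, because the ledger charges the tail norm itself rather than any optimal approximation error.

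Third, I would insert the hypotheses. The assumptions \(\Delta_k^{\proj}\ge\mathrm{ProjErr}_k\) and \(\Delta_k^{\harm}\ge\mathrm{HarmErr}_k\) assign each tail in full to its own ledger component. Since the definition of \(\Delta_k\) treats the channels separately, nothing is double-counted and no other channel is affected. This verifies the two checklist entries.

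The only step needing care is the first: fixing that the projection and harmonic mismatches are precisely these tail norms and not, say, an error against the transported previous-scale projection. If the latter reading were intended, I would add a triangle-inequality term. That term would then have to be charged to the synchronization channel through \cref{ass:sync}, not to \(\Delta_k^{\proj}\). Under the convention of \cref{def:proj-harm-datum} as stated, however, the identification holds by definition and the proof is immediate.
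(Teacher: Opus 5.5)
Your proposal is correct and matches the paper's proof. The paper likewise identifies the two checklist entries with the tails \((I-P_{N,k+1}^{\cl})\mathsf P_{k+1}^{\cl}\) and \((I-H_{M,k+1}^{\harm})\mathsf H_{k+1}^{\harm}\), whose sizes are \(\mathrm{ProjErr}_k\) and \(\mathrm{HarmErr}_k\) by \cref{def:proj-harm-datum}, and then lets the two displayed inequalities assign them to their ledger components. Your finiteness check is harmless but automatic, since both finite-rank maps act within \(Y_{\prs}\) and \(Y_{\harm}\); one small correction to your closing aside is that the paper charges a transported-projection mismatch to \(\Delta_k^{\proj}\) itself (\cref{thm:proj-tail-drift}, \cref{cor:proj-drift-insertion}), not to the synchronization channel.
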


\begin{proof}
The pressure-projection entry asks that the portion of the selected clean
pressure observation outside the chosen finite-rank pressure coordinate be
charged to the projection ledger.  By \cref{def:proj-harm-datum}, that portion
has size \(\mathrm{ProjErr}_k\).  The harmonic-tail entry is identical with
\(\mathrm{HarmErr}_k\) for the chosen harmonic finite-rank map.  The two
displayed inequalities therefore assign both tails to their respective
ledger components.
\end{proof}

\begin{assumption}[Projection and harmonic ledger assignment]\label{ass:proj-harm}
The clean pressure projection and harmonic-tail mismatches are interpreted
through \cref{def:proj-harm-datum} and satisfy
\[
  \mathrm{ProjErr}_k\le\Delta_k^{\proj},
  \qquad
  \mathrm{HarmErr}_k\le\Delta_k^{\harm}.
\]
Compactness of a selected pressure image, effective finite-rank projection, or
explicit harmonic truncation estimates may verify these inequalities in
particular settings.  The statement records only the finite-window ledger
convention.
\end{assumption}

\begin{remark}[Status of projection and harmonic tails]
\Cref{def:proj-harm-datum,prop:proj-harm-insertion} do not prove convergence
as \(N,M\to\infty\), compactness of a pressure image, summability of tail
increments, or scale-uniform approximation.  They provide the finite-window
bookkeeping convention used by the one-step theorem.
\end{remark}

\subsection{Gate/slack channel}

\begin{definition}[Gate/slack ledger datum]\label{def:gate-slack-datum}
At the next scale let \(\mathsf G_{k+1}\in\mathcal Y_{\gate,k+1}\) be the
selected gate/slack coordinate in a finite-dimensional normed space, and let
\[
  \mathcal K_{\gate,k+1}\subset\mathcal Y_{\gate,k+1}
\]
be the admissible closed gate set.  Define the gate/slack violation by
\[
  \mathrm{GateErr}_k
  :=
  \Dist_{\mathcal Y_{\gate,k+1}}
  \bigl(\mathsf G_{k+1},\mathcal K_{\gate,k+1}\bigr).
\]
For scalar inequality gates this is the same convention as measuring the
positive part of the violated slack vector, after choosing the corresponding
finite-dimensional norm.
\end{definition}

\begin{proposition}[Gate/slack ledger insertion]\label{prop:gate-slack-insertion}
Under the finite-window datum in \cref{def:gate-slack-datum}, if
\[
  \Delta_k^{\gate}\ge\mathrm{GateErr}_k,
\]
then the gate/slack entry of the one-step admissibility checklist is verified.
\end{proposition}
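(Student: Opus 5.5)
The argument follows the same bookkeeping pattern as \cref{prop:broad-sync-insertion} and \cref{prop:proj-harm-insertion}. The first step is to state precisely what the gate/slack entry of the one-step admissibility checklist in \cref{def:admissibility-with-ledger} asks. It asks that the next-scale gate/slack coordinate \(\mathsf G_{k+1}\) either lie in the admissible closed gate set \(\mathcal K_{\gate,k+1}\), or have its failure to lie there charged to the component \(\Delta_k^{\gate}\) of the ledger in \cref{def:one-step-ledger}. I will take the mismatch of the gate channel in the passage from \(D_k\) to \(D_{k+1}\) to be the amount by which \(\mathsf G_{k+1}\) violates the gate, measured in the fixed finite-dimensional norm of \(\mathcal Y_{\gate,k+1}\).

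The second step is to identify this mismatch with \(\mathrm{GateErr}_k\). By \cref{def:gate-slack-datum}, \(\mathrm{GateErr}_k=\Dist_{\mathcal Y_{\gate,k+1}}(\mathsf G_{k+1},\mathcal K_{\gate,k+1})\). Since \(\mathcal K_{\gate,k+1}\) is closed in a finite-dimensional normed space, this distance is attained by some \(g_*\in\mathcal K_{\gate,k+1}\); in any case it is finite, because I assume the gate set is nonempty, as is implicit in the datum. Two consequences follow. First, \(\mathrm{GateErr}_k=0\) exactly when \(\mathsf G_{k+1}\in\mathcal K_{\gate,k+1}\), so the unviolated case is covered. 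Second, \(\mathsf G_{k+1}-g_*\) is an explicit correction vector of size \(\mathrm{GateErr}_k\) that carries the coordinate back into the admissible set. For scalar inequality gates, I will briefly note that this correction is the positive part of the violated slack vector, so that the definition agrees with the usual slack convention mentioned after \cref{def:gate-slack-datum}.

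The final step is a direct comparison. The hypothesis \(\Delta_k^{\gate}\ge\mathrm{GateErr}_k\) says that the gate component of the ledger dominates the entire gate/slack violation. This is exactly the domination required for that channel in \cref{def:one-step-ledger}, so the gate entry of \(D_{k+1}\in\A_{k+1}^{\NS}(\Delta_k)\) is verified.

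There is no analytic obstacle here. The only delicate point is interpretive: the checklist entry must be read as charging the violation distance itself, not some larger quantity such as a transported-gate discrepancy. I would fix this reading explicitly at the start, consistent with \cref{def:admissibility-with-ledger}, and remark that closedness is used only to make the zero-error case equivalent to membership in \(\mathcal K_{\gate,k+1}\).
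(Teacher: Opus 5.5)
Your proposal is correct and follows essentially the same route as the paper: read the gate/slack checklist entry as charging the violation of \(\mathsf G_{k+1}\) from \(\mathcal K_{\gate,k+1}\), identify that violation with \(\mathrm{GateErr}_k\) by definition, and conclude from \(\Delta_k^{\gate}\ge\mathrm{GateErr}_k\). Your extra observations, that the distance is attained by a nearest point and that zero error is equivalent to membership in the closed gate set, are correct but not needed for the argument.
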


\begin{proof}
The gate/slack entry asks that the next-scale gate coordinate lie in the
admissible gate set up to a charged violation.  By
\cref{def:gate-slack-datum}, this violation is exactly
\(\mathrm{GateErr}_k\).  If \(\Delta_k^{\gate}\) dominates it, the full
gate/slack mismatch is assigned to the gate component of the ledger.
\end{proof}

\begin{remark}[Status of gate/slack bookkeeping]
\Cref{def:gate-slack-datum,prop:gate-slack-insertion} do not prove that the
gate violation is small, summable, or generated by the Navier--Stokes
equations.  They only fix the finite-window convention used by the one-step
ledger.
\end{remark}

\subsection{Reduced detector comparison for PFE clean audit channels}

This subsection realizes the detector channel in a concrete reduced
finite-dimensional model.  The comparison is between a local reduced detector
and the pressure--flux--energy clean detector after applying the reduced clean
chart.

\begin{definition}[Reduced PFE detector-comparison datum]
\label{def:reduced-pfe-detector-datum}
Fix finite-dimensional normed spaces
\[
  X_{N,k}^{\loc},
  \qquad
  Y_{N,k}^{\cl},
  \qquad
  Z_{\Lambda,k}^{\loc},
  \qquad
  Z_{\Lambda,k}^{\mathrm{PFE}}.
\]
Let
\[
  \Theta_{\Lambda,k}^{N}:X_{N,k}^{\loc}\to Y_{N,k}^{\cl}
\]
be the reduced clean chart.  Let
\[
  T_{\Lambda,k}^{\loc}:X_{N,k}^{\loc}\to Z_{\Lambda,k}^{\loc},
  \qquad
  T_{\Lambda,k}^{\mathrm{PFE}}:
  Y_{N,k}^{\cl}\to Z_{\Lambda,k}^{\mathrm{PFE}}
\]
be the reduced local detector/tax map and the reduced clean PFE detector/tax
map.  Finally, let
\[
  J_{\Lambda,k}:Z_{\Lambda,k}^{\loc}\to Z_{\Lambda,k}^{\mathrm{PFE}}
\]
be the observation transport map.  For \(D\in X_{N,k}^{\loc}\) and
\(d\in Y_{N,k}^{\cl}\), define
\[
  M_{\Lambda,k}^{\loc}(D)
  :=
  \norm{T_{\Lambda,k}^{\loc}D}_{Z_{\Lambda,k}^{\loc}},
  \qquad
  M_{\Lambda,k}^{\mathrm{PFE}}(d)
  :=
  \norm{T_{\Lambda,k}^{\mathrm{PFE}}d}_{Z_{\Lambda,k}^{\mathrm{PFE}}}.
\]
The reduced detector commutator is
\[
  \mathcal C_{\mathrm{det},k}(D)
  :=
  T_{\Lambda,k}^{\mathrm{PFE}}\Theta_{\Lambda,k}^{N}(D)
  -
  J_{\Lambda,k}T_{\Lambda,k}^{\loc}(D).
\]
\end{definition}

\begin{definition}[Detector residual domination]
\label{def:reduced-detector-residual}
The concrete detector-commutator residual is
\[
  \operatorname{Err}_{\mathrm{det.red},k}(D)
  :=
  \norm{\mathcal C_{\mathrm{det},k}(D)}_{Z_{\Lambda,k}^{\mathrm{PFE}}}.
\]
If a broader reduced residual functional \(\operatorname{Err}_{\mathrm{red},k}\)
is used, the detector-comparison convention requires constants
\(C_{\mathrm{res},k}\ge0\) and \(\Delta_{\mathrm{det},k}\ge0\) such that
\[
  \operatorname{Err}_{\mathrm{det.red},k}(D)
  \le
  C_{\mathrm{res},k}\operatorname{Err}_{\mathrm{red},k}(D)
  +
  \Delta_{\mathrm{det},k}.
\]
With the concrete choice
\(\operatorname{Err}_{\mathrm{red},k}=\operatorname{Err}_{\mathrm{det.red},k}\),
one may take \(C_{\mathrm{res},k}=1\) and
\(\Delta_{\mathrm{det},k}=0\).
\end{definition}

\begin{assumption}[Observation transport lower bound]
\label{ass:detector-observation-transport}
There are constants \(c_{J,k}>0\) and \(\Delta_{J,k}\ge0\) such that, for all
\(D\in X_{N,k}^{\loc}\),
\[
  \norm{T_{\Lambda,k}^{\loc}D}_{Z_{\Lambda,k}^{\loc}}
  \ge
  c_{J,k}
  \norm{J_{\Lambda,k}T_{\Lambda,k}^{\loc}D}_{Z_{\Lambda,k}^{\mathrm{PFE}}}
  -
  \Delta_{J,k}.
\]
This is a reduced finite-dimensional observability or transport convention.
\end{assumption}

\begin{theorem}[Reduced detector comparison for PFE clean audit channels]
\label{thm:reduced-pfe-detector-comparison}
Assume the reduced detector-comparison datum in
\cref{def:reduced-pfe-detector-datum}, the residual domination in
\cref{def:reduced-detector-residual}, and the observation transport lower
bound in \cref{ass:detector-observation-transport}.  Then for every
\(D\in X_{N,k}^{\loc}\),
\[
  M_{\Lambda,k}^{\loc}(D)
  \ge
  c_{J,k}
  M_{\Lambda,k}^{\mathrm{PFE}}
  \bigl(\Theta_{\Lambda,k}^{N}D\bigr)
  -
  c_{J,k}C_{\mathrm{res},k}
  \operatorname{Err}_{\mathrm{red},k}(D)
  -
  c_{J,k}\Delta_{\mathrm{det},k}
  -
  \Delta_{J,k}.
\]
\end{theorem}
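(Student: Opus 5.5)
The plan is to chain three inequalities: the observation transport lower bound, a triangle inequality through the detector commutator, and the residual domination. Fix \(D\in X_{N,k}^{\loc}\). First apply \cref{ass:detector-observation-transport} to obtain
\[
  M_{\Lambda,k}^{\loc}(D)
  \ge
  c_{J,k}\norm{J_{\Lambda,k}T_{\Lambda,k}^{\loc}D}_{Z_{\Lambda,k}^{\mathrm{PFE}}}
  -\Delta_{J,k}.
\]
This moves the comparison into the single space \(Z_{\Lambda,k}^{\mathrm{PFE}}\), where both detectors can be compared directly.

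Second, rewrite the transported local detector using the commutator of \cref{def:reduced-pfe-detector-datum}:
\[
  J_{\Lambda,k}T_{\Lambda,k}^{\loc}D
  =
  T_{\Lambda,k}^{\mathrm{PFE}}\Theta_{\Lambda,k}^{N}(D)
  -\mathcal C_{\mathrm{det},k}(D).
\]
The reverse triangle inequality in \(Z_{\Lambda,k}^{\mathrm{PFE}}\) then gives
\[
  \norm{J_{\Lambda,k}T_{\Lambda,k}^{\loc}D}
  \ge
  M_{\Lambda,k}^{\mathrm{PFE}}(\Theta_{\Lambda,k}^{N}D)
  -\operatorname{Err}_{\mathrm{det.red},k}(D).
\]
Third, bound \(\operatorname{Err}_{\mathrm{det.red},k}(D)\) above by \(C_{\mathrm{res},k}\operatorname{Err}_{\mathrm{red},k}(D)+\Delta_{\mathrm{det},k}\), as allowed by \cref{def:reduced-detector-residual}.

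Finally, multiply the resulting lower bound by \(c_{J,k}>0\). Positivity of \(c_{J,k}\) preserves the direction of the inequality. Substituting into the first display yields exactly the stated estimate, with the constants \(c_{J,k}C_{\mathrm{res},k}\), \(c_{J,k}\Delta_{\mathrm{det},k}\) and \(\Delta_{J,k}\) appearing as claimed.

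There is no real obstacle here. The only points needing care are the sign convention of the commutator, which the reverse triangle inequality makes irrelevant, and the fact that \(c_{J,k}\) multiplies the error terms but not \(\Delta_{J,k}\). Linearity of the maps is not needed, since only norms and the defining identity of \(\mathcal C_{\mathrm{det},k}\) are used.
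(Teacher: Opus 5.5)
Your proposal is correct and follows essentially the same route as the paper: the commutator identity with the reverse triangle inequality in \(Z_{\Lambda,k}^{\mathrm{PFE}}\), then the observation transport lower bound, then residual domination. The only differences are the order in which you apply these steps and that you state explicitly that positivity of \(c_{J,k}\) is what makes multiplying through harmless, which the paper uses without comment.
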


\begin{proof}
By definition of the detector commutator,
\[
  T_{\Lambda,k}^{\mathrm{PFE}}\Theta_{\Lambda,k}^{N}(D)
  =
  J_{\Lambda,k}T_{\Lambda,k}^{\loc}(D)
  +
  \mathcal C_{\mathrm{det},k}(D).
\]
Hence
\[
  \norm{J_{\Lambda,k}T_{\Lambda,k}^{\loc}(D)}_{Z_{\Lambda,k}^{\mathrm{PFE}}}
  \ge
  M_{\Lambda,k}^{\mathrm{PFE}}(\Theta_{\Lambda,k}^{N}D)
  -
  \norm{\mathcal C_{\mathrm{det},k}(D)}_{Z_{\Lambda,k}^{\mathrm{PFE}}}.
\]
Applying the observation transport lower bound and then the reduced residual
domination gives
\[
  M_{\Lambda,k}^{\loc}(D)
  \ge
  c_{J,k}
  M_{\Lambda,k}^{\mathrm{PFE}}(\Theta_{\Lambda,k}^{N}D)
  -
  c_{J,k}
  \operatorname{Err}_{\mathrm{det.red},k}(D)
  -
  \Delta_{J,k}
\]
and therefore the displayed estimate.
\end{proof}

\begin{corollary}[Normalized reduced detector comparison]
\label{cor:normalized-reduced-detector-comparison}
In the normalized observation case
\[
  c_{J,k}=1,
  \qquad
  \Delta_{J,k}=0,
\]
the comparison becomes
\[
  M_{\Lambda,k}^{\loc}(D)
  \ge
  M_{\Lambda,k}^{\mathrm{PFE}}
  \bigl(\Theta_{\Lambda,k}^{N}D\bigr)
  -
  C_{\mathrm{dc},k}
  \operatorname{Err}_{\mathrm{red},k}(D)
  -
  \Delta_{\mathrm{det},k},
\]
with \(C_{\mathrm{dc},k}=C_{\mathrm{res},k}\).
\end{corollary}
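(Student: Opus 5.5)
The corollary is a direct specialization of \cref{thm:reduced-pfe-detector-comparison}, so I will not argue from scratch. I will substitute the normalized observation constants into the estimate already proved there.

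First, I check that the hypotheses of the theorem hold in the normalized case. The datum of \cref{def:reduced-pfe-detector-datum} and the residual domination of \cref{def:reduced-detector-residual} are unchanged. \Cref{ass:detector-observation-transport} with \(c_{J,k}=1\) and \(\Delta_{J,k}=0\) reads
\[
  \norm{T_{\Lambda,k}^{\loc}D}_{Z_{\Lambda,k}^{\loc}}
  \ge
  \norm{J_{\Lambda,k}T_{\Lambda,k}^{\loc}D}_{Z_{\Lambda,k}^{\mathrm{PFE}}}.
\]
This is an admissible instance of the assumption, since it requires only \(c_{J,k}>0\) and \(\Delta_{J,k}\ge0\).

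Second, I insert these values into the conclusion of \cref{thm:reduced-pfe-detector-comparison}. The prefactor \(c_{J,k}\) on \(M_{\Lambda,k}^{\mathrm{PFE}}(\Theta_{\Lambda,k}^{N}D)\) becomes \(1\). The residual term \(c_{J,k}C_{\mathrm{res},k}\operatorname{Err}_{\mathrm{red},k}(D)\) becomes \(C_{\mathrm{res},k}\operatorname{Err}_{\mathrm{red},k}(D)\). The slack \(c_{J,k}\Delta_{\mathrm{det},k}\) becomes \(\Delta_{\mathrm{det},k}\), and \(\Delta_{J,k}\) vanishes. Setting \(C_{\mathrm{dc},k}:=C_{\mathrm{res},k}\) then gives exactly the displayed inequality, for every \(D\in X_{N,k}^{\loc}\).

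There is no genuine obstacle here. The only point to watch is bookkeeping: the constant \(C_{\mathrm{dc},k}\) must be identified with \(C_{\mathrm{res},k}\), not with the product \(c_{J,k}C_{\mathrm{res},k}\). These agree precisely because \(c_{J,k}=1\).

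As a remark, I may also note the concrete choice \(\operatorname{Err}_{\mathrm{red},k}=\operatorname{Err}_{\mathrm{det.red},k}\) from \cref{def:reduced-detector-residual}. It yields \(C_{\mathrm{dc},k}=1\) and \(\Delta_{\mathrm{det},k}=0\), so the bound becomes a pure triangle inequality for the detector commutator.
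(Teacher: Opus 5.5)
Your proposal is correct and is exactly the paper's argument: the corollary is \cref{thm:reduced-pfe-detector-comparison} with $c_{J,k}=1$, $\Delta_{J,k}=0$, and $C_{\mathrm{dc},k}=C_{\mathrm{res},k}$ substituted. One small correction to your closing aside: even with $C_{\mathrm{res},k}=1$ and $\Delta_{\mathrm{det},k}=0$, the bound is not a pure triangle inequality, because it still relies on the normalized transport inequality $\norm{T_{\Lambda,k}^{\loc}D}\ge\norm{J_{\Lambda,k}T_{\Lambda,k}^{\loc}D}$.
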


\begin{proof}
This is \cref{thm:reduced-pfe-detector-comparison} with
\(c_{J,k}=1\), \(\Delta_{J,k}=0\), and
\(C_{\mathrm{dc},k}=C_{\mathrm{res},k}\).
\end{proof}

\begin{corollary}[Detector ledger insertion for reduced PFE channels]
\label{cor:reduced-pfe-detector-ledger}
Let \(D_k^{N,\loc}\in X_{N,k}^{\loc}\) be the reduced local coordinate of the
synchronized package \(D_k-\zeta_k\).  Define
\[
  \mathrm{DetErr}_k^{\mathrm{PFE}}
  :=
  c_{J,k}C_{\mathrm{res},k}
  \operatorname{Err}_{\mathrm{red},k}(D_k^{N,\loc})
  +
  c_{J,k}\Delta_{\mathrm{det},k}
  +
  \Delta_{J,k}.
\]
If
\[
  \Delta_k^{\detc}
  \ge
  \mathrm{DetErr}_k^{\mathrm{PFE}},
\]
then the reduced PFE detector-comparison entry of the one-step admissibility
checklist is verified:
\[
  M_{\Lambda,k}^{\loc}(D_k^{N,\loc})
  \ge
  c_{J,k}
  M_{\Lambda,k}^{\mathrm{PFE}}
  \bigl(\Theta_{\Lambda,k}^{N}D_k^{N,\loc}\bigr)
  -
  \Delta_k^{\detc}.
\]
\end{corollary}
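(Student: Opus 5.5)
The argument is a direct specialization of \cref{thm:reduced-pfe-detector-comparison}. No new analysis is needed; the work is bookkeeping that collects the three error terms into one ledger quantity. I would apply the theorem with the particular choice \(D=D_k^{N,\loc}\in X_{N,k}^{\loc}\), the reduced local coordinate of the synchronized package \(D_k-\zeta_k\). The theorem holds for every element of \(X_{N,k}^{\loc}\), so it holds for this one. Its hypotheses are the datum of \cref{def:reduced-pfe-detector-datum}, the residual domination of \cref{def:reduced-detector-residual}, and the transport bound of \cref{ass:detector-observation-transport}, all in force at scale \(k\). This gives
\[
  M_{\Lambda,k}^{\loc}(D_k^{N,\loc})
  \ge
  c_{J,k}M_{\Lambda,k}^{\mathrm{PFE}}\bigl(\Theta_{\Lambda,k}^{N}D_k^{N,\loc}\bigr)
  -\Bigl(c_{J,k}C_{\mathrm{res},k}\operatorname{Err}_{\mathrm{red},k}(D_k^{N,\loc})
  +c_{J,k}\Delta_{\mathrm{det},k}+\Delta_{J,k}\Bigr).
\]

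Next I would note that the bracketed subtracted quantity is, term by term, exactly the definition of \(\mathrm{DetErr}_k^{\mathrm{PFE}}\). The only check is that each term is nonnegative. This holds because \(c_{J,k}>0\), \(C_{\mathrm{res},k},\Delta_{\mathrm{det},k},\Delta_{J,k}\ge0\), and \(\operatorname{Err}_{\mathrm{red},k}\) is a norm-type residual. Nonnegativity is not actually needed for the final inequality, but it confirms that \(\mathrm{DetErr}_k^{\mathrm{PFE}}\) is a legitimate ledger charge.

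Finally I would use the hypothesis \(\Delta_k^{\detc}\ge\mathrm{DetErr}_k^{\mathrm{PFE}}\). Replacing the subtracted term by the larger quantity \(\Delta_k^{\detc}\) only weakens the lower bound, which gives the displayed inequality. By the convention of \cref{def:one-step-ledger} and \cref{def:admissibility-with-ledger}, the detector entry of the checklist asks for precisely this comparison with the mismatch charged to \(\Delta_k^{\detc}\). That matches the pattern of \cref{prop:gate-slack-insertion}, so the entry is verified.

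There is no genuine obstacle here. The only point requiring care is to confirm that the reduced coordinate \(D_k^{N,\loc}\) lies in the domain \(X_{N,k}^{\loc}\) where the datum and \cref{ass:detector-observation-transport} are posed, rather than being the unsynchronized \(D_k\). The statement stipulates this membership directly, so the argument is a one-line monotonicity step.
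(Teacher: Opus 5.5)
Your proposal is correct and matches the paper's proof: substitute \(D=D_k^{N,\loc}\) into \cref{thm:reduced-pfe-detector-comparison}, recognize the subtracted terms as exactly \(\mathrm{DetErr}_k^{\mathrm{PFE}}\), and use \(\Delta_k^{\detc}\ge\mathrm{DetErr}_k^{\mathrm{PFE}}\) to weaken the lower bound. Your side remark is also accurate: nonnegativity of the collected terms is not needed for the inequality itself.
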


\begin{proof}
Substitute \(D=D_k^{N,\loc}\) in
\cref{thm:reduced-pfe-detector-comparison}.  The definition of
\(\mathrm{DetErr}_k^{\mathrm{PFE}}\) collects exactly the residual,
commutator, and observation-transport losses.  If
\(\Delta_k^{\detc}\) dominates this quantity, the displayed detector
comparison follows.
\end{proof}

\begin{remark}[Scope of the reduced detector comparison]
\label{rem:reduced-detector-scope}
\Cref{thm:reduced-pfe-detector-comparison,cor:reduced-pfe-detector-ledger}
are reduced finite-dimensional detector-comparison statements.  They do not
prove detector comparison for the full infinite-dimensional Navier--Stokes
package geometry.  They also do not prove scale-uniformity, singularity
extraction, Navier--Stokes regularity, or any Clay-level conclusion.
\end{remark}

\subsection{Chart and clean-gap channels}

\begin{assumption}[Reduced ledger assignment]\label{ass:reduced-detector}
The chart and clean-gap mismatches satisfy
\[
  \mathrm{ChartErr}_k\le\Delta_k^{\chart},\qquad
  \mathrm{GapErr}_k\le\Delta_k^{\gap}.
\]
These are structural inputs unless a reduced chart or clean detector model is
supplied.
\end{assumption}

\begin{remark}[Status of structural inputs]
\Cref{ass:sync,ass:proj-harm,ass:reduced-detector} and the gate/slack datum
in \cref{def:gate-slack-datum} are structural inputs extracted from the
derivation plan.  They are not asserted to follow from the Navier--Stokes
equations alone.
\end{remark}

\section{Finite-Window One-Step Admissibility Criteria}

The one-step admissibility assumption in \cref{ass:one-step-adm} is the first
recursive structural input.  This section records a checkable finite-window
criterion for it and verifies the pressure/source part for
Navier--Stokes-generated packages.  The result is intentionally local: it
does not prove detector stability, chart kernel-freeness, gate/slack closure,
or scale-uniformity.

\subsection{Coordinate admissibility budgets}

Use the normalized spaces
\[
  X_u:=L^3(Q_1)^3,\qquad
  X_{\mathrm{src}}
  :=
  L^{3/2}((-1,0);L^{3/2}(B_1))^{3\times3},
\]
\[
  Y_{\prs}:=L^{3/2}((-1,0);L^{3/2}(B_{1/2})),
  \qquad
  Y_{\harm}:=L^{3/2}((-1,0);L^{3/2}(B_{3/4})).
\]

\begin{definition}[Coordinate admissibility checklist]\label{def:coord-check}
At scale \(k+1\), a recomputed package \(D_{k+1}\) satisfies the coordinate
admissibility checklist with excess vector
\[
  \mathbf e_{k+1}
  =
  (e_u,e_{\mathrm{src}},e_{\prs},e_{\harm},
  e_{\sync},e_{\locerr},e_{\gate},e_{\detc},e_{\mathrm{chart}},e_{\gap})
\]
if the following hold:
\begin{enumerate}[label=(\alph*)]
  \item \(U_{k+1}\in X_u\), the selected source coordinate
  \(S_{k+1}\in X_{\mathrm{src}}\),
  \(P_{k+1}^{\mathrm{act}}\in Y_{\prs}\), and
  \(P_{k+1}^{\harm}\in Y_{\harm}\);
  \item the representative mismatch, localization leakage, gate/slack
  mismatch, detector mismatch, chart error, and clean-gap error are bounded
  by the corresponding entries of \(\mathbf e_{k+1}\);
  \item under the energy/flux convention used here, the
  localization entry is \(e_{\locerr}=e_{\locerr}^{\mathrm{EF}}\) and is
  verified by the update mechanism in \cref{thm:ef-loc-update};
  \item the pressure projection and harmonic truncation errors obey
  \[
    \mathrm{ProjErr}_{k+1}\le e_{\prs}+e_{\mathrm{src}},
    \qquad
    \mathrm{HarmErr}_{k+1}\le e_{\harm}.
  \]
  In a componentwise ledger these errors are later charged by requiring
  \(\Delta_k^{\proj}\ge\mathrm{ProjErr}_{k+1}\) and
  \(\Delta_k^{\harm}\ge\mathrm{HarmErr}_{k+1}\).
\end{enumerate}
The total checklist excess is
\[
  |\mathbf e_{k+1}|_1
  :=
  e_u+e_{\mathrm{src}}+e_{\prs}+e_{\harm}
  +e_{\sync}+e_{\locerr}+e_{\gate}+e_{\detc}
  +e_{\mathrm{chart}}+e_{\gap}.
\]
\end{definition}

\begin{theorem}[Coordinate-budget one-step admissibility]\label{thm:coord-adm}
Let \(D_k\in\mathcal A_k\) and let \(D_{k+1}=\mathcal R_k(D_k)\) be
obtained by restriction, rescaling, synchronization, and recomputation.  If
\(D_{k+1}\) satisfies the coordinate admissibility checklist with excess
\(\mathbf e_{k+1}\), with the localization entry supplied by the
energy/flux update of \cref{thm:ef-loc-update}.  Assume that the componentwise
ledger dominates the corresponding checklist errors, namely
\[
\begin{gathered}
  \Delta_k^{\sync}\ge e_{\sync},\qquad
  \Delta_k^{\mathrm{loc},\EF}\ge e_{\locerr},\qquad
  \Delta_k^{\gate}\ge e_{\gate},\qquad
  \Delta_k^{\detc}\ge e_{\detc},\\
  \Delta_k^{\chart}\ge e_{\mathrm{chart}},
  \qquad
  \Delta_k^{\gap}\ge e_{\gap},
  \qquad
  \Delta_k^{\proj}\ge e_{\prs}+e_{\mathrm{src}},
  \qquad
  \Delta_k^{\harm}\ge e_{\harm}.
\end{gathered}
\]
Equivalently, under a total-budget convention, it is enough to choose
\[
  \Delta_k\ge C_{\mathrm{adm}}|\mathbf e_{k+1}|_1
\]
for a fixed finite-window bookkeeping constant \(C_{\mathrm{adm}}\ge1\) and a
fixed allocation of the total budget to the named components.  Then
\(D_{k+1}\) belongs to the increment-enlarged admissible class
\(\mathcal A_{k+1}(\Delta_k)\).  In particular, if all checklist excesses
vanish, then \(D_{k+1}\in\mathcal A_{k+1}\) in the exact finite-window
sense.
\end{theorem}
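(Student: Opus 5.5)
The proof is essentially a bookkeeping verification: membership in \(\mathcal A_{k+1}(\Delta_k)\) is, by \cref{def:admissibility-with-ledger} and \cref{ass:one-step-adm}, the conjunction of (1) the defining coordinate entries of \(\A_{k+1}^{\NS}\) and (2) the assignment of every named one-step mismatch to its ledger component. I will check these two groups separately, then treat the total-budget variant and the exact case.

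\emph{Coordinate entries.} For the coordinate entries I invoke item (a) of \cref{def:coord-check} directly: \(U_{k+1}\in X_u\), \(S_{k+1}\in X_{\mathrm{src}}\), \(P_{k+1}^{\mathrm{act}}\in Y_{\prs}\), \(P_{k+1}^{\harm}\in Y_{\harm}\). For NS-generated data this is also reproduced by \cref{prop:pressure-source-preservation} applied to the rescaled pair \((u_{k+1},p_{k+1})\). This works because restriction and NS rescaling preserve pressure admissibility, the local energy inequality, and finite local energy on \(Q_1\); I will use the change-of-variables identities from the proof of \cref{prop:ef-localization}.

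\emph{Channel insertions.} I then go channel by channel, chaining the checklist bound with the assumed ledger domination, and quote the matching insertion result:
\begin{itemize}
\item Synchronization: the representative mismatch is at most \(e_{\sync}\le\Delta_k^{\sync}\), which gives \cref{prop:broad-sync-insertion}.
\item Localization: \(e_{\locerr}=e_{\locerr}^{\EF}\le\Delta_k^{\mathrm{loc},\EF}\), with the leakage controlled as in \cref{prop:ef-localization} and \cref{thm:ef-loc-update}.
\item Projection and harmonic tail: \(\mathrm{ProjErr}_{k+1}\le e_{\prs}+e_{\mathrm{src}}\le\Delta_k^{\proj}\) and \(\mathrm{HarmErr}_{k+1}\le e_{\harm}\le\Delta_k^{\harm}\), which gives \cref{prop:proj-harm-insertion}.
\item Gate/slack: \(e_{\gate}\le\Delta_k^{\gate}\), which gives \cref{prop:gate-slack-insertion}.
\item Detector: \(e_{\detc}\le\Delta_k^{\detc}\), which gives \cref{cor:reduced-pfe-detector-ledger} in the reduced PFE case.
\item Chart and clean gap: \(e_{\mathrm{chart}}\le\Delta_k^{\chart}\) and \(e_{\gap}\le\Delta_k^{\gap}\), which verify \cref{ass:reduced-detector}.
\end{itemize}
Since every named mismatch is now charged to its component, \cref{def:admissibility-with-ledger} yields \(D_{k+1}\in\mathcal A_{k+1}(\Delta_k)\).

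\emph{Total-budget variant.} I fix the allocation of \(\Delta_k\) to components so that each component receives at least its share of \(|\mathbf e_{k+1}|_1\). The cleanest choice is to give each component the full \(C_{\mathrm{adm}}|\mathbf e_{k+1}|_1\), up to a factor absorbed into \(C_{\mathrm{adm}}\) (for instance \(C_{\mathrm{adm}}\ge 8\) if the allocation is uniform). Since every individual entry \(e_\ast\), including the sum \(e_{\prs}+e_{\mathrm{src}}\), is bounded by \(|\mathbf e_{k+1}|_1\), the componentwise inequalities follow and the previous step applies.

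\emph{Exact case.} If \(\mathbf e_{k+1}=0\), every mismatch vanishes and \(\Delta_k=0\) is admissible. Then \(\mathcal A_{k+1}(0)=\A_{k+1}^{\NS}\), because by definition a zero ledger charges nothing.

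\emph{Main obstacle.} The real difficulty is interpretive rather than analytic. I must check that the checklist's list of mismatches exhausts the channels named in \cref{def:broad-package}(h) and \cref{def:one-step-ledger}, including the residual coordinate and the defect distance \(\delta_{k+1}\). If it does not, I will argue that these coordinates are finite because they are recomputed from admissible data via the synchronized near-minimizer of \cref{def:broad-sync}. I also need to state the allocation convention in the total-budget version precisely enough that the constant \(C_{\mathrm{adm}}\) is genuinely fixed.
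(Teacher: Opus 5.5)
Your proposal is correct and follows essentially the same bookkeeping argument as the paper. The checklist supplies the coordinate memberships, the componentwise inequalities charge each named channel to its ledger component, the total-budget version follows from a fixed allocation after enlarging \(C_{\mathrm{adm}}\) (your uniform split with \(C_{\mathrm{adm}}\ge 8\) is a concrete instance), and the exact case uses no enlargement. The paper simply reads \(\mathcal A_{k+1}(\Delta_k)\) as these coordinate memberships plus ledger-charged channel errors, so your extra routing through the individual insertion results, the scaling check of the structural entries, and the worry about the residual coordinate are harmless but not needed.
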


\begin{proof}
The admissible class \(\mathcal A_{k+1}(\Delta_k)\) is defined by the same
coordinate membership requirements as \(\mathcal A_{k+1}\), with each
non-exact channel allowed an error bounded by its assigned component of the
ledger in \cref{def:one-step-ledger}.  The checklist gives the required
coordinate membership in \(X_u\), \(X_{\mathrm{src}}\), \(Y_{\prs}\), and
\(Y_{\harm}\), and assigns every synchronization, localization, gate/slack,
detector, chart, projection, harmonic, and clean-gap defect to one of the
named components.  The displayed componentwise inequalities dominate those
errors by the corresponding ledger entries.  Under the coarser total-budget
convention, the fixed allocation and the bound
\(\Delta_k\ge C_{\mathrm{adm}}|\mathbf e_{k+1}|_1\) provide the same component
bounds after increasing \(C_{\mathrm{adm}}\) if necessary.  This is precisely
membership in \(\mathcal A_{k+1}(\Delta_k)\).  If
\(\mathbf e_{k+1}=0\), no enlargement is used.
\end{proof}

\subsection{A reduced synchronization model}

The synchronization increment \(\Delta_k^{\sync}\) measures the failure of
the representative chosen after renormalization to equal the renormalized
representative chosen before renormalization.  In full package geometry this
may be a genuine structural input.  In a reduced finite-dimensional quotient
model, however, the drift has a simple algebraic form.

\begin{definition}[Reduced synchronization datum]\label{def:sync-datum}
A reduced synchronization datum consists of finite-dimensional normed spaces
\(\mathfrak X_k\), gauge subspaces \(\mathfrak Z_k\subset\mathfrak X_k\),
closed complements \(\mathfrak H_k\), and projections
\[
  \Pi_k^{\mathfrak Z}:\mathfrak X_k\to\mathfrak Z_k,
  \qquad
  \Pi_k^{\mathfrak H}:\mathfrak X_k\to\mathfrak H_k,
  \qquad
  \mathfrak X_k=\mathfrak Z_k\oplus\mathfrak H_k.
\]
The synchronized representative is
\[
  \zeta_k(D):=\Pi_k^{\mathfrak Z}D,
  \qquad
  D-\zeta_k(D)=\Pi_k^{\mathfrak H}D.
\]
For a reduced one-step map \(R_k^N:\mathfrak X_k\to\mathfrak X_{k+1}\), the
synchronization drift is
\[
  \Delta_{k}^{\sync,N}(D)
  :=
  \norm{\zeta_{k+1}(R_k^ND)-R_k^N\zeta_k(D)}_{\mathfrak X_{k+1}}.
\]
\end{definition}

\begin{theorem}[Finite-dimensional synchronization drift]\label{thm:sync-drift}
Let \(R_k^N:\mathfrak X_k\to\mathfrak X_{k+1}\) be linear and gauge
compatible:
\[
  R_k^N\mathfrak Z_k\subset \mathfrak Z_{k+1}.
\]
Then
\[
  \Delta_{k}^{\sync,N}(D)
  \le
  \norm{\Pi_{k+1}^{\mathfrak Z}R_k^N\Pi_k^{\mathfrak H}}_{\mathfrak X_k\to
  \mathfrak X_{k+1}}
  \,
  \norm{D-\zeta_k(D)}_{\mathfrak X_k}.
\]
If, in addition,
\[
  R_k^N\mathfrak H_k\subset\mathfrak H_{k+1},
\]
then \(\Delta_{k}^{\sync,N}(D)=0\) for every \(D\).
\end{theorem}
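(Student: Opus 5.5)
The argument is purely linear-algebraic, built on the direct-sum decomposition $D=\Pi_k^{\mathfrak Z}D+\Pi_k^{\mathfrak H}D$. Writing $\zeta_k(D)=\Pi_k^{\mathfrak Z}D$ and using linearity of $R_k^N$ gives
\[
  R_k^ND=R_k^N\Pi_k^{\mathfrak Z}D+R_k^N\Pi_k^{\mathfrak H}D .
\]
Applying $\Pi_{k+1}^{\mathfrak Z}$ to both sides, the first term is handled by gauge compatibility. Since $R_k^N\Pi_k^{\mathfrak Z}D\in\mathfrak Z_{k+1}$ and $\Pi_{k+1}^{\mathfrak Z}$ restricts to the identity on $\mathfrak Z_{k+1}$ (it is the projection onto $\mathfrak Z_{k+1}$ along $\mathfrak H_{k+1}$), we get $\Pi_{k+1}^{\mathfrak Z}R_k^N\Pi_k^{\mathfrak Z}D=R_k^N\zeta_k(D)$. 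It follows that
\[
  \zeta_{k+1}(R_k^ND)-R_k^N\zeta_k(D)
  =
  \Pi_{k+1}^{\mathfrak Z}R_k^N\Pi_k^{\mathfrak H}D .
\]

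For the estimate, I use $\Pi_k^{\mathfrak H}D=\Pi_k^{\mathfrak H}\Pi_k^{\mathfrak H}D$, which holds because $\Pi_k^{\mathfrak H}$ is idempotent. This rewrites the right side as the operator $\Pi_{k+1}^{\mathfrak Z}R_k^N\Pi_k^{\mathfrak H}$ applied to the vector $\Pi_k^{\mathfrak H}D=D-\zeta_k(D)$. The definition of the operator norm between $\mathfrak X_k$ and $\mathfrak X_{k+1}$ then gives the stated bound. The norm is finite because all the spaces are finite-dimensional and all the maps are linear.

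For the second claim, assume in addition $R_k^N\mathfrak H_k\subset\mathfrak H_{k+1}$. Then $R_k^N\Pi_k^{\mathfrak H}D\in\mathfrak H_{k+1}$, which is the kernel of $\Pi_{k+1}^{\mathfrak Z}$. So the operator $\Pi_{k+1}^{\mathfrak Z}R_k^N\Pi_k^{\mathfrak H}$ vanishes identically, and the identity above gives $\Delta_k^{\sync,N}(D)=0$ for every $D$.

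The only point needing care is that the projections are the complementary pair associated with the splitting $\mathfrak X=\mathfrak Z\oplus\mathfrak H$. This makes $\Pi^{\mathfrak Z}$ the identity on $\mathfrak Z$, gives it kernel $\mathfrak H$, and makes $\Pi^{\mathfrak H}$ idempotent. The notation $D-\zeta_k(D)=\Pi_k^{\mathfrak H}D$ in \cref{def:sync-datum} encodes exactly this, and I would state it explicitly at the start of the proof.
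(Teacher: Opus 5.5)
Your proof is correct and follows the paper's argument essentially step for step: you decompose $D=\Pi_k^{\mathfrak Z}D+\Pi_k^{\mathfrak H}D$, remove the gauge term using $R_k^N\mathfrak Z_k\subset\mathfrak Z_{k+1}$ to get the identity $\zeta_{k+1}(R_k^ND)-R_k^N\zeta_k(D)=\Pi_{k+1}^{\mathfrak Z}R_k^N\Pi_k^{\mathfrak H}D$, and deduce both claims from it. Your explicit use of idempotence, $\Pi_k^{\mathfrak H}D=\Pi_k^{\mathfrak H}(\Pi_k^{\mathfrak H}D)$, is exactly the step the paper compresses into ``the stated bound follows from $\Pi_k^{\mathfrak H}D=D-\zeta_k(D)$,'' so writing it out is a small but real gain in rigor.
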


\begin{proof}
Decompose \(D=\Pi_k^{\mathfrak Z}D+\Pi_k^{\mathfrak H}D\).  Since
\(\zeta_{k+1}(R_k^ND)=\Pi_{k+1}^{\mathfrak Z}R_k^ND\), we have
\[
  \zeta_{k+1}(R_k^ND)-R_k^N\zeta_k(D)
  =
  \Pi_{k+1}^{\mathfrak Z}R_k^N\Pi_k^{\mathfrak H}D
  +
  \bigl(\Pi_{k+1}^{\mathfrak Z}R_k^N\Pi_k^{\mathfrak Z}D
  -
  R_k^N\Pi_k^{\mathfrak Z}D\bigr).
\]
The second term vanishes because \(R_k^N\Pi_k^{\mathfrak Z}D\in
\mathfrak Z_{k+1}\) and \(\Pi_{k+1}^{\mathfrak Z}\) is the identity on
\(\mathfrak Z_{k+1}\).  Thus
\[
  \Delta_{k}^{\sync,N}(D)
  =
  \norm{\Pi_{k+1}^{\mathfrak Z}R_k^N\Pi_k^{\mathfrak H}D}_{\mathfrak X_{k+1}},
\]
The stated bound follows from
\[
  \Pi_k^{\mathfrak H}D=D-\zeta_k(D).
\]
If \(R_k^N\mathfrak H_k\subset\mathfrak H_{k+1}\), then
\[
  \Pi_{k+1}^{\mathfrak Z}R_k^N\Pi_k^{\mathfrak H}D=0,
\]
so the drift vanishes.
\end{proof}

\begin{corollary}[Synchronization increment insertion]\label{cor:sync-insertion}
In a reduced finite-dimensional chain satisfying the hypotheses of
\cref{thm:sync-drift}, the synchronization component of the one-step ledger
may be chosen as
\[
  \Delta_k^{\sync}
  \ge
  \Delta_k^{\sync,N}(D).
\]
If the reduced map preserves both gauge and complement subspaces, then the
model synchronization contribution is zero.
\end{corollary}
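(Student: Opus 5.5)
The corollary follows directly from \cref{thm:sync-drift}, combined with the synchronization ledger convention of \cref{def:one-step-ledger} and \cref{prop:broad-sync-insertion}.

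\textbf{First step: the ledger inequality.} In the reduced model, the synchronization mismatch that the synchronization ledger component must dominate is the discrepancy between the next-scale representative and the transported previous representative. Take the transport map \(T_k^\Gamma\) to be the restriction of \(R_k^N\) to \(\mathfrak Z_k\); this is legitimate because gauge compatibility \(R_k^N\mathfrak Z_k\subset\mathfrak Z_{k+1}\) ensures it maps gauge into gauge. Measure the discrepancy \(d_{\Gamma,k+1}\) by the norm of \(\mathfrak X_{k+1}\). The representatives \(\zeta_k=\Pi_k^{\mathfrak Z}\) are exact choices rather than near-minimizers in this reduced convention, so the tolerances \(\varepsilon_k\) can be taken as zero (or absorbed). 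With these choices, \(\mathrm{SyncErr}_k\) coincides with \(\Delta_k^{\sync,N}(D)\). Therefore any choice \(\Delta_k^{\sync}\ge\Delta_k^{\sync,N}(D)\) verifies the synchronization entry, exactly as in \cref{prop:broad-sync-insertion}. By \cref{thm:sync-drift}, this quantity is bounded by \(\norm{\Pi_{k+1}^{\mathfrak Z}R_k^N\Pi_k^{\mathfrak H}}\,\norm{D-\zeta_k(D)}\), so a checkable admissible choice is available.

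\textbf{Second step: the zero contribution.} If \(R_k^N\) also preserves the complements, \(R_k^N\mathfrak H_k\subset\mathfrak H_{k+1}\), then the second part of \cref{thm:sync-drift} gives \(\Delta_k^{\sync,N}(D)=0\) for every \(D\). Hence \(\Delta_k^{\sync}=0\) is admissible, and the model synchronization contribution vanishes.

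\textbf{Main obstacle.} The only delicate point is the identification in the first step: the reduced projection-based representative and transport must be matched with the broad convention of \cref{def:broad-sync}. Concretely, this requires checking that \(\Pi_k^{\mathfrak Z}D\) is an admissible (near-minimizing) representative in the reduced quotient. I would treat this as part of the reduced-model convention, stating explicitly that in the reduced chain the representative is defined by the projection, rather than attempting to prove that it minimizes a distance.
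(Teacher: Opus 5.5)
Your argument is essentially the paper's: its proof says the first assertion is simply the definition of assigning the computed drift \(\Delta_k^{\sync,N}(D)\) to the ledger component \(\Delta_k^{\sync}\), and that the second assertion is the last part of \cref{thm:sync-drift}, exactly as in your second step. Routing the first step through \cref{def:broad-sync} and \cref{prop:broad-sync-insertion} is unnecessary and does not literally fit: that convention fixes \(\varepsilon_k>0\), so \(\mathrm{SyncErr}_k\) strictly exceeds the drift, and \(\Pi_k^{\mathfrak Z}D\) need not be a near-minimizer for \(\Dist_{0,k}\). The paper instead treats the reduced splitting model as a separate convention (see the remark after \cref{prop:broad-sync-insertion}), which is where your ``main obstacle'' paragraph ends up anyway.
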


\begin{proof}
The first assertion is the definition of assigning the computed drift to the
ledger component \(\Delta_k^{\sync}\).  The second assertion follows from
the last part of \cref{thm:sync-drift}.
\end{proof}

\subsection{Gate and slack update model}

The gate/slack component of the one-step ledger is another channel where a
finite-window algebraic update can be written without new PDE estimates.  Let
\(\mathfrak A\) be the finite set of active gate channels.  For
\(a\in\mathfrak A\), write \(B_{a,k}\ge0\) for the used budget,
\(\tau_{a,k}\ge0\) for the threshold, and \(s_{a,k}\ge0\) for the slack.
The gate violation and slack identity mismatch are
\[
  V_{a,k}:=(B_{a,k}-\tau_{a,k})_+,
  \qquad
  \Sigma_{a,k}:=\abs{B_{a,k}+s_{a,k}-\tau_{a,k}}.
\]

\begin{theorem}[Gate/slack one-step update]\label{thm:gate-update}
Assume that for each active gate channel \(a\in\mathfrak A\), the
renormalized budgets and thresholds satisfy
\[
  B_{a,k+1}
  \le
  \alpha_{a,k}B_{a,k}+\beta_{a,k}\mathfrak q_k+r_{a,k},
\]
and
\[
  \tau_{a,k+1}
  \ge
  \alpha_{a,k}\tau_{a,k}-q_{a,k},
\]
Here \(\mathfrak q_k\ge0\) is the non-gate disturbance supplied by the other
one-step channels.  The constants
\(\alpha_{a,k},\beta_{a,k},r_{a,k},q_{a,k}\) are nonnegative.  Then
\[
  V_{a,k+1}
  \le
  \alpha_{a,k}V_{a,k}
  +\beta_{a,k}\mathfrak q_k+r_{a,k}+q_{a,k}.
\]
If the next slack is chosen canonically by
\[
  s_{a,k+1}^{\mathrm{can}}:=(\tau_{a,k+1}-B_{a,k+1})_+,
\]
then
\[
  \abs{B_{a,k+1}+s_{a,k+1}^{\mathrm{can}}-\tau_{a,k+1}}
  =
  V_{a,k+1}.
\]
If exact slack identity is desired instead, the signed canonical choice
\[
  s_{a,k+1}^{\mathrm{sgn}}:=\tau_{a,k+1}-B_{a,k+1}
\]
gives
\[
  B_{a,k+1}+s_{a,k+1}^{\mathrm{sgn}}-\tau_{a,k+1}=0,
\]
but may fail to be nonnegative when the gate is violated.
\end{theorem}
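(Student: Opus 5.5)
The plan is to prove the violation estimate by subtracting the two hypotheses and then using monotonicity and subadditivity of the positive part. The two slack identities will follow by direct substitution.

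\textbf{Violation estimate.} Fix an active channel \(a\in\mathfrak A\). Subtracting the threshold lower bound from the budget upper bound gives
\[
  B_{a,k+1}-\tau_{a,k+1}
  \le
  \alpha_{a,k}(B_{a,k}-\tau_{a,k})
  +\beta_{a,k}\mathfrak q_k+r_{a,k}+q_{a,k}.
\]
Next I will use the following properties of \(x\mapsto x_+\):
\begin{itemize}
  \item it is nondecreasing;
  \item it satisfies \((x+y)_+\le x_++y_+\);
  \item for \(\alpha\ge0\) it satisfies \((\alpha x)_+=\alpha x_+\);
  \item it fixes nonnegative numbers, so \(y_+=y\) whenever \(y\ge0\).
\end{itemize}
Apply these with \(x=B_{a,k}-\tau_{a,k}\) and \(y=\beta_{a,k}\mathfrak q_k+r_{a,k}+q_{a,k}\ge0\), which is nonnegative by the nonnegativity assumptions on the constants and on \(\mathfrak q_k\). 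This yields
\[
  V_{a,k+1}\le\alpha_{a,k}V_{a,k}+\beta_{a,k}\mathfrak q_k+r_{a,k}+q_{a,k}.
\]
The only point needing care is that \(\alpha_{a,k}\ge0\) is multiplied into both hypotheses. This is what lets the threshold inequality, which has \(\ge\), be subtracted consistently.

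\textbf{Canonical slack.} Write \(x=B_{a,k+1}-\tau_{a,k+1}\). The canonical choice is \(s^{\mathrm{can}}_{a,k+1}=(-x)_+\). Then
\[
  x+(-x)_+=x_+ ,
\]
which one checks by splitting into the cases \(x\ge0\) and \(x<0\). Since \(x_+\ge0\), taking absolute values gives \(\abs{x+(-x)_+}=x_+=V_{a,k+1}\).

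\textbf{Signed slack.} For \(s^{\mathrm{sgn}}_{a,k+1}=\tau_{a,k+1}-B_{a,k+1}\), the identity \(B_{a,k+1}+s^{\mathrm{sgn}}_{a,k+1}-\tau_{a,k+1}=0\) is immediate by substitution. This slack is negative exactly when \(B_{a,k+1}>\tau_{a,k+1}\), i.e.\ exactly when the gate is violated.

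\textbf{Main obstacle.} None of these steps is a real obstacle; the argument is purely algebraic. The one thing to watch is the sign bookkeeping in combining the budget and threshold inequalities, as noted in the first step.
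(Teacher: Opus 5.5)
Your proof is correct and follows the paper's argument: subtract the threshold bound from the budget bound, take positive parts using monotonicity, subadditivity and positive homogeneity, and check the two slack identities by cases and by direct substitution. One small correction to your commentary: the subtraction step works for any sign of \(\alpha_{a,k}\), since it only needs the same coefficient to appear in both hypotheses; nonnegativity of \(\alpha_{a,k}\) is used only in \((\alpha x)_+=\alpha x_+\).
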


\begin{proof}
Subtract the threshold lower bound from the budget upper bound:
\[
  B_{a,k+1}-\tau_{a,k+1}
  \le
  \alpha_{a,k}(B_{a,k}-\tau_{a,k})
  +\beta_{a,k}\mathfrak q_k+r_{a,k}+q_{a,k}.
\]
Taking positive parts and using monotonicity and subadditivity of
\((\cdot)_+\) gives
\[
  (B_{a,k+1}-\tau_{a,k+1})_+
  \le
  \alpha_{a,k}(B_{a,k}-\tau_{a,k})_+
  +\beta_{a,k}\mathfrak q_k+r_{a,k}+q_{a,k}.
\]
This is the claimed violation update.  For the nonnegative canonical slack,
if \(\tau_{a,k+1}\ge B_{a,k+1}\), then the displayed mismatch is zero; if
\(B_{a,k+1}>\tau_{a,k+1}\), then the canonical slack is zero and the
mismatch is exactly \(B_{a,k+1}-\tau_{a,k+1}=V_{a,k+1}\).  The signed slack
identity is immediate from the definition.
\end{proof}

\begin{corollary}[Gate increment insertion]\label{cor:gate-insertion}
Let
\[
  \Delta_k^{\gate}
  \ge
  \sum_{a\in\mathfrak A}
  \left(
    \beta_{a,k}\mathfrak q_k+r_{a,k}+q_{a,k}
  \right)
  +
  \sum_{a\in\mathfrak A}\alpha_{a,k}V_{a,k}.
\]
Then all next-scale gate violations generated by
\cref{thm:gate-update} are charged to the gate component of the one-step
ledger.  If a transported noncanonical slack \(\widetilde s_{a,k+1}\) is
used, add
\[
  \sum_{a\in\mathfrak A}
  \abs{\widetilde s_{a,k+1}-s_{a,k+1}^{\mathrm{can}}}
\]
to \(\Delta_k^{\gate}\).
\end{corollary}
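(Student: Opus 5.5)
The corollary follows from the violation update in \cref{thm:gate-update} combined with the ledger-insertion convention of \cref{def:one-step-ledger}; no new estimate is needed. I would fix the active channel set \(\mathfrak A\), which is finite, and take \(V_{a,k+1}\) to be the next-scale gate violation of channel \(a\). The gate/slack mismatch that has to be charged is then the total violation \(\sum_{a\in\mathfrak A}V_{a,k+1}\), or its image under a fixed finite-dimensional norm equivalent to the \(\ell^1\) sum. This matches the convention for \(\mathrm{GateErr}_k\) in \cref{def:gate-slack-datum}, where scalar gates are measured by the positive parts of the violated slacks.

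First I would apply \cref{thm:gate-update} to each channel \(a\), which gives
\[
  V_{a,k+1}\le\alpha_{a,k}V_{a,k}+\beta_{a,k}\mathfrak q_k+r_{a,k}+q_{a,k}.
\]
Summing over \(a\in\mathfrak A\) bounds \(\sum_a V_{a,k+1}\) by exactly the right-hand side assumed for \(\Delta_k^{\gate}\). Hence \(\Delta_k^{\gate}\ge\sum_aV_{a,k+1}\ge\mathrm{GateErr}_k\), and \cref{prop:gate-slack-insertion} shows the gate entry is charged.

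Next I would account for the slack identity. Under the canonical nonnegative slack, \cref{thm:gate-update} gives \(\Sigma_{a,k+1}=V_{a,k+1}\), so the slack mismatch is already covered by the bound above. If instead a transported slack \(\widetilde s_{a,k+1}\) is used, I would apply the triangle inequality:
\[
  \abs{B_{a,k+1}+\widetilde s_{a,k+1}-\tau_{a,k+1}}
  \le V_{a,k+1}+\abs{\widetilde s_{a,k+1}-s_{a,k+1}^{\mathrm{can}}}.
\]
Summing over \(a\) shows that adding \(\sum_a\abs{\widetilde s_{a,k+1}-s^{\mathrm{can}}_{a,k+1}}\) to \(\Delta_k^{\gate}\) dominates the full mismatch.

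The only point needing care is the norm convention. If \(\mathcal Y_{\gate,k+1}\) carries a norm other than \(\ell^1\) on the violation vector, I would invoke finite-dimensional norm equivalence and absorb the constant into the fixed allocation, as in the total-budget remark of \cref{thm:coord-adm}. I would also state explicitly that each violation is counted once, so that the charges for violation and slack are not double-counted beyond the stated sum.
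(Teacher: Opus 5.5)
Your proposal is correct and follows the paper's proof. Both sum the per-channel violation update of \cref{thm:gate-update} over the finite set \(\mathfrak A\), and both handle a transported slack by the triangle inequality \(\abs{B+\widetilde s-\tau}\le\abs{B+s^{\mathrm{can}}-\tau}+\abs{\widetilde s-s^{\mathrm{can}}}\) combined with \(\abs{B+s^{\mathrm{can}}-\tau}=V\). Your extra link to \(\mathrm{GateErr}_k\) through \cref{prop:gate-slack-insertion}, and the norm-equivalence remark that goes with it, are not needed: the stated conclusion concerns only the scalar violations \(V_{a,k+1}\), and absorbing a norm constant would change the displayed hypothesis.
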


\begin{proof}
Summing the estimate in \cref{thm:gate-update} over the finite channel set
\(\mathfrak A\) gives the first assertion.  For a transported slack,
\[
  \abs{B+\widetilde s-\tau}
  \le
  \abs{B+s^{\mathrm{can}}-\tau}
  +
  \abs{\widetilde s-s^{\mathrm{can}}},
\]
so the additional displayed term controls the noncanonical slack mismatch.
\end{proof}

\subsection{Reduced detector stability}

The detector update in \eqref{eq:audit-M-stability} is model-dependent in the full
package setting.  In a reduced finite-dimensional model it follows from a
standard commutator estimate between the detector map and the one-step
renormalization map.

\begin{definition}[Detector commutation defect]\label{def:det-comm}
Let \(\mathfrak X_k\) be a reduced package space and let
\[
  T_k:\mathfrak X_k\to\mathcal Y_k
\]
be the combined detector/tax-channel map into a finite-dimensional normed
space \(\mathcal Y_k\).  Define
\[
  M_k(D):=\norm{T_kD}_{\mathcal Y_k}.
\]
Let \(R_k^N:\mathfrak X_k\to\mathfrak X_{k+1}\) be the reduced one-step map
and let \(L_k:\mathcal Y_k\to\mathcal Y_{k+1}\) transport detector
coordinates.  The detector commutation defect is
\[
  \mathfrak C_k(D)
  :=
  T_{k+1}R_k^ND-L_kT_kD.
\]
\end{definition}

\begin{theorem}[Reduced detector-stability estimate]\label{thm:det-stability}
In the reduced detector model of \cref{def:det-comm},
\[
  M_{k+1}(R_k^ND)
  \le
  \norm{L_k}_{\mathcal Y_k\to\mathcal Y_{k+1}}M_k(D)
  +
  \norm{\mathfrak C_k(D)}_{\mathcal Y_{k+1}}.
\]
Consequently, if
\[
  \norm{\mathfrak C_k(D)}_{\mathcal Y_{k+1}}
  \le
  B_kE_k+C_k\Delta_k+\epsilon_k^{\detc},
\]
then
\[
  M_{k+1}(R_k^ND)
  \le
  A_kM_k(D)+B_kE_k+C_k\Delta_k+\epsilon_k^{\detc},
\]
with
\[
  A_k:=\norm{L_k}_{\mathcal Y_k\to\mathcal Y_{k+1}}.
\]
\end{theorem}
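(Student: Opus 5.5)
The estimate is a triangle inequality in \(\mathcal Y_{k+1}\) combined with the definition of the operator norm of \(L_k\). Using the definition of the detector commutation defect in \cref{def:det-comm}, I would write
\[
  T_{k+1}R_k^ND = L_kT_kD + \mathfrak C_k(D).
\]
Taking \(\mathcal Y_{k+1}\)-norms gives
\[
  M_{k+1}(R_k^ND)
  =\norm{T_{k+1}R_k^ND}_{\mathcal Y_{k+1}}
  \le \norm{L_kT_kD}_{\mathcal Y_{k+1}}+\norm{\mathfrak C_k(D)}_{\mathcal Y_{k+1}}.
\]

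Next I would bound the first term by \(\norm{L_k}_{\mathcal Y_k\to\mathcal Y_{k+1}}\norm{T_kD}_{\mathcal Y_k}\). Here I need \(L_k\) to be a bounded linear map. If \(L_k\) is taken linear, this is automatic because \(\mathcal Y_k\) is finite-dimensional. I would state this linearity explicitly, since the definition leaves it implicit. By definition \(\norm{T_kD}_{\mathcal Y_k}=M_k(D)\), which yields the first displayed inequality of the theorem.

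For the consequence, I would substitute the assumed bound
\[
  \norm{\mathfrak C_k(D)}_{\mathcal Y_{k+1}}\le B_kE_k+C_k\Delta_k+\epsilon_k^{\detc}
\]
into the first inequality and set \(A_k:=\norm{L_k}\). Nothing further is required.

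There is no genuine obstacle. The one point needing care is that the operator norm of \(L_k\) must be finite and well defined. This is where linearity and finite-dimensionality of \(\mathcal Y_k\) enter. If \(L_k\) were only Lipschitz with \(L_k(0)=0\), the same argument would go through with \(A_k\) replaced by the Lipschitz constant.
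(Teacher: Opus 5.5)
Your proposal is correct and follows the paper's proof exactly. Both arguments write out the defining identity $T_{k+1}R_k^ND=L_kT_kD+\mathfrak C_k(D)$, apply the triangle inequality in $\mathcal Y_{k+1}$ with the operator-norm bound $\norm{L_kT_kD}_{\mathcal Y_{k+1}}\le A_kM_k(D)$, and then substitute the assumed bound on the commutation defect. Your explicit note that $L_k$ must be linear (hence bounded, since $\mathcal Y_k$ is finite-dimensional) is a sensible clarification of something the paper leaves implicit.
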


\begin{proof}
By the definition of \(\mathfrak C_k(D)\),
\[
  T_{k+1}R_k^ND=L_kT_kD+\mathfrak C_k(D).
\]
Taking the \(\mathcal Y_{k+1}\)-norm and using the triangle inequality gives
\[
  M_{k+1}(R_k^ND)
  \le
  \norm{L_kT_kD}_{\mathcal Y_{k+1}}
  +
  \norm{\mathfrak C_k(D)}_{\mathcal Y_{k+1}}.
\]
The first term is bounded by
\[
  \norm{L_k}_{\mathcal Y_k\to\mathcal Y_{k+1}}
  \norm{T_kD}_{\mathcal Y_k}
  =
  A_kM_k(D).
\]
The asserted detector update follows after inserting the assumed bound on
\(\mathfrak C_k(D)\).
\end{proof}

\begin{corollary}[Detector increment insertion]\label{cor:det-insertion}
In the reduced detector model, the detector component of the one-step ledger
may be chosen so that
\[
  \Delta_k^{\detc}
  \ge
  \norm{\mathfrak C_k(D)}_{\mathcal Y_{k+1}}.
\]
If the commutation defect satisfies the bound in
\cref{thm:det-stability}, then the detector update rule \eqref{eq:audit-M-stability}
holds with an additional additive tolerance \(\epsilon_k^{\detc}\).
\end{corollary}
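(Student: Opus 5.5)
The plan is to read \cref{cor:det-insertion} as a bookkeeping consequence of \cref{thm:det-stability}, in the same way \cref{cor:sync-insertion} and \cref{cor:gate-insertion} follow from their parent theorems. The corollary makes two assertions, and I treat them in order.

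For the first assertion, I set \(\Delta_k^{\detc}\) to be any quantity at least \(\norm{\mathfrak C_k(D)}_{\mathcal Y_{k+1}}\). This is admissible because of the identity
\[
T_{k+1}R_k^ND=L_kT_kD+\mathfrak C_k(D),
\]
taken from \cref{def:det-comm}. It shows that the only mismatch in the detector channel, between the recomputed next-scale detector and the transported old detector, is \(\mathfrak C_k(D)\). So charging its norm to \(\Delta_k^{\detc}\) satisfies the requirement in \cref{def:one-step-ledger}: each ledger component must dominate the mismatch of its own channel. The argument is the same as in \cref{prop:gate-slack-insertion}.

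For the second assertion, I first invoke the triangle-inequality bound of \cref{thm:det-stability},
\[
M_{k+1}(R_k^ND)\le A_kM_k(D)+\norm{\mathfrak C_k(D)}_{\mathcal Y_{k+1}}.
\]
I then insert the hypothesis
\[
\norm{\mathfrak C_k(D)}_{\mathcal Y_{k+1}}\le B_kE_k+C_k\Delta_k+\epsilon_k^{\detc}.
\]
This gives the update
\[
M_{k+1}\le A_kM_k+B_kE_k+C_k\Delta_k+\epsilon_k^{\detc}.
\]
That is the detector update rule \eqref{eq:audit-M-stability}, with the extra additive tolerance \(\epsilon_k^{\detc}\) recorded separately rather than absorbed into \(\Delta_k\). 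If one prefers to absorb it, choosing \(\Delta_k^{\detc}\ge\epsilon_k^{\detc}\) and using \(\Delta_k\ge\Delta_k^{\detc}\) does so whenever \(C_k\ge1\). I would state that only as a remark.

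The main obstacle is interpretive, not technical. The rule \eqref{eq:audit-M-stability} is cited but appears later in the paper, so I must check that its form matches \(A_kM_k+B_kE_k+C_k\Delta_k\) with the same \(A_k,B_k,C_k\) conventions. If the later rule is phrased with \(M_k\) evaluated on the synchronized package \(D_k-\zeta_k\), I would apply the above with \(D\) replaced by the reduced coordinate of \(D_k-\zeta_k\). The reduced model is linear, so this substitution changes nothing in the argument.
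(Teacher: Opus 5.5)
Your proposal is correct and follows the paper's proof. The first assertion is simply the assignment of $\norm{\mathfrak C_k(D)}_{\mathcal Y_{k+1}}$ to the detector ledger, justified by the identity $T_{k+1}R_k^ND=L_kT_kD+\mathfrak C_k(D)$, and the second assertion is \cref{thm:det-stability} verbatim.

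One small correction to your optional remark. The tolerance $\epsilon_k^{\detc}$ cannot be absorbed into $C_k\Delta_k$ with the same $C_k$: from $\Delta_k\ge\epsilon_k^{\detc}$ you only get $C_k\Delta_k+\epsilon_k^{\detc}\le(C_k+1)\Delta_k$, and the condition $C_k\ge1$ plays no role there. The cleaner observation is that your first assertion already gives $M_{k+1}(R_k^ND)\le A_kM_k(D)+\Delta_k^{\detc}\le A_kM_k(D)+\Delta_k$. This is the update rule with $B_k=0$ and any $C_k\ge1$.
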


\begin{proof}
The first assertion is the assignment of the detector commutation defect to
the detector ledger.  The second assertion is exactly
\cref{thm:det-stability}.
\end{proof}

\subsection{NS-generated pressure/source coordinate preservation}

\begin{theorem}[Pressure/source preservation]\label{thm:ns-pressure-source-preservation}
Let \((u,p)\) be pressure-admissible Navier--Stokes data on
\[
  Q_r(z_0)=B_r(x_0)\times(t_0-r^2,t_0)
\]
with
\[
  u\in L^3(Q_r(z_0))^3,\qquad p\in L^{3/2}(Q_r(z_0)),
\]
and
\[
  -\Delta p=\partial_i\partial_j(u_i u_j)
\]
in distributions, modulo time-dependent constants.  Define the normalized
fields on \(Q_1\) by
\[
  u_r(y,s)=r u(x_0+ry,t_0+r^2s),\qquad
  p_r(y,s)=r^2 p(x_0+ry,t_0+r^2s).
\]
Let \(\eta\in C_c^\infty(B_1)\) satisfy \(\eta\equiv1\) on \(B_{3/4}\), and
set
\[
  F_{r,ij}^{\mathrm{act}}:=\eta u_{r,i}u_{r,j},
  \qquad
  p_r^{\mathrm{act}}:=R_iR_j(F_{r,ij}^{\mathrm{act}}),
  \qquad
  p_r^{\harm}:=p_r-p_r^{\mathrm{act}}.
\]
Then
\[
  u_r\in X_u,\qquad
  F_r^{\mathrm{act}}\in X_{\mathrm{src}},\qquad
  p_r^{\mathrm{act}}\in Y_{\prs},\qquad
  p_r^{\harm}\in Y_{\harm}.
\]
Moreover, \(p_r^{\harm}\) is harmonic on \(B_{3/4}\) for almost every time.
\end{theorem}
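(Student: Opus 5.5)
The plan is to reduce the statement to \cref{prop:pressure-source-preservation} by showing that the normalized pair \((u_r,p_r)\) is pressure-admissible on \(Q_1\). The rest follows from the fixed-window Calderon--Zygmund bound \eqref{eq:cz-bound} and the same distributional identity used in that proposition.

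\textbf{Step 1: scaling.} Change variables with \(x=x_0+ry\), \(t=t_0+r^2s\), so that \(dx\,dt=r^5\,dy\,ds\). This gives
\[
  \norm{u_r}_{L^3(Q_1)}^3=r^{3}r^{-5}\norm{u}_{L^3(Q_r(z_0))}^3,
  \qquad
  \norm{p_r}_{L^{3/2}(Q_1)}^{3/2}=r^{3}r^{-5}\norm{p}_{L^{3/2}(Q_r(z_0))}^{3/2}.
\]
Hence \(u_r\in X_u\) and \(p_r\in L^{3/2}(Q_1)\). The pressure equation is invariant under the Navier--Stokes scaling. Indeed, \(\Delta_y p_r=r^4(\Delta p)(\cdot)\) and \(\partial_{y_i}\partial_{y_j}(u_{r,i}u_{r,j})=r^4(\partial_i\partial_j(u_iu_j))(\cdot)\). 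Testing against rescaled test functions shows that \(-\Delta p_r=\partial_i\partial_j(u_{r,i}u_{r,j})\) in distributions on \(B_1\). This holds for a.e. \(s\), or equivalently in space-time distributions, modulo time-dependent constants \(r^2c(t_0+r^2s)\).

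\textbf{Step 2: memberships.} Hölder gives \(\norm{\eta\,u_r\otimes u_r}_{L^{3/2}}\le\norm{\eta}_\infty\norm{u_r}_{L^3}^2\), so \(F_r^{\mathrm{act}}\in X_{\mathrm{src}}\). Extend \(F_r^{\mathrm{act}}\) by zero outside \(B_1\) and apply the \(L^{3/2}(\mathbb R^3)\) boundedness of \(R_iR_j\) for a.e. time, then integrate in time. This gives \(p_r^{\mathrm{act}}\in L^{3/2}((-1,0);L^{3/2}(\mathbb R^3))\), and restricting yields \(p_r^{\mathrm{act}}\in Y_{\prs}\) by \eqref{eq:cz-bound}. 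Restricting the same bound to \(B_{3/4}\) gives \(p_r^{\mathrm{act}}|_{B_{3/4}}\in Y_{\harm}\). Since \(p_r\in L^{3/2}(Q_1)\), the difference \(p_r^{\harm}\) lies in \(Y_{\harm}\).

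\textbf{Step 3: harmonicity.} From \(-\Delta R_iR_jF_{ij}=\partial_i\partial_jF_{ij}\) on \(\mathbb R^3\), and since \(\eta\equiv1\) on \(B_{3/4}\), we get \(-\Delta p_r^{\harm}=0\) in \(\mathcal D'(B_{3/4})\) for a.e. \(s\), modulo the normalization constant. A time-dependent constant is itself harmonic, so it does not affect the conclusion.

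\textbf{Main obstacle.} The delicate point is the phrase ``for almost every time'': the space-time distributional identity must be converted into spatial identities at a.e. fixed \(s\). I would handle this with a Fubini argument. Test against \(\theta(s)\psi(y)\) with \(\psi\) ranging over a countable dense subset of \(C_c^\infty(B_{3/4})\). The resulting identity holds for every \(\theta\), so the \(s\)-integrand vanishes a.e., and a single common null set serves all \(\psi\). Weyl's lemma then upgrades distributional harmonicity to classical harmonicity on \(B_{3/4}\). This also justifies the paper's claim of harmonicity on the whole ball \(B_{3/4}\), rather than only on compactly contained subballs as in \cref{prop:pressure-source-preservation}, since \(\eta=1\) on all of \(B_{3/4}\).
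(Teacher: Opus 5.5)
Your proposal is correct and follows the paper's proof essentially step for step: the change of variables giving the $r^{-2}$ factors and the scaling invariance of the pressure equation, H\"older plus the fixed-window Calderon--Zygmund bound for the memberships in $X_{\mathrm{src}}$, $Y_{\prs}$, $Y_{\harm}$, and the identity $-\Delta R_iR_jF_{ij}=\partial_i\partial_jF_{ij}$ together with $\eta\equiv1$ on $B_{3/4}$ for harmonicity. Your Fubini/countable-dense-family and Weyl-lemma argument only makes explicit the almost-every-time step the paper leaves implicit; also, your Steps 1--3 are self-contained, which matters because literally invoking \cref{prop:pressure-source-preservation} would require full membership in $\A_k^{\NS}$ (including finite local energy and the local energy inequality), which this theorem does not assume.
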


\begin{proof}
The change of variables gives
\[
  \norm{u_r}_{L^3(Q_1)}^3
  =
  r^{-2}\norm{u}_{L^3(Q_r(z_0))}^3,
  \qquad
  \norm{p_r}_{L^{3/2}(Q_1)}^{3/2}
  =
  r^{-2}\norm{p}_{L^{3/2}(Q_r(z_0))}^{3/2},
\]
so \(u_r\in L^3(Q_1)^3\) and \(p_r\in L^{3/2}(Q_1)\).  Hence
\[
  F_r^{\mathrm{act}}=\eta u_r\otimes u_r\in X_{\mathrm{src}},
  \qquad
  \norm{F_r^{\mathrm{act}}}_{X_{\mathrm{src}}}
  \le
  \norm{u_r}_{L^3(Q_1)}^2.
\]
By the fixed-window Calderon--Zygmund estimate,
\[
  \norm{p_r^{\mathrm{act}}}_{Y_{\prs}}
  \le
  C_{\mathrm{CZ}}\norm{F_r^{\mathrm{act}}}_{X_{\mathrm{src}}}.
\]
The same global Calderon--Zygmund bound gives
\[
  \norm{p_r^{\mathrm{act}}}_{L^{3/2}((-1,0);L^{3/2}(B_{3/4}))}
  \le
  C_{\mathrm{CZ}}\norm{F_r^{\mathrm{act}}}_{X_{\mathrm{src}}}.
\]
The pressure-admissibility identity is invariant under Navier--Stokes
scaling:
\[
  -\Delta_y p_r
  =
  \partial_{y_i}\partial_{y_j}(u_{r,i}u_{r,j})
\]
in distributions.  Also,
\[
  -\Delta_y p_r^{\mathrm{act}}
  =
  \partial_{y_i}\partial_{y_j}(\eta u_{r,i}u_{r,j})
\]
in the same distributional convention for the double Riesz transform.  Since
\(\eta=1\) on \(B_{3/4}\), the difference \(p_r^{\harm}=p_r-p_r^{\mathrm{act}}\)
satisfies
\[
  -\Delta_y p_r^{\harm}=0
\]
on \(B_{3/4}\) for almost every time.  Finally,
\[
  \norm{p_r^{\harm}}_{Y_{\harm}}
  \le
  \norm{p_r}_{L^{3/2}((-1,0);L^{3/2}(B_{3/4}))}
  +
  \norm{p_r^{\mathrm{act}}}_{L^{3/2}((-1,0);L^{3/2}(B_{3/4}))},
\]
and the right-hand side is finite.
\end{proof}

\begin{corollary}[Pressure/source part of one-step admissibility]\label{cor:pressure-source-one-step}
For NS-generated packages obtained by restricting to \(Q_{k+1}\) and
renormalizing to \(Q_1\), use the canonical selected-source convention
\[
  S_{k+1}:=F_{r_{k+1}}^{\mathrm{act}}.
\]
Then the velocity, selected source, active pressure, and harmonic pressure
coordinates satisfy the coordinate membership requirements in
\cref{def:coord-check}.  Thus the pressure/source part of the one-step
admissibility checklist is verified.  The remaining synchronization,
localization, gate/slack, chart, clean-gap, and detector channels remain
explicit components of \(\Delta_k\).
\end{corollary}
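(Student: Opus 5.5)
The plan is to reduce the corollary to \cref{thm:ns-pressure-source-preservation}, applied with center \(z_0\) and radius \(r=r_{k+1}=\lambda^{k+1}r_0\). First, I check that the restricted data satisfy that theorem's hypotheses. The package \(D_k\in\A_k^{\NS}\) is generated by \((u_k,p_k)\) on \(Q_1\) with \(u_k\in L^3(Q_1)^3\), \(p_k\in L^{3/2}(Q_1)\), and \(-\Delta p_k=\partial_i\partial_j(u_{k,i}u_{k,j})\) modulo time-dependent constants. Undoing the scale-\(k\) normalization, \((u,p)\) lies in \(L^3\times L^{3/2}\) on \(Q_{r_k}(z_0)\) and satisfies the pressure equation there. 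Restriction to the smaller cylinder \(Q_{r_{k+1}}(z_0)\subset Q_{r_k}(z_0)\) preserves both the integrability and the distributional identity, since test functions supported in the smaller ball are admissible in the larger one. Equivalently, one can work directly with the one-step rescaling \(u_{k+1}(Y,S)=\lambda u_k(\lambda Y,\lambda^2S)\) used in \cref{prop:ef-localization}. The normalized fields produced by the theorem then coincide with \((u_{k+1},p_{k+1})\), because composing the two rescalings gives exactly the scale-\((k+1)\) normalization.

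Next, I read off the four memberships. The theorem gives \(u_{r_{k+1}}\in X_u\), which is \(U_{k+1}\in X_u\). The canonical choice \(S_{k+1}:=F^{\mathrm{act}}_{r_{k+1}}\) gives \(S_{k+1}\in X_{\mathrm{src}}\) with the bound \(\norm{S_{k+1}}_{X_{\mathrm{src}}}\le\norm{u_{k+1}}_{L^3(Q_1)}^2\). For the pressure coordinates, I compare \cref{def:pressure-coordinates} at scale \(k+1\) with the theorem. This yields \(P_{k+1}^{\mathrm{act}}=\Rprs F^{\mathrm{act}}_{r_{k+1}}=p^{\mathrm{act}}_{r_{k+1}}|_{B_{1/2}}\in Y_{\prs}\) and \(P_{k+1}^{\harm}=p^{\harm}_{r_{k+1}}|_{B_{3/4}}\in Y_{\harm}\). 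Together these are exactly item (a) of \cref{def:coord-check}, with \(e_u=0\). No source or pressure excess is needed for membership, so \(e_{\mathrm{src}}\) and \(e_{\prs}\) enter only through the projection error in item (d). That error is already charged to \(\Delta_k^{\proj}\) by \cref{ass:proj-harm}.

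Finally, I note that nothing in this argument touches the synchronization, localization, gate/slack, chart, clean-gap, or detector entries. They remain governed by \cref{prop:broad-sync-insertion}, \cref{prop:ef-localization}, \cref{cor:gate-insertion}, \cref{ass:reduced-detector}, and \cref{cor:reduced-pfe-detector-ledger}, and so stay explicit components of \(\Delta_k\). The only delicate step is the identification of the recomputed coordinates with those of the theorem. This requires the fixed-center convention \(z_k=z_0\), a fixed cutoff \(\eta\), and the \(\lambda\)-rescaling matching the scale-\((k+1)\) normalization. The time-dependent pressure constant also needs checking: it is harmless because it lies in \(L^{3/2}\) in time on the finite window, and it is annihilated in the harmonicity statement.
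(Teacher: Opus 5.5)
Your proposal is correct and follows the paper's proof, which simply applies \cref{thm:ns-pressure-source-preservation} with \(r=r_{k+1}\) and notes that the non-pressure/source channels are left as components of \(\Delta_k\). Your added checks spell out what the paper leaves implicit: restriction from \(Q_{r_k}(z_0)\) to \(Q_{r_{k+1}}(z_0)\) preserves the \(L^3\times L^{3/2}\) integrability and the distributional pressure identity, and composing the two rescalings reproduces \((u_{k+1},p_{k+1})\).
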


\begin{proof}
Apply \cref{thm:ns-pressure-source-preservation} with
\(r=r_{k+1}\).  The final sentence records that the theorem does not estimate
the non-pressure/source channels; those are precisely the remaining
components in \eqref{eq:delta-k}.
\end{proof}

\subsection{Energy/flux localization update under renormalization}

This subsection fixes the localization convention used in the recursive
ledger.  It records the energy/flux leakage created by one restriction and
Navier--Stokes rescaling step.  The statement is deliberately a bookkeeping
estimate: it assigns the next-scale localization coordinate to the one-step
ledger, but it does not assert smallness or summability.

Let
\[
  u_{k+1}(Y,S)=\lambda u_k(\lambda Y,\lambda^2S),
  \qquad
  p_{k+1}(Y,S)=\lambda^2p_k(\lambda Y,\lambda^2S),
\]
where \(0<\lambda<1\).  Fix
\[
  0\le\chi\in C_c^\infty(B_1),
  \qquad
  \chi\equiv1\text{ on }B_{3/4},
\]
and \(0\le\theta\in C_c^\infty((-1,0])\).  Set
\[
  \phi(Y,S):=\theta(S)\chi(Y),
  \qquad
  A_\chi:=\supp\nabla\chi\cup\supp\Delta\chi,
  \qquad
  A_{\lambda,\chi}:=\lambda A_\chi.
\]

\begin{definition}[Energy/flux localization leakage]\label{def:ef-leak}
For finite-energy pressure-admissible data on \(Q_1\), define
\begin{align*}
  \Leak_\phi^{\mathrm{EF}}(u,p)
  :=
  &\int_{Q_1}|u|^2
  \left(\abs{\partial_t\phi}+\abs{\Delta\phi}\right)\,dx\,dt\\
  &+
  \int_{Q_1}\left(|u|^3+2|p||u|\right)
  \abs{\nabla\phi}\,dx\,dt
  +
  2\int_{Q_1}|\nabla u|^2\phi\,dx\,dt .
\end{align*}
The spatial-shell part is
\[
  \Leak_{\mathrm{shell}}^{\mathrm{EF}}(u,p)
  :=
  \int_{-1}^{0}\int_{A_\chi}
  \left(
    |u|^2+|\nabla u|^2+|u|^3+|p||u|
  \right)\,dx\,dt .
\]
The full pulled-back one-step energy/flux leakage is
\[
  \Leak_{k\to k+1}^{\mathrm{EF}}
  :=
  \int_{-\lambda^2}^{0}\int_{\lambda B_1}
  \left(|u_k|^2+|\nabla u_k|^2\right)\,dy\,ds
  +
  \int_{-\lambda^2}^{0}\int_{\lambda A_\chi}
  \left(|u_k|^2+|u_k|^3+|p_k||u_k|\right)\,dy\,ds .
\]
Its shell contribution is
\[
  \Leak_{k\to k+1}^{\mathrm{EF,shell}}
  :=
  \int_{-\lambda^2}^{0}\int_{\lambda A_\chi}
  \left(
    |u_k|^2+|\nabla u_k|^2+|u_k|^3+|p_k||u_k|
  \right)\,dy\,ds .
\]
\end{definition}

\begin{remark}
The full pulled-back leakage includes the core \(L^2\) and dissipation
contributions because the terms
\(\partial_t\phi\) and \(\phi|\nabla u|^2\) are not supported only on the
spatial transition shell.  The shell functional records the part generated
by the spatial cutoff.  Using the shell-only quantity for the full local
energy expression requires an additional convention that the time-cutoff and
core dissipation terms have already been charged to a separate energy
budget.
\end{remark}

\begin{theorem}[Energy/flux localization update under one-step renormalization]\label{thm:ef-loc-update}
Let \(D_k\) be an NS-generated scale-\(k\) package generated by data
satisfying, in normalized scale-\(k\) variables,
\[
  u_k\in L^\infty((-1,0);L^2(B_1))^3,\qquad
  \nabla u_k\in L^2(Q_1)^{3\times3},
\]
\[
  u_k\in L^3(Q_1)^3,\qquad
  p_k\in L^{3/2}(Q_1).
\]
Let \(D_{k+1}=\mathcal R_k(D_k)\) be obtained by restriction to
\((-\lambda^2,0)\times B_\lambda\), Navier--Stokes rescaling to \(Q_1\),
and recomputation of the next-scale package.  Then
\[
  \Leak_\phi^{\mathrm{EF}}(u_{k+1},p_{k+1})
  \le
  C_{\lambda,\chi,\theta}\,
  \Leak_{k\to k+1}^{\mathrm{EF}} .
\]
Consequently, if
\[
  \Delta_k^{\locerr,\mathrm{EF}}
  \ge
  C_{\lambda,\chi,\theta}\,
  \Leak_{k\to k+1}^{\mathrm{EF}},
\]
then the energy/flux localization component of the scale-\((k+1)\)
coordinate admissibility checklist is charged to the one-step ledger.
\end{theorem}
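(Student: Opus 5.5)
The estimate is a change of variables applied term by term to the five integrals in \(\Leak_\phi^{\mathrm{EF}}(u_{k+1},p_{k+1})\), as in the proof of \cref{prop:ef-localization}. The only new feature is the larger core domain \(\lambda B_1\) in \cref{def:ef-leak}, which makes the bookkeeping easier. Write \(y=\lambda Y\), \(s=\lambda^2 S\), so that \(dY\,dS=\lambda^{-5}\,dy\,ds\). The scalings are
\[
|u_{k+1}|^2=\lambda^2|u_k|^2,\quad |u_{k+1}|^3=\lambda^3|u_k|^3,\quad |p_{k+1}||u_{k+1}|=\lambda^3|p_k||u_k|,\quad |\nabla_Y u_{k+1}|^2=\lambda^4|\nabla_y u_k|^2 .
\]
Since \(\phi=\theta\chi\) is supported in \(B_1\times(-1,0]\), every integrand is supported in the pullback \((-\lambda^2,0)\times\lambda B_1\). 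This is exactly the restriction window, so the integrals on the right are finite by the standing hypotheses: \(L^\infty L^2\), \(\nabla u_k\in L^2\), \(u_k\in L^3\), and \(p_k|u_k|\in L^1\) by H\"older from \(L^{3/2}\times L^3\).

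The terms split into two groups.

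\textbf{Core terms.} These are \(|u_{k+1}|^2|\partial_S\phi|\) and \(2|\nabla u_{k+1}|^2\phi\). Here \(\partial_S\phi=\theta'(S)\chi(Y)\) and \(\phi\) are bounded and supported in \(\operatorname{supp}\chi\subset B_1\). This yields
\[
\lambda^{-3}\norm{\theta'}_{L^\infty}\norm{\chi}_{L^\infty}\int_{-\lambda^2}^0\!\int_{\lambda B_1}|u_k|^2
\quad\text{and}\quad
2\lambda^{-1}\norm{\phi}_{L^\infty}\int_{-\lambda^2}^0\!\int_{\lambda B_1}|\nabla u_k|^2 .
\]
Both are dominated by the first integral of \(\Leak_{k\to k+1}^{\mathrm{EF}}\).

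\textbf{Shell terms.} These are \(|u_{k+1}|^2|\Delta\phi|\), \(|u_{k+1}|^3|\nabla\phi|\) and \(2|p_{k+1}||u_{k+1}||\nabla\phi|\). The factors \(\Delta\phi=\theta\Delta\chi\) and \(\nabla\phi=\theta\nabla\chi\) are supported in \(A_\chi\), which pulls back to \(\lambda A_\chi\). This gives the factors \(\lambda^{-3}\), \(\lambda^{-2}\) and \(2\lambda^{-2}\), times \(\norm{\theta}_{L^\infty}\) and \(\norm{D^m\chi}_{L^\infty}\). These are dominated by the second integral of \(\Leak_{k\to k+1}^{\mathrm{EF}}\).

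Summing the five bounds, one may take
\[
C_{\lambda,\chi,\theta}=2\lambda^{-3}\max\bigl(\norm{\theta}_{W^{1,\infty}}\norm{\chi}_{W^{2,\infty}},1\bigr),
\]
using \(\lambda<1\) to absorb the smaller negative powers of \(\lambda\).

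For the consequence, the localization entry of \cref{def:coord-check} is charged by taking \(e_{\locerr}=\Leak_\phi^{\mathrm{EF}}(u_{k+1},p_{k+1})\). The displayed hypothesis on \(\Delta_k^{\locerr,\mathrm{EF}}\), combined with the bound just proved, gives \(\Delta_k^{\locerr,\mathrm{EF}}\ge e_{\locerr}\). This is exactly the componentwise inequality required in \cref{thm:coord-adm}.

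There is no real obstacle. The one point needing care is the time-derivative and dissipation terms. They live on the full support of \(\chi\), not on the shell, so they must be charged to the core integral over \(\lambda B_1\) and not to \(\Leak^{\mathrm{EF,shell}}_{k\to k+1}\). I would also note explicitly that the local energy inequality is not used for the estimate itself. It only justifies interpreting this functional as the localization channel.
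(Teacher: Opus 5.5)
Your proposal is correct and follows the paper's proof: a term-by-term change of variables $y=\lambda Y$, $s=\lambda^2 S$, with the integrands pulled back to $(-\lambda^2,0)\times\lambda B_1$ and $(-\lambda^2,0)\times\lambda A_\chi$, and the powers $\lambda^{-3},\lambda^{-2},\lambda^{-1}$ absorbed into $C_{\lambda,\chi,\theta}$. The only difference is harmless bookkeeping: you charge the $|u_{k+1}|^2|\Delta\phi|$ term to the shell $L^2$ integral over $\lambda A_\chi$, whereas the paper groups it with the $\partial_S\phi$ term and charges it to the core integral over $\lambda B_1$. Both are valid, since $\Leak_{k\to k+1}^{\mathrm{EF}}$ contains both integrals and $\lambda A_\chi\subset\lambda B_1$.
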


\begin{proof}
Use the change of variables
\[
  y=\lambda Y,\qquad s=\lambda^2S,\qquad dY\,dS=\lambda^{-5}\,dy\,ds.
\]
The rescaled fields satisfy
\[
  |u_{k+1}(Y,S)|=\lambda |u_k(y,s)|,\qquad
  |p_{k+1}(Y,S)|=\lambda^2 |p_k(y,s)|,
\]
and
\[
  |\nabla_Yu_{k+1}(Y,S)|
  =
  \lambda^2|\nabla_yu_k(y,s)|.
\]
The constants
\[
  \|\theta\|_{C^1},\qquad
  \|\chi\|_{C^2}
\]
control \(\abs{\partial_S\phi}\), \(\abs{\nabla_Y\phi}\), and
\(\abs{\Delta_Y\phi}\).  The terms containing \(\nabla_Y\chi\) or
\(\Delta_Y\chi\) are supported in \((-\lambda^2,0)\times\lambda A_\chi\)
after pullback.  The time-cutoff energy term and the dissipation term are
supported in \((-\lambda^2,0)\times\lambda B_1\).

Therefore
\[
  \int_{Q_1}|u_{k+1}|^2
  \left(\abs{\partial_S\phi}+\abs{\Delta_Y\phi}\right)\,dY\,dS
  \le
  C_{\lambda,\chi,\theta}
  \int_{-\lambda^2}^{0}\int_{\lambda B_1}|u_k|^2\,dy\,ds,
\]
\[
  \int_{Q_1}\left(|u_{k+1}|^3+2|p_{k+1}||u_{k+1}|\right)
  \abs{\nabla_Y\phi}\,dY\,dS
  \le
  C_{\lambda,\chi,\theta}
  \int_{-\lambda^2}^{0}\int_{\lambda A_\chi}
  \left(|u_k|^3+|p_k||u_k|\right)\,dy\,ds,
\]
and
\[
  2\int_{Q_1}|\nabla_Yu_{k+1}|^2\phi\,dY\,dS
  \le
  C_{\lambda,\chi,\theta}
  \int_{-\lambda^2}^{0}\int_{\lambda B_1}|\nabla_yu_k|^2\,dy\,ds.
\]
The constants absorb the fixed powers of \(\lambda\), including
\(\lambda^{-3}\), \(\lambda^{-2}\), and \(\lambda^{-1}\).  Adding the three
estimates gives the claimed bound.  The final assertion is the definition of
the localization component of the ledger.
\end{proof}

\begin{corollary}[Finite-amplitude localization bookkeeping]\label{cor:ef-finite-amplitude}
If \(\norm{u_k}_{L^3(Q_1)}\le M_U\), then
\[
  \int_{-\lambda^2}^{0}\int_{\lambda A_\chi}|p_k||u_k|\,dy\,ds
  \le
  \norm{p_k}_{L^{3/2}((-\lambda^2,0)\times\lambda A_\chi)}
  \norm{u_k}_{L^3((-\lambda^2,0)\times\lambda A_\chi)}
\]
and
\[
  \int_{-\lambda^2}^{0}\int_{\lambda A_\chi}|u_k|^3\,dy\,ds
  =
  \norm{u_k}_{L^3((-\lambda^2,0)\times\lambda A_\chi)}^3 .
\]
Thus the nonlinear flux and pressure-flux parts of
\(\Leak_{k\to k+1}^{\mathrm{EF}}\) are charged to the existing finite-window
velocity and pressure budgets.  No smallness is implied without additional
smallness assumptions.
\end{corollary}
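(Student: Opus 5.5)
The corollary makes two assertions: a Hölder bound for the pressure--flux term and an identity for the cubic flux term. Both are checked directly on the pulled-back shell cylinder
\[
  Q_{\lambda,\chi}:=(-\lambda^2,0)\times\lambda A_\chi\subset Q_1 ,
\]
which lies inside \(Q_1\) because \(A_\chi\subset B_1\) and \(0<\lambda<1\).

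\emph{Pressure--flux term.} Apply Hölder's inequality on \(Q_{\lambda,\chi}\) with the conjugate pair \((3/2,3)\), since \(\tfrac{2}{3}+\tfrac{1}{3}=1\). This gives
\[
  \int_{Q_{\lambda,\chi}}|p_k||u_k|
  \le
  \norm{p_k}_{L^{3/2}(Q_{\lambda,\chi})}\norm{u_k}_{L^3(Q_{\lambda,\chi})}.
\]
Both factors are finite because \(p_k\in L^{3/2}(Q_1)\) and \(u_k\in L^3(Q_1)^3\) are part of the hypotheses of \cref{thm:ef-loc-update}. Restriction to the subcylinder only decreases the norms, so
\[
  \norm{u_k}_{L^3(Q_{\lambda,\chi})}\le M_U
  \qquad\text{and}\qquad
  \norm{p_k}_{L^{3/2}(Q_{\lambda,\chi})}\le\norm{p_k}_{L^{3/2}(Q_1)}.
\]

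\emph{Cubic flux term.} The displayed equality is the definition of the \(L^3\) norm on \(Q_{\lambda,\chi}\). Monotonicity of the restriction then bounds it by \(\norm{u_k}_{L^3(Q_1)}^3\le M_U^3\).

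Together these two bounds show that the nonlinear parts of \(\Leak_{k\to k+1}^{\mathrm{EF}}\) are controlled by the existing finite-window amplitudes \(M_U\) and \(\norm{p_k}_{L^{3/2}(Q_1)}\). Consequently the ledger charge required in \cref{thm:ef-loc-update} is finite. The only point needing care is that no smallness is being claimed: the bounds are finite but not small, which is exactly what the final sentence of the corollary records. I see no genuine obstacle here; the only thing to confirm is the containment \(\lambda A_\chi\subset B_1\).
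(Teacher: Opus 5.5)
Your proposal is correct and follows the paper's proof: the pressure--flux bound is H\"older's inequality with the conjugate pair $(3/2,3)$, and the cubic identity is the definition of the $L^3$ norm on the pulled-back shell cylinder. Your extra remarks are consistent with the paper's reading that the finite-amplitude hypothesis records finiteness only, not smallness. These are the containment $\lambda A_\chi\subset B_1$ and the monotonicity of restriction, which give bounds by $M_U$ and $\norm{p_k}_{L^{3/2}(Q_1)}$.
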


\begin{proof}
The pressure-flux estimate is Holder's inequality with exponents \(3/2\)
and \(3\).  The cubic identity is the definition of the \(L^3\)-norm.  The
finite-amplitude hypothesis records that the global velocity factor is
finite when such a factor is needed elsewhere in the ledger; it does not
turn the displayed quantities into small errors.
\end{proof}

\begin{corollary}[Localization increment insertion]\label{cor:ef-loc-insertion}
In the energy/flux convention, set
\[
  e_{\locerr,k+1}^{\mathrm{EF}}
  :=
  C_{\lambda,\chi,\theta}\Leak_{k\to k+1}^{\mathrm{EF}}.
\]
If
\[
  \Delta_k^{\locerr}
  =
  \Delta_k^{\locerr,\mathrm{EF}}
  \ge
  e_{\locerr,k+1}^{\mathrm{EF}},
\]
then the localization entry in the scale-\((k+1)\) checklist is verified.
Consequently, if the pressure/source coordinates are preserved and the
synchronization, gate/slack, detector, chart, clean-gap, projection, harmonic,
and energy/flux localization entries are bounded by their assigned ledger
terms, then
\[
  D_{k+1}\in\mathcal A_{k+1}(\Delta_k).
\]
\end{corollary}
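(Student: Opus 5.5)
The statement has two parts, and both are assembled from results already proved. The first part is the localization entry. The second is the membership \(D_{k+1}\in\mathcal A_{k+1}(\Delta_k)\).

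For the first part I will invoke \cref{thm:ef-loc-update}. Its hypotheses are that \(u_k\) lies in the finite-energy, \(L^3\), and \(L^{3/2}\)-pressure classes, and these hold because \(D_k\) is NS-generated (entries (a)--(b) of \cref{def:broad-package}). That theorem gives
\[
  \Leak_\phi^{\mathrm{EF}}(u_{k+1},p_{k+1})\le C_{\lambda,\chi,\theta}\Leak_{k\to k+1}^{\mathrm{EF}}=e_{\locerr,k+1}^{\mathrm{EF}}.
\]
The next-scale localization mismatch is therefore bounded by \(e_{\locerr,k+1}^{\mathrm{EF}}\). Under the energy/flux convention \(e_{\locerr}=e_{\locerr}^{\mathrm{EF}}\) of \cref{def:coord-check}(c), the hypothesis \(\Delta_k^{\locerr,\mathrm{EF}}\ge e^{\mathrm{EF}}_{\locerr,k+1}\) is exactly the domination required in that checklist item. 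This settles the localization entry.

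For the second part I will assemble the full coordinate admissibility checklist of \cref{def:coord-check} and then apply \cref{thm:coord-adm} in its componentwise form.

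\begin{enumerate}[label=(\roman*)]
  \item Item (a), coordinate membership, comes from \cref{cor:pressure-source-one-step}, which rests on \cref{thm:ns-pressure-source-preservation} with \(r=r_{k+1}\) and the canonical source \(S_{k+1}=F^{\mathrm{act}}_{r_{k+1}}\). It gives \(U_{k+1}\in X_u\), \(S_{k+1}\in X_{\mathrm{src}}\), \(P^{\mathrm{act}}_{k+1}\in Y_{\prs}\), and \(P^{\harm}_{k+1}\in Y_{\harm}\), so \(e_u\) may be set to zero.
  \item For item (b) and the remaining channels, I will choose the excess entries equal to the corresponding assigned ledger terms. That is, \(e_{\sync}:=\Delta_k^{\sync}\) (justified by \cref{prop:broad-sync-insertion} and \cref{ass:sync}), \(e_{\gate}:=\Delta_k^{\gate}\) (\cref{prop:gate-slack-insertion}), \(e_{\detc}:=\Delta_k^{\detc}\) (e.g.\ \cref{cor:reduced-pfe-detector-ledger} or \cref{cor:det-insertion}), and \(e_{\mathrm{chart}},e_{\gap}\) from \cref{ass:reduced-detector}.
  \item For item (d), I will set \(e_{\prs}+e_{\mathrm{src}}:=\Delta_k^{\proj}\) and \(e_{\harm}:=\Delta_k^{\harm}\), using \cref{prop:proj-harm-insertion}.
\end{enumerate}

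With these choices every inequality in the componentwise display of \cref{thm:coord-adm} holds, and that theorem yields \(D_{k+1}\in\mathcal A_{k+1}(\Delta_k)\).

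The main obstacle is bookkeeping rather than analysis. The projection and harmonic errors carry two different indices: \cref{def:proj-harm-datum} writes \(\mathrm{ProjErr}_k\), while \cref{def:coord-check} writes \(\mathrm{ProjErr}_{k+1}\). I will state explicitly that both denote the same next-scale tail of the selected clean and harmonic observations. Once that identification is made, the domination hypotheses line up with the checklist. Nothing beyond the assumed ledger bounds is claimed: in particular, no smallness or summability of any component is asserted.
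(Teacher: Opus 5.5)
Your proposal is correct and follows the paper's proof: the first assertion is \cref{thm:ef-loc-update} with the leakage bound named as the checklist excess $e_{\locerr,k+1}^{\mathrm{EF}}$, and the second is \cref{thm:coord-adm} applied componentwise with the remaining entries charged to their ledger components. Your extra bookkeeping, namely choosing each excess equal to its ledger term and identifying $\mathrm{ProjErr}_k$ with $\mathrm{ProjErr}_{k+1}$, only makes explicit what the paper's two-line proof leaves implicit.
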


\begin{proof}
The first assertion is exactly \cref{thm:ef-loc-update} with the
localization entry named as a checklist excess.  The final assertion is
\cref{thm:coord-adm} applied with this localization entry and the other
entries assigned to the remaining components of \(\Delta_k\).
\end{proof}

\begin{remark}
\Cref{thm:ef-loc-update} is a finite-window localization bookkeeping result.
It proves that the localization leakage generated by one restriction and
rescaling step can be assigned to the one-step ledger.  It does not prove
that the leakage is small, summable, or scale-uniform.
\end{remark}

\subsection{Coefficient update and finite-window positivity}

The one-step audit theorem uses the coefficient update
\[
  c_{k+1}\ge c_k-\eta_k.
\]
At the finite-window level this update is an algebraic consequence of how the
clean gap, chart visibility, and residual-loss coefficient change from one
renormalized window to the next.

\begin{definition}[Scale coefficient drift]\label{def:coeff-drift}
Write
\[
  \mu_k:=\mu_{\Lambda,k}^{\comp},
  \qquad
  \lambda_k:=\lambda_{G,k},
  \qquad
  \ell_k:=\mathcal L_k^{\mathrm{res}},
\]
and
\[
  c_k:=\mu_k\lambda_k-\ell_k.
\]
The coefficient drift from scale \(k\) to \(k+1\) is controlled by
nonnegative numbers
\[
  \delta_{\mu,k},\qquad
  \delta_{\lambda,k},\qquad
  \delta_{\ell,k},
\]
if
\[
  \mu_{k+1}\ge(\mu_k-\delta_{\mu,k})_+,
  \qquad
  \lambda_{k+1}\ge(\lambda_k-\delta_{\lambda,k})_+,
  \qquad
  \ell_{k+1}\le \ell_k+\delta_{\ell,k}.
\]
Define the induced coefficient-loss allowance
\[
  \eta_k^{\mathrm{coeff}}
  :=
  \mu_k\lambda_k
  -
  (\mu_k-\delta_{\mu,k})_+
  (\lambda_k-\delta_{\lambda,k})_+
  +
  \delta_{\ell,k}.
\]
\end{definition}

\begin{theorem}[Finite-window coefficient update]\label{thm:coeff-update}
Assume the coefficient drift bounds in \cref{def:coeff-drift}.  Then
\[
  c_{k+1}\ge c_k-\eta_k^{\mathrm{coeff}}.
\]
Consequently, the abstract update rule \eqref{eq:audit-c-update} holds whenever
\[
  \eta_k\ge\eta_k^{\mathrm{coeff}}.
\]
\end{theorem}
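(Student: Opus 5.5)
The plan is a direct algebraic chain. I expand \(c_{k+1}=\mu_{k+1}\lambda_{k+1}-\ell_{k+1}\), bound the product term from below and the residual-loss term from above using the drift bounds of \cref{def:coeff-drift}, and then regroup so that \(c_k\) and \(\eta_k^{\mathrm{coeff}}\) appear.

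First I treat the product. The drift bounds give \(\mu_{k+1}\ge(\mu_k-\delta_{\mu,k})_+\ge0\) and \(\lambda_{k+1}\ge(\lambda_k-\delta_{\lambda,k})_+\ge0\). In particular \(\mu_{k+1}\) and \(\lambda_{k+1}\) are nonnegative; this holds automatically and is not an extra assumption. I then use the elementary monotonicity of multiplication on \([0,\infty)\): if \(a\ge a'\ge0\) and \(b\ge b'\ge0\), then \(ab\ge a'b\ge a'b'\). This yields
\[
  \mu_{k+1}\lambda_{k+1}\ge(\mu_k-\delta_{\mu,k})_+(\lambda_k-\delta_{\lambda,k})_+ .
\]
This is the only point needing care. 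The positive parts in \cref{def:coeff-drift} are exactly what make the multiplication step legitimate, since both factors are nonnegative without any sign hypothesis on \(\mu_k-\delta_{\mu,k}\) or \(\lambda_k-\delta_{\lambda,k}\).

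Second, the residual bound \(\ell_{k+1}\le\ell_k+\delta_{\ell,k}\) gives \(-\ell_{k+1}\ge-\ell_k-\delta_{\ell,k}\). Adding this to the product bound, I get
\[
  c_{k+1}\ge(\mu_k-\delta_{\mu,k})_+(\lambda_k-\delta_{\lambda,k})_+-\ell_k-\delta_{\ell,k}.
\]
Next I add and subtract \(\mu_k\lambda_k\), so the right-hand side becomes \(\mu_k\lambda_k-\ell_k-\eta_k^{\mathrm{coeff}}=c_k-\eta_k^{\mathrm{coeff}}\) by the definition of \(\eta_k^{\mathrm{coeff}}\). This proves the first assertion.

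Finally, for the consequence: if \(\eta_k\ge\eta_k^{\mathrm{coeff}}\), then \(c_{k+1}\ge c_k-\eta_k^{\mathrm{coeff}}\ge c_k-\eta_k\). This is the abstract update rule \(c_{k+1}\ge c_k-\eta_k\) referenced as \eqref{eq:audit-c-update}. No PDE input is required at any step.
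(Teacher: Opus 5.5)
Your proposal is correct and follows the same route as the paper's proof: bound the product term below by the product of positive parts, bound \(\ell_{k+1}\) above, and regroup to obtain \(c_k-\eta_k^{\mathrm{coeff}}\). Your one addition is to spell out a step the paper leaves implicit, namely that the product inequality is valid because \(\mu_{k+1}\ge(\mu_k-\delta_{\mu,k})_+\ge0\) and \(\lambda_{k+1}\ge(\lambda_k-\delta_{\lambda,k})_+\ge0\).
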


\begin{proof}
By the drift assumptions,
\[
  c_{k+1}
  =
  \mu_{k+1}\lambda_{k+1}-\ell_{k+1}
  \ge
  (\mu_k-\delta_{\mu,k})_+
  (\lambda_k-\delta_{\lambda,k})_+
  -
  \ell_k-\delta_{\ell,k}.
\]
Since \(c_k=\mu_k\lambda_k-\ell_k\), the right-hand side equals
\[
  c_k
  -
  \left[
    \mu_k\lambda_k
    -
    (\mu_k-\delta_{\mu,k})_+
    (\lambda_k-\delta_{\lambda,k})_+
    +
    \delta_{\ell,k}
  \right]
  =
  c_k-\eta_k^{\mathrm{coeff}}.
\]
This proves the estimate.  The final assertion follows by choosing the
update-rule loss \(\eta_k\) at least as large as the displayed coefficient
loss.
\end{proof}

\begin{corollary}[Linear coefficient-loss bound]\label{cor:linear-coeff-loss}
If
\[
  0\le\delta_{\mu,k}\le\mu_k,
  \qquad
  0\le\delta_{\lambda,k}\le\lambda_k,
\]
then
\[
  \eta_k^{\mathrm{coeff}}
  \le
  \mu_k\delta_{\lambda,k}
  +
  \lambda_k\delta_{\mu,k}
  +
  \delta_{\ell,k}.
\]
If, in addition, \(\mu_k\le\overline\mu_k\) and
\(\lambda_k\le\overline\lambda_k\), then the simpler bound
\[
  \eta_k^{\mathrm{coeff}}
  \le
  \overline\mu_k\delta_{\lambda,k}
  +
  \overline\lambda_k\delta_{\mu,k}
  +
  \delta_{\ell,k}
\]
also holds.
\end{corollary}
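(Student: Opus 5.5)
Write \(a:=\mu_k\), \(b:=\lambda_k\), \(\alpha:=\delta_{\mu,k}\), \(\beta:=\delta_{\lambda,k}\). Under the hypotheses \(0\le\alpha\le a\) and \(0\le\beta\le b\), both positive parts in \cref{def:coeff-drift} are inactive, so \((a-\alpha)_+=a-\alpha\) and \((b-\beta)_+=b-\beta\). Substituting into the definition of \(\eta_k^{\mathrm{coeff}}\), I will expand the product:
\[
  ab-(a-\alpha)(b-\beta)=a\beta+b\alpha-\alpha\beta .
\]
This gives the exact identity
\[
  \eta_k^{\mathrm{coeff}}
  =
  \mu_k\delta_{\lambda,k}+\lambda_k\delta_{\mu,k}-\delta_{\mu,k}\delta_{\lambda,k}+\delta_{\ell,k}.
\]

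Since \(\delta_{\mu,k}\delta_{\lambda,k}\ge0\), dropping the cross term yields the first bound,
\[
  \eta_k^{\mathrm{coeff}}\le\mu_k\delta_{\lambda,k}+\lambda_k\delta_{\mu,k}+\delta_{\ell,k}.
\]
For the simpler bound, I multiply \(\mu_k\le\overline\mu_k\) by \(\delta_{\lambda,k}\ge0\) and \(\lambda_k\le\overline\lambda_k\) by \(\delta_{\mu,k}\ge0\), then add \(\delta_{\ell,k}\) to both sides.

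There is no real obstacle here. The only point needing care is the first step: the hypotheses \(\delta_{\mu,k}\le\mu_k\) and \(\delta_{\lambda,k}\le\lambda_k\) are exactly what remove the positive parts, and this implicitly uses \(\mu_k,\lambda_k\ge0\), which those same inequalities force. Without those hypotheses the clipped product could be smaller, and the linear bound would have to be argued separately.
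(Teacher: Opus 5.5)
Your proposal is correct and follows essentially the same argument as the paper. The size hypotheses make both positive parts inactive, the product expands to \(\mu_k\delta_{\lambda,k}+\lambda_k\delta_{\mu,k}-\delta_{\mu,k}\delta_{\lambda,k}+\delta_{\ell,k}\), you drop the nonnegative cross term, and then you multiply the upper bounds \(\mu_k\le\overline\mu_k\), \(\lambda_k\le\overline\lambda_k\) by the nonnegative drifts to get the second bound.
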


\begin{proof}
Under the displayed size assumptions,
\[
  (\mu_k-\delta_{\mu,k})_+
  (\lambda_k-\delta_{\lambda,k})_+
  =
  (\mu_k-\delta_{\mu,k})(\lambda_k-\delta_{\lambda,k}).
\]
Therefore
\[
  \eta_k^{\mathrm{coeff}}
  =
  \mu_k\delta_{\lambda,k}
  +
  \lambda_k\delta_{\mu,k}
  -
  \delta_{\mu,k}\delta_{\lambda,k}
  +
  \delta_{\ell,k}
  \le
  \mu_k\delta_{\lambda,k}
  +
  \lambda_k\delta_{\mu,k}
  +
  \delta_{\ell,k}.
\]
The upper-bound formulation follows immediately.
\end{proof}

\begin{corollary}[Finite-chain positivity persistence]\label{cor:coeff-positivity}
If
\[
  c_0>\sum_{j=0}^{K-1}\eta_j^{\mathrm{coeff}},
\]
then
\[
  c_k
  \ge
  c_0-\sum_{j=0}^{k-1}\eta_j^{\mathrm{coeff}}>0
  \qquad\text{for }0\le k\le K.
\]
In particular,
\[
  c_K^{\min}
  :=
  \min_{0\le k\le K}c_k
  \ge
  c_0-\sum_{j=0}^{K-1}\eta_j^{\mathrm{coeff}}.
\]
\end{corollary}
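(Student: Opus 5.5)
The plan is a telescoping induction on $k$ driven by the one-step bound of \cref{thm:coeff-update}. That theorem gives $c_{j+1}\ge c_j-\eta_j^{\mathrm{coeff}}$ for every $j$, and we iterate it along the chain.

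\emph{Step 1: the lower bound.} We prove by induction that
\[
  c_k\ge c_0-\sum_{j=0}^{k-1}\eta_j^{\mathrm{coeff}},\qquad 0\le k\le K.
\]
For $k=0$ the sum is empty and the claim is trivial. For the inductive step, assume the bound at $k<K$. Combine \cref{thm:coeff-update} at index $k$ with the inductive hypothesis:
\[
  c_{k+1}\ge c_k-\eta_k^{\mathrm{coeff}}
  \ge c_0-\sum_{j=0}^{k}\eta_j^{\mathrm{coeff}} .
\]
This is the claim at $k+1$.

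\emph{Step 2: strict positivity.} First observe that each $\eta_j^{\mathrm{coeff}}$ is nonnegative. The drift $\delta_{\ell,j}$ is nonnegative. Also $(\mu_j-\delta_{\mu,j})_+\le\mu_j$ and $(\lambda_j-\delta_{\lambda,j})_+\le\lambda_j$. Here we use that $\mu_j,\lambda_j\ge0$, which holds because they are a compactness/visibility constant and a gap constant. Multiplying these two bounds gives $\mu_j\lambda_j-(\cdot)_+(\cdot)_+\ge0$.

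Since each term is nonnegative, the partial sums are monotone in $k$. Hence for every $k\le K$,
\[
  c_0-\sum_{j<k}\eta_j^{\mathrm{coeff}}\ \ge\ c_0-\sum_{j<K}\eta_j^{\mathrm{coeff}}\ >\ 0,
\]
where the last inequality is the hypothesis. This gives the strict positivity.

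\emph{Step 3: the minimum.} Take the minimum over $0\le k\le K$ of the Step 1 bound. Each right-hand side is at least the $k=K$ value, so
\[
  c_K^{\min}\ge c_0-\sum_{j=0}^{K-1}\eta_j^{\mathrm{coeff}}.
\]

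The only point requiring care is the nonnegativity of $\eta_j^{\mathrm{coeff}}$, which rests on $\mu_j,\lambda_j\ge0$. If these are not taken as standing conventions, the uniform positivity claim should be justified differently: apply the hypothesis at each truncated level $k$ directly. The inductive bound itself does not need nonnegativity at all.
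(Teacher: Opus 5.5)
Your proof is correct and is essentially the paper's argument (iterate \cref{thm:coeff-update}, then use the hypothesis and take the minimum). You also make explicit that $\eta_j^{\mathrm{coeff}}\ge0$, from $\mu_j,\lambda_j\ge0$ since both are infima of nonnegative functionals, which the paper uses silently in both the positivity claim for $k<K$ and the bound on $c_K^{\min}$.

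Drop your closing fallback, though. The hypothesis is given only at level $K$, so it cannot be ``applied at each truncated level''. Without a sign on the $\eta_j$, the positivity and minimum claims genuinely fail: $\eta_0=10$, $\eta_1=-10$, $c_0=1$, $K=2$ satisfies the hypothesis but gives $c_0-\eta_0<0$.
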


\begin{proof}
Iterating \cref{thm:coeff-update} gives
\[
  c_k\ge c_0-\sum_{j=0}^{k-1}\eta_j^{\mathrm{coeff}}.
\]
The assumed strict inequality makes the right-hand side positive for every
\(k\le K\).  Taking the minimum over \(k\) gives the displayed lower bound for
\(c_K^{\min}\).
\end{proof}

\begin{remark}
\Cref{thm:coeff-update} is purely finite-window algebra.  It does not prove
that \(\mu_k\), \(\lambda_k\), or \(\ell_k\) are scale-uniform.  It only
identifies the exact coefficient loss that must be included in the recursive
ledger when these quantities drift from one scale to the next.
\end{remark}

\section{Broad One-Step Admissibility Theorem}
\label{sec:main-results}

\subsection{Working one-step theorem}

\begin{theorem}[NS-generated one-step admissibility under explicit ledger assignment]
\label{thm:main}
Let \(D_k\in\A_k^{\NS}\) be generated by pressure-admissible finite-energy
Navier--Stokes data on the normalized scale-\(k\) window.  Let
\[
  D_{k+1}=\mathcal R_kD_k
\]
be obtained by restriction, Navier--Stokes rescaling, synchronization, and
recomputation of the scale-\((k+1)\) package.  Assume:
\begin{enumerate}[label=(\alph*)]
  \item pressure/source preservation holds as in
  \cref{prop:pressure-source-preservation};
  \item the energy/flux localization component is charged as in
  \cref{prop:ef-localization};
  \item pressure recomputation is charged by the commutator estimate in
  \cref{prop:commutator};
  \item synchronization uses the broad near-minimizer synchronization loss
  convention in \cref{def:broad-sync}, with
  \(\Delta_k^{\sync}\ge\mathrm{SyncErr}_k\), so that
  \cref{prop:broad-sync-insertion} applies;
  \item projection and harmonic tails use the finite-window datum in
  \cref{def:proj-harm-datum}, with
  \(\Delta_k^{\proj}\ge\mathrm{ProjErr}_k\) and
  \(\Delta_k^{\harm}\ge\mathrm{HarmErr}_k\), so that
  \cref{prop:proj-harm-insertion} applies;
  \item gate/slack uses the finite-window datum in
  \cref{def:gate-slack-datum}, with
  \(\Delta_k^{\gate}\ge\mathrm{GateErr}_k\), so that
  \cref{prop:gate-slack-insertion} applies;
  \item the detector channel is realized by the reduced PFE detector
  comparison in \cref{thm:reduced-pfe-detector-comparison}, with
  \(\Delta_k^{\detc}\ge\mathrm{DetErr}_k^{\mathrm{PFE}}\), so that
  \cref{cor:reduced-pfe-detector-ledger} applies;
  \item chart and clean-gap mismatches are assigned as in
  \cref{ass:reduced-detector}.
\end{enumerate}
Then
\[
  D_{k+1}\in\A_{k+1}^{\NS}(\Delta_k),
\]
where
\[
  \Delta_k
  =
  \Delta_k^{\sync}
  +\Delta_k^{\mathrm{loc},\EF}
  +\Delta_k^{\proj}
  +\Delta_k^{\harm}
  +\Delta_k^{\chart}
  +\Delta_k^{\gap}
  +\Delta_k^{\gate}
  +\Delta_k^{\detc}.
\]
\end{theorem}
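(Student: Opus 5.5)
The proof is a bookkeeping argument: reduce the statement to \cref{thm:coord-adm}, whose conclusion \(D_{k+1}\in\mathcal A_{k+1}(\Delta_k)\) is exactly the membership \(D_{k+1}\in\A_{k+1}^{\NS}(\Delta_k)\) of \cref{def:admissibility-with-ledger}. So the plan is to verify the coordinate admissibility checklist of \cref{def:coord-check} for \(D_{k+1}=\mathcal R_kD_k\). For each channel I will name an excess entry \(e_\bullet\) and show it is dominated by the matching component of \(\Delta_k\). Summing the componentwise bounds then gives the displayed decomposition of \(\Delta_k\), which is \cref{def:one-step-ledger}.

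\emph{Step 1: PDE-generated channels.} Apply hypothesis (a), in the form \cref{thm:ns-pressure-source-preservation} and \cref{cor:pressure-source-one-step} with \(r=r_{k+1}\) and \(S_{k+1}=F^{\mathrm{act}}_{r_{k+1}}\). This gives checklist item (a): \(U_{k+1}\in X_u\), \(S_{k+1}\in X_{\mathrm{src}}\), \(P^{\mathrm{act}}_{k+1}\in Y_{\prs}\), \(P^{\harm}_{k+1}\in Y_{\harm}\). Item (c), the localization entry, comes from hypothesis (b) through \cref{prop:ef-localization} and \cref{thm:ef-loc-update}. I set \(e_{\locerr}:=C_{\lambda,\chi,\theta}\Leak^{\EF}_{k\to k+1}\le\Delta_k^{\mathrm{loc},\EF}\), exactly as in \cref{cor:ef-loc-insertion}.

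Pressure recomputation, hypothesis (c), needs care. The recomputed active pressure on \(B_{1/2}\) differs from the transported one by the commutator \([\eta,R_iR_j](u_{k+1,i}u_{k+1,j})\). \cref{prop:commutator} bounds this commutator by the \(L^{3/2}\) mass of \(u_{k+1}\otimes u_{k+1}\) on the annulus \(A_\eta\). I will charge it to the projection channel, i.e.\ include it in \(e_{\prs}+e_{\mathrm{src}}\) with \(\mathrm{ProjErr}\le e_{\prs}+e_{\mathrm{src}}\le\Delta_k^{\proj}\). This keeps the PDE-generated mismatch inside a named component rather than creating an unnamed one.

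\emph{Step 2: structural channels.} Each of the remaining hypotheses supplies one component directly:
\begin{itemize}
\item (d) with \cref{prop:broad-sync-insertion} gives \(e_{\sync}=\mathrm{SyncErr}_k\le\Delta_k^{\sync}\);
\item (e) with \cref{prop:proj-harm-insertion} gives the projection and harmonic-tail bounds of checklist item (d), the projection entry absorbing the commutator term from Step 1;
\item (f) with \cref{prop:gate-slack-insertion} gives \(e_{\gate}=\mathrm{GateErr}_k\le\Delta_k^{\gate}\);
\item (g) with \cref{cor:reduced-pfe-detector-ledger} gives \(e_{\detc}=\mathrm{DetErr}_k^{\mathrm{PFE}}\le\Delta_k^{\detc}\);
\item (h), i.e.\ \cref{ass:reduced-detector}, gives \(e_{\mathrm{chart}}\le\Delta_k^{\chart}\) and \(e_{\gap}\le\Delta_k^{\gap}\).
\end{itemize}
With every entry of \(\mathbf e_{k+1}\) now dominated componentwise, \cref{thm:coord-adm} applies and yields the conclusion.

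\emph{Expected obstacle.} The main difficulty is one of consistency rather than estimation. The structural insertions (d) and (e) are indexed as \(\mathrm{ProjErr}_k\) and \(\mathrm{HarmErr}_k\), whereas the checklist uses \(\mathrm{ProjErr}_{k+1}\); I will identify these as the same next-scale quantity under the convention of \cref{def:proj-harm-datum}. The commutator charge must also genuinely fit inside \(\Delta_k^{\proj}\). If it does not, I would first try to enlarge \(\Delta_k^{\proj}\) by \(C_\eta\|u_{k+1}\|_{L^3(A_\eta)}^2\) as an explicit additive term, which stays within the stated structure of \(\Delta_k\). Finally, I must check that the remaining defining entries of \(\A^{\NS}_{k+1}\) are not charged to any component, since they are exact. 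These are the local energy inequality, which is scaling invariant, together with finite local energy and the fixed cutoff conventions.
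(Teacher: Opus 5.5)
Your route is essentially the paper's. Its proof is the same channel-by-channel assembly, concluding directly from the definition of $\A_{k+1}^{\NS}(\Delta_k)$:
\begin{itemize}
\item pressure/source preservation on the rescaled data gives the coordinate entries;
\item the rescaling estimate gives $\Delta_k^{\mathrm{loc},\EF}$;
\item \cref{prop:broad-sync-insertion,prop:proj-harm-insertion,prop:gate-slack-insertion,cor:reduced-pfe-detector-ledger,ass:reduced-detector} give the structural components.
\end{itemize}
Factoring the last step through \cref{thm:coord-adm} is what the paper itself does in \cref{cor:ef-loc-insertion}. Your separate check that finite local energy and the local energy inequality survive restriction and rescaling covers entries of $\A_{k+1}^{\NS}$ that \cref{thm:coord-adm} does not address. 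Your identification of $\mathrm{ProjErr}_k$ with the next-scale quantity is consistent with \cref{def:proj-harm-datum}.

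One step needs correcting. Bundling the commutator $[\eta,R_iR_j](u_{k+1,i}u_{k+1,j})$ into $e_{\prs}+e_{\mathrm{src}}$ requires
$\Delta_k^{\proj}\ge\mathrm{ProjErr}_k+C_\eta\norm{u_{k+1}\otimes u_{k+1}}_{L^{3/2}((-1,0)\times A_\eta)}$.
Hypothesis (e) only supplies $\Delta_k^{\proj}\ge\mathrm{ProjErr}_k$. Your fallback of enlarging $\Delta_k^{\proj}$ would therefore only prove the conclusion for ledgers above that larger threshold, not for every $\Delta_k^{\proj}$ allowed by (e).

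The paper avoids this by letting hypothesis (c) itself serve as the charging assumption for the pressure recomputation loss (``the commutator estimate assigns the pressure recomputation loss''). \Cref{def:coord-check} has no separate recomputation entry, so nothing forces you to fold the commutator into $\mathrm{ProjErr}$. Take $e_{\prs}+e_{\mathrm{src}}$ to cover $\mathrm{ProjErr}_k$ only, and cite (c) for the commutator.
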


\begin{remark}[Status of the theorem]
This is a conditional finite-window theorem.  The pressure/source,
energy/flux localization, and commutator modules in \cref{sec:pde}, together
with the reduced PFE detector comparison in \cref{sec:structural}, are proved
at fixed finite-window level under their stated hypotheses.  The chart and
clean-gap assignments in \cref{sec:structural} remain structural inputs unless
separately verified.
\end{remark}

\begin{proof}
The proof is an assembly of the preceding modules.  Applied to the rescaled
next-scale data, the pressure/source proposition verifies the active pressure,
source, and harmonic pressure coordinate entries at scale \(k+1\).  The local energy inequality and
rescaling estimate assign all localization terms to
\(\Delta_k^{\mathrm{loc},\EF}\).  The commutator estimate assigns the pressure
recomputation loss.  Broad synchronization is inserted by
\cref{prop:broad-sync-insertion}.  The projection and harmonic tails are
inserted by \cref{prop:proj-harm-insertion}.  The gate/slack entry is inserted
by \cref{prop:gate-slack-insertion}.  The detector channel is inserted by the
reduced PFE detector comparison in \cref{cor:reduced-pfe-detector-ledger}.
The remaining chart and clean-gap terms are inserted through
\cref{ass:reduced-detector}.  With every defining mismatch of
\(\A_{k+1}^{\NS}\) dominated by the corresponding component of \(\Delta_k\),
the package belongs to \(\A_{k+1}^{\NS}(\Delta_k)\).
\end{proof}

\subsection{Insertion into the recursive finite-chain theorem}

\begin{corollary}[One-step input for recursive audit chains]
\label{cor:recursive-insertion}
Assume the finite recursive audit theorem from the preceding paper applies to
any finite chain satisfying \(D_{k+1}\in\A_{k+1}^{\NS}(\Delta_k)\).  If the
hypotheses of \cref{thm:main} hold for each \(k=0,\ldots,K-1\), then the chain
is admissible for the finite recursive audit theorem in \cref{sec:finite-chain-recursive}.
\end{corollary}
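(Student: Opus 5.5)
The corollary follows directly from \cref{thm:main} together with the conditional hypothesis on the finite recursive audit theorem. I would argue step by step along the chain. Fix \(K\) and the initial package \(D_0\in\A_0^{\NS}\). For each \(k=0,\ldots,K-1\), set \(D_{k+1}:=\mathcal R_kD_k\) as in \cref{def:one-step-map}, and apply \cref{thm:main} at index \(k\). The conclusion is \(D_{k+1}\in\A_{k+1}^{\NS}(\Delta_k)\), with \(\Delta_k\) the componentwise ledger of \cref{def:one-step-ledger}. The whole chain \((D_0,\ldots,D_K)\) is then the object to which the assumed recursive theorem is applied.

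The one point needing care is that \cref{thm:main} at index \(k+1\) requires \(D_{k+1}\in\A_{k+1}^{\NS}\) generated by pressure-admissible finite-energy Navier--Stokes data. It does not suffice to have membership only in the enlarged class \(\A_{k+1}^{\NS}(\Delta_k)\). I would handle this by noting that the underlying PDE data of \(D_{k+1}\) is the restriction and rescaling \((u_{k+1},p_{k+1})\) of the original solution. Pressure admissibility and the \(L^3\times L^{3/2}\) membership are preserved by \cref{thm:ns-pressure-source-preservation}, applied with \(r=r_{k+1}\). Finite local energy and the local energy inequality are preserved under restriction and Navier--Stokes scaling, since the inequality is scale-invariant and restriction only shrinks the class of test functions.

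The enlargement by \(\Delta_k\) concerns only the audit, ledger and quotient coordinates, namely synchronization, chart, gap, gate, and detector. It does not affect the PDE-generation hypotheses. Hence the hypothesis "generated by pressure-admissible finite-energy data on the normalized window" holds at every scale. Hypotheses (a)--(h) of \cref{thm:main} at each \(k\) are assumed in the corollary, so the induction closes.

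With the chain relations \(D_{k+1}\in\A_{k+1}^{\NS}(\Delta_k)\) established for all \(k\le K-1\), the assumed applicability of the finite recursive audit theorem of \cref{sec:finite-chain-recursive} gives admissibility directly. I expect the main obstacle to be only the bookkeeping point above, namely separating the PDE-generation part of the class from the ledger-enlarged coordinate part. I would state this separation explicitly rather than leave it implicit in \cref{def:admissibility-with-ledger}.
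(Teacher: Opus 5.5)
Your argument is correct and is essentially the paper's own reasoning: the paper gives no written proof and treats the corollary as an insertion statement, whose content is exactly that \cref{thm:main} at each $k=0,\ldots,K-1$ yields $D_{k+1}\in\A_{k+1}^{\NS}(\Delta_k)$, after which the assumed applicability of the recursive theorem gives admissibility of the chain. Your inductive re-derivation of $D_{k+1}\in\A_{k+1}^{\NS}$ is unnecessary, because membership $D_k\in\A_k^{\NS}$ is already among the hypotheses of \cref{thm:main} that the corollary assumes for every $k\le K-1$; your preservation argument also covers only the PDE entries of \cref{def:broad-package}, not the audit-coordinate entries, so it could not replace that hypothesis in any case.
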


\begin{remark}[Status]
\Cref{cor:recursive-insertion} is an insertion statement.  It does not prove
scale-uniformity or summability of \(\Delta_k\).
\end{remark}

\section{Static Audit Certificates and Recursive Update Rules}
\label{sec:static-recursive-rules}

The broad one-step theorem supplies admissible transitions.  The recursive audit theorem also needs a static finite-window certificate on each scale and an error/coefficient update rule.

\begin{definition}[Scale audit certificate]\label{def:scale-audit-certificate}
A scale-
\(k\) audit certificate is valid if
\begin{equation}\label{eq:scale-audit-cert}
  M_k\ge c_k\delta_k-E_k,
\end{equation}
where \(M_k\) is the localized detector value, \(\delta_k\) is the baseline quotient defect distance, \(E_k\) is the total residual/error budget, and \(c_k\ge0\) is the finite-window audit coefficient.
\end{definition}

\begin{assumption}[Recursive update rules]\label{ass:recursive-update-rules}
For each step \(k\), assume constants \(a_k,b_k,\eta_k\ge0\) such that
\begin{equation}\label{eq:recursive-E-update}
  E_{k+1}\le a_kE_k+b_k\Delta_k,
\end{equation}
and
\begin{equation}\label{eq:audit-c-update}
  c_{k+1}\ge c_k-\eta_k.
\end{equation}
When detector stability is needed as a separate upper-control channel, assume also constants \(A_k,B_k,C_k\ge0\) such that
\begin{equation}\label{eq:audit-M-stability}
  M_{k+1}\le A_kM_k+B_kE_k+C_k\Delta_k.
\end{equation}
\end{assumption}

\begin{theorem}[One-step audit propagation]\label{thm:one-step-audit-propagation}
Let \(D_k\in\A_k^{\NS}\), let \(D_{k+1}=R_kD_k\), and assume the one-step admissibility conclusion
\[
  D_{k+1}\in\A_{k+1}^{\NS}(\Delta_k).
\]
Assume that the scale-\((k+1)\) static finite-window audit theorem applies to \(D_{k+1}\), giving
\[
  M_{k+1}\ge c_{k+1}\delta_{k+1}-E_{k+1}.
\]
Then \(D_{k+1}\) carries a valid audit certificate.  Moreover the coefficient and residual budget obey the recursive controls \eqref{eq:recursive-E-update} and \eqref{eq:audit-c-update}.  Consequently, if the hypotheses hold for \(k=0,\ldots,K-1\), every package in the finite chain carries a valid audit certificate with explicitly propagated error and coefficient loss.
\end{theorem}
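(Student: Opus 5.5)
The argument is essentially bookkeeping. I would split it into three parts: certificate validity at scale \(k+1\), the recursive controls, and induction along the finite chain.

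\emph{Certificate validity.} By the one-step admissibility hypothesis, \(D_{k+1}\in\A_{k+1}^{\NS}(\Delta_k)\). This means the defining coordinate entries of \(\A_{k+1}^{\NS}\) hold at scale \(k+1\), and every named mismatch is charged to its component of \(\Delta_k\). In particular, the scale-\((k+1)\) quantities \(M_{k+1}\), \(\delta_{k+1}\) (defined as in \eqref{eq:delta-k}), \(E_{k+1}\) and \(c_{k+1}\) are well defined. The assumed static audit inequality \(M_{k+1}\ge c_{k+1}\delta_{k+1}-E_{k+1}\) is then literally \eqref{eq:scale-audit-cert} at index \(k+1\), so \cref{def:scale-audit-certificate} gives validity. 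The only point to check is \(c_{k+1}\ge0\). If the static theorem does not already supply this, I would replace \(c_{k+1}\) by \((c_{k+1})_+\). This only weakens the inequality, because \(\delta_{k+1}\ge0\) is a distance.

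\emph{Recursive controls.} I would not derive \eqref{eq:recursive-E-update} from the PDE. It is imported from \cref{ass:recursive-update-rules} and made concrete by the one-step ledger. Since \(\Delta_k\) dominates every channel mismatch created by \(\mathcal R_k\) (by \cref{thm:main}, or \cref{thm:coord-adm}), the new residual is the transported old residual plus the newly charged increments. That justifies a bound of the form \(E_{k+1}\le a_kE_k+b_k\Delta_k\). For the coefficient update \eqref{eq:audit-c-update}, I would invoke \cref{thm:coeff-update} with \(\eta_k\ge\eta_k^{\mathrm{coeff}}\) under the drift bounds of \cref{def:coeff-drift}. If detector stability is wanted, it comes from \cref{thm:det-stability} in the reduced model.

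\emph{Finite chain.} I would induct on \(k\). The base case is the static certificate at scale \(0\), which is \eqref{eq:static-audit-fw}. The inductive step applies the argument above for each \(k=0,\dots,K-1\). Iterating the two recursions gives the explicit bounds
\[
E_k\le\Bigl(\prod_{j<k}a_j\Bigr)E_0+\sum_{i<k}\Bigl(\prod_{i<j<k}a_j\Bigr)b_i\Delta_i,
\qquad
c_k\ge c_0-\sum_{j<k}\eta_j .
\]
These identify the propagated error and the coefficient loss.

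The main obstacle is keeping the logical status honest. The recursive controls are hypotheses or consequences of earlier modules, not new estimates. I would state explicitly that the residual update needs \(E_{k+1}\) to be defined compatibly with the ledger components. I would also use \(a_k,b_k\) exactly as supplied by \cref{ass:recursive-update-rules}, rather than claiming to compute them.
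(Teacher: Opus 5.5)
Your argument follows the paper's proof. The scale-\((k+1)\) certificate is the static audit inequality read after admissibility has been reverified. The updates \eqref{eq:recursive-E-update} and \eqref{eq:audit-c-update} are taken from \cref{ass:recursive-update-rules}, not derived. The chain statement follows by iterating over \(k=0,\dots,K-1\). Two of your additions are fine: the explicit base case at scale \(0\) and the unrolled bounds, both of which the paper leaves implicit. Your ledger narrative for the \(E\)-update is motivation only, since nothing in \(\A_{k+1}^{\NS}(\Delta_k)\) relates \(E_{k+1}\) to \(E_k\). You concede this yourself when you import the bound.

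One side step is wrong as written. Replacing \(c_{k+1}\) by \((c_{k+1})_+\) does not weaken the inequality, it strengthens it. Since \((c_{k+1})_+\ge c_{k+1}\) and \(\delta_{k+1}\ge0\), the new right-hand side \((c_{k+1})_+\delta_{k+1}-E_{k+1}\) is larger. So the truncated certificate does not follow from the original one by monotonicity in \(c\).

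The truncation is still legitimate, but for a different reason. \(M_{k+1}\) is a norm of detector output and \(E_{k+1}\) is a nonnegative budget. So if \(c_{k+1}<0\), the certificate with coefficient \(0\) reads \(M_{k+1}\ge -E_{k+1}\), which holds trivially. The update \eqref{eq:audit-c-update} survives the truncation because \((c_k-\eta_k)_+\ge (c_k)_+-\eta_k\). Alternatively, do what the paper implicitly does: take \(c_{k+1}\ge0\) as part of the output of the static theorem.
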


\begin{proof}
The scale-\((k+1)\) certificate is exactly the static finite-window audit theorem applied after admissibility has been reverified at the next scale.  The displayed update inequalities are the recursive bookkeeping assumptions.  Iterating the same argument over a finite number of steps proves the final statement.
\end{proof}

\section{Finite-Chain Recursive Anti-Phantom Theorem}
\label{sec:finite-chain-recursive}

\subsection{Error recursion algebra}

\begin{lemma}[Variable-coefficient error recursion]\label{lem:var-recursion}
Suppose
\[
  E_{k+1}\le a_kE_k+b_k\Delta_k,\qquad k=0,\dots,K-1.
\]
With the convention that an empty product equals \(1\), one has
\[
  E_K
  \le
  \left(\prod_{i=0}^{K-1}a_i\right)E_0
  +
  \sum_{j=0}^{K-1}
  b_j
  \left(\prod_{i=j+1}^{K-1}a_i\right)
  \Delta_j .
\]
\end{lemma}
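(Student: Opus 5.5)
We argue by induction on \(K\), establishing the stronger statement that for every \(0\le m\le K\),
\[
  E_m
  \le
  \left(\prod_{i=0}^{m-1}a_i\right)E_0
  +
  \sum_{j=0}^{m-1}
  b_j
  \left(\prod_{i=j+1}^{m-1}a_i\right)
  \Delta_j ,
\]
with empty products equal to \(1\) and empty sums equal to \(0\). The base case \(m=0\) reads \(E_0\le E_0\) and holds trivially.

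For the inductive step, assume the bound at level \(m<K\). Apply the one-step recursion \(E_{m+1}\le a_mE_m+b_m\Delta_m\) and substitute the inductive bound for \(E_m\). This substitution is legitimate because \(a_m\ge0\): multiplying an upper bound by a nonnegative number preserves the inequality. This is the only point where a sign hypothesis enters. We therefore use the standing nonnegativity \(a_k,b_k\ge0\) from \cref{ass:recursive-update-rules}. Nonnegativity of \(\Delta_k\) and \(E_k\) is not needed for the inductive step itself.

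Distributing \(a_m\) over the bound gives two contributions. The first is \(a_m\prod_{i=0}^{m-1}a_i=\prod_{i=0}^{m}a_i\) multiplying \(E_0\). The second is, for each \(j\le m-1\), the term \(b_j\,a_m\prod_{i=j+1}^{m-1}a_i=b_j\prod_{i=j+1}^{m}a_i\) multiplying \(\Delta_j\). The extra term \(b_m\Delta_m\) is exactly the \(j=m\) summand, since its product \(\prod_{i=m+1}^{m}a_i\) is empty and equals \(1\). Collecting these terms yields the bound at level \(m+1\). Taking \(m=K\) proves the lemma.

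The argument is routine, and the only delicate point is bookkeeping. One must check the empty-product convention at the endpoints \(j=m\) and \(m=0\), and one must state explicitly that \(a_m\ge0\) is what allows the inductive bound to be multiplied through. There is no analytic obstacle.
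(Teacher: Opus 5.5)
Your induction is correct and matches the paper's proof: the paper inducts on \(K\) from the base case \(K=1\) and substitutes the inductive bound into \(E_{K+1}\le a_KE_K+b_K\Delta_K\), as you do starting from the trivial base \(m=0\). Your explicit remark that the substitution requires \(a_m\ge0\) (taken from \cref{ass:recursive-update-rules}) is a worthwhile addition: the lemma as stated omits this hypothesis and can fail without it, while the paper's proof uses it silently.
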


\begin{proof}
For \(K=1\) the claim is the assumed recursion.  Assume it is true for \(K\).
Then
\[
  E_{K+1}\le a_KE_K+b_K\Delta_K.
\]
Substituting the induction hypothesis for \(E_K\) gives
\[
  E_{K+1}
  \le
  \left(\prod_{i=0}^{K}a_i\right)E_0
  +
  \sum_{j=0}^{K-1}
  b_j
  \left(\prod_{i=j+1}^{K}a_i\right)\Delta_j
  +b_K\Delta_K,
\]
which is the desired formula at \(K+1\).
\end{proof}

\begin{corollary}[Constant-coefficient recursion]\label{cor:const-recursion}
If \(E_{k+1}\le aE_k+b\Delta_k\), then
\[
  E_K\le a^K E_0+b\sum_{j=0}^{K-1}a^{K-1-j}\Delta_j.
\]
\end{corollary}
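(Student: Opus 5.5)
The plan is to specialize \cref{lem:var-recursion} to constant coefficients: set \(a_i=a\) and \(b_j=b\) for all indices and then simplify the products. The hypothesis \(E_{k+1}\le aE_k+b\Delta_k\) for \(k=0,\dots,K-1\) is exactly the variable-coefficient hypothesis with these constant choices. I take it that \(a,b\ge0\), as in \cref{ass:recursive-update-rules}. This sign condition is what makes the iteration legitimate: each substitution multiplies an inequality by \(a\), so nonnegativity of \(a\) is needed to preserve its direction, and it was used implicitly in the lemma.

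With this specialization, the product \(\prod_{i=0}^{K-1}a_i\) becomes \(a^K\). The inner product \(\prod_{i=j+1}^{K-1}a_i\) has \(K-1-j\) factors, so it equals \(a^{K-1-j}\). When \(j=K-1\) this is an empty product, which equals \(1=a^0\) by the convention stated in the lemma, so the formula is consistent at the endpoint. The coefficient \(b_j=b\) factors out of the sum, and this gives
\[
  E_K\le a^KE_0+b\sum_{j=0}^{K-1}a^{K-1-j}\Delta_j .
\]

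The only point needing attention is the edge case \(K=0\). There the sum is empty and \(a^0=1\), so the bound reads \(E_0\le E_0\), which holds trivially. If \(a=0\), I will use the convention \(0^0=1\), which matches the empty-product convention. I do not expect any genuine obstacle: the statement is a direct substitution into \cref{lem:var-recursion}, and nothing beyond exponent counting is required.
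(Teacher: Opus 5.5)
Your proposal is correct and follows the paper's route: the paper gives no separate proof, treating the corollary as the specialization \(a_i=a\), \(b_j=b\) of \cref{lem:var-recursion}, where \(\prod_{i=j+1}^{K-1}a_i=a^{K-1-j}\). You are also right that \(a\ge0\) is needed, since the induction step multiplies the hypothesis by \(a_K\); the statement leaves this implicit, and it is supplied by \cref{ass:recursive-update-rules}.
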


\subsection{Weighted chain lower bound}

\begin{theorem}[Finite-chain recursive anti-phantom theorem]\label{thm:finite-chain}
Let \(D_0,\dots,D_K\) be a finite renormalized audit chain and let
\(w_k\ge0\).  Assume each scale carries a valid certificate
\[
  M_k\ge c_k\delta_k-E_k.
\]
Set
\[
  c_K^{\min}:=\min_{0\le k\le K}c_k
\]
and
\[
  \mathcal E_K^{\mathrm{rec}}:=\sum_{k=0}^K w_kE_k.
\]
Then
\begin{equation}\label{eq:finite-chain-lower}
  \sum_{k=0}^K w_kM_k
  \ge
  c_K^{\min}\sum_{k=0}^K w_k\delta_k
  -
  \mathcal E_K^{\mathrm{rec}}.
\end{equation}
Moreover, the terms \(E_k\) are controlled by \cref{lem:var-recursion} on
each prefix of the chain.
\end{theorem}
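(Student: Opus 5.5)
The proof is a termwise summation of the scale certificates; no recursion is needed for \eqref{eq:finite-chain-lower} itself. Fix \(k\in\{0,\dots,K\}\) and multiply the certificate \(M_k\ge c_k\delta_k-E_k\) by the weight \(w_k\ge0\). Nonnegativity of the weight preserves the inequality and gives
\[
  w_kM_k\ge w_kc_k\delta_k-w_kE_k .
\]
The only structural input is the sign of \(\delta_k\). By \eqref{eq:delta-k}, \(\delta_k=\dist_{0,k}(D_k,\Gamma_{\Lambda,k}^{\intg})\) is a distance, hence \(\delta_k\ge0\). Since \(c_K^{\min}\le c_k\) by definition of the minimum, it follows that \(c_k\delta_k\ge c_K^{\min}\delta_k\). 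Multiplying by \(w_k\ge0\) gives \(w_kM_k\ge c_K^{\min}w_k\delta_k-w_kE_k\) for every \(k\).

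This step is the main point requiring care, although it is not deep. The reduction from the variable coefficient \(c_k\) to the uniform coefficient \(c_K^{\min}\) uses both \(w_k\ge0\) and \(\delta_k\ge0\). It does not need \(c_K^{\min}>0\). If positivity of the coefficient is wanted, so that the bound is nontrivial, one combines the conclusion afterwards with \cref{cor:coeff-positivity}. I would state explicitly that the distance convention of \cref{def:broad-package}(e) supplies \(\delta_k\ge0\), and that \(\delta_k\) is finite, as assumed in \cref{def:broad-sync}.

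Summing these \(K+1\) inequalities over \(k=0,\dots,K\) and pulling the constant \(c_K^{\min}\) out of the sum gives
\[
  \sum_{k=0}^K w_kM_k\ge c_K^{\min}\sum_{k=0}^K w_k\delta_k-\sum_{k=0}^K w_kE_k .
\]
The last sum is \(\mathcal E_K^{\mathrm{rec}}\) by definition, which proves \eqref{eq:finite-chain-lower}.

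For the ``moreover'' clause, take any prefix \(D_0,\dots,D_k\) of the chain. Under the update rule \eqref{eq:recursive-E-update} from \cref{ass:recursive-update-rules}, available at each step through \cref{thm:one-step-audit-propagation}, \cref{lem:var-recursion} applied with \(K\) replaced by \(k\) bounds \(E_k\) by \(\bigl(\prod_{i<k}a_i\bigr)E_0+\sum_{j<k}b_j\bigl(\prod_{i=j+1}^{k-1}a_i\bigr)\Delta_j\). Inserting this bound into \(\mathcal E_K^{\mathrm{rec}}\) expresses the recursive error entirely through \(E_0\) and the one-step ledger increments \(\Delta_j\), as in the schematic form given in the introduction.
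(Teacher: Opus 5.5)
Your proof is correct and follows the paper's argument: multiply each certificate by \(w_k\ge0\), sum over \(k\), replace \(c_k\) by \(c_K^{\min}\), and obtain the ``moreover'' clause from \cref{lem:var-recursion} with terminal index \(k\). You also state explicitly that the replacement \(c_k\delta_k\ge c_K^{\min}\delta_k\) needs \(\delta_k\ge0\), which holds because \(\delta_k\) is a distance; the paper leaves this step implicit.
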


\begin{proof}
Multiplying \(M_k\ge c_k\delta_k-E_k\) by \(w_k\ge0\) and summing gives
\[
  \sum_{k=0}^K w_kM_k
  \ge
  \sum_{k=0}^K w_kc_k\delta_k
  -
  \sum_{k=0}^K w_kE_k.
\]
Since \(c_k\ge c_K^{\min}\), this is \eqref{eq:finite-chain-lower}.  The
claimed control of \(E_k\) follows from \cref{lem:var-recursion} applied with
terminal index \(k\).
\end{proof}

\begin{corollary}[Recursive anti-phantom alternative]\label{cor:recursive-alt}
Under the hypotheses of \cref{thm:finite-chain}, at least one of the
following alternatives holds:
\[
  \sum_{k=0}^K w_kM_k
  \ge
  \frac{c_K^{\min}}{2}
  \sum_{k=0}^K w_k\delta_k,
\]
or
\[
  \mathcal E_K^{\mathrm{rec}}
  \ge
  \frac{c_K^{\min}}{2}
  \sum_{k=0}^K w_k\delta_k.
\]
\end{corollary}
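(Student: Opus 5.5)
The plan is to argue by contradiction (equivalently, by a case split on the sign of an auxiliary quantity), using only the weighted lower bound \eqref{eq:finite-chain-lower} from \cref{thm:finite-chain}. Write
\[
  S:=\sum_{k=0}^K w_kM_k,\qquad
  \Delta:=\sum_{k=0}^K w_k\delta_k,\qquad
  \mathcal E:=\mathcal E_K^{\mathrm{rec}},
\]
so that \cref{thm:finite-chain} gives \(S\ge c_K^{\min}\Delta-\mathcal E\). All three quantities are finite real numbers, since the chain is finite and each \(M_k,\delta_k,E_k\) is a finite audit coordinate by \cref{def:broad-package}.

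Suppose the second alternative fails, that is, \(\mathcal E<\tfrac{c_K^{\min}}{2}\Delta\). Substituting this into the lower bound gives
\[
  S\ge c_K^{\min}\Delta-\mathcal E> c_K^{\min}\Delta-\tfrac{c_K^{\min}}{2}\Delta=\tfrac{c_K^{\min}}{2}\Delta,
\]
which is the first alternative, even with strict inequality. Hence at least one of the two alternatives always holds. Equivalently, one can split \(c_K^{\min}\Delta=\tfrac12c_K^{\min}\Delta+\tfrac12c_K^{\min}\Delta\) and observe that \(S+\mathcal E\ge c_K^{\min}\Delta\) forces \(\max(S,\mathcal E)\ge\tfrac12c_K^{\min}\Delta\).

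There is essentially no obstacle. The one point to check is that no sign assumption on \(c_K^{\min}\) or on the \(\delta_k\) is needed: the argument uses only real-number arithmetic on \eqref{eq:finite-chain-lower}, so it holds even in degenerate cases. For example, when \(c_K^{\min}\Delta\le0\), the second alternative holds automatically if \(\mathcal E\ge0\), and the contradiction argument above covers every case regardless. I would also note that the nonnegativity \(w_k\ge0\) is used only upstream, inside \cref{thm:finite-chain}, and not again here.
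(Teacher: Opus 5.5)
Your proposal is correct and follows essentially the same route as the paper's proof: a case split on whether $\mathcal E_K^{\mathrm{rec}}$ is below $\tfrac{c_K^{\min}}{2}\sum_k w_k\delta_k$, inserted into the weighted lower bound \eqref{eq:finite-chain-lower}. Your side remark on signs is also accurate: $w_k\ge0$ and $\delta_k\ge0$ are used only inside \cref{thm:finite-chain}, and nowhere in this corollary.
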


\begin{proof}
If
\[
  \mathcal E_K^{\mathrm{rec}}
  \le
  \frac{c_K^{\min}}{2}\sum_{k=0}^K w_k\delta_k,
\]
then \eqref{eq:finite-chain-lower} gives the first alternative.  If this
inequality fails, the second alternative holds.
\end{proof}

\section{Recursive Error Regimes}

Consider the constant-coefficient recursion
\[
  E_{k+1}\le aE_k+b\Delta_k.
\]

\begin{proposition}[Contractive regime]\label{prop:contractive}
If \(0\le a<1\) and \(\Delta_k\le \Delta_*\) for all \(k\), then
\[
  E_K
  \le
  a^K E_0+\frac{b}{1-a}\Delta_*.
\]
\end{proposition}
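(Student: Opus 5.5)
The plan is to start from the constant-coefficient closed form in \cref{cor:const-recursion}, itself the special case \(a_i\equiv a\), \(b_j\equiv b\) of \cref{lem:var-recursion}:
\[
  E_K\le a^K E_0+b\sum_{j=0}^{K-1}a^{K-1-j}\Delta_j .
\]
The first term already has the claimed form. The work is in bounding the accumulated ledger sum uniformly in \(K\).

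For that sum, use the hypothesis \(\Delta_j\le\Delta_*\) together with the nonnegativity of \(a\), \(b\), and hence of every weight \(b\,a^{K-1-j}\). Replacing each \(\Delta_j\) by \(\Delta_*\) then gives
\[
  b\sum_{j=0}^{K-1}a^{K-1-j}\Delta_j\le b\Delta_*\sum_{m=0}^{K-1}a^m ,
\]
where we reindex with \(m=K-1-j\). Since \(0\le a<1\), the finite geometric sum satisfies
\[
  \sum_{m=0}^{K-1}a^m=\frac{1-a^K}{1-a}\le\frac{1}{1-a}.
\]
Combining the two displays yields \(E_K\le a^KE_0+\frac{b}{1-a}\Delta_*\). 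For \(K=0\) the sum is empty and the bound reduces to \(E_0\le E_0+\frac{b}{1-a}\Delta_*\), which holds trivially.

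There is no genuine obstacle; the argument is pure algebra. The one point to check is sign bookkeeping. The comparison \(\Delta_j\le\Delta_*\) may be inserted only because the coefficients are nonnegative, which \cref{ass:recursive-update-rules} guarantees. If one wants the bound to be meaningful with \(E_0\) possibly large, one may also remark that the first term decays geometrically. That remark is not needed for the stated inequality.
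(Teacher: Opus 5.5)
Your proposal is correct and follows essentially the same route as the paper: it starts from \cref{cor:const-recursion}, replaces each \(\Delta_j\) by \(\Delta_*\) using nonnegativity of the weights, and then applies the geometric-series bound \(\sum_{m=0}^{K-1}a^m\le 1/(1-a)\). Your explicit sign check (\(a,b\ge0\) from \cref{ass:recursive-update-rules}) and the \(K=0\) case are details the paper leaves implicit.
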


\begin{proof}
Use \cref{cor:const-recursion} and the geometric-series bound
\[
  \sum_{j=0}^{K-1}a^{K-1-j}\le \frac{1}{1-a}.
\]
\end{proof}

\begin{proposition}[Neutral regime]\label{prop:neutral}
If \(a=1\), then
\[
  E_K\le E_0+b\sum_{j=0}^{K-1}\Delta_j.
\]
Thus finite-chain propagation remains useful whenever the accumulated
increments are controlled on the considered chain.
\end{proposition}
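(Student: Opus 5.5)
The neutral regime is the specialization of the constant-coefficient recursion to \(a=1\). I would simply invoke \cref{cor:const-recursion} rather than redo the induction. That corollary, itself the constant-coefficient case of \cref{lem:var-recursion} with \(a_i\equiv a\) and \(b_j\equiv b\), gives
\[
  E_K\le a^K E_0+b\sum_{j=0}^{K-1}a^{K-1-j}\Delta_j .
\]
Setting \(a=1\) makes every power \(a^K\) and \(a^{K-1-j}\) equal to \(1\), including the empty-product convention when \(K=0\) or \(j=K-1\). This yields exactly
\[
  E_K\le E_0+b\sum_{j=0}^{K-1}\Delta_j .
\]

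As a sanity check independent of the corollary, one can telescope directly. With \(a=1\) the hypothesis reads \(E_{k+1}-E_k\le b\Delta_k\). Summing over \(k=0,\dots,K-1\) gives the same bound without any sign assumptions on \(E_k\). Only \(b\ge0\) is used implicitly, and only to interpret \(b\Delta_k\) as a charged increment; even that is not needed for the algebra itself.

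The second sentence of the statement is interpretive rather than a separate claim. I would justify it by noting how the two results combine. If \(\sum_{j<K}\Delta_j\) is bounded on the chain under consideration, the displayed inequality bounds every prefix error \(E_k\), \(k\le K\), by the same quantity \(E_0+b\sum_{j<K}\Delta_j\). This holds provided the \(\Delta_j\) are nonnegative, which they are as sums of nonnegative ledger components. The recursive error \(\mathcal E_K^{\mathrm{rec}}\) in \cref{thm:finite-chain} is then at most \(\bigl(\sum_k w_k\bigr)\bigl(E_0+b\sum_{j<K}\Delta_j\bigr)\), so the lower bound \eqref{eq:finite-chain-lower} retains content.

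There is no genuine obstacle here. The only point requiring care is the monotonicity in \(k\) of the partial sums, which uses \(\Delta_j\ge0\).
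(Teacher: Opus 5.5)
Your proposal is correct and takes the same approach as the paper, whose proof is just \cref{cor:const-recursion} specialized to \(a=1\); your direct telescoping check is also valid and needs no sign conditions. One small remark on your interpretive paragraph: bounding every prefix error \(E_k\), \(k\le K\), by \(E_0+b\sum_{j<K}\Delta_j\) uses \(b\ge0\) as well as \(\Delta_j\ge0\), and both are available from the paper's standing assumptions.
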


\begin{proof}
This is \cref{cor:const-recursion} with \(a=1\).
\end{proof}

\begin{proposition}[Expanding regime]\label{prop:expanding}
If \(a>1\), then
\[
  E_K
  \le
  a^K E_0+b\sum_{j=0}^{K-1}a^{K-1-j}\Delta_j.
\]
Finite \(K\) estimates still hold, but scale-uniform propagation requires
additional decay or cancellation in \(E_0\) and the increments
\(\Delta_j\).
\end{proposition}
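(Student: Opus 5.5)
The expanding-regime bound in \cref{prop:expanding} is just \cref{cor:const-recursion} specialized to \(a>1\). I will read off the displayed estimate and then justify the qualitative remark about scale-uniform propagation. The constant-coefficient recursion \(E_{k+1}\le aE_k+b\Delta_k\) is the variable-coefficient recursion of \cref{lem:var-recursion} with \(a_k\equiv a\) and \(b_k\equiv b\). The products then collapse: \(\prod_{i=0}^{K-1}a_i=a^K\) and \(\prod_{i=j+1}^{K-1}a_i=a^{K-1-j}\), with the empty-product convention covering \(j=K-1\). This gives
\[
  E_K\le a^KE_0+b\sum_{j=0}^{K-1}a^{K-1-j}\Delta_j,
\]
which is the displayed inequality, valid for every finite \(K\). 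The hypothesis \(a>1\) is not used to derive it, since the identity holds for all \(a\ge0\). The nonnegativity of \(a\), \(b\), and \(\Delta_j\) is needed only so that multiplying the recursion step by \(a\) during the induction preserves the inequality direction, and that is already built into \cref{lem:var-recursion}.

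The second sentence of the proposition is interpretive, and I would justify it by contrast with \cref{prop:contractive}. When \(a>1\), the coefficient \(a^K\) of \(E_0\) grows without bound as \(K\to\infty\). Likewise, the weights \(a^{K-1-j}\) on early increments are not summable uniformly in \(K\): already a constant increment \(\Delta_j\equiv\Delta_*>0\) produces \(b\Delta_*(a^K-1)/(a-1)\), which tends to infinity. So the right-hand side can remain bounded uniformly in \(K\) only under extra decay, for instance \(E_0=0\) or \(E_0\) effectively of size \(a^{-K}\), together with \(\Delta_j\lesssim a^{-(K-1-j)}\rho_j\) for a summable sequence \(\rho_j\), or else under cancellation that is not visible in the one-sided recursion. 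I would record one such sufficient condition, for example \(\sup_K\bigl(a^KE_0+b\sum_j a^{K-1-j}\Delta_j\bigr)<\infty\), and state that this is exactly the additional hypothesis meant in the remark.

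No step is a genuine obstacle. The only care needed is to present the final sentence as a remark on the sharpness of the bound, supported by the constant-increment example, and not as a claimed theorem about actual Navier--Stokes chains.
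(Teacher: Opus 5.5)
Your argument is correct and matches the paper's proof: the displayed bound is \cref{cor:const-recursion} (that is, \cref{lem:var-recursion} with $a_k\equiv a$, $b_k\equiv b$), $a>1$ plays no role in deriving it, and the second sentence is an interpretation of the growing weights $a^{K-1-j}$. One small caveat on your gloss: for a single fixed cascade with $b>0$ and nonnegative data, the condition $\sup_K\bigl(a^KE_0+b\sum_{j<K}a^{K-1-j}\Delta_j\bigr)<\infty$ forces $E_0=0$ and every $\Delta_j=0$, since the $j$-th term alone grows like $a^{K-1-j}\Delta_j$; your $K$-dependent decay conditions are therefore meaningful only for families of chains whose data vary with $K$.
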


\begin{proof}
This is again \cref{cor:const-recursion}.  The final statement is an
interpretation of the exponential weights \(a^{K-1-j}\).
\end{proof}

\section{Compact, Reduced, and PFE Structural Inputs}

This section verifies finite-window structural inputs for selected compact,
smooth, reduced, or effectively projected package classes.  No result in
this section is asserted for all suitable weak Navier--Stokes solutions.

\subsection{Pressure projection compactness and effective projection}

Let
\[
  X_{\mathrm{src}}
  :=
  L^{3/2}((-1,0);L^{3/2}(B_1))^{3\times3},
  \qquad
  Y_{\prs}
  :=
  L^{3/2}((-1,0);L^{3/2}(B_{1/2})).
\]
The clean pressure map is
\[
  \Rprs F:=R_iR_j(F_{ij})\big|_{B_{1/2}},
\]
with sources extended by zero outside \(B_1\).  We assume the fixed
Calderon--Zygmund estimate
\[
  \norm{\Rprs F}_{Y_{\prs}}\le C_{\mathrm{CZ}}\norm{F}_{X_{\mathrm{src}}}.
\]

\begin{definition}[Selected source and pressure image]\label{def:selected-source}
At scale \(k\), define
\[
  \F_{k,0}^{\NS}
  :=
  \{F_{D_k-\zeta_k}^{\cl}:D_k\in\A_k^{\NS}\}
  \subset X_{\mathrm{src}},
\]
and
\[
  \mathcal G_{k,0}^{\NS}
  :=
  \{\Rprs F:F\in\F_{k,0}^{\NS}\}
  \subset Y_{\prs}.
\]
For finite-rank clean pressure projections \(P_{N,k}^{\cl}:Y_{\prs}\to
Y_{\prs}\), set
\[
  \Delta_{k,\proj,N}^{\mathrm{unif}}
  :=
  \sup_{g\in\mathcal G_{k,0}^{\NS}}
  \norm{(I-P_{N,k}^{\cl})g}_{Y_{\prs}}.
\]
\end{definition}

\begin{theorem}[Compact pressure image gives uniform projection tails]\label{thm:compact-image-proj}
Assume \(\mathcal G_{k,0}^{\NS}\Subset Y_{\prs}\),
\[
  P_{N,k}^{\cl}g\to g\quad\text{in }Y_{\prs}\text{ for every }g\in Y_{\prs},
\]
and
\[
  C_P:=\sup_N\norm{P_{N,k}^{\cl}}_{Y_{\prs}\to Y_{\prs}}<\infty.
\]
Then
\[
  \Delta_{k,\proj,N}^{\mathrm{unif}}\to0.
\]
\end{theorem}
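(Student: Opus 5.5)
The plan is the standard equicontinuity/compactness argument for uniformly bounded, pointwise convergent operators. The key observation is that \(I-P_{N,k}^{\cl}\) is uniformly bounded,
\[
  \norm{I-P_{N,k}^{\cl}}_{Y_{\prs}\to Y_{\prs}}\le 1+C_P,
\]
so the family \(\{I-P_{N,k}^{\cl}\}_N\) is equi-Lipschitz. Pointwise convergence to zero on a precompact set can then be upgraded to uniform convergence by a finite-net argument. Here I read \(\mathcal G_{k,0}^{\NS}\Subset Y_{\prs}\) as \emph{relatively compact}, i.e.\ totally bounded. Only total boundedness is needed, so we never pass to the closure.

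\textbf{Step 1 (finite net).} Fix \(\varepsilon>0\). By total boundedness, choose finitely many \(g_1,\dots,g_m\in\mathcal G_{k,0}^{\NS}\) such that every \(g\in\mathcal G_{k,0}^{\NS}\) satisfies \(\norm{g-g_i}_{Y_{\prs}}<\varepsilon\) for some \(i\). If \(\mathcal G_{k,0}^{\NS}\) is empty, the supremum is \(0\) by convention, so assume it is nonempty.

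\textbf{Step 2 (pointwise convergence on the net).} By the strong convergence hypothesis \(P_{N,k}^{\cl}g_i\to g_i\), there is an \(N_0\) with
\[
  \norm{(I-P_{N,k}^{\cl})g_i}_{Y_{\prs}}<\varepsilon
  \quad\text{for all } N\ge N_0 \text{ and all } i=1,\dots,m.
\]
This uses finiteness of the net in an essential way.

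\textbf{Step 3 (triangle inequality).} For any \(g\in\mathcal G_{k,0}^{\NS}\), pick \(i\) from Step 1. For \(N\ge N_0\),
\[
  \norm{(I-P_{N,k}^{\cl})g}
  \le
  \norm{(I-P_{N,k}^{\cl})(g-g_i)}
  +
  \norm{(I-P_{N,k}^{\cl})g_i}
  \le
  (1+C_P)\varepsilon+\varepsilon.
\]
Taking the supremum over \(g\) gives
\[
  \Delta_{k,\proj,N}^{\mathrm{unif}}\le(2+C_P)\varepsilon
  \quad\text{for } N\ge N_0.
\]
Since \(\varepsilon\) is arbitrary, \(\Delta_{k,\proj,N}^{\mathrm{unif}}\to0\).

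There is no real obstacle here; the only point needing care is that the uniform bound \(C_P\) is what lets the net approximation transfer through the operators \(I-P_{N,k}^{\cl}\). (Uniform boundedness would in any case follow from the strong convergence via Banach--Steinhaus, since \(Y_{\prs}\) is a Banach space, but it is assumed directly.) The Calder\'on--Zygmund bound is not needed for this statement. It would only enter if one wanted to derive compactness of \(\mathcal G_{k,0}^{\NS}\) from compactness of the source set \(\F_{k,0}^{\NS}\), since \(\Rprs\) is continuous.
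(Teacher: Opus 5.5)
Your proof is correct and is essentially the paper's argument: a finite net, strong convergence at the finitely many net points, and the uniform bound $\norm{I-P_{N,k}^{\cl}}\le 1+C_P$ to pass the net approximation through the operators. The only difference is cosmetic. The paper takes a net for the compact closure with radius $\varepsilon/(3(1+C_P))$, whereas you use total boundedness of the set itself and absorb the factor $2+C_P$ at the end.
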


\begin{proof}
Let \(K_k=\overline{\mathcal G_{k,0}^{\NS}}\), compact in \(Y_{\prs}\).
Fix \(\varepsilon>0\).  Choose a finite
\(\varepsilon/(3(1+C_P))\)-net \(g_1,\dots,g_J\) for \(K_k\).  For each
center, strong convergence gives
\[
  \norm{(I-P_{N,k}^{\cl})g_j}_{Y_{\prs}}\to0.
\]
For \(N\) large, all these quantities are at most \(\varepsilon/3\).  Given
\(g\in K_k\), choose \(g_j\) with
\[
  \norm{g-g_j}_{Y_{\prs}}\le \frac{\varepsilon}{3(1+C_P)}.
\]
Then
\[
  \norm{(I-P_{N,k}^{\cl})g}_{Y_{\prs}}
  \le
  (1+C_P)\norm{g-g_j}_{Y_{\prs}}
  +\norm{(I-P_{N,k}^{\cl})g_j}_{Y_{\prs}}
  \le
  \frac{2\varepsilon}{3}.
\]
Taking the supremum over \(g\in\mathcal G_{k,0}^{\NS}\) proves the claim.
\end{proof}

\begin{theorem}[Source compactness implies pressure-image compactness]\label{thm:source-compact}
If \(\F_{k,0}^{\NS}\Subset X_{\mathrm{src}}\), then
\[
  \mathcal G_{k,0}^{\NS}\Subset Y_{\prs},
\]
and hence \(\Delta_{k,\proj,N}^{\mathrm{unif}}\to0\).
\end{theorem}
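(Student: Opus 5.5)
The plan is to use that \(\Rprs\) is a bounded linear operator \(X_{\mathrm{src}}\to Y_{\prs}\), by the Calderon--Zygmund estimate \eqref{eq:cz-bound}. Bounded linear maps between normed spaces are continuous, and continuous images of compact sets are compact. The statement \(\F_{k,0}^{\NS}\Subset X_{\mathrm{src}}\) is read as relative compactness: the closure \(\overline{\F_{k,0}^{\NS}}\) is compact in \(X_{\mathrm{src}}\). Its image \(\Rprs\bigl(\overline{\F_{k,0}^{\NS}}\bigr)\) is then compact in \(Y_{\prs}\). It contains \(\mathcal G_{k,0}^{\NS}=\Rprs(\F_{k,0}^{\NS})\), so the closure of \(\mathcal G_{k,0}^{\NS}\) is a closed subset of a compact set, hence compact. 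This gives \(\mathcal G_{k,0}^{\NS}\Subset Y_{\prs}\).

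An alternative route, which I would mention or use instead, is via total boundedness, since \(X_{\mathrm{src}}\) and \(Y_{\prs}\) are Banach spaces and relative compactness there is equivalent to total boundedness. Given \(\varepsilon>0\), take a finite \(\varepsilon/C_{\mathrm{CZ}}\)-net \(F_1,\dots,F_J\) of \(\F_{k,0}^{\NS}\). Linearity and \eqref{eq:cz-bound} give
\[
\norm{\Rprs F-\Rprs F_j}_{Y_{\prs}}\le C_{\mathrm{CZ}}\norm{F-F_j}_{X_{\mathrm{src}}}\le\varepsilon ,
\]
so \(\Rprs F_1,\dots,\Rprs F_J\) is a finite \(\varepsilon\)-net of \(\mathcal G_{k,0}^{\NS}\). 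This version has the advantage of using only the Lipschitz bound.

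For the second assertion, I would invoke \cref{thm:compact-image-proj}, whose remaining hypotheses are the pointwise strong convergence \(P_{N,k}^{\cl}g\to g\) and the uniform bound \(C_P<\infty\). The statement does not restate these explicitly, so the "hence" should be read as holding under the standing projection hypotheses of \cref{thm:compact-image-proj}; I would say this explicitly in the proof.

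There is essentially no analytic obstacle. The only points needing care are that \(\Rprs\) is genuinely linear (zero extension followed by \(R_iR_j\) and restriction are all linear) and this interpretive issue about the projection hypotheses.
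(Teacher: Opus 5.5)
Your proposal is correct and matches the paper's proof: boundedness of \(\Rprs\) gives continuity, continuous images of (relatively) compact sets are (relatively) compact, and the projection-tail conclusion then follows from \cref{thm:compact-image-proj}. Your observation that the ``hence'' tacitly requires that theorem's strong-convergence and uniform-bound hypotheses on \(P_{N,k}^{\cl}\) is a fair clarification, which the paper leaves implicit.
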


\begin{proof}
The map \(\Rprs:X_{\mathrm{src}}\to Y_{\prs}\) is bounded linear, hence
continuous.  Continuous images of compact sets are compact.  The last claim
follows from \cref{thm:compact-image-proj}.
\end{proof}

\begin{proposition}[Compactness criteria]\label{prop:compactness-criteria}
Each of the following hypotheses implies
\(\F_{k,0}^{\NS}\Subset X_{\mathrm{src}}\).
\begin{enumerate}[label=(\alph*)]
  \item Smooth finite-window class: the selected sources are generated by a
  family of smooth Navier--Stokes data with a uniform \(C^m\) or \(H^s\)
  bound strong enough to compactly embed into \(L^{3/2}\) on the fixed
  window.
  \item Sobolev compactness: for some \(s>0\),
  \[
    \sup_{F\in\F_{k,0}^{\NS}}\norm{F}_{W^{s,3/2}(Q_1)}<\infty.
  \]
  \item Kolmogorov--Riesz compactness: the family is uniformly bounded in
  \(L^{3/2}\), supported in the fixed window, and uniformly translation
  continuous in \(L^{3/2}\) after zero extension.
\end{enumerate}
\end{proposition}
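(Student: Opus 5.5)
Each of the three hypotheses yields relative compactness of \(\F_{k,0}^{\NS}\) in \(X_{\mathrm{src}}\) through a standard compact embedding or compactness criterion on the bounded window \(Q_1\). The one structural point is that \(X_{\mathrm{src}}=L^{3/2}((-1,0);L^{3/2}(B_1))^{3\times3}\) coincides isometrically with \(L^{3/2}(Q_1)^{3\times3}\) by Fubini, because the time and space exponents agree. So in every case it suffices to prove precompactness in \(L^{3/2}(Q_1)\), with the family regarded as a subset of \(L^{3/2}(\mathbb R^4)\) after zero extension. I read \(\Subset\) as relative compactness, matching its use in \cref{thm:compact-image-proj}, where the closure is taken.

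I would treat case (c) first, since it is the base case. Uniform boundedness in \(L^{3/2}\), common support in the bounded set \(\overline{Q_1}\) (so uniform tightness is automatic), and equicontinuity of translations
\[
  \sup_{F}\norm{F(\cdot+h)-F}_{L^{3/2}(\mathbb R^4)}\to0 \quad (h\to0)
\]
are exactly the hypotheses of the Kolmogorov--Riesz--Fréchet theorem in \(L^{3/2}(\mathbb R^4)\), applied componentwise to the finitely many matrix entries. That theorem gives total boundedness, hence relative compactness in \(X_{\mathrm{src}}\).

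For case (b), I would use the compact embedding \(W^{s,3/2}(Q_1)\hookrightarrow L^{3/2}(Q_1)\) for \(s>0\) on the bounded Lipschitz domain \(Q_1\subset\mathbb R^4\), a fractional Rellich--Kondrachov theorem. If I wanted to avoid citing that, I would reduce to (c). A bounded extension operator \(W^{s,3/2}(Q_1)\to W^{s,3/2}(\mathbb R^4)\) exists for Lipschitz domains; multiply by a fixed cutoff equal to one on \(Q_1\). The Gagliardo seminorm then controls translations by \(\norm{\tau_hF-F}_{L^{3/2}}\lesssim |h|^{s}\norm{F}_{W^{s,3/2}}\) when \(s<1\), and one can always lower \(s\) to reach this range. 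Restriction back to \(Q_1\) preserves precompactness.

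For case (a), a uniform \(C^m\) bound with \(m\ge1\) on \(\overline{Q_1}\) gives uniform boundedness and equicontinuity, so Arzelà--Ascoli yields precompactness in \(C(\overline{Q_1})\). This transfers to \(L^{3/2}(Q_1)\) because \(\norm{\cdot}_{L^{3/2}(Q_1)}\le|Q_1|^{2/3}\norm{\cdot}_{L^\infty}\). A uniform \(H^s\) bound (\(s>0\)) reduces to case (b): since \(3/2<2\) and \(Q_1\) is bounded, \(H^s(Q_1)\hookrightarrow W^{s,3/2}(Q_1)\) continuously, by Hölder on the \(L^p\) part and on the Gagliardo double integral against the finite-measure weight. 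The hypothesis "strong enough to compactly embed" essentially grants this.

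The main obstacle is the fractional step in (b): justifying the extension from \(Q_1\) and the translation estimate near the boundary. Zero extension of a \(W^{s,3/2}\) function is only harmless for \(s<2/3\). I would either invoke the standard extension theorem for Lipschitz domains or first reduce \(s\) below \(2/3\), which keeps the bound valid, and control translations of the zero extension directly.
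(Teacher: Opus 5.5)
Your overall route is the paper's: Kolmogorov--Riesz (componentwise, after zero extension) for (c), the fractional Rellich--Kondrachov embedding \(W^{s,3/2}(Q_1)\Subset L^{3/2}(Q_1)\) for (b), and Arzel\`a--Ascoli for the \(C^m\) form of (a). The Fubini identification \(X_{\mathrm{src}}=L^{3/2}(Q_1)^{3\times3}\) and your backup reduction of (b) to (c) through an extension operator are also sound. One step fails: in the \(H^s\) form of (a) you claim \(H^s(Q_1)\hookrightarrow W^{s,3/2}(Q_1)\) by H\"older on the Gagliardo double integral ``against a finite-measure weight.''

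For non-integer \(s\) this is false. In dimension four,
\[
  \frac{|F(x)-F(y)|^{3/2}}{|x-y|^{4+3s/2}}
  =
  \Bigl(\frac{|F(x)-F(y)|^{2}}{|x-y|^{4+2s}}\Bigr)^{3/4}|x-y|^{-1},
\]
so H\"older with exponents \(4/3\) and \(4\) leaves \(\iint_{Q_1\times Q_1}|x-y|^{-4}\,dx\,dy\). That integral diverges logarithmically at the diagonal, so the weight is not of finite measure. The embedding itself fails, not just this estimate. Take \(\varphi\in C_c^\infty(Q_1)\) and the lacunary sum \(f=\sum_j j^{-2/3}2^{-js}\varphi(x)\cos(2^jx_1)\). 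Then \(\norm{f}_{B^s_{p,p}}^p\) is comparable to \(\sum_j j^{-2p/3}\), which is finite for \(p=2\) and infinite for \(p=3/2\), and \(W^{s,p}=B^s_{p,p}\) for non-integer \(s\). For integer \(s\) your embedding is correct. The repair is immediate, and you have two options. For \(0<s'<s\), the same computation leaves the integrable weight \(|x-y|^{-4+6(s-s')}\), so \(H^s(Q_1)\hookrightarrow W^{s',3/2}(Q_1)\) and (b) applies with \(s'\). Alternatively, as the paper's appeal to Rellich--Kondrachov intends, skip (b) and use \(H^s(Q_1)\Subset L^2(Q_1)\hookrightarrow L^{3/2}(Q_1)\); here H\"older really does use only \(|Q_1|<\infty\).
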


\begin{proof}
Part (a) follows from Arzela--Ascoli or Rellich--Kondrachov, depending on
whether a \(C^m\) or \(H^s\) formulation is used.  Part (b) is the
Rellich--Kondrachov compact embedding
\[
  W^{s,3/2}(Q_1)\Subset L^{3/2}(Q_1)
\]
on a bounded domain, applied componentwise.  Part (c) is the
Kolmogorov--Riesz compactness theorem on a bounded domain after zero
extension.
\end{proof}

\begin{proposition}[Effective projection replacement]\label{prop:effective-proj}
For recursive audit estimates, compactness may be replaced by the direct
assumption
\[
  \Delta_{k,\proj,N}^{\mathrm{unif}}\le \varepsilon_{k,N},
  \qquad
  \varepsilon_{k,N}\to0.
\]
For a finite weighted chain,
\[
  \sum_{k=0}^K w_k\Delta_{k,\proj,N}^{\mathrm{unif}}
  \le
  \varepsilon_{K,N}^{\mathrm{chain}}
  :=
  \sum_{k=0}^K w_k\varepsilon_{k,N}.
\]
\end{proposition}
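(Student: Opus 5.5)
The statement is Proposition~\ref{prop:effective-proj}. It makes two assertions. The first is a replacement principle: the conclusion of \cref{thm:compact-image-proj,thm:source-compact}, namely \(\Delta_{k,\proj,N}^{\mathrm{unif}}\to0\), is the only property of compactness that the recursive audit estimates actually use, so it may be postulated directly. The second is a weighted summation bound over a finite chain. The plan is to treat the first assertion as a bookkeeping observation and to prove the second by termwise comparison.

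For the first assertion, I will trace where the projection channel enters the recursion. By \cref{def:proj-harm-datum} and \cref{prop:proj-harm-insertion}, the only requirement is \(\Delta_k^{\proj}\ge\mathrm{ProjErr}_k\). The selected clean observation \(\mathsf P^{\cl}\) at each scale is the image under \(\Rprs\) of a selected source in \(\F^{\NS}_{\cdot,0}\), so it lies in \(\mathcal G^{\NS}_{\cdot,0}\). Hence, with the corresponding scale indexing, \(\mathrm{ProjErr}\) is at most the supremum \(\Delta^{\mathrm{unif}}_{\cdot,\proj,N}\), and therefore at most \(\varepsilon_{\cdot,N}\) under the direct assumption. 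Choosing the projection component of the ledger to be at least \(\varepsilon_{\cdot,N}\) then verifies the projection entry. This is the same output that \cref{thm:compact-image-proj} would supply, without any compactness being invoked. Beyond this, nothing in \cref{thm:main}, \cref{lem:var-recursion}, or \cref{thm:finite-chain} uses compactness.

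For the chain bound, I will multiply the inequality \(\Delta_{k,\proj,N}^{\mathrm{unif}}\le\varepsilon_{k,N}\) by \(w_k\ge0\), which preserves it, and then sum over the finite index set \(0\le k\le K\). This gives the displayed estimate with \(\varepsilon^{\mathrm{chain}}_{K,N}=\sum_k w_k\varepsilon_{k,N}\). Since the sum is finite and each \(\varepsilon_{k,N}\to0\) as \(N\to\infty\), it also follows that \(\varepsilon^{\mathrm{chain}}_{K,N}\to0\) for fixed \(K\). That is the sense in which the replacement feeds into the recursive error term \(\mathcal E_K^{\mathrm{rec}}\).

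I expect the only real obstacle to be the first assertion. It is interpretive rather than analytic, and the care needed is to make sure that the selected observation at each scale does lie in the image set over which the supremum is taken. Beyond that, I will note that no summability in \(K\) is claimed: the bound holds for each fixed finite chain only.
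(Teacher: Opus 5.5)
Your proposal is correct and is essentially the paper's argument: the paper simply declares the statement immediate from the effective projection bound and summation with nonnegative weights, which is exactly your termwise multiplication by $w_k\ge0$ followed by a finite sum. Your extra tracing of $\mathrm{ProjErr}_k$ through $\Delta^{\mathrm{unif}}_{k+1,\proj,N}$ is not needed, and it rests on an identification the paper never states: that the selected observation $\mathsf P^{\cl}_{k+1}$ lies in the image set $\mathcal G^{\NS}_{k+1,0}$ over which the supremum is taken. The first assertion is therefore better read, as in the paper, as a definitional replacement rather than something you derive.
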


\begin{proof}
The statement is immediate from the definition of the effective projection
bound and summation with nonnegative weights.
\end{proof}

\begin{definition}[Projection-tail transport data]\label{def:proj-transport}
Let
\[
  T_k^{\prs}:Y_{\prs}\to Y_{\prs}
\]
be a bounded pressure-coordinate transport map from scale \(k\) to scale
\(k+1\).  The transported pressure-image defect is a number
\(\delta_{k}^{\mathrm{img}}\ge0\) such that for every
\[
  g_{k+1}\in\mathcal G_{k+1,0}^{\NS}
\]
there exists \(g_k\in\mathcal G_{k,0}^{\NS}\) satisfying
\[
  \norm{g_{k+1}-T_k^{\prs}g_k}_{Y_{\prs}}
  \le
  \delta_k^{\mathrm{img}}.
\]
For finite-rank projections \(P_{N,k}^{\cl}\) and \(P_{N,k+1}^{\cl}\), define
the projection-transport commutator size
\[
  \omega_{k,N}^{\proj}
  :=
  \sup_{g\in\mathcal G_{k,0}^{\NS}}
  \norm{
  (I-P_{N,k+1}^{\cl})T_k^{\prs}g
  -
  T_k^{\prs}(I-P_{N,k}^{\cl})g
  }_{Y_{\prs}}.
\]
\end{definition}

\begin{theorem}[Projection-tail drift under pressure-image transport]\label{thm:proj-tail-drift}
Assume the projection-tail transport data in \cref{def:proj-transport}.  Set
\[
  C_{P,k+1}:=\norm{P_{N,k+1}^{\cl}}_{Y_{\prs}\to Y_{\prs}},
  \qquad
  C_{T,k}:=\norm{T_k^{\prs}}_{Y_{\prs}\to Y_{\prs}}.
\]
Then
\[
  \Delta_{k+1,\proj,N}^{\mathrm{unif}}
  \le
  (1+C_{P,k+1})\delta_k^{\mathrm{img}}
  +
  C_{T,k}\Delta_{k,\proj,N}^{\mathrm{unif}}
  +
  \omega_{k,N}^{\proj}.
\]
\end{theorem}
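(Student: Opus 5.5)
The plan is a direct triangle-inequality argument, carried out pointwise over the next-scale pressure image and then passed to the supremum.

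Fix an arbitrary \(g_{k+1}\in\mathcal G_{k+1,0}^{\NS}\). By the transported pressure-image defect in \cref{def:proj-transport}, choose \(g_k\in\mathcal G_{k,0}^{\NS}\) with \(\norm{g_{k+1}-T_k^{\prs}g_k}_{Y_{\prs}}\le\delta_k^{\mathrm{img}}\). Next decompose the next-scale tail as
\[
  (I-P_{N,k+1}^{\cl})g_{k+1}
  =
  (I-P_{N,k+1}^{\cl})\bigl(g_{k+1}-T_k^{\prs}g_k\bigr)
  +
  \Bigl[(I-P_{N,k+1}^{\cl})T_k^{\prs}g_k-T_k^{\prs}(I-P_{N,k}^{\cl})g_k\Bigr]
  +
  T_k^{\prs}(I-P_{N,k}^{\cl})g_k .
\]
This is an exact algebraic identity: the middle bracket adds and subtracts the transported scale-\(k\) tail, and the first term adds and subtracts \(T_k^{\prs}g_k\).

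The three pieces are then bounded separately. The first is at most \(\norm{I-P_{N,k+1}^{\cl}}\,\delta_k^{\mathrm{img}}\le(1+C_{P,k+1})\delta_k^{\mathrm{img}}\). The middle bracket is at most \(\omega_{k,N}^{\proj}\) by the definition of the commutator size, since \(g_k\in\mathcal G_{k,0}^{\NS}\). The last is at most \(C_{T,k}\norm{(I-P_{N,k}^{\cl})g_k}_{Y_{\prs}}\le C_{T,k}\Delta_{k,\proj,N}^{\mathrm{unif}}\) by \cref{def:selected-source}. Summing gives, for each \(g_{k+1}\), a bound by the right-hand side of the theorem. That right-hand side is independent of \(g_{k+1}\), so taking the supremum over \(\mathcal G_{k+1,0}^{\NS}\) yields the claim.

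There is no real obstacle here. The only point needing care is that \(g_k\) may depend on \(g_{k+1}\). This is harmless, because every bound used for \(g_k\) is uniform over \(\mathcal G_{k,0}^{\NS}\), and that uniformity is exactly what the suprema in \(\omega_{k,N}^{\proj}\) and \(\Delta_{k,\proj,N}^{\mathrm{unif}}\) provide. If the supremum defining \(\Delta^{\mathrm{unif}}\) is infinite, the inequality holds trivially.
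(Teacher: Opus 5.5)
Your proposal is correct and follows the paper's proof essentially verbatim. You choose $g_k$ via the image-transport defect and use the same add-and-subtract of $T_k^{\prs}g_k$ and $T_k^{\prs}(I-P_{N,k}^{\cl})g_k$. You then bound the three pieces by $(1+C_{P,k+1})\delta_k^{\mathrm{img}}$, $\omega_{k,N}^{\proj}$ and $C_{T,k}\Delta_{k,\proj,N}^{\mathrm{unif}}$, and take the supremum over $\mathcal G_{k+1,0}^{\NS}$, exactly as the paper does.
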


\begin{proof}
Fix \(g_{k+1}\in\mathcal G_{k+1,0}^{\NS}\).  Choose
\(g_k\in\mathcal G_{k,0}^{\NS}\) with
\[
  \norm{g_{k+1}-T_k^{\prs}g_k}_{Y_{\prs}}
  \le
  \delta_k^{\mathrm{img}}.
\]
Then
\begin{align*}
  (I-P_{N,k+1}^{\cl})g_{k+1}
  &=
  (I-P_{N,k+1}^{\cl})(g_{k+1}-T_k^{\prs}g_k)\\
  &\quad
  +
  (I-P_{N,k+1}^{\cl})T_k^{\prs}g_k.
\end{align*}
For the first term,
\[
  \norm{(I-P_{N,k+1}^{\cl})(g_{k+1}-T_k^{\prs}g_k)}_{Y_{\prs}}
  \le
  (1+C_{P,k+1})\delta_k^{\mathrm{img}}.
\]
For the second term, add and subtract
\(T_k^{\prs}(I-P_{N,k}^{\cl})g_k\):
\begin{align*}
  \norm{(I-P_{N,k+1}^{\cl})T_k^{\prs}g_k}_{Y_{\prs}}
  &\le
  \norm{T_k^{\prs}(I-P_{N,k}^{\cl})g_k}_{Y_{\prs}}\\
  &\quad+
  \norm{
  (I-P_{N,k+1}^{\cl})T_k^{\prs}g_k
  -
  T_k^{\prs}(I-P_{N,k}^{\cl})g_k
  }_{Y_{\prs}}\\
  &\le
  C_{T,k}\Delta_{k,\proj,N}^{\mathrm{unif}}
  +
  \omega_{k,N}^{\proj}.
\end{align*}
Taking the supremum over \(g_{k+1}\in\mathcal G_{k+1,0}^{\NS}\) gives the
claim.
\end{proof}

\begin{corollary}[Projection increment insertion]\label{cor:proj-drift-insertion}
If the projection component of the one-step ledger satisfies
\[
  \Delta_k^{\proj}
  \ge
  (1+C_{P,k+1})\delta_k^{\mathrm{img}}
  +
  C_{T,k}\Delta_{k,\proj,N}^{\mathrm{unif}}
  +
  \omega_{k,N}^{\proj},
\]
then the scale-\((k+1)\) uniform projection-tail contribution is charged to
the one-step ledger.  In particular, under the effective projection bound
\[
  \Delta_{k,\proj,N}^{\mathrm{unif}}\le\varepsilon_{k,N},
\]
it is enough to require
\[
  \Delta_k^{\proj}
  \ge
  (1+C_{P,k+1})\delta_k^{\mathrm{img}}
  +
  C_{T,k}\varepsilon_{k,N}
  +
  \omega_{k,N}^{\proj}.
\]
\end{corollary}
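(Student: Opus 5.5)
The claim is a direct consequence of \cref{thm:proj-tail-drift} together with the meaning of ledger insertion used throughout \cref{sec:structural}. I would therefore prove it by comparison of the two right-hand sides, in two steps.

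For the first assertion, \cref{thm:proj-tail-drift} gives
\[
  \Delta_{k+1,\proj,N}^{\mathrm{unif}}
  \le
  (1+C_{P,k+1})\delta_k^{\mathrm{img}}
  +
  C_{T,k}\Delta_{k,\proj,N}^{\mathrm{unif}}
  +
  \omega_{k,N}^{\proj}.
\]
The hypothesis says that \(\Delta_k^{\proj}\) dominates exactly this right-hand side. Transitivity then gives
\[
  \Delta_{k+1,\proj,N}^{\mathrm{unif}}\le\Delta_k^{\proj}.
\]

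Next, I would connect this to the one-step checklist. The scale-\((k+1)\) projection error \(\mathrm{ProjErr}\) of \cref{def:proj-harm-datum} is taken from a selected clean observation in \(\mathcal G_{k+1,0}^{\NS}\). Since \(\Delta_{k+1,\proj,N}^{\mathrm{unif}}\) is a supremum over that image, it bounds \(\mathrm{ProjErr}\) pointwise. Hence \cref{prop:proj-harm-insertion} charges the projection tail to \(\Delta_k^{\proj}\), which is what ``charged to the one-step ledger'' means.

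For the second assertion, I would use that \(C_{T,k}\ge0\). The effective bound \(\Delta_{k,\proj,N}^{\mathrm{unif}}\le\varepsilon_{k,N}\) then gives
\[
  C_{T,k}\Delta_{k,\proj,N}^{\mathrm{unif}}\le C_{T,k}\varepsilon_{k,N}.
\]
So the displayed sufficient condition on \(\Delta_k^{\proj}\) implies the first condition, and the first part applies.

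The only point needing care, which is a mild obstacle, is the identification of the selected observation \(\mathsf P_{k+1}^{\cl}\) with an element of \(\mathcal G_{k+1,0}^{\NS}\), and of the projections \(P_{N,k+1}^{\cl}\) in the two definitions. I would state this identification as the standing convention and take the same finite-rank map in both.
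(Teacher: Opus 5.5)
Your proposal is correct and follows the paper's route: apply \cref{thm:proj-tail-drift}, assign its right-hand side to \(\Delta_k^{\proj}\), and get the effective-projection version by substituting \(\Delta_{k,\proj,N}^{\mathrm{unif}}\le\varepsilon_{k,N}\) using \(C_{T,k}\ge0\). Your extra step, bounding the pointwise \(\mathrm{ProjErr}\) by the supremum under the identification convention you flag, is a harmless elaboration that the paper leaves implicit.
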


\begin{proof}
This is \cref{thm:proj-tail-drift} with the right-hand side assigned to the
projection component of \(\Delta_k\).  The effective projection version
follows by substituting
\(\Delta_{k,\proj,N}^{\mathrm{unif}}\le\varepsilon_{k,N}\).
\end{proof}

\subsection{Chart visibility for reduced NS-generated packages}

Let \(\A_{\Lambda,k}^{\mathrm{red},\NS}\) be a reduced finite-window
NS-generated class with coordinates \(x_k(D_k)\in X_{N,k}\).  Let
\[
  G_{N,k}^{\intg}\subset X_{N,k},\qquad
  G_{N,k}^{\cl}\subset Y_{N,k}^{\cl}
\]
be the local and clean gauge subspaces, and let
\[
  \Theta_{\Lambda,k}^{N}:X_{N,k}\to Y_{N,k}^{\cl}
\]
be the reduced clean chart.  Define
\[
  V_k(D_k)
  :=
  \Dist_{\cl}\bigl(\Theta_{\Lambda,k}^{N}(D_k-\zeta_k),
  G_{N,k}^{\cl}\bigr).
\]

\begin{theorem}[Compact quotient chart visibility]\label{thm:chart-vis}
Assume:
\begin{enumerate}[label=(\alph*)]
  \item the reduced baseline unit quotient section
  \[
    S_{k,0}^{\mathrm{red}}
    :=
    \{D_k\in\A_{\Lambda,k}^{\mathrm{red},\NS}:
    \Dist_{0,k}(D_k,\Gamma_{\Lambda,k}^{\intg})=1\}
  \]
  is compact modulo the admissible local gauge;
  \item \(V_k\) is lower semicontinuous;
  \item chart kernel-freeness holds:
  \[
    V_k(D_k)=0
    \quad\Longrightarrow\quad
    D_k\in\Gamma_{\Lambda,k}^{\intg};
  \]
  \item quotient homogeneity holds for \(V_k\) and \(\Dist_{0,k}\).
\end{enumerate}
Then
\[
  \lambda_{G,k}:=
  \inf_{D_k\in S_{k,0}^{\mathrm{red}}}V_k(D_k)>0,
\]
and
\[
  V_k(D_k)
  \ge
  \lambda_{G,k}\Dist_{0,k}(D_k,\Gamma_{\Lambda,k}^{\intg})
\]
for all reduced packages.
\end{theorem}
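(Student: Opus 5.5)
The plan is the standard compactness-plus-homogeneity argument. First I will show that the infimum \(\lambda_{G,k}\) over the unit section \(S_{k,0}^{\mathrm{red}}\) is attained and positive. Second, I will extend the bound to all reduced packages by scaling.

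\textbf{Gauge invariance.} Before anything else I will record that \(V_k\) and \(\Dist_{0,k}(\cdot,\Gamma_{\Lambda,k}^{\intg})\) are invariant under the admissible local gauge. For \(\Dist_{0,k}\) this holds because \(\Gamma_{\Lambda,k}^{\intg}\) is the gauge class. For \(V_k\) it holds because \(V_k\) is computed from the synchronized representative \(D_k-\zeta_k\) and measured modulo \(G_{N,k}^{\cl}\). I will take this as part of the quotient convention behind hypothesis (a). It lets me regard \(V_k\) as a function on the quotient of \(S_{k,0}^{\mathrm{red}}\), which is compact by (a).

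\textbf{Positivity of \(\lambda_{G,k}\).} If \(S_{k,0}^{\mathrm{red}}\) is empty, the infimum is \(+\infty\) and the claim is trivial. Otherwise, take a minimizing sequence \(D^{(n)}\in S_{k,0}^{\mathrm{red}}\) with \(V_k(D^{(n)})\to\lambda_{G,k}\). By compactness modulo gauge, after gauge moves and passing to a subsequence, \(D^{(n)}\to D^*\) in the quotient.

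I need \(D^*\) to remain in the unit section, so that \(\Dist_{0,k}(D^*,\Gamma^{\intg}_{\Lambda,k})=1\). In finite dimensions this follows from continuity of the distance to a fixed set. More generally it follows from closedness of the section, which I read as part of the compactness hypothesis (a).

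Lower semicontinuity (b) then gives \(V_k(D^*)\le\liminf V_k(D^{(n)})=\lambda_{G,k}\), so the infimum is attained. If it were \(0\), kernel-freeness (c) would force \(D^*\in\Gamma_{\Lambda,k}^{\intg}\). That would give \(\Dist_{0,k}(D^*,\Gamma^{\intg}_{\Lambda,k})=0\), contradicting membership in the unit section. Hence \(\lambda_{G,k}>0\).

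\textbf{Extension to all packages.} Take an arbitrary reduced package \(D_k\) and set \(t:=\Dist_{0,k}(D_k,\Gamma^{\intg}_{\Lambda,k})\). If \(t=0\), then \(D_k\in\Gamma_{\Lambda,k}^{\intg}\) (using closedness of the gauge class), and the inequality is trivial since \(V_k\ge0\). If \(t>0\), quotient homogeneity (d) provides a rescaled package \(\widetilde D_k\) (dilating \(D_k-\zeta_k\) by \(t^{-1}\) and re-adding a gauge element) with
\begin{itemize}
\item \(\Dist_{0,k}(\widetilde D_k,\Gamma^{\intg}_{\Lambda,k})=1\), and
\item \(V_k(\widetilde D_k)=t^{-1}V_k(D_k)\).
\end{itemize}
Here I interpret (d) as including that the reduced class is closed under these dilations. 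Then \(V_k(D_k)=tV_k(\widetilde D_k)\ge t\lambda_{G,k}\), which is the asserted bound.

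\textbf{Main obstacle.} I expect the delicate point to be interpretive rather than computational: making precise that ``compact modulo gauge'' yields a limit point that still lies in the unit section, and that \(V_k\) is compatible with the gauge quotient. Lower semicontinuity is needed on the quotient, not merely on representatives. My approach is to fix gauge-normalized representatives, for instance by the near-minimizer synchronization of \cref{def:broad-sync}, and verify both properties for them.
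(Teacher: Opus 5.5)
Your proposal is correct and follows the paper's proof: a contradiction from compactness, lower semicontinuity, and kernel-freeness on the unit section (the paper goes straight from \(\lambda_{G,k}=0\) to a zero of \(V_k\), while you go through an attained minimizer), then normalization to the unit section by homogeneity, with the zero-distance case trivial. You make explicit three things the paper's short proof leaves implicit: gauge invariance of \(V_k\), closedness of the unit section, and closure of the reduced class under the dilations. You do not need closedness of \(\Gamma_{\Lambda,k}^{\intg}\) in the case \(t=0\), because the right-hand side is then zero and \(V_k\ge0\).
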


\begin{proof}
If \(\lambda_{G,k}=0\), compactness and lower semicontinuity give
\(D_*\in S_{k,0}^{\mathrm{red}}\) with \(V_k(D_*)=0\).  Kernel-freeness then
gives \(D_*\in\Gamma_{\Lambda,k}^{\intg}\), contradicting
\(\Dist_{0,k}(D_*,\Gamma_{\Lambda,k}^{\intg})=1\).  For an arbitrary package
with positive baseline distance, normalize it to the unit quotient section
and use homogeneity.  The zero-distance case is trivial.
\end{proof}

\begin{corollary}[Additive-error chart visibility]\label{cor:additive-chart}
Suppose an ideal chart distance \(\widetilde V_k\) satisfies
\[
  \widetilde V_k(D_k)
  \ge
  \lambda_{G,k}\Dist_{0,k}(D_k,\Gamma_{\Lambda,k}^{\intg}),
\]
and the realized chart distance satisfies
\[
  V_k(D_k)+\delta_{G,k}\ge \widetilde V_k(D_k).
\]
Then
\[
  V_k(D_k)
  \ge
  \lambda_{G,k}\Dist_{0,k}(D_k,\Gamma_{\Lambda,k}^{\intg})
  -\delta_{G,k}.
\]
\end{corollary}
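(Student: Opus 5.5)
The argument is a one-line rearrangement of the two displayed hypotheses; no compactness, homogeneity, or structure of the reduced class is needed beyond what is already encoded in \(\widetilde V_k\). Fix a reduced package \(D_k\) and abbreviate \(d_k:=\Dist_{0,k}(D_k,\Gamma_{\Lambda,k}^{\intg})\).

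First, apply the ideal visibility bound to get \(\widetilde V_k(D_k)\ge\lambda_{G,k}d_k\). Second, chain it with the realized-versus-ideal comparison:
\[
  V_k(D_k)+\delta_{G,k}\ge\widetilde V_k(D_k)\ge\lambda_{G,k}d_k .
\]
Third, subtract \(\delta_{G,k}\) from both sides. This gives
\[
  V_k(D_k)\ge\lambda_{G,k}d_k-\delta_{G,k},
\]
which is the claimed estimate.

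The only points to check are bookkeeping ones. All quantities must be finite real numbers, so that the subtraction is legitimate. This holds because \(d_k<\infty\) by the standing convention in \cref{def:broad-sync}, and \(V_k\) is a distance in a finite-dimensional normed space. The inequality holds pointwise for each package, so no uniformity in \(D_k\) is required.

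In the intended application \(\widetilde V_k\) is the chart distance covered by \cref{thm:chart-vis}. There the constant \(\lambda_{G,k}\) is the one produced by that theorem, and \(\delta_{G,k}\) is charged to the chart component \(\Delta_k^{\chart}\) via \cref{ass:reduced-detector}. That identification is interpretation rather than part of the proof.

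I expect no genuine obstacle. The one subtlety is notational: the corollary's \(V_k\) must be the same realized chart distance that appears elsewhere in the ledger, and \(\delta_{G,k}\ge0\) should be read as an additive tolerance. The inequality itself does not use the sign of \(\delta_{G,k}\).
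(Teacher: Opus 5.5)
Your proposal is correct and is the same argument as the paper's proof. The paper likewise chains the realized-versus-ideal comparison with the ideal lower bound and subtracts \(\delta_{G,k}\); your finiteness check is a harmless addition that the paper leaves implicit.
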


\begin{proof}
Subtract \(\delta_{G,k}\) from the ideal lower bound.
\end{proof}

\begin{definition}[Reduced chart transport defect]\label{def:chart-transport}
Let
\[
  T_k^{\cl}:Y_{N,k}^{\cl}\to Y_{N,k+1}^{\cl}
\]
be a bounded clean chart-coordinate transport map.  In the conservative
reduced convention used here, assume exact clean-gauge transport:
\[
  T_k^{\cl}G_{N,k}^{\cl}\subset G_{N,k+1}^{\cl}.
\]
For a one-step package \(D_{k+1}=\mathcal R_k(D_k)\), define
\[
  y_k(D_k):=\Theta_{\Lambda,k}^{N}(D_k-\zeta_k),
  \qquad
  y_{k+1}(D_{k+1})
  :=\Theta_{\Lambda,k+1}^{N}(D_{k+1}-\zeta_{k+1}),
\]
and define the reduced chart transport defect
\[
  \mathfrak C_k^{\mathrm{chart}}(D_k)
  :=
  y_{k+1}(D_{k+1})-T_k^{\cl}y_k(D_k).
\]
\end{definition}

\begin{theorem}[Reduced chart-distance drift]\label{thm:chart-drift}
Assume the reduced chart transport convention in
\cref{def:chart-transport}.  Then
\[
  V_{k+1}(D_{k+1})
  \le
  \norm{T_k^{\cl}}_{Y_{N,k}^{\cl}\to Y_{N,k+1}^{\cl}}\,V_k(D_k)
  +
  \norm{\mathfrak C_k^{\mathrm{chart}}(D_k)}_{Y_{N,k+1}^{\cl}}.
\]
\end{theorem}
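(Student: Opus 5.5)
The plan is to prove \cref{thm:chart-drift} in the same way as the reduced detector-stability estimate in \cref{thm:det-stability}: decompose via the defect, apply the triangle inequality, and use the exact clean-gauge transport \(T_k^{\cl}G_{N,k}^{\cl}\subset G_{N,k+1}^{\cl}\) to pass from a near-minimizing clean gauge element at scale \(k\) to an admissible competitor at scale \(k+1\).

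First, the definition of \(\mathfrak C_k^{\mathrm{chart}}\) in \cref{def:chart-transport} gives the exact decomposition
\[
  y_{k+1}(D_{k+1})=T_k^{\cl}y_k(D_k)+\mathfrak C_k^{\mathrm{chart}}(D_k).
\]
By definition, \(V_k(D_k)=\Dist_{\cl}(y_k(D_k),G_{N,k}^{\cl})\) and \(V_{k+1}(D_{k+1})=\Dist_{\cl}(y_{k+1}(D_{k+1}),G_{N,k+1}^{\cl})\). Here the distance is the norm distance in \(Y_{N,\cdot}^{\cl}\).

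Next, fix \(\varepsilon>0\) and choose \(g\in G_{N,k}^{\cl}\) with \(\norm{y_k(D_k)-g}\le V_k(D_k)+\varepsilon\). In finite dimensions a minimizer exists, but the \(\varepsilon\) argument avoids needing that. Gauge compatibility gives \(T_k^{\cl}g\in G_{N,k+1}^{\cl}\), so \(T_k^{\cl}g\) is an admissible competitor. Therefore
\[
  V_{k+1}(D_{k+1})\le\norm{y_{k+1}(D_{k+1})-T_k^{\cl}g}
  \le\norm{T_k^{\cl}(y_k(D_k)-g)}+\norm{\mathfrak C_k^{\mathrm{chart}}(D_k)},
\]
where the second step uses the decomposition above and linearity of \(T_k^{\cl}\). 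The first term on the right is bounded by \(\norm{T_k^{\cl}}\,(V_k(D_k)+\varepsilon)\). Letting \(\varepsilon\to0\) then yields the claimed drift inequality.

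There is no real obstacle. The only point needing care is that \(T_k^{\cl}\) must be linear, or at least affine, so that \(T_k^{\cl}y-T_k^{\cl}g=T_k^{\cl}(y-g)\) and the operator norm applies. I will read "bounded map" as bounded linear, consistent with the operator-norm notation in the statement. The gauge-compatibility hypothesis is exactly what makes the transported near-minimizer admissible at scale \(k+1\), and without it one would need an extra gauge-transport error term.
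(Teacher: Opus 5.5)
Your proposal is correct and follows the paper's proof: both use the identity $y_{k+1}(D_{k+1})=T_k^{\cl}y_k(D_k)+\mathfrak C_k^{\mathrm{chart}}(D_k)$ and the inclusion $T_k^{\cl}G_{N,k}^{\cl}\subset G_{N,k+1}^{\cl}$, so that $T_k^{\cl}g$ is an admissible competitor, followed by the triangle inequality with the operator norm. The only difference is that you use an $\varepsilon$-near minimizer where the paper takes the infimum over $g\in G_{N,k}^{\cl}$ directly, which is equivalent, and your remark that $T_k^{\cl}$ must be linear is also implicitly assumed in the paper's operator-norm step.
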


\begin{proof}
Let \(g_k\in G_{N,k}^{\cl}\).  Since
\[
  T_k^{\cl}g_k\in G_{N,k+1}^{\cl},
\]
we have
\begin{align*}
  V_{k+1}(D_{k+1})
  &=
  \Dist_{\cl}(y_{k+1}(D_{k+1}),G_{N,k+1}^{\cl})\\
  &\le
  \norm{y_{k+1}(D_{k+1})-T_k^{\cl}g_k}_{Y_{N,k+1}^{\cl}}\\
  &\le
  \norm{\mathfrak C_k^{\mathrm{chart}}(D_k)}_{Y_{N,k+1}^{\cl}}
  +
  \norm{T_k^{\cl}(y_k(D_k)-g_k)}_{Y_{N,k+1}^{\cl}}\\
  &\le
  \norm{\mathfrak C_k^{\mathrm{chart}}(D_k)}_{Y_{N,k+1}^{\cl}}
  +
  \norm{T_k^{\cl}}_{Y_{N,k}^{\cl}\to Y_{N,k+1}^{\cl}}
  \norm{y_k(D_k)-g_k}_{Y_{N,k}^{\cl}}.
\end{align*}
Taking the infimum over \(g_k\in G_{N,k}^{\cl}\) gives the claim.
\end{proof}

\begin{corollary}[Chart increment insertion]\label{cor:chart-drift-insertion}
If the chart component of the one-step ledger satisfies
\[
  \Delta_k^{\mathrm{chart}}
  \ge
  \norm{\mathfrak C_k^{\mathrm{chart}}(D_k)}_{Y_{N,k+1}^{\cl}},
\]
then the reduced chart-transport mismatch is charged to the chart component
of the one-step increment.  If the next-scale admissibility criterion also
requires the full chart distance \(V_{k+1}(D_{k+1})\), it is enough to use
\[
  \Delta_k^{\mathrm{chart}}
  \ge
  \norm{T_k^{\cl}}\,V_k(D_k)
  +
  \norm{\mathfrak C_k^{\mathrm{chart}}(D_k)}.
\]
\end{corollary}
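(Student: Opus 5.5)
The statement to prove is \cref{cor:chart-drift-insertion}. It is essentially a bookkeeping consequence of \cref{thm:chart-drift}, so the plan has two parts matching its two assertions.

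\emph{First assertion.} The chart entry of the one-step admissibility checklist (\cref{def:coord-check}, item (b), and \cref{ass:reduced-detector}) asks that the mismatch created in the chart channel by passing from \(D_k\) to \(D_{k+1}\) be dominated by \(\Delta_k^{\mathrm{chart}}\). Under the reduced chart transport convention of \cref{def:chart-transport}, I would identify this mismatch with the transport defect \(\mathfrak C_k^{\mathrm{chart}}(D_k)=y_{k+1}(D_{k+1})-T_k^{\cl}y_k(D_k)\). In words, it is the discrepancy between the recomputed next-scale chart coordinate and the transported previous one. Setting \(\mathrm{ChartErr}_k:=\norm{\mathfrak C_k^{\mathrm{chart}}(D_k)}_{Y_{N,k+1}^{\cl}}\), the hypothesis \(\Delta_k^{\mathrm{chart}}\ge\mathrm{ChartErr}_k\) is exactly the inequality required in \cref{ass:reduced-detector}. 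The first assertion then holds by definition, in the same way as in \cref{prop:broad-sync-insertion} and \cref{prop:gate-slack-insertion}.

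\emph{Second assertion.} Suppose the criterion instead charges the full next-scale chart distance \(V_{k+1}(D_{k+1})\). I would invoke \cref{thm:chart-drift} directly, using that \(T_k^{\cl}\) maps \(G_{N,k}^{\cl}\) into \(G_{N,k+1}^{\cl}\). This gives
\[
V_{k+1}(D_{k+1})\le\norm{T_k^{\cl}}V_k(D_k)+\norm{\mathfrak C_k^{\mathrm{chart}}(D_k)}.
\]
If \(\Delta_k^{\mathrm{chart}}\) dominates the right-hand side, then by transitivity it dominates \(V_{k+1}(D_{k+1})\). The first bound is also a consequence of this one, because both terms on the right are nonnegative.

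I expect no real obstacle. The only delicate point is interpretive: one must fix which quantity the chart entry of the checklist measures, either the transport mismatch or the full distance. The corollary deliberately covers both conventions, so I would state the identification explicitly before applying \cref{thm:chart-drift}, and the remaining steps are only the triangle inequality already carried out in that theorem.
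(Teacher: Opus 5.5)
Your proposal is correct and follows the paper's proof. The first assertion is just the assignment of the transport defect $\mathfrak C_k^{\mathrm{chart}}(D_k)$ to $\Delta_k^{\mathrm{chart}}$, and the second follows from \cref{thm:chart-drift} by transitivity; your extra observation that the second hypothesis implies the first, since $\norm{T_k^{\cl}}V_k(D_k)\ge 0$, is a harmless addition the paper does not state.
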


\begin{proof}
The first statement is only the assignment of the chart commutation defect to
the chart ledger.  The second statement follows from
\cref{thm:chart-drift}, with the operator norm and target-space norm
understood as in that theorem.
\end{proof}

\subsection{Clean gap for a genuine detector class}

A clean package at scale \(k\) is written as
\[
  D_k^{\cl}=(P_k^{\mathrm{act},\cl},P_k^{\harm,\cl},
  S_k^{\cl},G_k^{\cl},O_k^{\cl}).
\]
Define a clean detector
\begin{align*}
  M_{\Lambda,k}^{\comp}(D_k^{\cl})
  &:=
  \norm{O_k^{\cl}(D_k^{\cl})}_{\mathcal O}
  +\beta_{\prs}\Tax_{\prs}^{\cl}(D_k^{\cl})
  +\beta_{\harm}\Tax_{\harm}^{\cl}(D_k^{\cl})\\
  &\quad
  +\beta_{\gate}\Tax_{\gate}^{\cl}(D_k^{\cl})
  +\beta_{\detc}\Tax_{\detc}^{\cl}(D_k^{\cl}),
\end{align*}
with all active weights positive.

\begin{definition}[Clean kernel]\label{def:clean-kernel}
The clean detector kernel is
\[
  \K_{\Lambda,k}^{\cl}
  :=
  \{D_k^{\cl}:M_{\Lambda,k}^{\comp}(D_k^{\cl})=0\}.
\]
The clean kernel-free condition is
\[
  \K_{\Lambda,k}^{\cl}\subset\Gamma_{\Lambda,k}^{\cl}.
\]
\end{definition}

\begin{theorem}[Clean compact quotient gap]\label{thm:clean-gap}
Assume:
\begin{enumerate}[label=(\alph*)]
  \item the clean class is stable under quotient normalization;
  \item the clean unit quotient section
  \[
    S_{\Lambda,k}^{\cl}
    :=
    \{D_k^{\cl}:\Dist_{\cl}(D_k^{\cl},\Gamma_{\Lambda,k}^{\cl})=1\}
  \]
  is compact;
  \item \(M_{\Lambda,k}^{\comp}\) is lower semicontinuous and positively
  homogeneous;
  \item clean kernel-freeness holds.
\end{enumerate}
Then
\[
  \mu_{\Lambda,k}^{\comp}
  :=
  \inf_{D_k^{\cl}\in S_{\Lambda,k}^{\cl}}
  M_{\Lambda,k}^{\comp}(D_k^{\cl})>0,
\]
and
\[
  M_{\Lambda,k}^{\comp}(D_k^{\cl})
  \ge
  \mu_{\Lambda,k}^{\comp}
  \Dist_{\cl}(D_k^{\cl},\Gamma_{\Lambda,k}^{\cl}).
\]
\end{theorem}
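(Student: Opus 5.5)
The plan is to follow the compactness--contradiction template of \cref{thm:chart-vis}, now applied to the clean detector \(M_{\Lambda,k}^{\comp}\) on the clean unit quotient section. There are two steps: first show \(\mu_{\Lambda,k}^{\comp}>0\), then extend the bound from the unit section to all clean packages by homogeneity.

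\emph{Positivity of the infimum.} Suppose \(\mu_{\Lambda,k}^{\comp}=0\).
\begin{enumerate}[label=(\roman*)]
\item Take a minimizing sequence \(D^{(n)}\in S_{\Lambda,k}^{\cl}\) with \(M_{\Lambda,k}^{\comp}(D^{(n)})\to0\).
\item By compactness (b), pass to a subsequence converging to some \(D_*\in S_{\Lambda,k}^{\cl}\). Here I use that the section is compact, hence closed, so the limit stays in it.
\item Lower semicontinuity (c) gives \(0\le M_{\Lambda,k}^{\comp}(D_*)\le\liminf M_{\Lambda,k}^{\comp}(D^{(n)})=0\). Hence \(D_*\in\K_{\Lambda,k}^{\cl}\).
\item Clean kernel-freeness (d) then gives \(D_*\in\Gamma_{\Lambda,k}^{\cl}\), so \(\Dist_{\cl}(D_*,\Gamma_{\Lambda,k}^{\cl})=0\).
\end{enumerate}
This contradicts \(D_*\in S_{\Lambda,k}^{\cl}\), where the distance equals \(1\). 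Nonnegativity of the detector is clear, since it is a sum of a norm and taxes with positive weights (the taxes are taken nonnegative).

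\emph{Extension to all clean packages.} Fix a clean package \(D\) with \(d:=\Dist_{\cl}(D,\Gamma_{\Lambda,k}^{\cl})\).
\begin{enumerate}[label=(\roman*)]
\item If \(d=0\), the inequality is trivial because the left side is nonnegative.
\item If \(d>0\), choose a gauge representative \(\gamma\in\Gamma_{\Lambda,k}^{\cl}\) and form the normalized package \(\widehat D:=d^{-1}(D-\gamma)\). Hypothesis (a), stability under quotient normalization, places \(\widehat D\) in the clean class with \(\Dist_{\cl}(\widehat D,\Gamma_{\Lambda,k}^{\cl})=1\). Since \(\Gamma\) is a gauge class, I take this distance to be gauge-invariant and homogeneous of degree one.
\item Then \(M_{\Lambda,k}^{\comp}(\widehat D)\ge\mu_{\Lambda,k}^{\comp}\).
\item Positive homogeneity of \(M_{\Lambda,k}^{\comp}\), together with its invariance under the clean gauge, gives \(M_{\Lambda,k}^{\comp}(D)=d\,M_{\Lambda,k}^{\comp}(\widehat D)\ge\mu_{\Lambda,k}^{\comp}d\).
\end{enumerate}

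The main obstacle is that the hypotheses as stated do not spell out two things this step relies on: that \(M_{\Lambda,k}^{\comp}\) is invariant under subtraction of clean gauge elements, and that normalization acts on both \(M\) and \(\Dist_{\cl}\) by the same scalar. I would read ``stable under quotient normalization'' in (a), together with the quotient convention of \cref{thm:chart-vis}(d), as supplying exactly this: normalization maps a package of clean distance \(d\) to one on the unit section, and rescales \(M\) by the factor \(d^{-1}\). With that interpretation fixed, the argument is the same as the proof of \cref{thm:chart-vis}.
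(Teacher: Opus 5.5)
Your proposal is correct and follows the paper's proof: you argue by contradiction on the compact unit section using lower semicontinuity and kernel-freeness, and then extend to all clean packages by homogeneity. One simplification is available: you do not need gauge invariance of \(M_{\Lambda,k}^{\comp}\), because you can take \(\gamma=0\) and normalize by pure rescaling \(\widehat D=d^{-1}D\), which uses only positive homogeneity of \(M_{\Lambda,k}^{\comp}\) and of \(\Dist_{\cl}(\cdot,\Gamma_{\Lambda,k}^{\cl})\) (the latter valid when the gauge class is a cone, e.g.\ a subspace), and this is exactly the content of the paper's one-line homogeneity step.
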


\begin{proof}
If the infimum were zero, compactness and lower semicontinuity would give a
minimizer \(D_*^{\cl}\in S_{\Lambda,k}^{\cl}\) with
\[
  M_{\Lambda,k}^{\comp}(D_*^{\cl})=0.
\]
Thus \(D_*^{\cl}\in\K_{\Lambda,k}^{\cl}\), so by kernel-freeness
\(D_*^{\cl}\in\Gamma_{\Lambda,k}^{\cl}\).  This contradicts
\(\Dist_{\cl}(D_*^{\cl},\Gamma_{\Lambda,k}^{\cl})=1\).  Homogeneity extends the
unit-section estimate to all clean packages.
\end{proof}

\begin{corollary}[Additive-error clean gap]\label{cor:additive-clean-gap}
Let \(\widetilde M_{\Lambda,k}^{\comp}\) satisfy the exact clean gap with
constant \(\mu_{\Lambda,k}^{\comp}\), and assume
\[
  M_{\Lambda,k}^{\comp}(D_k^{\cl})+\Delta_{\gap,k}
  \ge
  \widetilde M_{\Lambda,k}^{\comp}(D_k^{\cl}).
\]
Then
\[
  M_{\Lambda,k}^{\comp}(D_k^{\cl})
  \ge
  \mu_{\Lambda,k}^{\comp}
  \Dist_{\cl}(D_k^{\cl},\Gamma_{\Lambda,k}^{\cl})
  -
  \Delta_{\gap,k}.
\]
\end{corollary}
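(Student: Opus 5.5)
The statement to prove is \cref{cor:additive-clean-gap}. It is a one-line perturbation of the exact clean gap, and I will follow the same pattern as the proof of \cref{cor:additive-chart}.

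First, I apply the hypothesis that \(\widetilde M_{\Lambda,k}^{\comp}\) satisfies the exact clean gap with constant \(\mu_{\Lambda,k}^{\comp}\). This is precisely the conclusion of \cref{thm:clean-gap} for the ideal detector, and it gives, for every clean package \(D_k^{\cl}\),
\[
  \widetilde M_{\Lambda,k}^{\comp}(D_k^{\cl})
  \ge
  \mu_{\Lambda,k}^{\comp}
  \Dist_{\cl}(D_k^{\cl},\Gamma_{\Lambda,k}^{\cl}).
\]

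Second, I chain this with the assumed comparison \(M_{\Lambda,k}^{\comp}(D_k^{\cl})+\Delta_{\gap,k}\ge\widetilde M_{\Lambda,k}^{\comp}(D_k^{\cl})\). This yields
\[
  M_{\Lambda,k}^{\comp}(D_k^{\cl})+\Delta_{\gap,k}
  \ge
  \mu_{\Lambda,k}^{\comp}\Dist_{\cl}(D_k^{\cl},\Gamma_{\Lambda,k}^{\cl}).
\]

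Third, I subtract \(\Delta_{\gap,k}\) from both sides, which gives the displayed estimate. Since only finite real numbers are involved, no sign condition on \(\Delta_{\gap,k}\) is needed. The conclusion remains meaningful, though trivially so, when the right-hand side is negative.

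There is no real obstacle. The only point to check is that both inequalities are applied to the same package \(D_k^{\cl}\) in the common domain of \(M_{\Lambda,k}^{\comp}\) and \(\widetilde M_{\Lambda,k}^{\comp}\), so that the distance term is the identical quantity in both.
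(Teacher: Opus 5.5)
Your proposal is correct and matches the paper's proof, which likewise applies the exact clean gap for \(\widetilde M_{\Lambda,k}^{\comp}\), chains it with the realized/ideal comparison, and subtracts \(\Delta_{\gap,k}\). Your extra remarks are accurate but not needed for the argument: no sign condition on \(\Delta_{\gap,k}\) is required, and both inequalities must be applied to the same package.
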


\begin{proof}
Combine the ideal clean gap with the comparison between realized and ideal
detectors.
\end{proof}

\begin{proposition}[Zero-set and matrix criteria]\label{prop:zero-set-clean}
If simultaneous vanishing of
\[
  O_k^{\cl},\quad
  \Tax_{\prs}^{\cl},\quad
  \Tax_{\harm}^{\cl},\quad
  \Tax_{\gate}^{\cl},\quad
  \Tax_{\detc}^{\cl}
\]
implies \(D_k^{\cl}\in\Gamma_{\Lambda,k}^{\cl}\), then the clean detector is
kernel-free.  In a reduced finite-dimensional clean detector class with
linear detector/tax map \(T_{\Lambda,k}^{\cl}\) and clean gauge subspace
\(G_{\Lambda,k}^{\cl}\), this condition is equivalent to
\[
  \ker T_{\Lambda,k}^{\cl}=G_{\Lambda,k}^{\cl}
\]
when all clean gauge directions are detector-invisible.
\end{proposition}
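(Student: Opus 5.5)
The proof has two parts: the general zero-set criterion, which follows directly from the definitions, and the linear-algebra equivalence in the reduced class.

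\emph{Zero-set criterion.} Since every active weight \(\beta_{\prs},\beta_{\harm},\beta_{\gate},\beta_{\detc}\) is positive, the norm term and each tax functional are nonnegative. Hence \(M_{\Lambda,k}^{\comp}(D_k^{\cl})=0\) forces \(\norm{O_k^{\cl}(D_k^{\cl})}_{\mathcal O}=0\) and every \(\Tax^{\cl}_{\bullet}(D_k^{\cl})=0\). The first gives \(O_k^{\cl}=0\). So every element of \(\K_{\Lambda,k}^{\cl}\) lies in the simultaneous zero set. By hypothesis it then lies in \(\Gamma_{\Lambda,k}^{\cl}\), which is the kernel-free inclusion of \cref{def:clean-kernel}.

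\emph{Reduced linear class.} Here I would interpret the reduced convention as follows. The combined map \(T_{\Lambda,k}^{\cl}\) collects the observation and tax coordinates into a finite-dimensional normed space, and \(M_{\Lambda,k}^{\comp}\) is comparable to \(\norm{T_{\Lambda,k}^{\cl}D}\); this is the same structure as \(M_{\Lambda,k}^{\mathrm{PFE}}\) in \cref{def:reduced-pfe-detector-datum}. I also take \(\Gamma_{\Lambda,k}^{\cl}=G_{\Lambda,k}^{\cl}\). Then simultaneous vanishing of all channels is the same as \(T_{\Lambda,k}^{\cl}D=0\), because a norm vanishes only at zero. So \(\K_{\Lambda,k}^{\cl}=\ker T_{\Lambda,k}^{\cl}\), and kernel-freeness reads \(\ker T_{\Lambda,k}^{\cl}\subset G_{\Lambda,k}^{\cl}\).

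The assumption that all clean gauge directions are detector-invisible means \(T_{\Lambda,k}^{\cl}G_{\Lambda,k}^{\cl}=0\), that is, \(G_{\Lambda,k}^{\cl}\subset\ker T_{\Lambda,k}^{\cl}\). Combining the two inclusions gives equality. Conversely, equality trivially implies \(\ker T\subset G\), which is kernel-freeness.

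The main obstacle is interpretive rather than technical. One has to fix that in the reduced model the taxes are seminorms of linear coordinates of \(T_{\Lambda,k}^{\cl}\), so that their joint vanishing is exactly membership in the kernel. One also has to fix that the gauge class is the subspace \(G_{\Lambda,k}^{\cl}\). I would state both points explicitly as the reduced-class convention. Without detector-invisibility only the inclusion \(\ker T\subset G\) holds, and I would note this as a remark.
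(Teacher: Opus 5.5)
Your proposal is correct and matches the paper's proof. Positivity of the weights together with nonnegativity of each channel reduces $M_{\Lambda,k}^{\comp}(D_k^{\cl})=0$ to simultaneous vanishing of all channels; in the reduced linear model that common zero set is $\ker T_{\Lambda,k}^{\cl}$, so detector-invisibility gives $G_{\Lambda,k}^{\cl}\subset\ker T_{\Lambda,k}^{\cl}$ and kernel-freeness is exactly the reverse inclusion.

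The conventions you make explicit, namely $\Gamma_{\Lambda,k}^{\cl}=G_{\Lambda,k}^{\cl}$ and each channel vanishing exactly where its linear component does, are the ones the paper uses without stating them. The only slip is in phrasing: nonnegativity of the tax terms is a standing assumption, not a consequence of the weights being positive.
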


\begin{proof}
Because all detector terms are nonnegative and all active weights are
positive, vanishing of \(M_{\Lambda,k}^{\comp}(D_k^{\cl})\) is equivalent to
vanishing of all active clean channels.  The zero-set implication therefore
gives kernel-freeness.  In finite dimensions, the common zero set is the
kernel of the detector/tax map.  If gauge directions are invisible, then
\[
  G_{\Lambda,k}^{\cl}\subset\ker T_{\Lambda,k}^{\cl},
\]
and kernel-freeness is precisely the reverse inclusion.
\end{proof}

\subsection{Clean gap for a genuine pressure--flux--energy detector}

The preceding compact quotient theorem is an abstract clean-gap criterion.  We
now record a concrete reduced realization.  The purpose is not to prove
kernel-freeness in the full clean package space, but to give a finite-window
matrix condition for a detector built from pressure, flux, energy, gate, and
observation coordinates.

\begin{definition}[PFE clean package]\label{def:pfe-clean-package}
At scale \(k\), a reduced pressure--flux--energy clean package is
\[
  D_k^{\cl}=
  \bigl(
    p_k^{\mathrm{act},\cl},
    p_k^{\harm,\cl},
    F_k^{\cl},
    \mathcal F_k^{\mathrm{flux}},
    \mathcal E_k^{\mathrm{en}},
    G_k^{\cl},
    O_k^{\cl}
  \bigr)
  \in Y_{N,k}^{\cl}.
\]
Here \(p_k^{\mathrm{act},\cl}\) is the clean Calderon--Zygmund pressure
coordinate, \(p_k^{\harm,\cl}\) is the clean harmonic pressure coordinate,
\(F_k^{\cl}\) is the clean source coordinate, \(\mathcal F_k^{\mathrm{flux}}\)
is the clean flux coordinate, \(\mathcal E_k^{\mathrm{en}}\) is the clean
energy/dissipation coordinate, \(G_k^{\cl}\) is the clean gate/slack
coordinate, and \(O_k^{\cl}\) is the selected clean observation.  The clean
gauge subspace is denoted by
\[
  G_{\Lambda,k}^{\cl}\subset Y_{N,k}^{\cl},
\]
and the clean distance is \(\Dist_{\cl}(D_k^{\cl},G_{\Lambda,k}^{\cl})\).
\end{definition}

\begin{definition}[Pressure--flux--energy clean detector]\label{def:pfe-detector}
Fix finite-rank pressure and harmonic selectors
\[
  \Pi_{J,k}^{\prs},\qquad \Pi_{M,k}^{\harm},
\]
and fixed finite-dimensional norms on the flux, energy, and gate spaces.  The
pressure--flux--energy detector is
\begin{align*}
  M_{\Lambda,k}^{\mathrm{PFE}}(D_k^{\cl})
  &:=
  \norm{O_k^{\cl}(D_k^{\cl})}_{\mathcal O}
  +\beta_{\prs}\Tax_{\prs}^{\cl}(D_k^{\cl})
  +\beta_{\harm}\Tax_{\harm}^{\cl}(D_k^{\cl})\\
  &\quad
  +\beta_{\mathrm{flux}}\Tax_{\mathrm{flux}}^{\cl}(D_k^{\cl})
  +\beta_{\mathrm{en}}\Tax_{\mathrm{en}}^{\cl}(D_k^{\cl})
  +\beta_{\gate}\Tax_{\gate}^{\cl}(D_k^{\cl}),
\end{align*}
where every active weight is positive and
\[
  \Tax_{\prs}^{\cl}(D_k^{\cl})
  :=
  \norm{(I-\Pi_{J,k}^{\prs})p_k^{\mathrm{act},\cl}}_{Y_{\prs}},
  \qquad
  \Tax_{\harm}^{\cl}(D_k^{\cl})
  :=
  \norm{(I-\Pi_{M,k}^{\harm})p_k^{\harm,\cl}}_{Y_{\harm}},
\]
\[
  \Tax_{\mathrm{flux}}^{\cl}(D_k^{\cl})
  :=
  \norm{T_{\mathrm{flux}}D_k^{\cl}}_{\mathcal Y_{\mathrm{flux}}},
  \qquad
  \Tax_{\mathrm{en}}^{\cl}(D_k^{\cl})
  :=
  \norm{T_{\mathrm{en}}D_k^{\cl}}_{\mathcal Y_{\mathrm{en}}},
\]
and
\[
  \Tax_{\gate}^{\cl}(D_k^{\cl})
  :=
  \norm{T_{\gate}D_k^{\cl}}_{\mathcal Y_{\gate}}.
\]
Here \(T_{\mathrm{flux}}\), \(T_{\mathrm{en}}\), and \(T_{\gate}\) are the
fixed reduced flux, energy, and gate-channel maps.  If the gate ledger is
implemented through positive gate violation coordinates, \(T_{\gate}D_k^{\cl}\) is
the chosen reduced coordinate whose vanishing is equivalent to vanishing of
that gate violation on the selected finite-dimensional model.
This is a normalized finite-dimensional detector.  It uses genuine clean
pressure, flux, energy, and gate coordinates, but it is not asserted to be
kernel-free outside the selected reduced class.
\end{definition}

\begin{definition}[PFE detector matrix and kernel]\label{def:pfe-matrix}
Let
\[
  T_{\Lambda,k}^{\mathrm{PFE}}:Y_{N,k}^{\cl}\to
  Z_{\Lambda,k}^{\mathrm{PFE}}
\]
be the combined finite-dimensional detector/tax map
\[
  T_{\Lambda,k}^{\mathrm{PFE}}D_k^{\cl}
  =
  \bigl(
    O_k^{\cl}(D_k^{\cl}),
    T_{\prs}D_k^{\cl},
    T_{\harm}D_k^{\cl},
    T_{\mathrm{flux}}D_k^{\cl},
    T_{\mathrm{en}}D_k^{\cl},
    T_{\gate}D_k^{\cl}
  \bigr),
\]
where the components are the linear maps whose norms define the observation,
pressure, harmonic, flux, energy, and gate channels in
\cref{def:pfe-detector}.  The PFE zero set is
\[
  \K_{\Lambda,k}^{\mathrm{PFE}}
  :=
  \{D_k^{\cl}\in Y_{N,k}^{\cl}:M_{\Lambda,k}^{\mathrm{PFE}}(D_k^{\cl})=0\}.
\]
The finite-dimensional PFE kernel condition is
\[
  \ker T_{\Lambda,k}^{\mathrm{PFE}}=G_{\Lambda,k}^{\cl}.
\]
Equivalently, the induced map on the quotient
\[
  \overline T_{\Lambda,k}^{\mathrm{PFE}}:
  Y_{N,k}^{\cl}/G_{\Lambda,k}^{\cl}\to Z_{\Lambda,k}^{\mathrm{PFE}}
\]
is injective.  This condition is a matrix rank condition in the chosen
reduced coordinates.
\end{definition}

\begin{theorem}[Pressure--flux--energy zero-set rigidity]\label{thm:pfe-zero-rigidity}
Assume all PFE detector channels are nonnegative, all active weights are
positive, and
\[
  \ker T_{\Lambda,k}^{\mathrm{PFE}}=G_{\Lambda,k}^{\cl}.
\]
Then
\[
  M_{\Lambda,k}^{\mathrm{PFE}}(D_k^{\cl})=0
  \quad\Longrightarrow\quad
  D_k^{\cl}\in G_{\Lambda,k}^{\cl}.
\]
Equivalently,
\[
  \K_{\Lambda,k}^{\mathrm{PFE}}\subset G_{\Lambda,k}^{\cl}.
\]
\end{theorem}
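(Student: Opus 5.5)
The argument reduces vanishing of the detector to vanishing of the combined linear map, and then applies the kernel condition. This mirrors \cref{prop:zero-set-clean}, now with the concrete channels of \cref{def:pfe-detector} and the map of \cref{def:pfe-matrix}.

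Fix \(D_k^{\cl}\in Y_{N,k}^{\cl}\) with \(M_{\Lambda,k}^{\mathrm{PFE}}(D_k^{\cl})=0\). By \cref{def:pfe-detector}, \(M_{\Lambda,k}^{\mathrm{PFE}}\) is a finite sum of an observation norm and the tax terms, each multiplied by a positive weight \(\beta_\ast\). Every term is nonnegative, so each summand vanishes individually:
\[
  \norm{O_k^{\cl}(D_k^{\cl})}_{\mathcal O}=0,
  \qquad
  \Tax_\ast^{\cl}(D_k^{\cl})=0
  \quad\text{for }\ast\in\{\prs,\harm,\mathrm{flux},\mathrm{en},\gate\}.
\]
Since these are norms, the underlying vectors vanish:
\[
  O_k^{\cl}(D_k^{\cl})=0,\quad T_{\mathrm{flux}}D_k^{\cl}=0,\quad T_{\mathrm{en}}D_k^{\cl}=0,\quad T_{\gate}D_k^{\cl}=0.
\]
For the pressure and harmonic channels I use the convention of \cref{def:pfe-matrix}. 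There, \(T_{\prs}\) and \(T_{\harm}\) are the linear maps whose norms define the pressure and harmonic taxes, that is, \(T_{\prs}D_k^{\cl}=(I-\Pi_{J,k}^{\prs})p_k^{\mathrm{act},\cl}\) and \(T_{\harm}D_k^{\cl}=(I-\Pi_{M,k}^{\harm})p_k^{\harm,\cl}\). Vanishing of these norms therefore gives \(T_{\prs}D_k^{\cl}=0\) and \(T_{\harm}D_k^{\cl}=0\). For the gate channel, if gate violations are positive-part coordinates, the convention in \cref{def:pfe-detector} says that vanishing of \(T_{\gate}D_k^{\cl}\) is exactly vanishing of the violation, so nothing further is needed.

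Combining these, every component of \(T_{\Lambda,k}^{\mathrm{PFE}}D_k^{\cl}\) vanishes, so \(D_k^{\cl}\in\ker T_{\Lambda,k}^{\mathrm{PFE}}\). The hypothesis \(\ker T_{\Lambda,k}^{\mathrm{PFE}}=G_{\Lambda,k}^{\cl}\) then gives \(D_k^{\cl}\in G_{\Lambda,k}^{\cl}\). The reformulation \(\K_{\Lambda,k}^{\mathrm{PFE}}\subset G_{\Lambda,k}^{\cl}\) is just this implication written with the zero set of \cref{def:pfe-matrix}. Only the inclusion \(\ker T\subset G\) is used, so the converse inclusion in the kernel condition is not needed here.

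The only step that needs care is identifying "norm equals zero" with "component of \(T_{\Lambda,k}^{\mathrm{PFE}}\) equals zero." This relies on each channel functional being a genuine norm of a linear image, not a seminorm or a nonlinear positive-part expression. I would state explicitly that this is the convention fixed in \cref{def:pfe-detector,def:pfe-matrix}. Without it the argument would have to go through a zero-set equivalence rather than a kernel argument.
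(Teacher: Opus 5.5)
Your proposal is correct and follows essentially the same route as the paper: nonnegative channels with positive weights force every channel to vanish, so $T_{\Lambda,k}^{\mathrm{PFE}}D_k^{\cl}=0$, and the inclusion $\ker T_{\Lambda,k}^{\mathrm{PFE}}\subset G_{\Lambda,k}^{\cl}$ then gives $D_k^{\cl}\in G_{\Lambda,k}^{\cl}$. You also spell out two points the paper's one-line argument leaves tacit: that each channel is a genuine norm of a linear image, and that every component of $T_{\Lambda,k}^{\mathrm{PFE}}$ carries a positive weight.
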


\begin{proof}
Since every summand in \(M_{\Lambda,k}^{\mathrm{PFE}}\) is nonnegative and
every active weight is positive, \(M_{\Lambda,k}^{\mathrm{PFE}}(D_k^{\cl})=0\)
forces every active channel to vanish.  Thus
\[
  T_{\Lambda,k}^{\mathrm{PFE}}D_k^{\cl}=0.
\]
The matrix kernel condition gives
\[
  D_k^{\cl}\in\ker T_{\Lambda,k}^{\mathrm{PFE}}=G_{\Lambda,k}^{\cl},
\]
which proves the claim.
\end{proof}

\begin{remark}[Reduced status of the PFE kernel condition]
This is a reduced detector realization, not a full Navier--Stokes clean-gap
theorem.  The condition
\[
  \ker T_{\Lambda,k}^{\mathrm{PFE}}=G_{\Lambda,k}^{\cl}
\]
is a finite-dimensional matrix condition in the selected clean coordinates.
It is not derived from the Navier--Stokes equations alone and is not asserted
for the full infinite-dimensional clean package geometry.
\end{remark}

\begin{theorem}[PFE compact quotient clean gap]\label{thm:pfe-clean-gap}
Let
\[
  \A_{\Lambda,k}^{\cl,\mathrm{PFE}}\subset Y_{N,k}^{\cl}
\]
be a reduced clean package class.  Assume:
\begin{enumerate}[label=(\alph*)]
  \item \(\A_{\Lambda,k}^{\cl,\mathrm{PFE}}\) is stable under quotient
  normalization;
  \item the clean unit quotient section
  \[
    S_{\Lambda,k}^{\cl,\mathrm{PFE}}
    :=
    \{D_k^{\cl}\in\A_{\Lambda,k}^{\cl,\mathrm{PFE}}:
      \Dist_{\cl}(D_k^{\cl},G_{\Lambda,k}^{\cl})=1\}
  \]
  is compact;
  \item \(M_{\Lambda,k}^{\mathrm{PFE}}\) is lower semicontinuous and
  positively homogeneous;
  \item the PFE zero-set rigidity theorem \cref{thm:pfe-zero-rigidity}
  holds.
\end{enumerate}
Then
\[
  \mu_{\Lambda,k}^{\mathrm{PFE}}
  :=
  \inf_{D_k^{\cl}\in S_{\Lambda,k}^{\cl,\mathrm{PFE}}}
  M_{\Lambda,k}^{\mathrm{PFE}}(D_k^{\cl})>0,
\]
and for every \(D_k^{\cl}\in\A_{\Lambda,k}^{\cl,\mathrm{PFE}}\),
\[
  M_{\Lambda,k}^{\mathrm{PFE}}(D_k^{\cl})
  \ge
  \mu_{\Lambda,k}^{\mathrm{PFE}}
  \Dist_{\cl}(D_k^{\cl},G_{\Lambda,k}^{\cl}).
\]
\end{theorem}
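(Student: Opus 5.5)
The plan is to repeat the compactness--contradiction argument of \cref{thm:clean-gap}, now inside the reduced class \(\A_{\Lambda,k}^{\cl,\mathrm{PFE}}\). The kernel-freeness input is supplied by \cref{thm:pfe-zero-rigidity}. The proof has two steps: positivity of \(\mu_{\Lambda,k}^{\mathrm{PFE}}\) on the unit section, then extension to all packages by homogeneity.

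\emph{Step 1 (positivity).} Suppose \(\mu_{\Lambda,k}^{\mathrm{PFE}}=0\). Then there are \(D^{(n)}\in S_{\Lambda,k}^{\cl,\mathrm{PFE}}\) with \(M_{\Lambda,k}^{\mathrm{PFE}}(D^{(n)})\to0\). By compactness (b), a subsequence converges to some \(D_*\in S_{\Lambda,k}^{\cl,\mathrm{PFE}}\). Lower semicontinuity (c) and nonnegativity of every channel give
\[
0\le M_{\Lambda,k}^{\mathrm{PFE}}(D_*)\le\liminf_n M_{\Lambda,k}^{\mathrm{PFE}}(D^{(n)})=0 .
\]
Hypothesis (d), via \cref{thm:pfe-zero-rigidity}, then places \(D_*\) in \(G_{\Lambda,k}^{\cl}\). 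Hence \(\Dist_{\cl}(D_*,G_{\Lambda,k}^{\cl})=0\), which contradicts \(D_*\) lying on the unit section. In fact compactness plus lower semicontinuity show the infimum is attained, which gives positivity directly. One degenerate case needs a convention: if the section is empty, the infimum is \(+\infty\) by the usual convention, and the inequality is trivial because every package then lies in the gauge. I would state this explicitly.

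\emph{Step 2 (homogeneity).} Take \(D\in\A_{\Lambda,k}^{\cl,\mathrm{PFE}}\) with \(d:=\Dist_{\cl}(D,G_{\Lambda,k}^{\cl})\). If \(d=0\), the estimate holds because \(M^{\mathrm{PFE}}\ge0\). If \(d>0\), set \(\widehat D:=d^{-1}D\). Since \(G_{\Lambda,k}^{\cl}\) is a linear subspace of the normed space \(Y_{N,k}^{\cl}\), the distance to it is positively homogeneous, so \(\Dist_{\cl}(\widehat D,G_{\Lambda,k}^{\cl})=1\). Stability under quotient normalization (a) keeps \(\widehat D\) in the class, so \(\widehat D\in S_{\Lambda,k}^{\cl,\mathrm{PFE}}\). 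Positive homogeneity (c) then gives
\[
M_{\Lambda,k}^{\mathrm{PFE}}(D)=d\,M_{\Lambda,k}^{\mathrm{PFE}}(\widehat D)\ge\mu_{\Lambda,k}^{\mathrm{PFE}}\,d .
\]

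The main obstacle is interpreting hypothesis (a) correctly. ``Quotient normalization'' might mean normalizing a gauge-shifted representative \(D-g\) rather than \(D\) itself. Since \(M^{\mathrm{PFE}}\) need not be gauge invariant in general, I would read (a) as closure under positive scaling, the only normalization Step 2 uses. If gauge shifts are intended, I would additionally note that every gauge direction lies in \(\ker T_{\Lambda,k}^{\mathrm{PFE}}\) by the kernel condition. The channels \(O\), \(T_{\mathrm{flux}}\), \(T_{\mathrm{en}}\), \(T_{\gate}\) are norms of linear maps and hence invariant under such shifts. The same should hold for the pressure and harmonic taxes, via the pressure and harmonic components of \(T_{\Lambda,k}^{\mathrm{PFE}}\), so the argument is unaffected.
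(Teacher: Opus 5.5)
Your proposal is correct and matches the paper's proof: a compactness and lower-semicontinuity contradiction on the unit section using the zero-set rigidity of \cref{thm:pfe-zero-rigidity}, followed by extension to the whole class through normalization and positive homogeneity. Your case split on whether $\Dist_{\cl}(D,G_{\Lambda,k}^{\cl})$ vanishes is slightly more precise than the paper's split into zero and nonzero packages, since it also covers nonzero packages lying in $G_{\Lambda,k}^{\cl}$. Your remarks on the empty section and on reading (a) as closure under positive scaling are harmless refinements.
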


\begin{proof}
If the infimum were zero, compactness and lower semicontinuity would give a
minimizer \(D_*^{\cl}\in S_{\Lambda,k}^{\cl,\mathrm{PFE}}\) with
\[
  M_{\Lambda,k}^{\mathrm{PFE}}(D_*^{\cl})=0.
\]
By \cref{thm:pfe-zero-rigidity}, \(D_*^{\cl}\in G_{\Lambda,k}^{\cl}\), contradicting
\(\Dist_{\cl}(D_*^{\cl},G_{\Lambda,k}^{\cl})=1\).  Thus the infimum is positive on
the unit quotient section.  Positive homogeneity extends the bound to
arbitrary nonzero clean packages, and the zero package case is immediate.
\end{proof}

\begin{corollary}[Additive-error PFE clean gap]\label{cor:pfe-additive-gap}
Let \(\widetilde M_{\Lambda,k}^{\mathrm{PFE}}\) satisfy the exact gap in
\cref{thm:pfe-clean-gap} with constant \(\mu_{\Lambda,k}^{\mathrm{PFE}}\).
If the realized clean detector satisfies
\[
  M_{\Lambda,k}^{\comp}(D_k^{\cl})+\Delta_{\gap,k}^{\mathrm{PFE}}
  \ge
  \widetilde M_{\Lambda,k}^{\mathrm{PFE}}(D_k^{\cl}),
\]
then
\[
  M_{\Lambda,k}^{\comp}(D_k^{\cl})
  \ge
  \mu_{\Lambda,k}^{\mathrm{PFE}}
  \Dist_{\cl}(D_k^{\cl},G_{\Lambda,k}^{\cl})
  -
  \Delta_{\gap,k}^{\mathrm{PFE}}.
\]
When the PFE detector is the chosen clean detector, we set
\[
  \mu_{\Lambda,k}^{\comp}:=\mu_{\Lambda,k}^{\mathrm{PFE}}.
\]
\end{corollary}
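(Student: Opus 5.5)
The statement is \cref{cor:pfe-additive-gap}, and the argument is the same additive-error pattern as \cref{cor:additive-chart} and \cref{cor:additive-clean-gap}. I would first apply the exact gap to the ideal detector, then use the realized-to-ideal comparison, and finally read off the identification of \(\mu_{\Lambda,k}^{\comp}\).

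\textbf{Step 1.} By hypothesis, \(\widetilde M_{\Lambda,k}^{\mathrm{PFE}}\) satisfies the exact gap of \cref{thm:pfe-clean-gap} with constant \(\mu_{\Lambda,k}^{\mathrm{PFE}}\). So for every clean package \(D_k^{\cl}\) in the class on which the corollary is asserted,
\[
  \widetilde M_{\Lambda,k}^{\mathrm{PFE}}(D_k^{\cl})
  \ge
  \mu_{\Lambda,k}^{\mathrm{PFE}}
  \Dist_{\cl}(D_k^{\cl},G_{\Lambda,k}^{\cl}).
\]
This class is \(\A_{\Lambda,k}^{\cl,\mathrm{PFE}}\), or whatever domain the ideal gap is assumed on. Positivity of \(\mu_{\Lambda,k}^{\mathrm{PFE}}\) is not needed for the inequality itself; it only matters for how useful the result is.

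\textbf{Step 2.} I chain this with the assumed comparison \(M_{\Lambda,k}^{\comp}(D_k^{\cl})+\Delta_{\gap,k}^{\mathrm{PFE}}\ge \widetilde M_{\Lambda,k}^{\mathrm{PFE}}(D_k^{\cl})\). Subtracting \(\Delta_{\gap,k}^{\mathrm{PFE}}\) from both sides gives the displayed bound
\[
  M_{\Lambda,k}^{\comp}(D_k^{\cl})
  \ge
  \mu_{\Lambda,k}^{\mathrm{PFE}}
  \Dist_{\cl}(D_k^{\cl},G_{\Lambda,k}^{\cl})
  -
  \Delta_{\gap,k}^{\mathrm{PFE}}.
\]
All quantities are finite real numbers, so the rearrangement is legitimate.

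\textbf{Step 3.} The last sentence of the corollary is a definition rather than an assertion. When the PFE detector is itself the chosen clean detector, one may take \(\widetilde M_{\Lambda,k}^{\mathrm{PFE}}=M_{\Lambda,k}^{\comp}=M_{\Lambda,k}^{\mathrm{PFE}}\) and \(\Delta_{\gap,k}^{\mathrm{PFE}}=0\). The inequality then reduces to \cref{thm:pfe-clean-gap}, which justifies setting \(\mu_{\Lambda,k}^{\comp}:=\mu_{\Lambda,k}^{\mathrm{PFE}}\). This is consistent with the coefficient \(\mu_k\) in \cref{def:coeff-drift}.

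\textbf{Main obstacle.} There is no analytic difficulty here. The only point needing care is domain bookkeeping: the ideal gap must hold on the same packages where the comparison is assumed, and the same clean gauge \(G_{\Lambda,k}^{\cl}\) and distance \(\Dist_{\cl}\) must be used on both sides. I would state this explicitly rather than prove anything further.
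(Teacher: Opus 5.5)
Your proposal is correct and follows the paper's proof: chain the exact PFE gap for \(\widetilde M_{\Lambda,k}^{\mathrm{PFE}}\) with the realized/ideal comparison, subtract \(\Delta_{\gap,k}^{\mathrm{PFE}}\), and read the final sentence as a renaming of the clean gap constant. Your point that the ideal gap and the comparison must be taken on the same packages, with the same \(G_{\Lambda,k}^{\cl}\) and \(\Dist_{\cl}\), is a sensible clarification of what the paper leaves implicit, not a gap.
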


\begin{proof}
Combine the exact PFE gap for \(\widetilde M_{\Lambda,k}^{\mathrm{PFE}}\)
with the realized/ideal detector comparison.  The final convention is only a
renaming of the clean gap constant when the PFE detector supplies the clean
side of the audit.
\end{proof}

\begin{theorem}[Genuine clean gap on smooth reduced NS-generated packages]\label{thm:pfe-smred-gap}
Let
\[
  D_0\to\cdots\to D_K\in\A_{K,N,M}^{\mathrm{sm,red}}.
\]
For each scale \(k\), define the reduced clean image class
\[
  \A_{\Lambda,k}^{\cl,\mathrm{PFE}}
  :=
  \{\Theta_{\Lambda,k}^{N}(D_k-\zeta_k):
    D_0\to\cdots\to D_K\in\A_{K,N,M}^{\mathrm{sm,red}}\}.
\]
Assume this clean image class has compact clean unit quotient section, that
\(M_{\Lambda,k}^{\mathrm{PFE}}\) is lower semicontinuous and positively
homogeneous on it, and that
\[
  \ker T_{\Lambda,k}^{\mathrm{PFE}}=G_{\Lambda,k}^{\cl}.
\]
Then there exists \(\mu_{\Lambda,k}^{\mathrm{PFE}}>0\) such that
\[
  M_{\Lambda,k}^{\mathrm{PFE}}
  \bigl(\Theta_{\Lambda,k}^{N}(D_k-\zeta_k)\bigr)
  \ge
  \mu_{\Lambda,k}^{\mathrm{PFE}}
  \Dist_{\cl}
  \bigl(
    \Theta_{\Lambda,k}^{N}(D_k-\zeta_k),
    G_{\Lambda,k}^{\cl}
  \bigr).
\]
If \(M_{\Lambda,k}^{\comp}\) is a realized detector satisfying the
additive-error comparison in \cref{cor:pfe-additive-gap}, then
\[
  M_{\Lambda,k}^{\comp}
  \bigl(\Theta_{\Lambda,k}^{N}(D_k-\zeta_k)\bigr)
  \ge
  \mu_{\Lambda,k}^{\mathrm{PFE}}
  \Dist_{\cl}
  \bigl(
    \Theta_{\Lambda,k}^{N}(D_k-\zeta_k),
    G_{\Lambda,k}^{\cl}
  \bigr)
  -
  \Delta_{\gap,k}^{\mathrm{PFE}}.
\]
\end{theorem}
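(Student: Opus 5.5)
The plan is to reduce the statement to the abstract PFE clean-gap theorem, \cref{thm:pfe-clean-gap}, applied to the reduced clean image class \(\A_{\Lambda,k}^{\cl,\mathrm{PFE}}\) defined in the statement. Then the realized-detector conclusion follows from \cref{cor:pfe-additive-gap}. No new analysis of the Navier--Stokes data is needed: smoothness of the class \(\A_{K,N,M}^{\mathrm{sm,red}}\) enters only through the assumed compactness, lower semicontinuity, and homogeneity of the image class.

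\textbf{Step 1: zero-set rigidity.} Fix \(k\) and set \(\A:=\A_{\Lambda,k}^{\cl,\mathrm{PFE}}\subset Y_{N,k}^{\cl}\). By \cref{def:pfe-detector}, every channel of \(M_{\Lambda,k}^{\mathrm{PFE}}\) is a norm, hence nonnegative, and every active weight is positive. Together with the assumed kernel condition \(\ker T_{\Lambda,k}^{\mathrm{PFE}}=G_{\Lambda,k}^{\cl}\), \cref{thm:pfe-zero-rigidity} gives \(\K_{\Lambda,k}^{\mathrm{PFE}}\subset G_{\Lambda,k}^{\cl}\). This is hypothesis (d) of \cref{thm:pfe-clean-gap}. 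Hypotheses (b) and (c) are exactly the assumptions of the statement.

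\textbf{Step 2: stability under quotient normalization (the main obstacle).} Hypothesis (a) is not stated explicitly, so I would either verify it or bypass it.

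\emph{First attempt: verify it.} Given an image element \(y=\Theta_{\Lambda,k}^{N}(D_k-\zeta_k)\) with \(d:=\Dist_{\cl}(y,G_{\Lambda,k}^{\cl})>0\), we want \(y/d\in\A\). The chart is a reduced linear map, and the reduced class is closed under scaling of the reduced coordinates after synchronization, so \(y/d\) should come from the rescaled synchronized package.

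\emph{Fallback: bypass it.} Run the proof of \cref{thm:pfe-clean-gap} directly on the cone generated by \(\A\), and read the compactness assumption as compactness of the normalized section \(\{y/\Dist_{\cl}(y,G_{\Lambda,k}^{\cl})\}\). The argument is as follows.
\begin{itemize}
  \item If \(\mu:=\inf M_{\Lambda,k}^{\mathrm{PFE}}\) over the unit section were zero, take a minimizing sequence.
  \item Compactness gives a limit point \(y_*\) with \(\Dist_{\cl}(y_*,G_{\Lambda,k}^{\cl})=1\), since the distance to a subspace is continuous.
  \item Lower semicontinuity gives \(M_{\Lambda,k}^{\mathrm{PFE}}(y_*)=0\).
  \item Step 1 then places \(y_*\) in \(G_{\Lambda,k}^{\cl}\), a contradiction.
\end{itemize}
Hence \(\mu_{\Lambda,k}^{\mathrm{PFE}}>0\). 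Positive homogeneity of both \(M_{\Lambda,k}^{\mathrm{PFE}}\) and \(\Dist_{\cl}(\cdot,G_{\Lambda,k}^{\cl})\) then gives the linear lower bound for every nonzero-distance element. Elements at distance zero are trivial, since the left side is nonnegative.

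\textbf{Step 3: evaluation and realized detector.} Evaluating the bound at \(y=\Theta_{\Lambda,k}^{N}(D_k-\zeta_k)\) yields the first display of the statement. For the second display, I would invoke \cref{cor:pfe-additive-gap} with \(\widetilde M_{\Lambda,k}^{\mathrm{PFE}}=M_{\Lambda,k}^{\mathrm{PFE}}\), which we have just shown satisfies the exact gap. Subtracting \(\Delta_{\gap,k}^{\mathrm{PFE}}\) through the assumed comparison \(M_{\Lambda,k}^{\comp}+\Delta_{\gap,k}^{\mathrm{PFE}}\ge M_{\Lambda,k}^{\mathrm{PFE}}\) on the image class gives the additive-error inequality.
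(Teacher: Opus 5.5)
Your skeleton matches the paper's proof exactly: \cref{thm:pfe-zero-rigidity} for kernel-freeness, then \cref{thm:pfe-clean-gap} on the image class, then \cref{cor:pfe-additive-gap}. Your Steps 1 and 3 are fine. You are also right that hypothesis (a) of \cref{thm:pfe-clean-gap}, stability under quotient normalization, is not among the stated assumptions; the paper applies that theorem without checking it.

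However, your Step 2 does not close this hole. The ``first attempt'' claims the image class \(\A_{\Lambda,k}^{\cl,\mathrm{PFE}}\) is closed under rescaling, and nothing in \cref{def:smred-class} supports this. The class consists of images of genuine Navier--Stokes data under the uniform bound \(M\). An amplitude rescaling of the data does not preserve the Navier--Stokes and pressure equations. Nor does it multiply all package coordinates by a common factor: velocity coordinates are linear in \(u\), while source and active-pressure coordinates are quadratic. The fallback does not prove the statement as written either. It replaces the assumed compactness of \(\{y\in\A_{\Lambda,k}^{\cl,\mathrm{PFE}}:\Dist_{\cl}(y,G_{\Lambda,k}^{\cl})=1\}\) by compactness of the normalized set \(\{y/\Dist_{\cl}(y,G_{\Lambda,k}^{\cl})\}\). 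That is a different hypothesis and is not implied: the normalized set sits inside \(\{\Dist_{\cl}(\cdot,G_{\Lambda,k}^{\cl})=1\}\), which is unbounded along \(G_{\Lambda,k}^{\cl}\).

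The missing idea is to use the finite-dimensional linear structure. This makes (a) superfluous, and in fact also the compactness, semicontinuity, and homogeneity hypotheses on the image class. By \cref{def:pfe-detector,def:pfe-matrix}, \(M_{\Lambda,k}^{\mathrm{PFE}}(y)=\sum_i\beta_i\norm{T_iy}\), where \(\beta_i>0\) and the \(T_i\) are the active linear channels of \(T_{\Lambda,k}^{\mathrm{PFE}}\). So it is a seminorm on the finite-dimensional space \(Y_{N,k}^{\cl}\).

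\begin{itemize}
\item The inclusion \(G_{\Lambda,k}^{\cl}\subset\ker T_{\Lambda,k}^{\mathrm{PFE}}\) makes it invariant under translation by \(G_{\Lambda,k}^{\cl}\).
\item The reverse inclusion makes it strictly positive off \(G_{\Lambda,k}^{\cl}\).
\item Hence it descends to a norm on the finite-dimensional quotient \(Y_{N,k}^{\cl}/G_{\Lambda,k}^{\cl}\), whose quotient norm is exactly \(\Dist_{\cl}(\cdot,G_{\Lambda,k}^{\cl})\).
\end{itemize}

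Equivalence of norms then gives \(\mu_{\Lambda,k}^{\mathrm{PFE}}>0\) with \(M_{\Lambda,k}^{\mathrm{PFE}}(y)\ge\mu_{\Lambda,k}^{\mathrm{PFE}}\Dist_{\cl}(y,G_{\Lambda,k}^{\cl})\) for all \(y\in Y_{N,k}^{\cl}\), in particular at \(y=\Theta_{\Lambda,k}^{N}(D_k-\zeta_k)\). This is just your minimizing-sequence argument, run on the compact unit sphere of the quotient instead of on a section of the image class. Your Step 3 then goes through unchanged.
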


\begin{proof}
The matrix condition and \cref{thm:pfe-zero-rigidity} give PFE
kernel-freeness on the reduced clean class.  Applying
\cref{thm:pfe-clean-gap} to the clean image
\(\A_{\Lambda,k}^{\cl,\mathrm{PFE}}\) yields the exact inequality.  The
realized-detector estimate is \cref{cor:pfe-additive-gap}.
\end{proof}

\subsection{Localized Calderon--Zygmund commutator input}

This subsection records the separated-support Calderon--Zygmund input used in the pressure ledger; the underlying singular-integral estimates are standard.

Let \(f=u_i u_j\) and let \(T=R_iR_j\).  Choose
\(\eta\in C_c^\infty(B_1)\) with \(\eta\equiv1\) on \(B_{3/4}\).  For
\(x\in B_{1/2}\),
\[
  [\eta,T]f(x):=\eta(x)Tf(x)-T(\eta f)(x)
  =T((1-\eta)f)(x).
\]
The source \((1-\eta)f\) is supported in the annular region where
\(\eta\ne1\), separated from \(B_{1/2}\).

\begin{theorem}[Localized Calderon--Zygmund commutator]\label{thm:cz-comm}
For \(1\le p\le\infty\),
\[
  \norm{[\eta,R_iR_j]f}_{L^p(B_{1/2})}
  \le
  C_\eta\norm{f}_{L^p(B_1\setminus B_{3/4})},
\]
with the evident interpretation when the support of \(1-\eta\) is contained
in a slightly larger annulus.  In particular,
\[
  \norm{[\eta,R_iR_j](u_i u_j)}_{Y_{\prs}}
  \le
  C_\eta
  \norm{u\otimes u}_{L^{3/2}((-1,0);L^{3/2}(B_1\setminus B_{3/4}))}.
\]
If \(\norm{u}_{L^3(Q_1)}\le M_U\), then
\[
  \norm{[\eta,R_iR_j](u_i u_j)}_{Y_{\prs}}
  \le
  C_\eta M_U
  \norm{u}_{L^3((-1,0);L^3(B_1\setminus B_{3/4}))}.
\]
\end{theorem}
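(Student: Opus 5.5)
The plan is to reuse the separated-kernel argument of \cref{prop:commutator}, made explicit for general \(1\le p\le\infty\). Write \(T=R_iR_j\). Following the computation preceding the statement, \(\eta\equiv1\) on \(B_{1/2}\), so
\[
  [\eta,T]f=T\bigl((1-\eta)f\bigr)\qquad\text{on }B_{1/2}.
\]
Moreover \((1-\eta)f\), restricted to \(B_1\) where \(f\) lives after zero extension, is supported in \(A_\eta=B_1\cap\operatorname{supp}(1-\eta)\subset \overline{B_1}\setminus B_{3/4}\). This set lies at distance at least \(1/4\) from \(B_{1/2}\).

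\textbf{Step 1: kernel representation.} Away from the diagonal, \(R_iR_j\) is given by the kernel
\[
  K_{ij}(z)=c\,\frac{\delta_{ij}|z|^2-3z_iz_j}{|z|^5}.
\]
The delta-function part of the kernel sits on the diagonal, so it does not act here. Hence for \(x\in B_{1/2}\),
\[
  T\bigl((1-\eta)f\bigr)(x)=\int_{A_\eta}K_{ij}(x-y)(1-\eta(y))f(y)\,dy .
\]
For \(|x-y|\ge1/4\) we have \(|K_{ij}(x-y)|\le C4^3\), and \(0\le 1-\eta\le C\). This gives the pointwise bound
\[
  |[\eta,T]f(x)|\le C_\eta\norm{f}_{L^1(A_\eta)}.
\]

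\textbf{Step 2: pass to \(L^p\).} Since \(A_\eta\) is bounded, Hölder's inequality gives
\[
  \norm{f}_{L^1(A_\eta)}\le|A_\eta|^{1-1/p}\norm{f}_{L^p(A_\eta)}.
\]
Integrating the pointwise bound over \(B_{1/2}\), which has finite measure, then yields the \(L^p(B_{1/2})\) estimate. This works uniformly for \(1\le p\le\infty\), including the endpoints, because no singular-integral boundedness is needed, only the bounded-kernel estimate. If \(\operatorname{supp}(1-\eta)\cap B_1\) is not literally inside \(B_1\setminus B_{3/4}\), the same argument applies with the slightly larger annulus \(A_\eta\); this is the "evident interpretation" in the statement.

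\textbf{Step 3: space--time estimate.} Apply the spatial estimate with \(p=3/2\) for almost every \(s\in(-1,0)\), then take the \(L^{3/2}\)-norm in time. This gives the \(Y_{\prs}\) bound with \(f=u\otimes u\). Finally,
\[
  \norm{u\otimes u}_{L^{3/2}}\le\norm{u}_{L^3}^2\le M_U\norm{u}_{L^3(\text{annulus})}.
\]
Here the norms on the left and in the middle are taken over the annulus, and the last inequality uses \(\norm{u}_{L^3(\text{annulus})}\le\norm{u}_{L^3(Q_1)}\le M_U\), exactly as in the proof of \cref{prop:commutator}.

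\textbf{Main obstacle.} The only delicate point is justifying the kernel representation for merely \(L^p\) data, especially \(p=1\) and \(p=\infty\), where \(T\) is not bounded on the whole space. I would handle this by defining \(T\bigl((1-\eta)f\bigr)\) on \(B_{1/2}\) directly through the absolutely convergent off-diagonal integral. For nice \(f\) this integral agrees with the singular integral. For \(1<p<\infty\) it agrees in general by density, since both sides are continuous in \(L^p(A_\eta)\). At the endpoints the bounded-kernel formula is taken as the definition, consistent with the local pressure convention of \cite{SereginLectureNotes}.
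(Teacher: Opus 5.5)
Your proposal is correct and uses essentially the same separated-kernel argument as the paper. Boundedness of the off-diagonal kernel of $R_iR_j$ on $B_{1/2}\times\supp(1-\eta)$ gives a pointwise bound by $\norm{f}_{L^1}$ over the annulus. That bound is upgraded to $L^p$, applied at almost every time with $p=3/2$, and combined with H\"older's inequality for the $M_U$ form. The differences are cosmetic: you pass to $L^p$ via H\"older on bounded sets where the paper cites Schur's test, and you justify the kernel representation (including the diagonal delta term, which vanishes on $B_{1/2}$), which the paper leaves implicit.
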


\begin{proof}
The kernel of \(R_iR_j\) is a Calderon--Zygmund kernel \(K_{ij}(x-y)\).  If
\(x\in B_{1/2}\) and \(y\in\supp(1-\eta)\subset B_1\setminus B_{3/4}\), then
\(\abs{x-y}\ge c_\eta>0\).  Thus \(K_{ij}(x-y)\) is bounded and smooth on
the relevant product set.  Hence
\[
  \abs{[\eta,R_iR_j]f(x)}
  \le
  C_\eta\int_{B_1\setminus B_{3/4}}\abs{f(y)}\,dy.
\]
This gives the \(L^\infty(B_{1/2})\) bound by \(\norm{f}_{L^1}\), and the
same separated-kernel operator is bounded from \(L^p\) on the annulus to
\(L^p(B_{1/2})\) for every \(1\le p\le\infty\), for instance by Schur's
test.  Applying this at almost every time with \(p=3/2\) and integrating in
time gives the space-time estimate.  The final inequality follows from
Holders inequality:
\[
  \norm{u\otimes u}_{L^{3/2}(A)}
  \le
  \norm{u}_{L^3(Q_1)}\norm{u}_{L^3(A)}
  \le
  M_U\norm{u}_{L^3(A)}.
\]
\end{proof}

\begin{corollary}[Conservative pressure-tail visibility input]\label{cor:pressure-tail-input}
For pressure-admissible local data with \(u\in L^3(Q_1)^3\), the localized
pressure recomputation error on \(B_{1/2}\) is bounded by the annular
velocity leakage:
\[
  \Err_{k}^{\proj}
  \lesssim
  \norm{u\otimes u}_{L^{3/2}((-1,0);L^{3/2}(B_1\setminus B_{3/4}))},
\]
and under finite amplitude by \(C M_U\Leak_{\mathrm{ann}}(u)\).
\end{corollary}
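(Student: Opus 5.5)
The corollary follows from \cref{thm:cz-comm} (equivalently \cref{prop:commutator}) once the projection recomputation error is identified with the commutator.

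First, fix what \(\Err_k^{\proj}\) means. It is the discrepancy on \(B_{1/2}\) between two pressures: the active pressure recomputed from the localized source \(\eta\,u\otimes u\), and the pressure one would observe by first applying \(R_iR_j\) to the full product and then localizing. By definition this discrepancy is
\[
  \eta\,R_iR_j(u_iu_j)-R_iR_j(\eta\,u_iu_j)=[\eta,R_iR_j](u_iu_j),
\]
measured in \(Y_{\prs}\). The rest of the pressure-projection ledger, namely \(\mathrm{ProjErr}_k\) from \cref{def:proj-harm-datum}, is charged separately, so only this commutator part needs to be bounded here.

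Second, since \(\eta\equiv1\) on \(B_{1/2}\), the commutator reduces on \(B_{1/2}\) to \(R_iR_j((1-\eta)u_iu_j)\). The source \((1-\eta)u_iu_j\) is supported in the annulus \(B_1\setminus B_{3/4}\), which is at positive distance from \(B_{1/2}\). Therefore the kernel of \(R_iR_j\) is smooth and bounded on the relevant product set. Schur's test then gives, at almost every time, an \(L^{3/2}\to L^{3/2}\) bound. Integrating in time produces the displayed estimate by \(\norm{u\otimes u}_{L^{3/2}((-1,0);L^{3/2}(B_1\setminus B_{3/4}))}\), with the implicit constant equal to \(C_\eta\).

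Third, the finite-amplitude form follows from Hölder's inequality, as in the last step of \cref{thm:cz-comm}:
\[
  \norm{u\otimes u}_{L^{3/2}(A)}\le \norm{u}_{L^3(Q_1)}\norm{u}_{L^3(A)}\le M_U\norm{u}_{L^3(A)},
\]
where \(A=(-1,0)\times(B_1\setminus B_{3/4})\). Taking \(\Leak_{\mathrm{ann}}(u):=\norm{u}_{L^3(A)}\) gives the bound \(C M_U\Leak_{\mathrm{ann}}(u)\).

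The only delicate point is the bookkeeping in the first step: \(\Err_k^{\proj}\) must be exactly the commutator and not the finite-rank projection tail. I would state this identification explicitly as the convention. If the actual support of \(1-\eta\) inside \(B_1\) is a slightly larger annulus, the "evident interpretation" clause of \cref{thm:cz-comm} covers it.
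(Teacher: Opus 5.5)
Your proposal is correct and is essentially the paper's argument: the paper's proof simply invokes the $p=3/2$ space-time case of \cref{thm:cz-comm} and names its right-hand side the annular leakage. Your second and third steps reproduce exactly that (separated kernel, Schur's test, then H\"older with $\Leak_{\mathrm{ann}}(u)=\norm{u}_{L^3((-1,0)\times(B_1\setminus B_{3/4}))}$), and your explicit identification of $\Err_k^{\proj}$ with $[\eta,R_iR_j](u_iu_j)$ on $B_{1/2}$ spells out a convention the paper leaves implicit: it is the localized recomputation error of \cref{prop:commutator}, kept separate from $\mathrm{ProjErr}_k$.
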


\begin{proof}
This is the \(p=3/2\) space-time estimate in \cref{thm:cz-comm}, with the
right side named as the annular leakage contribution.
\end{proof}

\section{Smooth Reduced NS-Generated Verification Class}

The preceding section proves individual finite-window input criteria.  This
section packages several of them into a single selected class generated by
smooth Navier--Stokes data and then reduced by finite-dimensional quotient and
projection conventions.  The point is modest but important: the recursive
audit framework is not purely formal.  Some of its structural inputs hold on a
nonempty finite-window PDE-generated class.  No assertion is made for all
suitable weak solutions.

\subsection{The smooth reduced class}

\begin{definition}[Smooth reduced NS-generated finite-window class]\label{def:smred-class}
Fix \(0<\lambda<1\), \(r_k=\lambda^kr_0\), \(k=0,\dots,K\), a reduction
dimension \(N\), a smoothness bound \(M\), and an integer \(m\) large enough
for the compact embeddings used below.  The class
\[
  \A_{K,N,M}^{\mathrm{sm,red}}
\]
consists of finite chains
\[
  D_0(u,p)\to D_1(u,p)\to\cdots\to D_K(u,p)
\]
generated by smooth local Navier--Stokes data \((u,p)\) on \(Q_{r_0}(z_0)\),
with normalized fields
\[
  u_k(y,s)=r_k u(x_0+r_ky,t_0+r_k^2s),
  \qquad
  p_k(y,s)=r_k^2 p(x_0+r_ky,t_0+r_k^2s),
\]
and satisfying the following finite-window conditions.
\begin{enumerate}[label=(\alph*)]
  \item Uniform smooth bound:
  \[
    \sup_{0\le k\le K}
    \left(
      \norm{u_k}_{C^m(\overline{Q_1})}
      +
      \norm{p_k}_{C^{m-1}(\overline{Q_1})}
    \right)\le M.
  \]
  Equivalently, one may use Sobolev bounds
  \[
    \sup_{0\le k\le K}
    \left(
      \norm{u_k}_{H^s(Q_1)}
      +
      \norm{p_k}_{H^{s-1}(Q_1)}
    \right)\le M
  \]
  with \(s\) large enough for the same compactness conclusions.
  \item Pressure admissibility at each scale:
  \[
    -\Delta p_k=\partial_i\partial_j(u_{k,i}u_{k,j})
  \]
  in distributions on \(B_1\), modulo time-dependent constants.
  \item Fixed cutoff convention: choose
  \[
    \eta,\chi\in C_c^\infty(B_1),
    \qquad
    \eta\equiv\chi\equiv1\text{ on }B_{3/4},
  \]
  and
  \[
    A_\chi:=\supp\nabla\chi\cup\supp\Delta\chi.
  \]
  \item Reduced coordinates: fixed finite-dimensional reduction maps
  \[
    \Pi_{N,k}D_k=x_k(D_k)\in X_{N,k}
  \]
  are used for the clean, source, chart, and detector coordinates.
  \item Synchronized quotient section:
  \[
    X_{N,k}=G_{N,k}^{\intg}\oplus H_{N,k},
  \]
  and \(\zeta_k(D_k)\) denotes the \(G_{N,k}^{\intg}\)-component.  The
  synchronized representative is \(D_k-\zeta_k(D_k)\).
  \item Reduced quotient compactness and chart kernel-freeness: the reduced
  baseline unit section is compact modulo \(G_{N,k}^{\intg}\), the reduced
  chart \(V_k\) is lower semicontinuous and homogeneous, and
  \[
    V_k(D_k)=0
    \quad\Longrightarrow\quad
    D_k\in\Gamma_{\Lambda,k}^{\intg}
  \]
  on the reduced class.
  \item Clean detector compactness and kernel-freeness: the clean unit
  quotient section is compact, \(M_{\Lambda,k}^{\comp}\) is lower
  semicontinuous and positively homogeneous, and
  \[
    \K_{\Lambda,k}^{\cl}\subset\Gamma_{\Lambda,k}^{\cl}.
  \]
  In a reduced finite-dimensional detector model this may be checked by
  \[
    \ker \mathcal T_{\Lambda,k}^{\cl}=G_{\Lambda,k}^{\cl}
  \]
  for the clean detector/tax-channel matrix \(\mathcal T_{\Lambda,k}^{\cl}\).
  In the pressure--flux--energy realization below, this condition is replaced
  by the concrete matrix condition
  \[
    \ker T_{\Lambda,k}^{\mathrm{PFE}}=G_{\Lambda,k}^{\cl}.
  \]
  \item Projection convention: the selected clean source family is either
  compact in \(X_{\mathrm{src}}\) by the smooth bound, or the effective
  projection estimate
  \[
    \Delta_{k,\proj,J}^{\mathrm{unif}}\le \varepsilon_{k,J},
    \qquad
    \varepsilon_{k,J}\to0
  \]
  is imposed for the chosen pressure projection rank \(J\).
\end{enumerate}
\end{definition}

\begin{remark}[Slice notation]
The object \(\A_{K,N,M}^{\mathrm{sm,red}}\) is a class of finite chains.  In
the statements below, a scale-\(k\) package \(D_k\) is always understood as
the \(k\)-th slice of some chain
\[
  D_0\to\cdots\to D_K\in\A_{K,N,M}^{\mathrm{sm,red}}.
\]
This convention avoids treating the chain class itself as a single-scale
package class.
\end{remark}

\begin{remark}[Nonemptiness]
The class is nonempty for suitable choices of \(M\) and of the reduced
coordinate spaces.  For example, smooth local Navier--Stokes solutions with
bounded \(C^m\)-norm generate finite-window packages in the coordinate part of
the class.  The zero solution is always a smooth example when the selected
quotient data are compatible, and nonzero constant velocity fields with
constant pressure give nonzero smooth examples in compatible reduced models.
Nonzero smooth local solutions with nonzero strain may be used to generate
nontrivial pressure/source coordinates, provided the selected reduced model
admits them.
This nonemptiness statement says nothing about singular or merely suitable
weak solutions.
\end{remark}

\subsection{Selected input verification}

\begin{proposition}[Pressure/source preservation on the smooth reduced class]\label{prop:smred-pressure-source}
Let
\[
  D_0\to\cdots\to D_K\in\A_{K,N,M}^{\mathrm{sm,red}},
\]
and fix \(0\le k\le K\).  Define
\[
  F^{\mathrm{act}}_{k,ij}:=\eta u_{k,i}u_{k,j},
  \qquad
  P_k^{\mathrm{act}}:=R_iR_j(F^{\mathrm{act}}_{k,ij})\big|_{B_{1/2}},
  \qquad
  P_k^{\harm}:=p_k-P_k^{\mathrm{act}}.
\]
Then
\[
  U_k=u_k\in L^3(Q_1)^3,\qquad
  F_k^{\mathrm{act}}\in X_{\mathrm{src}},
  \qquad
  P_k^{\mathrm{act}}\in Y_{\prs},
  \qquad
  P_k^{\harm}\in Y_{\harm}.
\]
Moreover, \(P_k^{\harm}\) is harmonic on \(B_{3/4}\) for almost every time.
\end{proposition}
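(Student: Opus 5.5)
The plan is to reduce the statement to \cref{thm:ns-pressure-source-preservation} (equivalently \cref{prop:pressure-source-preservation}), applied at the scale \(r=r_k\) to the smooth data \((u,p)\) on \(Q_{r_0}(z_0)\). Since \(r_k\le r_0\), the cylinder \(Q_{r_k}(z_0)\) is contained in \(Q_{r_0}(z_0)\), and the normalized fields \(u_k,p_k\) in \cref{def:smred-class} are exactly the fields \(u_r,p_r\) of that theorem with \(r=r_k\).

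First I verify the hypotheses. By the uniform bound (a) of \cref{def:smred-class}, \(u_k\) and \(p_k\) are continuous on the compact set \(\overline{Q_1}\), so \(\norm{u_k}_{L^3(Q_1)}\le |Q_1|^{1/3}M\) and \(\norm{p_k}_{L^{3/2}(Q_1)}\le |Q_1|^{2/3}M\). The Sobolev variant works the same way, via the embedding \(H^s\subset L^\infty\) for large \(s\). Pressure admissibility holds at each scale by (b), and the cutoff \(\eta\) is the one fixed in (c).

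Next I run the argument of that theorem directly. Hölder's inequality gives
\[
  \norm{F_k^{\mathrm{act}}}_{X_{\mathrm{src}}}\le\norm{u_k}_{L^3(Q_1)}^2 .
\]
The Calderon--Zygmund bound \eqref{eq:cz-bound} then gives \(P_k^{\mathrm{act}}\in Y_{\prs}\). Applying the same global bound before restriction gives \(R_iR_jF^{\mathrm{act}}_k\in L^{3/2}((-1,0);L^{3/2}(B_{3/4}))\). Since \(p_k\in L^{3/2}(Q_1)\), the difference \(P_k^{\harm}\) lies in \(Y_{\harm}\).

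There is one notational point to fix here. The statement writes \(P_k^{\harm}=p_k-P_k^{\mathrm{act}}\) with \(P_k^{\mathrm{act}}\) restricted to \(B_{1/2}\). On \(B_{3/4}\) I interpret the harmonic coordinate, as in \cref{def:pressure-coordinates}, as \(p_k-\widetilde P_k^{\mathrm{act}}\), using the unrestricted lift \(\widetilde P_k^{\mathrm{act}}=R_iR_jF^{\mathrm{act}}_k\).

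The main obstacle is harmonicity, and I would handle it exactly as in \cref{prop:pressure-source-preservation}. In distributions,
\[
  -\Delta\widetilde P_k^{\mathrm{act}}=\partial_i\partial_j(\eta u_{k,i}u_{k,j}).
\]
Because \(\eta\equiv1\) on \(B_{3/4}\), this agrees with \(\partial_i\partial_j(u_{k,i}u_{k,j})=-\Delta p_k\) on every \(B_\rho\Subset B_{3/4}\), modulo the time-dependent constant. Therefore \(P_k^{\harm}\) is distributionally harmonic in space for a.e. \(s\). Weyl's lemma then upgrades it to a classical harmonic function on \(B_{3/4}\). Smoothness of the data is a convenience here but is not needed for this step.
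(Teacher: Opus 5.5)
Your proposal is correct and follows the paper's proof. That proof applies \cref{thm:ns-pressure-source-preservation} to the normalized fields \((u_k,p_k)\), taking integrability from the smooth bound (a) and pressure admissibility from (b) of \cref{def:smred-class}; your reading of \(P_k^{\harm}\) on \(B_{3/4}\) as \(p_k-R_iR_j(F_{k,ij}^{\mathrm{act}})\), using the unrestricted lift, is exactly the convention \(p_r^{\harm}=p_r-p_r^{\mathrm{act}}\) used in that theorem.
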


\begin{proof}
This is \cref{thm:ns-pressure-source-preservation} applied to the normalized
fields \((u_k,p_k)\).  The smooth bound in
\cref{def:smred-class} implies the required \(L^3\) and \(L^{3/2}\)
integrability, and the pressure-admissibility condition is part of the class
definition.
\end{proof}

\begin{proposition}[Energy/flux localization on the smooth reduced class]\label{prop:smred-ef-loc}
Let
\[
  D_0\to\cdots\to D_K\in\A_{K,N,M}^{\mathrm{sm,red}},
\]
and fix \(0\le k<K\).  The one-step localization component may be chosen in
the energy/flux convention so that
\[
  \Delta_k^{\locerr}
  =
  \Delta_k^{\locerr,\mathrm{EF}}
  \ge
  C_{\lambda,\chi,\theta}\Leak_{k\to k+1}^{\mathrm{EF}}.
\]
With this choice, the localization entry of the scale-\((k+1)\)
admissibility checklist is verified.  The spatial shell part is charged by
\[
  \Leak_{k\to k+1}^{\mathrm{EF,shell}},
\]
while the full local-energy expression also includes the pulled-back core
energy and dissipation terms recorded in
\(\Leak_{k\to k+1}^{\mathrm{EF}}\).
\end{proposition}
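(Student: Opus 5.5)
The plan is to reduce the statement to \cref{thm:ef-loc-update} and \cref{cor:ef-loc-insertion}, applied to the $k$-th and $(k+1)$-st slices of the given chain. The only genuine work is to check that the hypotheses of \cref{thm:ef-loc-update} hold on the smooth reduced class. After that, the shell/core decomposition is handled by comparing the two leakage functionals of \cref{def:ef-leak}.

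\emph{Step 1 (hypotheses).} Fix $0\le k<K$. The uniform bound in \cref{def:smred-class}(a) gives $\norm{u_k}_{C^m(\overline{Q_1})}+\norm{p_k}_{C^{m-1}(\overline{Q_1})}\le M$ with $m\ge1$. Since $Q_1$ is bounded, this yields
\[
u_k\in L^\infty((-1,0);L^2(B_1))^3,\qquad \nabla u_k\in L^2(Q_1)^{3\times3},\qquad u_k\in L^3(Q_1)^3,\qquad p_k\in L^{3/2}(Q_1),
\]
with all norms controlled by $M$ times powers of $|Q_1|$. In the Sobolev variant I would use embeddings with $s$ large, as allowed in (a). Next, the scaling $r_{k+1}=\lambda r_k$ shows that the normalized fields satisfy
\[
u_{k+1}(Y,S)=\lambda u_k(\lambda Y,\lambda^2S),\qquad p_{k+1}(Y,S)=\lambda^2 p_k(\lambda Y,\lambda^2 S).
\]
This follows by direct substitution in the formulas of \cref{def:smred-class}. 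Hence $D_{k+1}$ is exactly the restriction--rescaling of $D_k$ used in \cref{thm:ef-loc-update}. Smoothness also provides the local energy (in)equality, though \cref{thm:ef-loc-update} only uses the integrability and the scaling.

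\emph{Step 2 (insertion).} \cref{thm:ef-loc-update} now gives
\[
\Leak_\phi^{\mathrm{EF}}(u_{k+1},p_{k+1})\le C_{\lambda,\chi,\theta}\Leak_{k\to k+1}^{\mathrm{EF}}.
\]
The right-hand side is finite by Step 1, so a finite choice
\[
\Delta_k^{\locerr,\mathrm{EF}}\ge C_{\lambda,\chi,\theta}\Leak_{k\to k+1}^{\mathrm{EF}}
\]
is possible. \cref{cor:ef-loc-insertion} then verifies the localization entry of the scale-$(k+1)$ checklist.

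\emph{Step 3 (shell versus core).} It remains to identify which part of the leakage the shell functional charges. Splitting the integrand as in the proof of \cref{thm:ef-loc-update}:
\begin{itemize}
\item The terms with $\nabla_Y\chi$ and $\Delta_Y\chi$ pull back to $(-\lambda^2,0)\times\lambda A_\chi$. They are dominated by $\Leak_{k\to k+1}^{\mathrm{EF,shell}}$, since its integrand contains $|u_k|^2+|u_k|^3+|p_k||u_k|$.
\item The $\partial_S\phi$ and $\phi|\nabla u|^2$ terms live on $(-\lambda^2,0)\times\lambda B_1$. They are exactly the core contributions $\int\!\!\int_{\lambda B_1}(|u_k|^2+|\nabla u_k|^2)$ in $\Leak_{k\to k+1}^{\mathrm{EF}}$, as noted in the remark after \cref{def:ef-leak}.
\end{itemize}

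I expect no real obstacle. The one point to get right is Step 1: the smooth class must reproduce the exact one-step scaling with a fixed center, and $m$ must be large enough for the $C^m$ bound to control $\nabla u_k$ in $L^2$.
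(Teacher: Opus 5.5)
Your proposal is correct and follows the paper's route. The paper also deduces $u_k\in L^\infty_tL^2_x\cap L^2_tH^1_x\cap L^3$ and $p_k\in L^{3/2}$ from the uniform smooth bound, applies \cref{thm:ef-loc-update}, and reads off the ledger choice from \cref{cor:ef-loc-insertion}; it treats the shell/core sentence as pure bookkeeping. Your extra checks are correct details the paper leaves implicit:
the fixed-center identity $u_{k+1}(Y,S)=\lambda u_k(\lambda Y,\lambda^2S)$, finiteness of $\Leak_{k\to k+1}^{\mathrm{EF}}$, and the explicit split sending the $\nabla\chi,\Delta\chi$ terms to the shell and the $\partial_S\phi$, $\phi|\nabla u|^2$ terms to the core. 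One small correction: the core terms are \emph{dominated by} the pulled-back core integrals, not exactly equal to them.
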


\begin{proof}
The smooth bound implies
\[
  u_k\in L^\infty_tL^2_x\cap L^2_tH^1_x\cap L^3,
  \qquad
  p_k\in L^{3/2}.
\]
Thus the hypotheses of \cref{thm:ef-loc-update} hold, and the displayed
choice of \(\Delta_k^{\locerr,\mathrm{EF}}\) is exactly the localization
increment insertion in \cref{cor:ef-loc-insertion}.  The final sentence only
recalls the distinction between the full energy/flux leakage and its spatial
shell contribution.
\end{proof}

\begin{corollary}[Finite-amplitude localization charging on the smooth class]\label{cor:smred-ef-amplitude}
For every chain in \(\A_{K,N,M}^{\mathrm{sm,red}}\), there is a finite
constant \(M_U=M_U(M)\) such that
\[
  \norm{u_k}_{L^3(Q_1)}\le M_U
  \qquad\text{for all }0\le k\le K.
\]
Consequently the nonlinear flux and pressure-flux terms in
\(\Leak_{k\to k+1}^{\mathrm{EF}}\) are charged to the finite-window velocity
and pressure budgets as in \cref{cor:ef-finite-amplitude}.
\end{corollary}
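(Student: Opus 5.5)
The plan is to derive the uniform \(L^3\) bound directly from the smooth bound (a) in \cref{def:smred-class}, and then invoke \cref{cor:ef-finite-amplitude} at every scale.

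\textbf{Step 1: the constant \(M_U\).} For each \(0\le k\le K\), the \(C^m\) bound with \(m\ge0\) gives \(\sup_{\overline{Q_1}}|u_k|\le \norm{u_k}_{C^m(\overline{Q_1})}\le M\). Integrating over the bounded cylinder yields
\[
  \norm{u_k}_{L^3(Q_1)}\le |Q_1|^{1/3}\sup_{\overline{Q_1}}|u_k|\le |Q_1|^{1/3}M,
\]
where \(|Q_1|=|B_1|\) is a fixed constant. So we set \(M_U(M):=|Q_1|^{1/3}M\). This choice depends only on \(M\), not on \(k\), the chain, or \(K\).

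If the Sobolev variant of (a) is used instead, the same argument goes through with one extra ingredient. The Sobolev embedding \(H^s(Q_1)\hookrightarrow L^3(Q_1)\) on the bounded Lipschitz cylinder holds as soon as \(s\) is large enough, and the class definition already requires this. It gives \(\norm{u_k}_{L^3(Q_1)}\le C_{\mathrm{emb}}M\), and we then take \(M_U:=C_{\mathrm{emb}}M\).

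\textbf{Step 2: charging the nonlinear terms.} Fix \(k<K\). The pressure bound \(\norm{p_k}_{C^{m-1}}\le M\) similarly gives \(p_k\in L^{3/2}(Q_1)\) with norm at most \(|Q_1|^{2/3}M\). Hence the hypotheses of \cref{cor:ef-finite-amplitude} hold at scale \(k\) with the amplitude \(M_U\) from Step 1. That corollary bounds the pressure-flux integral over \((-\lambda^2,0)\times\lambda A_\chi\) by Hölder's inequality, and identifies the cubic flux integral with \(\norm{u_k}^3_{L^3((-\lambda^2,0)\times\lambda A_\chi)}\). Both of these sets are subsets of \(Q_1\), so both integrals are dominated by the finite-window velocity and pressure budgets, and each is at most a constant times a power of \(M\). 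This is exactly the claimed charging.

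\textbf{Main subtlety.} I expect no real obstacle. The only point needing care is uniformity: \(M_U\) must be chosen independently of \(k\). This is ensured because (a) takes the supremum over \(0\le k\le K\) inside the bound. As in \cref{cor:ef-finite-amplitude}, the statement asserts only finiteness, not smallness.
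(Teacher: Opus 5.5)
Your proposal is correct and follows the paper's own argument: the paper likewise obtains the uniform \(L^3(Q_1)\) bound directly from the \(C^m\) bound in the class definition and then cites \cref{cor:ef-finite-amplitude} for the charging. You additionally make the constant explicit as \(M_U=|Q_1|^{1/3}M\), treat the Sobolev variant via the embedding \(H^s(Q_1)\hookrightarrow L^3(Q_1)\), and check \(p_k\in L^{3/2}(Q_1)\); the paper's two-line proof leaves these steps implicit.
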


\begin{proof}
The \(C^m\)-bound gives a uniform \(L^3(Q_1)\) bound.  The charging statement
is \cref{cor:ef-finite-amplitude}.
\end{proof}

\begin{proposition}[Smooth compactness and uniform projection tails]\label{prop:smred-projection}
For each fixed \(k\), the selected smooth clean source family
\[
  \F_k^{\mathrm{sm}}
  :=
  \{F_{D_k-\zeta_k}^{\cl}:
    D_0\to\cdots\to D_K\in\A_{K,N,M}^{\mathrm{sm,red}}\}
  \subset X_{\mathrm{src}}
\]
is precompact in \(X_{\mathrm{src}}\), provided the selected source map is
the canonical smooth source \(F^{\cl}=\eta u_k\otimes u_k\) or any fixed
continuous reduced source selector of the same smooth-bounded coordinates.
Therefore
\[
  \mathcal G_k^{\mathrm{sm}}
  :=
  \{\Rprs F:F\in\F_k^{\mathrm{sm}}\}
  \Subset Y_{\prs}.
\]
If \(P_{J,k}^{\cl}\to I\) strongly on \(Y_{\prs}\) and
\[
  \sup_J\norm{P_{J,k}^{\cl}}_{Y_{\prs}\to Y_{\prs}}<\infty,
\]
then
\[
  \sup_{g\in\mathcal G_k^{\mathrm{sm}}}
  \norm{(I-P_{J,k}^{\cl})g}_{Y_{\prs}}\to0.
\]
Alternatively, the effective projection assumption
\[
  \Delta_{k,\proj,J}^{\mathrm{unif}}\le\varepsilon_{k,J},
  \qquad
  \varepsilon_{k,J}\to0,
\]
may be used directly.
\end{proposition}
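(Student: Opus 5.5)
The plan is to reduce the statement to \cref{thm:source-compact} and \cref{thm:compact-image-proj}. The only genuinely analytic step is precompactness of the source family \(\F_k^{\mathrm{sm}}\) in \(X_{\mathrm{src}}\). After that, the pressure-image compactness and the uniform decay of projection tails follow from results already proved.

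\textbf{Step 1: compactness of the sources.} I would use \cref{prop:compactness-criteria}(a). For the canonical selector \(F^{\cl}=\eta\,u_k\otimes u_k\), the uniform bound \(\norm{u_k}_{C^m(\overline{Q_1})}\le M\) in \cref{def:smred-class}(a), with \(m\ge1\), gives a uniform \(C^1(\overline{Q_1})\) bound on \(\eta u_k\otimes u_k\):
\[
  \norm{\eta\,u_k\otimes u_k}_{C^1}\le C_\eta M^2 .
\]
By Arzel\`a--Ascoli, the family is precompact in \(C^0(\overline{Q_1})\). The embedding \(C^0(\overline{Q_1})\hookrightarrow X_{\mathrm{src}}\) is continuous because \(Q_1\) has finite measure, so the family is precompact in \(X_{\mathrm{src}}\). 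The synchronization \(D_k-\zeta_k\) only changes which reduced representative is selected. Under the stated convention, the selected source is still a function of the smooth-bounded coordinates, so this step is unaffected. In the Sobolev variant, Rellich--Kondrachov, as in \cref{prop:compactness-criteria}(b), plays the same role.

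For a general fixed continuous reduced source selector \(\sigma\), I would argue as follows. The smooth-bounded coordinate set is precompact in the topology in which \(\sigma\) is continuous. Its image under \(\sigma\) is therefore precompact, since a continuous image of a compact closure is compact.

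\textbf{Step 2: compactness of the pressure image.} With \(\F_k^{\mathrm{sm}}\Subset X_{\mathrm{src}}\), \cref{thm:source-compact} (equivalently, continuity of \(\Rprs\) via \eqref{eq:cz-bound}) gives
\[
  \mathcal G_k^{\mathrm{sm}}\Subset Y_{\prs}.
\]

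\textbf{Step 3: uniform projection tails.} Given strong convergence \(P_{J,k}^{\cl}\to I\) and \(\sup_J\norm{P_{J,k}^{\cl}}<\infty\), \cref{thm:compact-image-proj} applied to \(\mathcal G_k^{\mathrm{sm}}\) yields
\[
  \sup_{g\in\mathcal G_k^{\mathrm{sm}}}\norm{(I-P_{J,k}^{\cl})g}_{Y_{\prs}}\to0 .
\]
This uses the finite-net argument there, with \(\mathcal G_k^{\mathrm{sm}}\) in place of \(\mathcal G_{k,0}^{\NS}\). The alternative clause is simply \cref{prop:effective-proj} and needs no proof.

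\textbf{Main obstacle.} I expect the only delicate point to be Step 1 for a general selector. "Continuous reduced source selector" must be read with respect to a topology in which the \(C^m\)-bounded set is precompact, for instance \(C^{m-1}\) or \(L^3\). I would make that reading explicit and then take the continuous image of the compact closure.
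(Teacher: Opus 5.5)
Your proposal is correct and follows the paper's proof. The chain is the same: precompactness of the source family, then compactness of the pressure image through the bounded map \(\Rprs\) (\cref{thm:source-compact}), then the finite-net argument of \cref{thm:compact-image-proj}, with the alternative clause being \cref{prop:effective-proj}. The one small difference is in Step 1: you bound \(\eta\,u_k\otimes u_k\) directly in \(C^1\) and apply Arzel\`a--Ascoli to the products, whereas the paper applies Arzel\`a--Ascoli to \(u_k\) to get precompactness in \(L^3\) and then pushes it through the continuous quadratic map \(u\mapsto\eta\,u\otimes u\) from \(L^3\) to \(L^{3/2}\), which is the same continuous-selector argument you already use for the general case.
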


\begin{proof}
Uniform \(C^m\)-bounds imply precompactness in lower regularity spaces by
Arzela--Ascoli, hence in \(L^3\) and \(L^{3/2}\) on the fixed cylinder.  The
map \(u\mapsto\eta u\otimes u\) is continuous from \(L^3\) to
\(L^{3/2}\), since
\[
  \norm{u_n\otimes u_n-u\otimes u}_{L^{3/2}}
  \le
  \norm{u_n-u}_{L^3}
  \left(\norm{u_n}_{L^3}+\norm{u}_{L^3}\right).
\]
The same conclusion holds for any fixed continuous reduced source selector.
Thus \(\F_k^{\mathrm{sm}}\Subset X_{\mathrm{src}}\).  The compactness of
\(\mathcal G_k^{\mathrm{sm}}\) follows from the bounded linearity of
\(\Rprs\), as in \cref{thm:source-compact}.  Uniform convergence of
strongly convergent uniformly bounded projections on the compact pressure
image is \cref{thm:compact-image-proj}.  The effective projection alternative
is \cref{prop:effective-proj}.
\end{proof}

\begin{proposition}[Reduced chart visibility and clean gap on the smooth class]\label{prop:smred-chart-gap}
Let
\[
  D_0\to\cdots\to D_K\in\A_{K,N,M}^{\mathrm{sm,red}},
\]
and fix \(0\le k\le K\).  The reduced chart distance satisfies
\[
  V_k(D_k)
  \ge
  \lambda_{G,k}\Dist_{0,k}(D_k,\Gamma_{\Lambda,k}^{\intg})
  -
  \delta_{G,k},
\]
with \(\delta_{G,k}=0\) in the exact reduced chart model and with the
displayed additive error in the realized/ideal chart comparison model.
Likewise, the clean detector satisfies
\[
  M_{\Lambda,k}^{\comp}(D_k^{\cl})
  \ge
  \mu_{\Lambda,k}^{\comp}
  \Dist_{\cl}(D_k^{\cl},\Gamma_{\Lambda,k}^{\cl})
  -
  \Delta_{\gap,k}.
\]
If the pressure--flux--energy detector is the chosen clean detector, then
\(\mu_{\Lambda,k}^{\comp}\) may be taken to be
\(\mu_{\Lambda,k}^{\mathrm{PFE}}\).  In the realized-detector case,
\[
  M_{\Lambda,k}^{\comp}(D_k^{\cl})
  \ge
  \mu_{\Lambda,k}^{\mathrm{PFE}}
  \Dist_{\cl}(D_k^{\cl},G_{\Lambda,k}^{\cl})
  -
  \Delta_{\gap,k}^{\mathrm{PFE}},
\]
provided the PFE matrix condition
\(\ker T_{\Lambda,k}^{\mathrm{PFE}}=G_{\Lambda,k}^{\cl}\) holds.
\end{proposition}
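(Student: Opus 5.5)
The plan is to assemble the earlier abstract compactness and comparison results, after checking that the smooth reduced class in \cref{def:smred-class} supplies their hypotheses at the fixed scale \(k\). No new analysis is needed; the content lies in matching hypotheses.

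\emph{Chart part.} Items (e) and (f) of \cref{def:smred-class} give the needed inputs:
\begin{itemize}
  \item compactness of the reduced baseline unit section modulo \(G_{N,k}^{\intg}\);
  \item lower semicontinuity and homogeneity of \(V_k\);
  \item chart kernel-freeness.
\end{itemize}
These are exactly hypotheses (a)--(d) of \cref{thm:chart-vis}, read on the reduced class consisting of \(k\)-th slices of chains in \(\A_{K,N,M}^{\mathrm{sm,red}}\). That theorem yields \(\lambda_{G,k}>0\) and the exact bound \(V_k\ge\lambda_{G,k}\Dist_{0,k}\), which is the case \(\delta_{G,k}=0\). In the realized/ideal model, I would apply \cref{thm:chart-vis} to the ideal chart distance \(\widetilde V_k\) instead. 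Then \cref{cor:additive-chart}, together with the comparison \(V_k+\delta_{G,k}\ge\widetilde V_k\), gives the displayed estimate with the additive error.

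\emph{Clean-gap part.} Item (g) supplies the hypotheses of \cref{thm:clean-gap}:
\begin{itemize}
  \item compactness of the clean unit quotient section;
  \item lower semicontinuity and positive homogeneity of \(M_{\Lambda,k}^{\comp}\);
  \item \(\K_{\Lambda,k}^{\cl}\subset\Gamma_{\Lambda,k}^{\cl}\).
\end{itemize}
In the matrix form of item (g), kernel-freeness follows from \cref{prop:zero-set-clean}. Stability under quotient normalization is the one hypothesis of \cref{thm:clean-gap} not listed explicitly in item (g). I would obtain it from the synchronized splitting in item (e) together with homogeneity: rescaling a package rescales its \(H_{N,k}\)-component and leaves the gauge part in \(G\). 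If that argument falls short, I would state it as implicit in ``compact clean unit quotient section'' for the reduced class. The exact gap then follows from \cref{thm:clean-gap}, and the additive form with \(\Delta_{\gap,k}\) follows from \cref{cor:additive-clean-gap}. With \(\Delta_{\gap,k}=0\), this covers the exact case.

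\emph{PFE specialization.} When the PFE detector is the chosen clean detector, I would apply \cref{thm:pfe-smred-gap}, or equivalently \cref{thm:pfe-zero-rigidity} followed by \cref{thm:pfe-clean-gap}, to the clean image class \(\A_{\Lambda,k}^{\cl,\mathrm{PFE}}\). Its hypotheses are:
\begin{itemize}
  \item compactness, from item (g);
  \item lower semicontinuity and homogeneity, from item (g);
  \item the matrix condition \(\ker T_{\Lambda,k}^{\mathrm{PFE}}=G_{\Lambda,k}^{\cl}\), assumed in the statement.
\end{itemize}
This produces \(\mu_{\Lambda,k}^{\mathrm{PFE}}>0\). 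The naming convention in \cref{cor:pfe-additive-gap} then sets \(\mu_{\Lambda,k}^{\comp}:=\mu_{\Lambda,k}^{\mathrm{PFE}}\). The realized-detector inequality with \(\Delta_{\gap,k}^{\mathrm{PFE}}\) is the second conclusion of \cref{thm:pfe-smred-gap}.

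The main obstacle is bookkeeping, not analysis. The compactness, kernel-freeness and homogeneity hypotheses must be applied to the right set, namely the slice class and its clean image rather than the chain class. The gauge sets \(\Gamma_{\Lambda,k}^{\cl}\) and \(G_{\Lambda,k}^{\cl}\) must also be identified in the PFE case, which I will take to coincide in the reduced model, as item (g) of \cref{def:smred-class} implicitly does.
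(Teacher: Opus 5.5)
Your proposal is correct and is essentially the paper's proof, which likewise just cites \cref{thm:chart-vis} and \cref{cor:additive-chart} for the chart bound, \cref{thm:clean-gap} or \cref{cor:additive-clean-gap} for the clean gap, and \cref{thm:pfe-smred-gap} with \cref{cor:pfe-additive-gap} for the PFE case, drawing all hypotheses from \cref{def:smred-class}. One small caution: the paper never derives stability under quotient normalization either (it is tacitly part of the class assumptions), and your suggested derivation from item~(e) would not work, because (e) splits the local space $X_{N,k}$ rather than the clean space and NS-generated packages are not closed under scalar multiples, so your fallback of treating it as an implicit assumption is the right reading.
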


\begin{proof}
The chart statement is \cref{thm:chart-vis} in the exact model and
\cref{cor:additive-chart} in the additive-error model, using the compact
quotient and chart kernel-free assumptions in \cref{def:smred-class}.  The
clean detector statement is \cref{thm:clean-gap} or
\cref{cor:additive-clean-gap}, using the clean compact quotient and clean
kernel-free assumptions in the class definition.  The PFE realization is
\cref{thm:pfe-smred-gap} and \cref{cor:pfe-additive-gap}.
\end{proof}

\begin{theorem}[Selected structural inputs on the smooth reduced class]\label{thm:smred-selected-inputs}
Let
\[
  D_0(u,p)\to D_1(u,p)\to\cdots\to D_K(u,p)
  \in
  \A_{K,N,M}^{\mathrm{sm,red}}.
\]
Then the following selected finite-window audit inputs hold at every
applicable scale:
\begin{enumerate}[label=(\alph*)]
  \item pressure/source preservation:
  \[
    U_k\in L^3(Q_1)^3,\quad
    F_k^{\mathrm{act}}\in X_{\mathrm{src}},\quad
    P_k^{\mathrm{act}}\in Y_{\prs},\quad
    P_k^{\harm}\in Y_{\harm};
  \]
  \item energy/flux localization for \(0\le k<K\):
  \[
    \Delta_k^{\locerr}
    =
    \Delta_k^{\locerr,\mathrm{EF}}
    \ge
    C_{\lambda,\chi,\theta}\Leak_{k\to k+1}^{\mathrm{EF}};
  \]
  \item projection-tail control, either by compactness:
  \[
    \Delta_{k,\proj,J}^{\mathrm{unif}}\to0,
  \]
  or by effective projection
  \[
    \Delta_{k,\proj,J}^{\mathrm{unif}}\le\varepsilon_{k,J};
  \]
  \item reduced chart visibility:
  \[
    V_k(D_k)
    \ge
    \lambda_{G,k}\Dist_{0,k}(D_k,\Gamma_{\Lambda,k}^{\intg})
    -
    \delta_{G,k};
  \]
  \item clean detector gap, realized by the PFE detector in the reduced
  pressure--flux--energy model:
  \[
    M_{\Lambda,k}^{\mathrm{PFE}}(D_k^{\cl})
    \ge
    \mu_{\Lambda,k}^{\mathrm{PFE}}
    \Dist_{\cl}(D_k^{\cl},G_{\Lambda,k}^{\cl}),
  \]
  and, for a realized detector,
  \[
    M_{\Lambda,k}^{\comp}(D_k^{\cl})
    \ge
    \mu_{\Lambda,k}^{\mathrm{PFE}}
    \Dist_{\cl}(D_k^{\cl},G_{\Lambda,k}^{\cl})
    -
    \Delta_{\gap,k}^{\mathrm{PFE}}.
  \]
\end{enumerate}
Thus the selected pressure/source, localization, projection, chart, and clean
gap inputs of the one-step audit framework are verified on
\(\A_{K,N,M}^{\mathrm{sm,red}}\), with finite-window constants depending
only on the fixed class data and the selected reduction/projection
conventions.
\end{theorem}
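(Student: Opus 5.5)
The plan is to prove \cref{thm:smred-selected-inputs} by assembly, treating each item (a)--(e) as a direct instance of a proposition already established for the smooth reduced class. No new estimate should be needed. The one real task is to check that the hypotheses in \cref{def:smred-class} match the hypotheses of the earlier results, for each slice \(D_k\) of a fixed chain in \(\A_{K,N,M}^{\mathrm{sm,red}}\).

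For (a), I will invoke \cref{prop:smred-pressure-source}. Its proof reduces to \cref{thm:ns-pressure-source-preservation}, applied with the \(L^3\) and \(L^{3/2}\) integrability supplied by the uniform \(C^m\) (or \(H^s\)) bound in item (a) of the class definition, together with pressure admissibility from item (b).

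For (b), I will invoke \cref{prop:smred-ef-loc} for \(0\le k<K\). This amounts to checking the hypotheses of \cref{thm:ef-loc-update}, namely \(u_k\in L^\infty_tL^2_x\), \(\nabla u_k\in L^2\), \(u_k\in L^3\) and \(p_k\in L^{3/2}\). All of these follow from boundedness of \(C^m\) functions on \(\overline{Q_1}\). The ledger choice is then exactly \cref{cor:ef-loc-insertion}, and \cref{cor:smred-ef-amplitude} records the uniform amplitude \(M_U(M)\).

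For (c), I will split according to the projection convention in item (h) of the class definition.
\begin{enumerate}[label=(\roman*)]
  \item In the compactness branch, \cref{prop:smred-projection} gives precompactness of the selected source family. Continuity of \(\Rprs\) then gives a compact pressure image, as in \cref{thm:source-compact}, and \cref{thm:compact-image-proj} gives \(\Delta_{k,\proj,J}^{\mathrm{unif}}\to0\). This uses the standing assumptions that the projections converge strongly and are uniformly bounded.
  \item In the effective branch, the bound \(\Delta_{k,\proj,J}^{\mathrm{unif}}\le\varepsilon_{k,J}\) is imposed directly, and \cref{prop:effective-proj} applies.
\end{enumerate}

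For (d), I will use the reduced quotient compactness, lower semicontinuity, homogeneity and chart kernel-freeness in item (f) of the class definition. These feed \cref{thm:chart-vis} in the exact model (\(\delta_{G,k}=0\)) and \cref{cor:additive-chart} in the realized/ideal model. Both routes are packaged in \cref{prop:smred-chart-gap}.

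For (e), I will apply \cref{thm:pfe-smred-gap} to the clean image class \(\Theta^N_{\Lambda,k}(D_k-\zeta_k)\). Item (g) of the class definition supplies the compact clean unit section, lower semicontinuity and homogeneity of the detector, and the matrix condition \(\ker T^{\mathrm{PFE}}_{\Lambda,k}=G^{\cl}_{\Lambda,k}\). Through \cref{thm:pfe-zero-rigidity} and \cref{thm:pfe-clean-gap}, these yield the exact inequality with \(\mu^{\mathrm{PFE}}_{\Lambda,k}>0\). The realized-detector form then follows from \cref{cor:pfe-additive-gap}.

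The closing sentence of the theorem, that the constants depend only on class data and conventions, will be justified by tracing where each constant comes from:
\begin{itemize}
  \item \(C_{\lambda,\chi,\theta}\) depends only on \(\lambda\) and the cutoffs;
  \item \(M_U\) depends only on \(M\);
  \item \(\lambda_{G,k}\) and \(\mu^{\mathrm{PFE}}_{\Lambda,k}\) are infima over sections that are fixed by the class and the reduction;
  \item \(\varepsilon_{k,J}\), \(\delta_{G,k}\) and \(\Delta^{\mathrm{PFE}}_{\gap,k}\) are prescribed by the chosen conventions.
\end{itemize}

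The main obstacle I expect is in (e), and it concerns the quantifier structure. The clean image class in \cref{thm:pfe-smred-gap} is defined as a union over all chains in the class. The compactness hypothesis must therefore be read on that union, not on a single chain. I also have to confirm that the clean coordinate written \(D_k^{\cl}\) in \cref{thm:smred-selected-inputs} is exactly \(\Theta^N_{\Lambda,k}(D_k-\zeta_k)\). If the two differ, I will enforce this identification by convention, and otherwise absorb the discrepancy into \(\Delta^{\mathrm{PFE}}_{\gap,k}\) through the additive-error corollary.
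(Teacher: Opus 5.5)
Your proposal is correct and matches the paper's proof, which is exactly this assembly: (a) from \cref{prop:smred-pressure-source}, (b) from \cref{prop:smred-ef-loc}, (c) from \cref{prop:smred-projection}, and (d)--(e) from \cref{prop:smred-chart-gap}, which itself routes the PFE gap through \cref{thm:pfe-smred-gap} and \cref{cor:pfe-additive-gap}, together with the remark that each cited result uses only the class assumptions. Your constant-tracing and your note that the compactness branch of (c) needs strong convergence and uniform boundedness of the projections only make explicit what the paper leaves implicit. The identification \(D_k^{\cl}=\Theta_{\Lambda,k}^{N}(D_k-\zeta_k)\) should simply be taken as the convention, as the paper implicitly does. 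Your fallback of absorbing a discrepancy into \(\Delta_{\gap,k}^{\mathrm{PFE}}\) would not give the exact first inequality in (e).
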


\begin{proof}
The pressure/source item is \cref{prop:smred-pressure-source}.  The
localization item is \cref{prop:smred-ef-loc}.  Projection-tail control is
\cref{prop:smred-projection}.  The chart and clean gap estimates are
\cref{prop:smred-chart-gap}.  Each cited result is finite-window and uses only
the assumptions included in the definition of the smooth reduced class.
\end{proof}

\begin{corollary}[Selected-class one-step admissibility insertion]\label{cor:smred-one-step-insertion}
Let
\[
  D_0\to\cdots\to D_K\in\A_{K,N,M}^{\mathrm{sm,red}}.
\]
Assume that synchronization, harmonic truncation, gate/slack, detector, and
coefficient-drift mismatches are charged to their corresponding ledger
components.  If the projection and chart transport defects are charged as in
\cref{cor:proj-drift-insertion,cor:chart-drift-insertion}, then the one-step
increment may be written in the energy/flux localization convention as
\[
  \Delta_k
  =
  \Delta_k^{\sync}
  +
  \Delta_k^{\locerr,\mathrm{EF}}
  +
  \Delta_k^{\proj}
  +
  \Delta_k^{\harm}
  +
  \Delta_k^{\mathrm{chart}}
  +
  \Delta_k^{\gap}
  +
  \Delta_k^{\gate}
  +
  \Delta_k^{\detc}.
\]
With this choice, \(D_{k+1}\in\mathcal A_{k+1}(\Delta_k)\) for
\(0\le k<K\).
\end{corollary}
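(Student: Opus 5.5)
The plan is to reduce \cref{cor:smred-one-step-insertion} to the coordinate-budget criterion \cref{thm:coord-adm}, in the form already packaged by \cref{cor:ef-loc-insertion}. That corollary states that once the pressure/source coordinates are preserved and the synchronization, gate/slack, detector, chart, clean-gap, projection, harmonic, and energy/flux localization entries are bounded by their assigned ledger terms, the next-scale package lies in \(\mathcal A_{k+1}(\Delta_k)\). So it suffices to fix \(0\le k<K\), regard \(D_k\) and \(D_{k+1}\) as consecutive slices of the given chain, and verify each checklist entry of \cref{def:coord-check} at scale \(k+1\).

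The verification goes channel by channel. First, the coordinate-membership part (a) of the checklist comes from \cref{prop:smred-pressure-source}, which applies at index \(k+1\le K\), so \(e_u=e_{\mathrm{src}}=0\) in the membership sense. Second, the localization entry comes from \cref{prop:smred-ef-loc}: we choose \(\Delta_k^{\locerr,\mathrm{EF}}\ge C_{\lambda,\chi,\theta}\Leak_{k\to k+1}^{\mathrm{EF}}\) and set \(e_{\locerr}\) equal to the right side. Third, the projection entry comes from \cref{cor:proj-drift-insertion}, which yields \(\mathrm{ProjErr}_{k+1}\le\Delta_{k+1,\proj,J}^{\mathrm{unif}}\le\Delta_k^{\proj}\). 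This uses that the selected clean pressure observation lies in \(\mathcal G_{k+1}^{\mathrm{sm}}\), so the realized tail is bounded by the uniform tail, which is finite by \cref{prop:smred-projection}. Fourth, the chart entry comes from \cref{cor:chart-drift-insertion}, using the second form \(\Delta_k^{\mathrm{chart}}\ge\norm{T_k^{\cl}}V_k(D_k)+\norm{\mathfrak C_k^{\mathrm{chart}}(D_k)}\) if the full chart distance is required. Finally, synchronization, harmonic truncation, gate/slack, detector, and coefficient drift are charged by hypothesis, so \(e_{\sync}\le\Delta_k^{\sync}\), \(e_{\harm}\le\Delta_k^{\harm}\), \(e_{\gate}\le\Delta_k^{\gate}\), and \(e_{\detc}\le\Delta_k^{\detc}\). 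The clean-gap entry \(e_{\gap}\) is charged to \(\Delta_k^{\gap}\), with its realized/ideal defect \(\Delta_{\gap,k+1}^{\mathrm{PFE}}\) supplied by \cref{thm:smred-selected-inputs}(e).

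With all componentwise dominations in place, \cref{thm:coord-adm} applies directly and gives \(D_{k+1}\in\mathcal A_{k+1}(\Delta_k)\) with \(\Delta_k\) equal to the displayed sum. Since \(k\) is arbitrary in \(\{0,\dots,K-1\}\), the statement follows.

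I expect the main obstacle to be the projection step: matching indices and objects. \cref{def:coord-check} charges \(\mathrm{ProjErr}_{k+1}\) for the selected observation \(\mathsf P_{k+1}^{\cl}\), whereas \cref{cor:proj-drift-insertion} controls the uniform tail over the whole image \(\mathcal G_{k+1,0}^{\NS}\). I would handle this by adopting the canonical smooth source convention of \cref{prop:smred-projection}. Under that convention \(\mathsf P_{k+1}^{\cl}=\Rprs F^{\cl}_{D_{k+1}-\zeta_{k+1}}\) lies in the image set, so the realized error is at most the supremum. A secondary point is that \cref{thm:coord-adm} is stated for \(\mathcal A_k\) rather than for chain slices; the slice convention of the preceding remark resolves this.
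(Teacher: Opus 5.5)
Your proposal is correct and follows the paper's own route. The paper cites \cref{thm:smred-selected-inputs} for the pressure/source, localization, projection, chart, and clean-gap entries, takes the remaining entries as charged by hypothesis, and concludes via \cref{thm:coord-adm} with the localization component read as \(\Delta_k^{\locerr,\mathrm{EF}}\); you unpack that theorem into its constituent propositions and are more explicit than the paper in matching the selected observation \(\mathsf P_{k+1}^{\cl}\) to the uniform projection tail.
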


\begin{proof}
\Cref{thm:smred-selected-inputs} supplies the pressure/source,
localization, projection, chart, and clean-gap entries.  The remaining
entries are assumed to be charged to their ledger components.  The conclusion
is then the coordinate-budget admissibility theorem
\cref{thm:coord-adm}, with the localization component interpreted as
\(\Delta_k^{\locerr,\mathrm{EF}}\).
\end{proof}

\section{Conditional Scale-Uniform Criteria}

This section records conditions under which the finite-chain theorem could
remain nondegenerate as \(K\) grows.  It does not assert that these
conditions hold for all Navier--Stokes-generated chains.

\begin{assumption}[Conditional scale-uniform audit criteria]\label{ass:scale-uniform}
Along a considered cascade, assume:
\begin{enumerate}[label=(\alph*)]
  \item uniform gap: \(c_k\ge c_*>0\);
  \item summable weighted recursive error:
  \[
    \sum_{k=0}^\infty w_kE_k<\infty,
  \]
  or at least
  \[
    \frac{\sum_{k=0}^K w_kE_k}{\sum_{k=0}^K w_k\delta_k}\to0
  \]
  on the considered sequence;
  \item synchronized representative stability:
  \[
    \norm{\zeta_{k+1}-\mathcal R_k\zeta_k}
    \le C E_k+C\Delta_k;
  \]
  \item detector stability in the form \eqref{eq:audit-M-stability} or a two-sided
  estimate
  \[
    \abs{M_{k+1}-M_k}\le B E_k+C\Delta_k;
  \]
  \item nondegenerate weighted defect:
  \[
    \sum_{k=0}^K w_k\delta_k
  \]
  does not vanish along the cascade being tested.
\end{enumerate}
\end{assumption}

\begin{theorem}[Conditional scale-uniform audit criterion]\label{thm:conditional-scale}
Assume the finite-chain theorem applies on every finite prefix and that
\cref{ass:scale-uniform} holds.  Then the finite-chain recursive lower bound
remains nondegenerate along the considered prefixes in the following sense:
for every finite \(K\),
\[
  \sum_{k=0}^K w_kM_k
  \ge
  c_*\sum_{k=0}^K w_k\delta_k-\sum_{k=0}^K w_kE_k,
\]
and whenever
\[
  \frac{\sum_{k=0}^K w_kE_k}{\sum_{k=0}^K w_k\delta_k}\le \frac{c_*}{2},
\]
one has
\[
  \sum_{k=0}^K w_kM_k
  \ge
  \frac{c_*}{2}\sum_{k=0}^K w_k\delta_k.
\]
\end{theorem}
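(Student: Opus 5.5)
The plan is to read \cref{thm:conditional-scale} as a direct specialization of the finite-chain lower bound \eqref{eq:finite-chain-lower} in \cref{thm:finite-chain}, followed by the absorption argument already used in \cref{cor:recursive-alt}. Fix a finite \(K\). By hypothesis, the finite-chain theorem applies to the prefix \(D_0,\dots,D_K\). This means each scale carries a valid certificate \(M_k\ge c_k\delta_k-E_k\) with \(w_k\ge0\), so
\[
  \sum_{k=0}^K w_kM_k
  \ge
  c_K^{\min}\sum_{k=0}^K w_k\delta_k-\sum_{k=0}^K w_kE_k .
\]

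The first step replaces \(c_K^{\min}\) by \(c_*\). By item (a) of \cref{ass:scale-uniform}, \(c_k\ge c_*\) for every \(k\), so \(c_K^{\min}\ge c_*\). Since \(w_k\ge0\) and the defect distances \(\delta_k\) are nonnegative, we have \(\sum_k w_k\delta_k\ge0\). Hence \(c_K^{\min}\sum_k w_k\delta_k\ge c_*\sum_k w_k\delta_k\), which gives the first displayed inequality. The only point to check is the sign of \(\delta_k\), and it holds because \(\delta_k\) is a distance by \eqref{eq:delta-k}. Alternatively, one can redo the weighted summation directly from \(c_k\ge c_*\), bypassing \(c_K^{\min}\) altogether.

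The second step uses item (e) of \cref{ass:scale-uniform}, which says \(S_K:=\sum_k w_k\delta_k>0\), so the ratio in the hypothesis is well defined. The ratio condition then rewrites as \(\sum_k w_kE_k\le \tfrac{c_*}{2}S_K\). Substituting this into the first inequality gives
\[
  \sum_k w_kM_k\ge c_*S_K-\tfrac{c_*}{2}S_K=\tfrac{c_*}{2}S_K .
\]
Items (b)--(d) of \cref{ass:scale-uniform} are not needed for the displayed finite-\(K\) conclusions. They only explain when the ratio condition is eventually satisfied along the cascade: under summability, (b) together with a growing denominator gives it. I expect no real obstacle here. The only delicate point is bookkeeping: the statement is per-prefix, so no limit in \(K\) is taken, and the nonvanishing in (e) must be used to avoid a division by zero.
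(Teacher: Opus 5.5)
Your proposal is correct and follows the paper's proof: apply \cref{thm:finite-chain} on each prefix, use \(c_K^{\min}\ge c_*\) together with \(w_k\delta_k\ge0\) to get the first inequality, then absorb the weighted error term into half of the weighted defect term. Your sign check on \(\delta_k\) and your use of item (e) to make the ratio well defined are details the paper leaves implicit.
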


\begin{proof}
Apply \cref{thm:finite-chain} with \(c_K^{\min}\ge c_*\).  The second
inequality follows by absorbing the weighted error term into half of the
weighted defect term.
\end{proof}

\begin{remark}
\Cref{thm:conditional-scale} is a criterion, not a scale-uniform regularity
theorem.  Its hypotheses are precisely the structural conditions that would
have to be verified before any infinite-chain conclusion could be discussed.
\end{remark}

\section{Dependency Ledger for the Recursive Theorem}

The final theorem combines finite-window algebra, reduced-model compatibility
lemmas, and selected NS-generated inputs.  The following ledger records the
status of each module used in the recursive bound.

\begin{center}
\small
\begin{tabular}{
  >{\raggedright\arraybackslash}p{0.22\textwidth}
  >{\raggedright\arraybackslash}p{0.19\textwidth}
  >{\raggedright\arraybackslash}p{0.25\textwidth}
  >{\raggedright\arraybackslash}p{0.22\textwidth}}
\toprule
Module & Status & Input & Output used later\\
\midrule
Static audit certificate &
Imported finite-window theorem &
Admissible scale-\(k\) package &
\(M_k\ge c_k\delta_k-E_k\)\\
Coordinate admissibility &
Conditional finite-window bookkeeping &
Checklist excesses \(\mathbf e_{k+1}\) &
\(D_{k+1}\in\mathcal A_{k+1}(\Delta_k)\)\\
Synchronization drift &
Reduced finite-dimensional theorem &
Quotient selector and transport defect &
\(\Delta_k^{\sync}\) assignment\\
Gate/slack update &
Algebraic finite-window theorem &
Budget and threshold update inequalities &
\(\Delta_k^{\gate}\) assignment\\
Detector stability &
Reduced operator theorem &
Detector commutation defect &
Update rule for \(M_{k+1}\)\\
Pressure/source preservation &
NS-generated finite-window theorem &
Pressure-admissible \(u,p\) on a fixed window &
Next-scale pressure/source coordinates\\
Energy/flux localization &
Finite-window bookkeeping theorem &
Finite-energy \(u\), pressure \(p\), fixed cutoffs &
\(\Delta_k^{\locerr,\mathrm{EF}}\) assignment\\
Coefficient update &
Algebraic finite-window theorem &
Drift of \(\mu_k,\lambda_k,\ell_k\) &
\(c_{k+1}\ge c_k-\eta_k\)\\
Projection-tail drift &
Operator-algebraic theorem &
Pressure-image transport and projection commutator &
\(\Delta_k^{\proj}\) assignment\\
Chart-distance drift &
Reduced operator theorem &
Clean chart transport and chart commutator &
\(\Delta_k^{\mathrm{chart}}\) assignment\\
Clean gap and chart visibility &
Compact quotient and PFE matrix criteria &
Kernel-free reduced/clean quotients; \(\ker T_{\Lambda,k}^{\mathrm{PFE}}
=G_{\Lambda,k}^{\cl}\) &
Positive \(\mu_{\Lambda,k}^{\mathrm{PFE}}\), \(\lambda_{G,k}\)\\
Smooth reduced selected class &
Collected finite-window verification &
Smooth NS data plus reduced quotient conventions &
Selected inputs for \(\A_{K,N,M}^{\mathrm{sm,red}}\)\\
Finite-chain summation &
Algebraic theorem &
Scale certificates and weights &
Recursive anti-phantom alternative\\
\bottomrule
\end{tabular}
\end{center}

Every entry labeled reduced or conditional remains a finite-window structural
input.  The ledger is meant to prevent a common misreading: the recursive
theorem assembles and transports audit certificates over a finite chain; it
does not prove that all suitable weak solutions satisfy the listed reduced
kernel-free, compactness, or drift hypotheses.

\section{Defect-to-CKN Smallness and Baseline Audit Defect Extraction}
\label{sec:ckn-branch}

This section adds the finite-window bridge from CKN-compatible quotient closeness to Caffarelli--Kohn--Nirenberg smallness, using the standard epsilon-regularity endpoint and quantitative regularity perspective \cite{CKN1982,Lin1998,SereginLectureNotes,BarkerPrange2021,AlbrittonBarkerPrange2023}.  It is a conditional
defect-extraction branch: the hard comparison between the audit baseline
distance and the CKN-compatible distance is kept as an explicit hypothesis.
The section does not prove Navier--Stokes regularity, singularity exclusion,
or detector/ledger cost unsustainability.

\subsection{Normalized CKN quantity}

Fix \(0<\vartheta<1\), for instance \(\vartheta=1/2\), and write
\[
  Q_\vartheta:=B_\vartheta\times(-\vartheta^2,0).
\]
For \(f\in L^1(B_\vartheta)\), set
\[
  (f)_{B_\vartheta}
  :=
  \frac{1}{|B_\vartheta|}\int_{B_\vartheta}f(y)\,dy.
\]
For normalized pressure-admissible fields \((u_k,p_k)\) on \(Q_1\), define
\[
  \Phi_k(\vartheta)
  :=
  \int_{-\vartheta^2}^{0}\int_{B_\vartheta}|u_k|^3\,dy\,ds
  +
  \int_{-\vartheta^2}^{0}\int_{B_\vartheta}
  |p_k-(p_k)_{B_\vartheta}(s)|^{3/2}\,dy\,ds .
\]
The Caffarelli--Kohn--Nirenberg epsilon-regularity theorem \cite{CKN1982} is used only as a
known endpoint: if \(\Phi_k(\vartheta)<\varepsilon_{\mathrm{CKN}}\), then the
corresponding physical point is regular on a smaller cylinder.  This endpoint
is not reproved here.

\begin{remark}[Fixed-window normalization]
Because \(\vartheta\) is fixed throughout this branch, the usual scale-invariant
CKN quantity with a prefactor \(\vartheta^{-2}\) is equivalent to the displayed
\(\Phi_k(\vartheta)\) after changing the numerical epsilon.  Thus
\(\varepsilon_{\mathrm{CKN}}\) in this section denotes the fixed-window
threshold corresponding to the chosen \(\vartheta\).
\end{remark}

\subsection{CKN-compatible clean class and distance}

\begin{definition}[CKN-compatible clean class]\label{def:ckn-clean-class}
The class \(\Gamma_{\Lambda,k}^{\mathrm{CKN}}\) is a selected regular/clean
target class inside the scale-\(k\) package space.  Each
\(\zeta\in\Gamma_{\Lambda,k}^{\mathrm{CKN}}\) has associated normalized fields
\((u_\zeta,p_\zeta)\) on \(Q_1\), with pressure defined modulo
time-dependent constants, and satisfies
\[
  \Phi_\zeta(\vartheta)
  :=
  \int_{Q_\vartheta}|u_\zeta|^3
  +
  \int_{-\vartheta^2}^{0}\int_{B_\vartheta}
  |p_\zeta-(p_\zeta)_{B_\vartheta}(s)|^{3/2}
  \le
  \frac{\varepsilon_{\mathrm{CKN}}}{4}.
\]
This class is not identified with the full audit admissible class unless a
separate theorem proves that the full audit admissible class implies CKN
smallness.
\end{definition}

\begin{definition}[CKN-compatible baseline distance]\label{def:ckn-distance}
The auxiliary CKN baseline distance from \(D_k\) to
\(\Gamma_{\Lambda,k}^{\mathrm{CKN}}\) is
\[
  \delta_k^{\mathrm{CKN}}
  :=
  \Dist_{\mathrm{CKN}}
  (D_k,\Gamma_{\Lambda,k}^{\mathrm{CKN}}),
\]
where the right-hand side is defined by
\[
  \Dist_{\mathrm{CKN}}
  (D_k,\Gamma_{\Lambda,k}^{\mathrm{CKN}})
  :=
  \inf_{\zeta\in\Gamma_{\Lambda,k}^{\mathrm{CKN}}}
  \left[
  \norm{u_k-u_\zeta}_{L^3(Q_\vartheta)}
  +
  \inf_{a=a(s)}
  \norm{p_k-p_\zeta-a(s)}_{L^{3/2}(Q_\vartheta)}
  \right].
\]
The infimum over \(a(s)\) accounts for the pressure gauge by time-dependent
constants.
\end{definition}

\begin{assumption}[CKN-compatible audit domination]\label{ass:ckn-audit-domination}
The audit baseline distance dominates the CKN-compatible distance up to an
explicit pressure-gauge comparison error:
\[
  \delta_k^{\mathrm{CKN}}
  \le
  C_{\mathrm{CKN/aud}}\,\delta_k
  +
  \Delta_k^{\mathrm{gauge}}.
\]
Throughout this section we use Convention A.  If the audit baseline metric
is defined to include the \(L^3(Q_\vartheta)\) velocity component and the
mean-free \(L^{3/2}(Q_\vartheta)\) pressure component, the domination is built
into the metric.  If the audit distance is weaker, this comparison is a
separate structural input.
\end{assumption}

\subsection{A sufficient metric condition for audit-to-CKN domination}

The previous assumption is the main interface between the audit quotient
geometry and the CKN epsilon-regularity scale.  The following proposition gives
a finite-window sufficient condition for it.  It is useful when the audit
distance has an explicit field-control component, or when a reduced
representative can be cleaned into the CKN-small class with a quantified
gauge error.

\begin{definition}[CKN field distance]\label{def:ckn-field-distance}
For two normalized field pairs \((u,p)\) and \((v,q)\) on \(Q_1\), define
\[
  d_\vartheta^{\mathrm{fld}}\bigl((u,p),(v,q)\bigr)
  :=
  \norm{u-v}_{L^3(Q_\vartheta)}
  +
  \inf_{a=a(s)}
  \norm{p-q-a(s)}_{L^{3/2}(Q_\vartheta)}.
\]
\end{definition}

\begin{assumption}[Field control by the audit baseline norm]
\label{ass:field-control-audit}
Each audit representative \(\zeta\in\Gamma_{\Lambda,k}^{\intg}\) used in this
comparison has associated fields \((u_\zeta,p_\zeta)\) on \(Q_1\), modulo
time-dependent pressure constants, and there is a finite-window constant
\(C_{\mathrm{fld}}\) such that
\[
  d_\vartheta^{\mathrm{fld}}\bigl((u_k,p_k),(u_\zeta,p_\zeta)\bigr)
  \le
  C_{\mathrm{fld}}\lvert D_k-\zeta\rvert_{0,k}
\]
for every such representative.
\end{assumption}

\begin{assumption}[CKN cleaning of near-minimizing audit representatives]
\label{ass:ckn-cleaning}
For every \(\eta>0\) there exists
\(\zeta_\eta\in\Gamma_{\Lambda,k}^{\intg}\) with
\[
  \lvert D_k-\zeta_\eta\rvert_{0,k}\le\delta_k+\eta
\]
and a CKN-clean representative
\(\zeta_\eta^{\mathrm{CKN}}\in\Gamma_{\Lambda,k}^{\mathrm{CKN}}\) such that
\[
  d_\vartheta^{\mathrm{fld}}
  \bigl((u_{\zeta_\eta},p_{\zeta_\eta}),
  (u_{\zeta_\eta^{\mathrm{CKN}}},
  p_{\zeta_\eta^{\mathrm{CKN}}})\bigr)
  \le
  \Delta_k^{\mathrm{gauge}}+\eta.
\]
\end{assumption}

\begin{proposition}[Metric realization of the CKN-compatible domination]
\label{prop:metric-realizes-ckn-domination}
Assume \cref{ass:field-control-audit,ass:ckn-cleaning}.  Then
\cref{ass:ckn-audit-domination} holds with
\[
  C_{\mathrm{CKN/aud}}=C_{\mathrm{fld}}.
\]
\end{proposition}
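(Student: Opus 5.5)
The proof is a triangle inequality for the field distance \(d_\vartheta^{\mathrm{fld}}\), followed by letting the cleaning tolerance \(\eta\) tend to zero. Fix \(k\) and \(\eta>0\). Apply \cref{ass:ckn-cleaning} to obtain a near-minimizing audit representative \(\zeta_\eta\in\Gamma_{\Lambda,k}^{\intg}\) with \(\lvert D_k-\zeta_\eta\rvert_{0,k}\le\delta_k+\eta\). The same assumption supplies a CKN-clean representative \(\zeta_\eta^{\mathrm{CKN}}\in\Gamma_{\Lambda,k}^{\mathrm{CKN}}\) whose fields are within \(\Delta_k^{\mathrm{gauge}}+\eta\) of those of \(\zeta_\eta\) in \(d_\vartheta^{\mathrm{fld}}\).

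First, check that \(d_\vartheta^{\mathrm{fld}}\) satisfies the triangle inequality. The velocity part is a norm. For the pressure part, fix time-dependent constants \(a(s)\) and \(b(s)\) and note
\[
  \norm{p-r-(a+b)}_{L^{3/2}(Q_\vartheta)}
  \le
  \norm{p-q-a}_{L^{3/2}(Q_\vartheta)}
  +
  \norm{q-r-b}_{L^{3/2}(Q_\vartheta)}.
\]
Taking the infimum over \(a\) and \(b\) gives the claim, since sums of time-dependent constants are again admissible gauges. The definition of \(\Dist_{\mathrm{CKN}}\) in \cref{def:ckn-distance} is an infimum over \(\zeta\in\Gamma_{\Lambda,k}^{\mathrm{CKN}}\) of exactly \(d_\vartheta^{\mathrm{fld}}((u_k,p_k),(u_\zeta,p_\zeta))\). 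Therefore
\[
  \delta_k^{\mathrm{CKN}}
  \le
  d_\vartheta^{\mathrm{fld}}\bigl((u_k,p_k),(u_{\zeta_\eta},p_{\zeta_\eta})\bigr)
  +
  d_\vartheta^{\mathrm{fld}}\bigl((u_{\zeta_\eta},p_{\zeta_\eta}),(u_{\zeta_\eta^{\mathrm{CKN}}},p_{\zeta_\eta^{\mathrm{CKN}}})\bigr).
\]

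Next, bound the two terms on the right. The first is controlled by \cref{ass:field-control-audit}, giving at most \(C_{\mathrm{fld}}\lvert D_k-\zeta_\eta\rvert_{0,k}\le C_{\mathrm{fld}}(\delta_k+\eta)\). The second is at most \(\Delta_k^{\mathrm{gauge}}+\eta\) by the cleaning assumption. Combining,
\[
  \delta_k^{\mathrm{CKN}}
  \le
  C_{\mathrm{fld}}\delta_k+\Delta_k^{\mathrm{gauge}}+(C_{\mathrm{fld}}+1)\eta.
\]
Since \(\eta>0\) is arbitrary and the left side does not depend on \(\eta\), letting \(\eta\to0\) yields \cref{ass:ckn-audit-domination} with \(C_{\mathrm{CKN/aud}}=C_{\mathrm{fld}}\).

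The only delicate point is the pressure gauge in the triangle inequality, namely that infima over time-dependent constants compose additively. This holds because admissible gauges form a vector space. One should also confirm that \(\zeta_\eta\) lies in the representative set to which \cref{ass:field-control-audit} applies; this is automatic, since that assumption covers every \(\zeta\in\Gamma_{\Lambda,k}^{\intg}\) used in the comparison.
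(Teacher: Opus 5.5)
Your proposal is correct and follows essentially the same argument as the paper's proof. Like the paper, you choose \(\zeta_\eta\) and \(\zeta_\eta^{\mathrm{CKN}}\) from the cleaning assumption and bound \(\delta_k^{\mathrm{CKN}}\) through the gauge-additive triangle inequality for \(d_\vartheta^{\mathrm{fld}}\), using field control and the cleaning bound before letting \(\eta\downarrow0\).
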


\begin{proof}
Fix \(\eta>0\), and choose \(\zeta_\eta\) and
\(\zeta_\eta^{\mathrm{CKN}}\) as in \cref{ass:ckn-cleaning}.  By the
definition of \(\delta_k^{\mathrm{CKN}}\),
\[
  \delta_k^{\mathrm{CKN}}
  \le
  d_\vartheta^{\mathrm{fld}}
  \bigl((u_k,p_k),
  (u_{\zeta_\eta^{\mathrm{CKN}}},
  p_{\zeta_\eta^{\mathrm{CKN}}})\bigr).
\]
The triangle inequality for \(d_\vartheta^{\mathrm{fld}}\) gives
\[
  \delta_k^{\mathrm{CKN}}
  \le
  d_\vartheta^{\mathrm{fld}}
  \bigl((u_k,p_k),(u_{\zeta_\eta},p_{\zeta_\eta})\bigr)
  +
  d_\vartheta^{\mathrm{fld}}
  \bigl((u_{\zeta_\eta},p_{\zeta_\eta}),
  (u_{\zeta_\eta^{\mathrm{CKN}}},
  p_{\zeta_\eta^{\mathrm{CKN}}})\bigr).
\]
For the pressure component, this triangle inequality is taken in the quotient
by time-dependent constants: if two differences are represented using gauges
\(a_1(s)\) and \(a_2(s)\), then their sum is represented using the admissible
gauge \(a_1(s)+a_2(s)\), and taking infima gives the displayed estimate.
Using \cref{ass:field-control-audit,ass:ckn-cleaning}, this is bounded by
\[
  C_{\mathrm{fld}}(\delta_k+\eta)
  +
  \Delta_k^{\mathrm{gauge}}+\eta.
\]
Letting \(\eta\downarrow0\) yields
\[
  \delta_k^{\mathrm{CKN}}
  \le
  C_{\mathrm{fld}}\delta_k+\Delta_k^{\mathrm{gauge}},
\]
which is \cref{ass:ckn-audit-domination} with
\(C_{\mathrm{CKN/aud}}=C_{\mathrm{fld}}\).
\end{proof}

\begin{corollary}[Built-in CKN-compatible audit metric]
\label{cor:built-in-ckn-metric}
Suppose the audit clean target for this branch is already the
CKN-compatible class,
\[
  \Gamma_{\Lambda,k}^{\intg}=\Gamma_{\Lambda,k}^{\mathrm{CKN}},
\]
and the audit baseline norm controls the CKN field distance as in
\cref{ass:field-control-audit}.  Then
\[
  \delta_k^{\mathrm{CKN}}\le C_{\mathrm{fld}}\delta_k.
\]
Thus \cref{ass:ckn-audit-domination} holds with
\(\Delta_k^{\mathrm{gauge}}=0\) and
\(C_{\mathrm{CKN/aud}}=C_{\mathrm{fld}}\).
\end{corollary}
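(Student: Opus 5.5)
The plan is to obtain \cref{cor:built-in-ckn-metric} as a special case of \cref{prop:metric-realizes-ckn-domination}. To apply it, I will verify \cref{ass:ckn-cleaning} with \(\Delta_k^{\mathrm{gauge}}=0\). \Cref{ass:field-control-audit} holds by hypothesis.

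First, fix \(\eta>0\). Since \(\delta_k=\Dist_{0,k}(D_k,\Gamma_{\Lambda,k}^{\intg})\) is an infimum of \(\lvert D_k-\zeta\rvert_{0,k}\) over \(\zeta\in\Gamma_{\Lambda,k}^{\intg}\), and this infimum is finite, we can choose a near-minimizer \(\zeta_\eta\in\Gamma_{\Lambda,k}^{\intg}\) with \(\lvert D_k-\zeta_\eta\rvert_{0,k}\le\delta_k+\eta\). This is the same selection used in \cref{def:broad-sync}. If \(\delta_k=\infty\), the claimed inequality is trivial.

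Second, the hypothesis \(\Gamma_{\Lambda,k}^{\intg}=\Gamma_{\Lambda,k}^{\mathrm{CKN}}\) shows that \(\zeta_\eta\) is already CKN-clean. Set \(\zeta_\eta^{\mathrm{CKN}}:=\zeta_\eta\). The field distance \(d_\vartheta^{\mathrm{fld}}\) between the fields of \(\zeta_\eta\) and themselves is zero, taking the gauge \(a\equiv0\). Hence \cref{ass:ckn-cleaning} holds with \(\Delta_k^{\mathrm{gauge}}=0\); in fact the error is \(0\le 0+\eta\).

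\Cref{prop:metric-realizes-ckn-domination} then gives \(\delta_k^{\mathrm{CKN}}\le C_{\mathrm{fld}}\delta_k\), which is \cref{ass:ckn-audit-domination} with the stated constants.

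As a cross-check, the direct argument is equally short. Since \(\zeta_\eta\in\Gamma_{\Lambda,k}^{\mathrm{CKN}}\), the definition of \(\delta_k^{\mathrm{CKN}}\) as an infimum gives
\[
  \delta_k^{\mathrm{CKN}}\le d_\vartheta^{\mathrm{fld}}\bigl((u_k,p_k),(u_{\zeta_\eta},p_{\zeta_\eta})\bigr)\le C_{\mathrm{fld}}(\delta_k+\eta).
\]
Letting \(\eta\downarrow0\) gives the same bound.

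The only point needing care is consistency of associated fields. The fields attached to \(\zeta\) as an element of \(\Gamma_{\Lambda,k}^{\intg}\) in \cref{ass:field-control-audit} must be the same pair \((u_\zeta,p_\zeta)\) that enters the definition of \(\Dist_{\mathrm{CKN}}\), up to time-dependent pressure constants. I will read the identification of the two classes as including this identification of associated fields. Time-dependent constants are harmless because they are absorbed in the infimum over \(a(s)\). I expect no genuine obstacle beyond stating this convention explicitly.
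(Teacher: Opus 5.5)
Your proposal is correct and follows the paper's argument: the paper likewise sets \(\zeta_\eta^{\mathrm{CKN}}=\zeta_\eta\) in \cref{ass:ckn-cleaning}, observes that the cleaning error vanishes, and invokes \cref{prop:metric-realizes-ckn-domination}. Your direct cross-check, your handling of the case \(\delta_k=\infty\), and your remark that the fields attached to \(\zeta\) in the two identified classes must coincide up to time-dependent pressure constants are useful clarifications of points the paper leaves implicit.
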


\begin{proof}
In this case take
\(\zeta_\eta^{\mathrm{CKN}}=\zeta_\eta\) in
\cref{ass:ckn-cleaning}.  The cleaning error is zero, and
\cref{prop:metric-realizes-ckn-domination} gives the result.
\end{proof}

\begin{remark}[Status of the metric realization]\label{rem:metric-realization-status}
\Cref{prop:metric-realizes-ckn-domination,cor:built-in-ckn-metric} are
finite-window norm-comparison statements.  They do not prove that an arbitrary
audit metric controls the CKN field distance, and they do not prove that an
arbitrary audit representative can be cleaned into
\(\Gamma_{\Lambda,k}^{\mathrm{CKN}}\).  They identify the exact local metric
and representative-cleaning inputs needed for the CKN defect-extraction branch.
\end{remark}

\subsection{Quotient closeness implies CKN smallness}

\begin{lemma}[CKN smallness from CKN-compatible quotient closeness]
\label{lem:ckn-closeness-smallness}
Assume there exists \(\zeta\in\Gamma_{\Lambda,k}^{\mathrm{CKN}}\) such that
\[
  \norm{u_k-u_\zeta}_{L^3(Q_\vartheta)}
  +
  \inf_{a=a(s)}
  \norm{p_k-p_\zeta-a(s)}_{L^{3/2}(Q_\vartheta)}
  \le
  \tau.
\]
Then there is a constant \(C_\vartheta\) such that
\[
  \Phi_k(\vartheta)
  \le
  2\Phi_\zeta(\vartheta)
  +
  C_\vartheta(\tau^3+\tau^{3/2}).
\]
In particular, since
\(\Phi_\zeta(\vartheta)\le\varepsilon_{\mathrm{CKN}}/4\), one also has the
coarser bound
\[
  \Phi_k(\vartheta)
  \le
  C_\vartheta
  \left[
  \frac{\varepsilon_{\mathrm{CKN}}}{4}
  +
  \tau^3+\tau^{3/2}
  \right].
\]
Consequently there exists \(\tau_{\mathrm{CKN}}>0\) such that
\[
  \tau\le\tau_{\mathrm{CKN}}
  \quad\Longrightarrow\quad
  \Phi_k(\vartheta)<\varepsilon_{\mathrm{CKN}}.
\]
\end{lemma}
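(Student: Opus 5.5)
The plan is to split \(\Phi_k(\vartheta)\) into its velocity and pressure parts and compare each termwise with the corresponding part of \(\Phi_\zeta(\vartheta)\). The only tool is the elementary convexity inequality \(|a+b|^q\le 2^{q-1}(|a|^q+|b|^q)\) for \(q\ge1\). For the velocity part, write \(u_k=u_\zeta+(u_k-u_\zeta)\) on \(Q_\vartheta\). With \(q=3\) this gives
\[
\int_{Q_\vartheta}|u_k|^3\le 4\int_{Q_\vartheta}|u_\zeta|^3+4\norm{u_k-u_\zeta}_{L^3(Q_\vartheta)}^3\le 4\int_{Q_\vartheta}|u_\zeta|^3+4\tau^3.
\]
The factor \(4\) rather than \(2\) is the first small bookkeeping issue. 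One option is a sharper splitting \(|a+b|^q\le(1+\epsilon)|a|^q+C_\epsilon|b|^q\) with \(1+\epsilon=2\), so that \(C_\epsilon\) depends only on \(q\). This gives exactly the coefficient \(2\) in front of \(\Phi_\zeta(\vartheta)\), with the remaining constants absorbed into \(C_\vartheta\). I will use that version.

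The pressure part is where the gauge has to be handled, and I expect it to be the main obstacle. Fix a gauge \(a(s)\) that nearly attains the infimum in the hypothesis, up to an arbitrarily small slack that is removed at the end. Write \(p_k=p_\zeta+a(s)+h\) with \(\norm{h}_{L^{3/2}(Q_\vartheta)}\le\tau+\epsilon'\). The key observation is that the time-dependent constant \(a(s)\) disappears under spatial mean subtraction: \(p_k-(p_k)_{B_\vartheta}=(p_\zeta-(p_\zeta)_{B_\vartheta})+(h-(h)_{B_\vartheta})\). Jensen's inequality (or Hölder) on \(B_\vartheta\) gives \(\norm{(h)_{B_\vartheta}(s)}_{L^{3/2}(B_\vartheta)}\le\norm{h(s)}_{L^{3/2}(B_\vartheta)}\). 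Hence \(\norm{h-(h)_{B_\vartheta}}_{L^{3/2}(Q_\vartheta)}\le 2(\tau+\epsilon')\). Applying the same \((1+\epsilon)\)-splitting with \(q=3/2\) bounds the pressure part of \(\Phi_k\) by twice the pressure part of \(\Phi_\zeta\) plus \(C(\tau+\epsilon')^{3/2}\). Letting \(\epsilon'\downarrow0\) then yields \(\Phi_k(\vartheta)\le2\Phi_\zeta(\vartheta)+C_\vartheta(\tau^3+\tau^{3/2})\). Here \(C_\vartheta\) is in fact a numerical constant, and the \(\vartheta\)-dependence is harmless.

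For the consequences, insert \(\Phi_\zeta(\vartheta)\le\varepsilon_{\mathrm{CKN}}/4\). This gives \(\Phi_k(\vartheta)\le\varepsilon_{\mathrm{CKN}}/2+C_\vartheta(\tau^3+\tau^{3/2})\), which implies the stated coarser bound after enlarging \(C_\vartheta\ge2\). Finally, choose \(\tau_{\mathrm{CKN}}>0\) so small that \(C_\vartheta(\tau_{\mathrm{CKN}}^3+\tau_{\mathrm{CKN}}^{3/2})<\varepsilon_{\mathrm{CKN}}/2\). This is possible because the map \(\tau\mapsto\tau^3+\tau^{3/2}\) is continuous, increasing, and vanishes at \(0\). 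Then \(\tau\le\tau_{\mathrm{CKN}}\) gives \(\Phi_k(\vartheta)<\varepsilon_{\mathrm{CKN}}\). Note that it is the sharp constant \(2\), and not the coarse form, that is used here.
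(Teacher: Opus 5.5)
Your proof is correct and follows the paper's argument essentially step for step. The common steps are the $(1+\alpha)$-splitting with coefficient $2$ for both the $L^3$ velocity part and the mean-subtracted $L^{3/2}$ pressure part, removal of the time-dependent gauge by spatial mean subtraction with a near-optimal $a(s)$ whose slack is sent to zero, and the choice of $\tau_{\mathrm{CKN}}$ from the sharp coefficient-$2$ form rather than the coarse one. Your Jensen bound $\norm{h-(h)_{B_\vartheta}}_{L^{3/2}(Q_\vartheta)}\le 2\norm{h}_{L^{3/2}(Q_\vartheta)}$ makes explicit the paper's step that ``mean subtraction is bounded on $L^{3/2}(B_\vartheta)$,'' and it correctly shows that the constant is in fact independent of $\vartheta$.
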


\begin{proof}
Fix \(\eta_0>0\).  Since the pressure distance contains an infimum over
time-dependent constants, choose \(a(s)\) such that
\[
  \norm{u_k-u_\zeta}_{L^3(Q_\vartheta)}
  +
  \norm{p_k-p_\zeta-a(s)}_{L^{3/2}(Q_\vartheta)}
  \le
  \tau+\eta_0.
\]
For every \(r>1\) and every \(\alpha>0\),
\[
  |A+B|^r\le (1+\alpha)|A|^r+C_{r,\alpha}|B|^r .
\]
Using this with \(r=3\), \(A=u_\zeta\), and
\(B=u_k-u_\zeta\), and choosing \(\alpha\le1\), gives
\[
  \int_{Q_\vartheta}|u_k|^3
  \le
  2\int_{Q_\vartheta}|u_\zeta|^3
  +
  C\norm{u_k-u_\zeta}_{L^3(Q_\vartheta)}^3.
\]

For pressure, write
\[
  q:=p_k-p_\zeta-a(s).
\]
Since spatial mean subtraction removes the time-dependent pressure constant,
\[
  p_k-(p_k)_{B_\vartheta}(s)
  =
  p_\zeta-(p_\zeta)_{B_\vartheta}(s)
  +
  q-(q)_{B_\vartheta}(s).
\]
Applying the same perturbative inequality with \(r=3/2\) yields
\[
  \int_{Q_\vartheta}|p_k-(p_k)_{B_\vartheta}(s)|^{3/2}
  \le
  2\int_{Q_\vartheta}
  |p_\zeta-(p_\zeta)_{B_\vartheta}(s)|^{3/2}
  +
  C\int_{Q_\vartheta}|q-(q)_{B_\vartheta}(s)|^{3/2}.
\]
The mean-subtraction map is bounded on \(L^{3/2}(B_\vartheta)\), uniformly for
the fixed ball \(B_\vartheta\), so
\[
  \int_{Q_\vartheta}|q-(q)_{B_\vartheta}(s)|^{3/2}
  \le
  C_\vartheta\norm{q}_{L^{3/2}(Q_\vartheta)}^{3/2}.
\]
Combining the velocity and pressure estimates and using the choice of
\(a(s)\) gives
\[
  \Phi_k(\vartheta)
  \le
  2\Phi_\zeta(\vartheta)
  +
  C_\vartheta\bigl((\tau+\eta_0)^3+(\tau+\eta_0)^{3/2}\bigr).
\]
Letting \(\eta_0\downarrow0\) proves the first estimate.  The second estimate
follows from \(\Phi_\zeta(\vartheta)\le\varepsilon_{\mathrm{CKN}}/4\).

Choose \(\tau_{\mathrm{CKN}}>0\) so small that
\[
  C_\vartheta
  \left(
  \tau_{\mathrm{CKN}}^3+\tau_{\mathrm{CKN}}^{3/2}
  \right)
  <
  \frac{\varepsilon_{\mathrm{CKN}}}{2}.
\]
Then the first estimate gives
\[
  \Phi_k(\vartheta)
  <
  \frac{\varepsilon_{\mathrm{CKN}}}{2}
  +
  \frac{\varepsilon_{\mathrm{CKN}}}{2}
  =
  \varepsilon_{\mathrm{CKN}},
  \qquad
  \tau\le\tau_{\mathrm{CKN}}.
\]
\end{proof}

\begin{theorem}[Audit closeness implies CKN smallness]
\label{thm:audit-closeness-ckn}
Assume the CKN-compatible audit domination in
\cref{ass:ckn-audit-domination}.  If
\[
  C_{\mathrm{CKN/aud}}\,\delta_k+\Delta_k^{\mathrm{gauge}}
  \le
  \tau_{\mathrm{CKN}},
\]
then
\[
  \Phi_k(\vartheta)<\varepsilon_{\mathrm{CKN}}.
\]
\end{theorem}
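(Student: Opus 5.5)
The plan is to chain \cref{ass:ckn-audit-domination} with \cref{lem:ckn-closeness-smallness}, using the definition of \(\delta_k^{\mathrm{CKN}}\) as an infimum.

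First, by \cref{ass:ckn-audit-domination} and the displayed hypothesis,
\[
  \delta_k^{\mathrm{CKN}}
  \le
  C_{\mathrm{CKN/aud}}\,\delta_k+\Delta_k^{\mathrm{gauge}}
  \le
  \tau_{\mathrm{CKN}}.
\]
By \cref{def:ckn-distance}, \(\delta_k^{\mathrm{CKN}}\) is an infimum over \(\zeta\in\Gamma_{\Lambda,k}^{\mathrm{CKN}}\) of the field distance \(d_\vartheta^{\mathrm{fld}}\bigl((u_k,p_k),(u_\zeta,p_\zeta)\bigr)\). So for each \(\eta_0>0\) there is some \(\zeta\in\Gamma_{\Lambda,k}^{\mathrm{CKN}}\) whose field distance is at most \(\tau:=\tau_{\mathrm{CKN}}+\eta_0\). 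Note that nonemptiness of \(\Gamma_{\Lambda,k}^{\mathrm{CKN}}\) is implicit, since the hypothesis forces the infimum to be finite.

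The one point needing care is that the infimum need not be attained. Consequently \(\tau\) may slightly exceed \(\tau_{\mathrm{CKN}}\), whereas \cref{lem:ckn-closeness-smallness} as stated applies only for \(\tau\le\tau_{\mathrm{CKN}}\). I would resolve this using the strict margin built into the proof of the lemma. There, \(\tau_{\mathrm{CKN}}\) is chosen so that
\[
  C_\vartheta\bigl(\tau_{\mathrm{CKN}}^3+\tau_{\mathrm{CKN}}^{3/2}\bigr)
  <
  \frac{\varepsilon_{\mathrm{CKN}}}{2},
\]
and the left side is continuous in \(\tau\). Hence this strict inequality persists for \(\tau=\tau_{\mathrm{CKN}}+\eta_0\) once \(\eta_0\) is small enough. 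Alternatively, one can shrink \(\tau_{\mathrm{CKN}}\) slightly at the outset, since the lemma only asserts existence of some threshold.

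With such a \(\zeta\), the first estimate of \cref{lem:ckn-closeness-smallness} gives
\[
  \Phi_k(\vartheta)
  \le
  2\Phi_\zeta(\vartheta)+C_\vartheta(\tau^3+\tau^{3/2}).
\]
By \cref{def:ckn-clean-class}, \(\Phi_\zeta(\vartheta)\le\varepsilon_{\mathrm{CKN}}/4\), so
\[
  \Phi_k(\vartheta)
  <
  \frac{\varepsilon_{\mathrm{CKN}}}{2}+\frac{\varepsilon_{\mathrm{CKN}}}{2}
  =
  \varepsilon_{\mathrm{CKN}},
\]
which is the claim. The only obstacle is the non-attained infimum and the strict-versus-nonstrict threshold bookkeeping just described; everything else is direct substitution.
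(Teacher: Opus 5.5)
Your proof is correct and follows the paper's argument. As in the paper, \cref{ass:ckn-audit-domination} gives $\delta_k^{\mathrm{CKN}}\le\tau_{\mathrm{CKN}}$, a near-minimizing $\zeta\in\Gamma_{\Lambda,k}^{\mathrm{CKN}}$ is fed into the quantitative estimate of \cref{lem:ckn-closeness-smallness}, and the strict defining inequality for $\tau_{\mathrm{CKN}}$ closes the argument. The only difference is cosmetic: the paper handles non-attainment by letting $\eta_0\downarrow0$ in the non-strict bound (legitimate because $\Phi_k(\vartheta)$ does not depend on $\eta_0$) before using the strict margin, whereas you fix a small $\eta_0$ and use continuity of $\tau\mapsto C_\vartheta(\tau^3+\tau^{3/2})$.
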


\begin{proof}
By \cref{ass:ckn-audit-domination},
\[
  \delta_k^{\mathrm{CKN}}\le\tau_{\mathrm{CKN}}.
\]
For every \(\eta_0>0\), the definition of
\(\delta_k^{\mathrm{CKN}}\) gives a representative
\(\zeta_{\eta_0}\in\Gamma_{\Lambda,k}^{\mathrm{CKN}}\) whose
CKN-compatible distance to \(D_k\) is at most
\(\delta_k^{\mathrm{CKN}}+\eta_0\).  Applying
\cref{lem:ckn-closeness-smallness} in its quantitative form gives
\[
  \Phi_k(\vartheta)
  \le
  \frac{\varepsilon_{\mathrm{CKN}}}{2}
  +
  C_\vartheta
  \left[
  (\delta_k^{\mathrm{CKN}}+\eta_0)^3
  +
  (\delta_k^{\mathrm{CKN}}+\eta_0)^{3/2}
  \right].
\]
Letting \(\eta_0\downarrow0\) and using
\(\delta_k^{\mathrm{CKN}}\le\tau_{\mathrm{CKN}}\), together with the defining
choice of \(\tau_{\mathrm{CKN}}\), gives
\(\Phi_k(\vartheta)<\varepsilon_{\mathrm{CKN}}\).
\end{proof}

\begin{corollary}[CKN-bad scale forces baseline audit defect]
\label{cor:ckn-bad-defect}
Assume \cref{ass:ckn-audit-domination}.  If
\[
  \Phi_k(\vartheta)\ge\varepsilon_{\mathrm{CKN}},
\]
then
\[
  C_{\mathrm{CKN/aud}}\,\delta_k+\Delta_k^{\mathrm{gauge}}
  >
  \tau_{\mathrm{CKN}}.
\]
In particular, if
\[
  \Delta_k^{\mathrm{gauge}}\le \frac12\tau_{\mathrm{CKN}},
\]
then
\[
  \delta_k\ge\delta_*,
  \qquad
  \delta_*:=
  \frac{\tau_{\mathrm{CKN}}}{2C_{\mathrm{CKN/aud}}}.
\]
\end{corollary}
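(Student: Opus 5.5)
The plan is to argue by contraposition from \cref{thm:audit-closeness-ckn}, and then extract the explicit lower bound on \(\delta_k\) by elementary algebra.

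For the first assertion, suppose to the contrary that
\[
  C_{\mathrm{CKN/aud}}\,\delta_k+\Delta_k^{\mathrm{gauge}}\le\tau_{\mathrm{CKN}}.
\]
Under \cref{ass:ckn-audit-domination} this is exactly the hypothesis of \cref{thm:audit-closeness-ckn}. That theorem then gives \(\Phi_k(\vartheta)<\varepsilon_{\mathrm{CKN}}\), which contradicts the assumed CKN-badness \(\Phi_k(\vartheta)\ge\varepsilon_{\mathrm{CKN}}\). Hence the strict inequality \(C_{\mathrm{CKN/aud}}\delta_k+\Delta_k^{\mathrm{gauge}}>\tau_{\mathrm{CKN}}\) holds. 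No further analysis is needed, because \(\tau_{\mathrm{CKN}}\) was fixed in \cref{lem:ckn-closeness-smallness} independently of \(k\) and of the particular package.

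For the second assertion, we subtract the gauge bound \(\Delta_k^{\mathrm{gauge}}\le\tfrac12\tau_{\mathrm{CKN}}\). This gives
\[
  C_{\mathrm{CKN/aud}}\,\delta_k>\tfrac12\tau_{\mathrm{CKN}}.
\]
The only point requiring care is dividing by \(C_{\mathrm{CKN/aud}}\), which needs \(C_{\mathrm{CKN/aud}}>0\). We would handle this as follows.

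\begin{itemize}
\item If \(C_{\mathrm{CKN/aud}}=0\), the displayed inequality reads \(0>\tfrac12\tau_{\mathrm{CKN}}>0\), which is impossible. So under the gauge hypothesis this case cannot occur together with CKN-badness, and the conclusion holds vacuously.
\item Equivalently, one may enlarge \(C_{\mathrm{CKN/aud}}\) to a positive constant without weakening \cref{ass:ckn-audit-domination}.
\end{itemize}

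When \(C_{\mathrm{CKN/aud}}>0\), dividing gives \(\delta_k>\delta_*\), which is in particular \(\delta_k\ge\delta_*\) with \(\delta_*=\tau_{\mathrm{CKN}}/(2C_{\mathrm{CKN/aud}})\).

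The only step resembling an obstacle is this degenerate-constant case; everything else is a direct reading of \cref{thm:audit-closeness-ckn}.
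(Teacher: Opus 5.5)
Your proof is correct and matches the paper's argument. The first claim is the contrapositive of \cref{thm:audit-closeness-ckn}, and the second follows by subtracting the gauge bound and dividing by $C_{\mathrm{CKN/aud}}$; your treatment of the degenerate case $C_{\mathrm{CKN/aud}}=0$, which the paper leaves implicit, is valid.
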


\begin{proof}
The first conclusion is the contrapositive of
\cref{thm:audit-closeness-ckn}.  If additionally
\(\Delta_k^{\mathrm{gauge}}\le\tau_{\mathrm{CKN}}/2\), then
\[
  C_{\mathrm{CKN/aud}}\delta_k
  >
  \tau_{\mathrm{CKN}}-\Delta_k^{\mathrm{gauge}}
  \ge
  \frac12\tau_{\mathrm{CKN}},
\]
which gives the stated lower bound for \(\delta_k\).
\end{proof}

\subsection{Conditional finite audit-chain extraction}

\begin{theorem}[Conditional finite audit-chain extraction from CKN-bad scales]
\label{thm:conditional-ckn-chain-extraction}
Let \((u,p)\) be a suitable weak solution and fix a finite geometric scale
chain
\[
  r_k=\lambda^k r_0,\qquad k=0,\ldots,K.
\]
Let \(D_k(u,p)\) be the NS-generated package at scale \(r_k\).  Assume:
\begin{enumerate}[label=(\alph*)]
  \item the scale \(r_k\) is CKN-bad for each \(k=0,\ldots,K\):
  \[
    \Phi_k(\vartheta)\ge\varepsilon_{\mathrm{CKN}};
  \]
  \item the broad one-step admissibility theorem applies to each transition
  \(D_k\mapsto D_{k+1}\);
  \item the CKN-compatible audit domination
  \cref{ass:ckn-audit-domination} holds at each considered scale;
  \item the gauge comparison errors satisfy
  \[
    \Delta_k^{\mathrm{gauge}}\le\frac12\tau_{\mathrm{CKN}},
    \qquad k=0,\ldots,K.
  \]
\end{enumerate}
Then
\[
  \delta_k\ge\delta_*>0,
  \qquad k=0,\ldots,K.
\]
Consequently
\[
  D_0\to D_1\to\cdots\to D_K
\]
is an admissible finite audit chain with nondegenerate baseline defect, and
the finite recursive anti-phantom theorem may be applied to this finite chain
whenever its other hypotheses are satisfied.
\end{theorem}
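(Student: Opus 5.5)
The plan is to apply \cref{cor:ckn-bad-defect} scale by scale and then assemble the chain-level conclusion from \cref{thm:main} and \cref{thm:finite-chain}. Fix \(k\in\{0,\ldots,K\}\). Hypothesis (c) supplies \cref{ass:ckn-audit-domination} at scale \(k\), so \cref{thm:audit-closeness-ckn} and its contrapositive \cref{cor:ckn-bad-defect} are available there. Hypothesis (a) gives \(\Phi_k(\vartheta)\ge\varepsilon_{\mathrm{CKN}}\), which forces
\[
  C_{\mathrm{CKN/aud}}\,\delta_k+\Delta_k^{\mathrm{gauge}}>\tau_{\mathrm{CKN}}.
\]
Hypothesis (d), \(\Delta_k^{\mathrm{gauge}}\le\tau_{\mathrm{CKN}}/2\), then yields \(\delta_k\ge\delta_*=\tau_{\mathrm{CKN}}/(2C_{\mathrm{CKN/aud}})\). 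The constant \(\delta_*\) depends only on \(\tau_{\mathrm{CKN}}\) and \(C_{\mathrm{CKN/aud}}\). I will therefore record that \(C_{\mathrm{CKN/aud}}\) is taken to be a single finite-window constant for all \(k\le K\), or else replaced by the maximum over the finitely many scales. I will also note that \(C_{\mathrm{CKN/aud}}>0\) may be assumed: if it were zero, (a), (d) and the corollary would give \(\tau_{\mathrm{CKN}}/2\ge\Delta_k^{\mathrm{gauge}}>\tau_{\mathrm{CKN}}\), a contradiction. So \(\delta_*\) is well defined and positive. The quantity \(\tau_{\mathrm{CKN}}\) is the one from \cref{lem:ckn-closeness-smallness}, and it is the same at every scale because \(\vartheta\) and \(\varepsilon_{\mathrm{CKN}}\) are fixed.

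For the chain statement, hypothesis (b) lets me apply \cref{thm:main} to each transition \(D_k\mapsto D_{k+1}\), \(k=0,\ldots,K-1\). This gives \(D_{k+1}\in\A_{k+1}^{\NS}(\Delta_k)\), so \(D_0\to\cdots\to D_K\) is an admissible finite audit chain in the sense of \cref{cor:recursive-insertion}. Nondegeneracy then follows directly from the uniform bound: for any nonnegative weights,
\[
  \sum_{k=0}^K w_k\delta_k\ge\delta_*\sum_{k=0}^K w_k,
\]
which is positive whenever some \(w_k>0\). This is exactly condition (e) of \cref{ass:scale-uniform} on the finite chain. The last sentence of the statement is only a pointer: once static certificates \(M_k\ge c_k\delta_k-E_k\) are available (the "other hypotheses"), \cref{thm:finite-chain} and \cref{cor:recursive-alt} apply with \(\sum w_k\delta_k\ge\delta_*\sum w_k\).

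There is essentially no analytic obstacle, since the argument is a contrapositive applied pointwise in \(k\). The only delicate point is bookkeeping. The \(\delta_k\) appearing in \cref{ass:ckn-audit-domination} must be the same baseline distance \eqref{eq:delta-k} that enters the audit certificates. In addition, the normalized fields \((u_k,p_k)\) of the suitable weak solution must lie in \(L^3\times L^{3/2}\) on \(Q_1\), so that \(\Phi_k(\vartheta)\) is defined. I will check both against \cref{def:broad-package}(a),(e) before concluding.
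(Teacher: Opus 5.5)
Your proposal is correct and follows the paper's proof: you apply \cref{cor:ckn-bad-defect} at each scale, using (a), (c) and (d), to get $\delta_k\ge\delta_*$; you use (b) and \cref{thm:main} for admissibility of the transitions; and you treat the final clause as only an insertion into \cref{thm:finite-chain}. Your extra bookkeeping is sound and makes explicit points the paper leaves implicit, namely that $C_{\mathrm{CKN/aud}}>0$ is forced by the hypotheses, that it can be taken uniform in $k$ (or maximized over the finitely many scales), and that $\sum_k w_k\delta_k\ge\delta_*\sum_k w_k$.
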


\begin{proof}
For each \(k\), CKN-badness and the gauge-error bound allow
\cref{cor:ckn-bad-defect} to be applied at that scale.  This gives
\(\delta_k\ge\delta_*\) for all \(k=0,\ldots,K\).  The broad one-step
admissibility theorem supplies admissibility of the transitions in the finite
chain.  The last sentence is therefore an insertion into the already
established finite recursive audit theorem, not a new regularity conclusion.
\end{proof}

\begin{remark}[CKN singular candidates]\label{rem:ckn-singular-candidates}
If \(z_0\) is a CKN singular candidate in the standard epsilon-regularity
sense \cite{CKN1982,Lin1998}, then CKN epsilon regularity implies that sufficiently small normalized
scales centered at \(z_0\) are CKN-bad: otherwise a scale with
\(\Phi_k(\vartheta)<\varepsilon_{\mathrm{CKN}}\) would give regularity on a
smaller cylinder.  \Cref{thm:conditional-ckn-chain-extraction} uses only this
finite-chain CKN-badness input and does not produce a contradiction or a
regularity conclusion.
\end{remark}

\begin{remark}[What remains open]\label{rem:ckn-open}
The genuinely hard PDE input in this branch is the domination
\[
  \delta_k^{\mathrm{CKN}}
  \le
  C_{\mathrm{CKN/aud}}\,\delta_k+\Delta_k^{\mathrm{gauge}}.
\]
If the audit baseline distance is defined to include the CKN-compatible
\(L^3\) velocity and mean-free \(L^{3/2}\) pressure components, this
comparison is built into the metric.  If the existing audit distance is
weaker, the comparison must be proved separately.  The branch proves only the
finite-window implication
\[
  \text{CKN-bad scale}
  \Longrightarrow
  \text{positive audit defect}
\]
under a CKN-compatible quotient convention.  It does not prove detector/ledger
cost unsustainability, a CKN-small scale conclusion, Navier--Stokes
regularity, singularity exclusion, or any Clay-level theorem.
\end{remark}

\section{Final Finite-Window Theorem}
\label{sec:final-finite-window-theorem}

At each scale, the finite-window coefficient has the schematic form
\[
  c_k=\mu_{\Lambda,k}^{\comp}\lambda_{G,k}-L_k^{\mathrm{res}},
\]
where \(\mu_{\Lambda,k}^{\comp}\) is a clean compact quotient gap, \(\lambda_{G,k}\) is chart visibility, and \(L_k^{\mathrm{res}}\) is the residual-loss coefficient.  In the pressure--flux--energy realization one may take \(\mu_{\Lambda,k}^{\comp}=\mu_{\Lambda,k}^{\mathrm{PFE}}\), provided the reduced matrix condition
\[
  \ker T_{\Lambda,k}^{\mathrm{PFE}}=G_{\Lambda,k}^{\cl}
\]
holds on the selected clean coordinates.

\begin{theorem}[NS-generated recursive finite-window audit chain]\label{thm:final-fw-chain}
Fix a finite geometric scale chain \(D_0\to D_1\to\cdots\to D_K\) generated by normalized local Navier--Stokes data.  Assume the following.
\begin{enumerate}[label=(\alph*)]
  \item The broad one-step admissibility theorem, \cref{thm:main}, applies to every transition \(D_k\to D_{k+1}\), with ledger
  \[
    \Delta_k=\Delta_k^{\sync}+\Delta_k^{\loc,\EF}+\Delta_k^{\proj}+\Delta_k^{\harm}+\Delta_k^{\chart}+\Delta_k^{\gap}+\Delta_k^{\gate}+\Delta_k^{\detc}.
  \]
  \item The static finite-window audit certificate \eqref{eq:scale-audit-cert} holds at every scale.
  \item The recursive error and coefficient update rules in \cref{ass:recursive-update-rules} hold.
  \item The coefficient drift is controlled by \cref{thm:coeff-update}, or equivalently the accumulated coefficient loss keeps \(c_K^{\min}>0\) on the considered finite chain.
  \item Projection-tail control is supplied either by compactness, by effective projection, or by the drift estimate in \cref{thm:compact-image-proj,prop:effective-proj,thm:proj-tail-drift}.
  \item Chart visibility and clean detector gap are supplied by the compact quotient criteria in \cref{thm:chart-vis,thm:clean-gap}; in the PFE realization the clean gap is supplied by \cref{thm:pfe-clean-gap,thm:pfe-smred-gap}.
\end{enumerate}
Then, for every choice of weights \(w_k\ge0\),
\[
  \sum_{k=0}^K w_kM_k
  \ge
  c_K^{\min}\sum_{k=0}^K w_k\delta_k
  -E_K^{\mathrm{rec}},
  \qquad
  E_K^{\mathrm{rec}}:=\sum_{k=0}^K w_kE_k.
\]
The individual \(E_k\) are bounded by the variable-coefficient recursion in \cref{lem:var-recursion}.  In particular, either
\[
  \sum_{k=0}^K w_kM_k
  \ge
  \frac{c_K^{\min}}2\sum_{k=0}^K w_k\delta_k,
\]
or
\[
  E_K^{\mathrm{rec}}
  \ge
  \frac{c_K^{\min}}2\sum_{k=0}^K w_k\delta_k.
\]
\end{theorem}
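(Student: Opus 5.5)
The proof is an assembly of results already established, and the real content sits in \cref{thm:finite-chain}. The plan has three steps: verify that each scale carries a valid certificate, identify $c_K^{\min}$ as a genuine positive minimum, and then apply the weighted summation and the alternative.

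First, I use hypothesis (a) and \cref{thm:main} to conclude $D_{k+1}\in\A_{k+1}^{\NS}(\Delta_k)$ for $k=0,\dots,K-1$. Together with (b) and \cref{thm:one-step-audit-propagation}, this shows that every package $D_0,\dots,D_K$ in the chain carries a valid certificate $M_k\ge c_k\delta_k-E_k$. I will also check that the scale-$k$ coefficient $c_k=\mu_{\Lambda,k}^{\comp}\lambda_{G,k}-L_k^{\mathrm{res}}$ appearing in the certificate is meaningful. Hypothesis (f) supplies this: \cref{thm:chart-vis,thm:clean-gap}, or \cref{thm:pfe-clean-gap,thm:pfe-smred-gap} in the PFE case, give $\lambda_{G,k}>0$ and $\mu_{\Lambda,k}^{\comp}>0$. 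Hypothesis (e) only ensures that the projection-tail entries of $\Delta_k$ are finite, so that $E_k$ and the ledger are well defined. It does not enter the final inequality otherwise.

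Second, I handle the coefficient. Under (d), either I invoke \cref{thm:coeff-update} with (c) and iterate as in \cref{cor:coeff-positivity} to get $c_k\ge c_0-\sum_{j<k}\eta_j$, or I take $c_K^{\min}>0$ directly. In either case, $c_K^{\min}:=\min_k c_k$ is a well-defined lower bound for every $c_k$. Strictly speaking, the weighted inequality needs only that $c_k\ge c_K^{\min}$ and $w_k\delta_k\ge0$; positivity is needed only for the alternative to carry content.

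Third, I apply \cref{thm:finite-chain} verbatim. I multiply each certificate by $w_k\ge0$, sum over $k$, and bound $\sum_k w_kc_k\delta_k\ge c_K^{\min}\sum_k w_k\delta_k$ using $\delta_k\ge0$. The bound on each $E_k$ comes from \cref{lem:var-recursion} applied on the prefix ending at $k$, with the recursion \eqref{eq:recursive-E-update} from (c). The dichotomy is then exactly \cref{cor:recursive-alt}.

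The only point requiring care, which I expect to be the main obstacle, is consistency of notation. The $E_k$ and $c_k$ in the static certificate (b) must be the same quantities that obey the update rules in (c). Likewise, the scale-$k$ package to which the certificate is applied must be the admissible $D_k$ produced in the first step, rather than an $\A_k^{\NS}$ package in the exact (unenlarged) sense. I would address this by reading (b) as applying to the increment-enlarged classes $\A_k^{\NS}(\Delta_{k-1})$, which is the convention of \cref{thm:one-step-audit-propagation}. After that, everything is finite algebra.
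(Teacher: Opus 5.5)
Your proposal is correct and follows the paper's own proof essentially step for step: \cref{thm:main} gives admissibility of each transition, \cref{thm:one-step-audit-propagation} together with hypothesis (b) gives a certificate at every scale, and the weighted bound and dichotomy are exactly \cref{thm:finite-chain} and \cref{cor:recursive-alt}. Your side observations are accurate refinements rather than a different route: the summation uses only $w_k\ge0$, $\delta_k\ge0$ and $c_k\ge c_K^{\min}$, and positivity of $c_K^{\min}$ matters only for the content of the alternative. The notational worry you flag is harmless, since hypothesis (b) by itself already supplies the certificate $M_k\ge c_k\delta_k-E_k$ at every scale of the given chain.
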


\begin{proof}
By \cref{thm:main}, each NS-generated transition is admissible after every named mismatch is charged to the ledger.  By \cref{thm:one-step-audit-propagation}, the static certificate is therefore available at each scale with recursively controlled coefficient and residual budget.  The weighted lower bound is exactly \cref{thm:finite-chain}; the alternative is \cref{cor:recursive-alt}.  The listed structural assumptions identify where projection, chart, clean-gap, PFE, and coefficient-loss inputs enter the ledger.
\end{proof}

\begin{corollary}[Smooth reduced selected-class chain]\label{cor:smred-final-chain}
Let
\[
  D_0\to D_1\to\cdots\to D_K\in\A_{K,N,M}^{\mathrm{sm,red}}.
\]
Assume the selected structural inputs in \cref{thm:smred-selected-inputs} hold and that the remaining synchronization, harmonic truncation, gate/slack, detector, residual, coefficient-drift, projection-transport, and chart-transport terms are charged to their ledger components.  Then the finite-chain lower bound and alternative in \cref{thm:final-fw-chain} hold for this smooth reduced NS-generated chain.
\end{corollary}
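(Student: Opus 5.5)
The corollary is an assembly statement. The plan is to verify, for a chain in \(\A_{K,N,M}^{\mathrm{sm,red}}\), each hypothesis (a)--(f) of \cref{thm:final-fw-chain}, and then invoke that theorem.

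\textbf{Step 1: one-step admissibility, hypothesis (a).} For each \(0\le k<K\), \cref{cor:smred-one-step-insertion} gives \(D_{k+1}\in\mathcal A_{k+1}(\Delta_k)\). Its inputs are of two kinds. The pressure/source, energy/flux localization, projection, chart and clean-gap entries come from \cref{thm:smred-selected-inputs}. The remaining synchronization, harmonic, gate/slack, detector, projection-transport and chart-transport entries are charged by the corollary's hypothesis, using \cref{cor:proj-drift-insertion,cor:chart-drift-insertion}. This is exactly the ledger assignment required in items (d)--(h) of \cref{thm:main}. The PDE items (a)--(c) of \cref{thm:main} hold on the smooth class because of the uniform \(C^m\) bound, via \cref{prop:smred-pressure-source,prop:smred-ef-loc}. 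The commutator estimate of \cref{prop:commutator} holds with \(M_U=M_U(M)\) from \cref{cor:smred-ef-amplitude}.

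\textbf{Step 2: structural hypotheses (e) and (f).} Projection-tail control, hypothesis (e), is item (c) of \cref{thm:smred-selected-inputs}, which rests on \cref{prop:smred-projection}. Chart visibility and the clean gap, hypothesis (f), are items (d)--(e) of \cref{thm:smred-selected-inputs}. In the PFE realization these are obtained through \cref{thm:pfe-smred-gap}, using the compactness and kernel-freeness clauses (f)--(g) of \cref{def:smred-class}.

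\textbf{Step 3: hypotheses (b)--(d).} These are the static certificate \eqref{eq:scale-audit-cert} at each scale, the update rules of \cref{ass:recursive-update-rules}, and positivity of \(c_K^{\min}\). They are the delicate point, since \cref{thm:smred-selected-inputs} does not produce them directly. I would interpret the corollary's phrase ``residual and coefficient-drift terms are charged to their ledger components'' as supplying two things. First, it gives \eqref{eq:recursive-E-update} and \eqref{eq:audit-c-update}. Second, via \cref{thm:coeff-update} and \cref{cor:coeff-positivity}, it gives the coefficient-loss control in (d).

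The static certificate itself is then taken from the imported finite-window audit theorem applied at each admissible scale, exactly as in \cref{thm:one-step-audit-propagation}. Here \(c_k=\mu_{\Lambda,k}^{\comp}\lambda_{G,k}-\ell_k\) is built from the constants just obtained in Step 2.

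I expect this step to be the main obstacle, because it is where one must be explicit that the certificate and the update rules are inherited assumptions, not consequences of the smooth class. If \(c_K^{\min}\le 0\), the lower bound still holds, but the conclusion is vacuous.

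\textbf{Step 4: conclusion.} With (a)--(f) verified, \cref{thm:final-fw-chain} gives the weighted lower bound and the alternative. This reduces to \cref{thm:finite-chain} and \cref{cor:recursive-alt}, which completes the proof.
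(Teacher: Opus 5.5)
Your proposal is correct and follows the paper's own proof, which is the same assembly argument. In both, the pressure/source, localization, projection, chart, and PFE clean-gap inputs come from \cref{thm:smred-selected-inputs}, the remaining hypotheses of \cref{thm:final-fw-chain} are read off from the corollary's ledger-charging assumptions, and \cref{thm:final-fw-chain} is then invoked; your hypothesis-by-hypothesis check is simply more explicit than the paper's three-line version, notably in flagging that the static certificate and the update rules of \cref{ass:recursive-update-rules} are inherited inputs rather than consequences of the smooth class.
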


\begin{proof}
The selected pressure/source, localization, projection, chart, and PFE clean-gap inputs are supplied by \cref{thm:smred-selected-inputs}.  The remaining assumptions are exactly the ledger assignments stated in the corollary.  Thus \cref{thm:final-fw-chain} applies.
\end{proof}

\section{Scope, Open Obligations, and Limitations}
\label{sec:limitations}

The final theorem deliberately stops at finite-window and finite-chain statements.  The following items remain outside the scope of the results.
\begin{enumerate}[label=(\roman*)]
  \item proving one-step admissibility for all suitable weak solutions without structural ledger hypotheses;
  \item proving uniform positive lower bounds for \(c_k\) as \(k\to\infty\);
  \item proving summability of recursive increments in arbitrary cascades;
  \item deriving reduced chart kernel-freeness and clean detector kernel-freeness for broad infinite-dimensional NS-generated package classes;
  \item proving the audit-to-CKN domination hypothesis when it is not built into the baseline metric;
  \item proving detector/ledger cost unsustainability or a CKN-small scale conclusion;
  \item proving Navier--Stokes regularity, singularity exclusion, or any Clay-level theorem.
\end{enumerate}

\begin{remark}[Final positioning]
The one-step admissibility part supplies the left edge of the recursive audit framework: it explains how a single NS-generated package can remain admissible after restriction, rescaling, synchronization, and recomputation when every mismatch is charged.  The recursive part supplies the finite-chain middle mechanism: once admissible packages and static certificates are available at each scale, the detector lower bound propagates through a weighted finite chain.  Any infinite-chain or regularity-facing conclusion requires additional uniform hypotheses not proved here.
\end{remark}

\appendix

\section{Renormalized Navier--Stokes Scaling}

The scaling
\[
  u_k(y,s)=r_k u(x_0+r_ky,t_0+r_k^2s),\qquad
  p_k(y,s)=r_k^2 p(x_0+r_ky,t_0+r_k^2s)
\]
preserves the form of the incompressible Navier--Stokes equations.  The scale-critical quantities used in the audit packages are always computed in the normalized variables.  This convention prevents hidden powers of \(r_k\) from entering the recursive ledger.

\section{Relation to the Static Finite-Window Theorem}

The static finite-window theorem \eqref{eq:static-audit-fw} is used as the audit input at each scale from the finite-window audit and computational anti-phantom framework \cite{YuSingularityAuditTransfer2026,YuComputationalAntiPhantom2026}.  The results above do not reprove that static theorem.  Instead, they combine broad NS-generated one-step admissibility with finite-chain recursive summation, while isolating all structural inputs needed for projection, chart visibility, clean detector gaps, PFE kernel-freeness, and CKN-compatible defect extraction.

\end{document}